\numberwithin{equation}{section}
\newtheorem{theo}{Theorem}[subsection]
\newtheorem{prop}[theo]{Proposition}
\newtheorem{lem}[theo]{Lemma}
\newtheorem{cor}[theo]{Corollary}
\newtheorem{conjecture}[theo]{Conjecture}
\theoremstyle{definition}
\newtheorem{exam}[theo]{Example}
\newtheorem{definition}[theo]{Definition}
\newtheorem{hypothesis}[theo]{Hypothesis}
\theoremstyle{remark}
\newtheorem{remark}[theo]{Remark}
\newcommand{\bb}[1]{\mathbb{#1}}
\newcommand{\bbf}[1]{\mathbf{#1}}
\newcommand{\n}[1]{\mathcal{#1}}
\newcommand{\f}[1]{\mathfrak{#1}}
\newcommand{\s}[1]{\mathscr{#1}}
\newcommand{\Hom}{\mbox{\normalfont Hom}}
\newcommand{\Gal}{\mbox{Gal}}
\DeclareMathOperator{\GL}{GL}
\DeclareMathOperator{\Tr}{Tr}
\DeclareMathOperator{\id}{id}
\DeclareMathOperator{\End}{End}
\DeclareMathOperator{\iEnd}{\End}
\DeclareMathOperator{\pr}{pr}
\DeclareMathOperator{\Lie}{Lie}
\DeclareMathOperator{\iHom}{\Hom}
\def \gr {{ \mathrm gr}}
\DeclareMathOperator{\an}{\begin{scriptsize}
an
\end{scriptsize}}
\DeclareMathOperator{\et}{\begin{scriptsize}
\acute{e}t
\end{scriptsize}}
\DeclareMathOperator{\proet}{\begin{scriptsize}
pro\acute{e}t
\end{scriptsize}}
\DeclareMathOperator{\ket}{\begin{scriptsize}
k\acute{e}t
\end{scriptsize}}
\DeclareMathOperator{\proket}{\begin{scriptsize}
prok\acute{e}t
\end{scriptsize}}
\DeclareMathOperator{\sm}{\begin{scriptsize}
sm
\end{scriptsize}}
\DeclareMathOperator{\dR}{\begin{scriptsize}
dR
\end{scriptsize}}
\DeclareMathOperator{\cyc}{\begin{scriptsize}
cyc
\end{scriptsize}}
\newcommand{\OBdr}{\s{O}\!\bb{B}_{\dR,\log}}
\newcommand{\OC}{\s{O}\!\bb{C}_{\log}}
\DeclareMathOperator{\sol}{\blacksquare}
\DeclareMathOperator{\Aut}{Aut}
\DeclareMathOperator{\iAut}{\Aut}
\DeclareMathOperator{\Rep}{Rep}
\DeclareMathOperator{\Sym}{Sym}
\DeclareMathOperator{\Spa}{Spa}
\DeclareMathOperator{\dlog}{dlog}
\DeclareMathOperator{\coker}{coker}
\DeclareMathOperator{\Sen}{Sen}
\DeclareMathOperator{\Solid}{Solid}
\begin{document}

\title{Geometric Sen theory over rigid analytic spaces}

\date{}
\author{Juan Esteban  Rodr\'iguez  Camargo}
\address{Max Planck Institute for Mathematics, Vivatsgasse 7,  53111 Bonn-Germany}
\email{rodriguez@mpim-bonn.mpg.de}

\subjclass[2020]{Primary 14G22, 14G45, 26E30	}
\keywords{Locally analytic representations,    Condensed mathematics, Sen theory,    $p$-adic Simpson correspondence}

\maketitle

\begin{abstract}
In this work we develop geometric Sen theory  for rigid analytic spaces, generalizing  the previous work  of Pan for curves. We also extend the axiomatic Sen-Tate formalism of Berger-Colmez to a certain class of locally analytic representations. 
\end{abstract}

\tableofcontents

\section{Introduction}

Let $p$ be a prime number, $\Gal_{\bb{Q}_p}$ the absolute Galois group of $\bb{Q}_p$.  In the  work \cite{LuePan}, Lue Pan has introduced a new and powerful tool to compute pro\'etale cohomology of $\widehat{\s{O}}$-modules over rigid analytic spaces based on the axiomatic framework of Sen theory of Berger-Colmez \cite{BC1,BC2}.  The  objective of this paper is to generalize Pan's method from curves to log smooth adic spaces; our motivation to develop such a theory is to study of the Hodge-Tate structure of completed cohomology of Shimura varieties, this is carried out in the author's work \cite{RCLocAnACoho}.

Let $\bb{Q}_p^{\cyc}$ be the completion of the $p$-adic cyclotomic extension of $\bb{Q}_p$ and let $(C,C^{+})$ be a perfectoid field containing $\bb{Q}_p^{\cyc}$.  In the following we shall consider almost mathematics with respect to the ideal $\f{m}_C\subset C^+$ of topologically nilpotent elements, and write ``$=^{ae}$'' for a natural equivalence in the category of almost modules, (resp., ``$\cong^{ae}$'' for a non-natural isomorphism of almost modules).

 Let $X$ be a log smooth fs locally noetherian adic space over $(C,C^{+})$ with reduced normal crossing divisors,  we let $\widehat{\s{O}}^{(+)}_X$ and $\s{O}^{(+)}_X$ denote the structural sheaves of the  pro-Kummer-\'etale  and Kummer-\'etale sites of $X$ respectively, finally let $\Omega^{1}_X(\log)$ be the sheaf of log  differential forms of $X$, we refer to \cite{DiaoLogarithmic2019} for the theory of log geometry on adic spaces. 

  In order to state the main theorem in geometric Sen theory we need the following definition.

\begin{definition}
\label{DefRelLocAnIntro}
 A pro-Kummer-\'etale $\widehat{\s{O}}_X$-module   $\s{F}$  over $X$  is said \textit{relative locally analytic ON Banach}\footnote{Where \textit{ON} comes from orthonormalizable.} if there is a Kummer-\'etale  cover $\{U_j\}_{j\in J}$ of $X$ such that,  for all $j$,  the restriction $\s{F}|_{U_j}$ admits a $p$-adically complete  $\widehat{\s{O}}^+_X$-lattice  $\s{F}^0_j$,  and there is   $\epsilon>0$ (depending on $j$) such that $\s{F}^0_j /p^{\epsilon}{\cong}^{ae} \bigoplus_{I_j} \s{O}^+_X/p^{\epsilon} $, for $I_j$ an index set.  
\end{definition}

\begin{remark}
The relative locally analytic condition can be considered as a ``smallness'' condition on the sheaf $\s{F}$ in the sense of Faltings  \cite{FaltingsSimpsonI}. Examples of relative locally analytic ON $\widehat{\s{O}}_X$-modules are $\widehat{\s{O}}_X$-vector bundles, this can be deduced from $v$-descent of vector bundles on perfectoid spaces \cite[Lemma 17.1.8]{ScholzeWeinspadicgeometry}.  See Example \ref{ExampleRelativeLocAnRep} down below. 
\end{remark}

\begin{theo}[Theorem \ref{TheoGluingCaseGamma}]
\label{TheoSenOperatorIntro}
Let $\s{F}$ be a relative locally analytic ON Banach $\widehat{\s{O}}_X$-sheaf over $X$. Then  there is an $\widehat{\s{O}}_X$-linear Higgs field 
\[
\theta_{\s{F}}:\s{F}\to \s{F}\otimes_{\s{O}_X} \Omega^{1}_X(\log) (-1),
\]
(i.e. $\theta_{\s{F}}\wedge \theta_{\s{F}}=0$) called the geometric Sen operator, satisfying the following properties:
\begin{enumerate}

\item The formation of $\theta_{\s{F}}$ is functorial in $\s{F}$.

\item Let $\nu:X_{\proket}\to X_{\ket}$ be the projection from the pro-Kummer-\'etale site to the Kummer-\'etale site, then there is a natural equivalence
\[
R^{i}\nu_* \s{F}= \nu_* H^{i}(\theta_{\s{F}},\s{F}),
\]
where $H^{i}(\theta_{\s{F}},\s{F})$ is the cohomology of the Higgs complex
\[
0\to \s{F}\xrightarrow{\theta_{\s{F}}} \s{F}\otimes_{\s{O}_X} \Omega_X^{1}(\log)(-1) \to\cdots \to \s{F}\otimes_{\s{O}_X} \Omega^i_X(\log)(-i) \to \cdots \to \s{F}\otimes_{\s{O}_X} \Omega_{X}^{d}(\log)(-d)\to 0.
\]

\item Suppose that $\theta_{\s{F}}=0$, then  $\nu_* \s{F}$ is locally on the Kummer-\'etale topology of $X$ an ON Banach $\s{O}_X$-module\footnote{This means that locally in the Kummer-\'etale topology $\nu_* \s{F}$ admits a Banach basis over $\s{O}_X$.} and  $\s{F}= \widehat{\s{O}}_X \widehat{\otimes}_{\s{O}_{X}}\nu_*\s{F}$. Conversely, for any  ON Banach $\s{O}_X$-module $\s{G}$ the pullback $\widehat{\s{O}}_X\widehat{\otimes}_{\s{O}_X} \s{G}$ has trivial Sen operator.

\item  If $X$ has a form $X'$ over a discretely valued field with perfect residue field $(K,K^+)$, then $\theta_{\s{F}}$ is Galois equivariant. In particular, we recover the natural splitting 
\[
R\nu_* \widehat{\s{O}}_X = \bigoplus_{i=0}^d \Omega_X^{i}(\log)(-i)[-i],
\]
deduced from \cite[Proposition 3.23]{ScholzePerfectoidSurvey} in the non-log case, and from \cite[Theorem 3.2.4]{DiaoLogarithmicHilbert2018} in the log case. 

\item Let $f:Y\to X$ be a morphism of smooth fs log adic spaces over $(C,C^+)$. Then there is a commutative diagram
\[
\begin{tikzcd}
f^{*}\s{F} \ar[r, "f^* \theta_{\s{F}}"] \ar[rd, "\theta_{f^*\s{F}}"']& f^{*}\s{F} \otimes_{\s{O}_Y} f^{*}\Omega_X^{1}(\log)(-1) \ar[d, "\id\otimes f^*"] \\ 
			& f^{*}\s{F}\otimes_{\s{O}_Y} \Omega^{1}_Y(\log)(-1).
\end{tikzcd}
\]

\end{enumerate}
\end{theo}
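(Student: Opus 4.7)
The plan is to construct $\theta_{\s{F}}$ Kummer-\'etale locally on a toric chart, verify that the construction is canonical, and then read the properties (1)--(5) off the local description.

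\emph{Local construction.} By Kummer-\'etale descent we may assume $X$ is affinoid admitting a toric chart
\[
X \longrightarrow \Spa(C\langle T_1^{\pm 1},\dots,T_r^{\pm 1},T_{r+1},\dots,T_d\rangle),
\]
with log structure generated by $T_1,\dots,T_s$, and that $\s{F}$ admits an $\widehat{\s{O}}_X^+$-lattice as in Definition~\ref{DefRelLocAnIntro}. Extracting $p$-power roots of the $T_i$ yields an affinoid perfectoid pro-Kummer-\'etale cover $\widetilde{X}\to X$ with Galois group $\Gamma\cong \bb{Z}_p(1)^d$. Combining the smallness of Definition~\ref{DefRelLocAnIntro} with the perfectoidness of $\widetilde{X}$, the module $M:=\s{F}(\widetilde{X})$ becomes a locally analytic ON Banach representation of $\Gamma$, so that the infinitesimal action supplies a canonical derivation $\theta_M:M\to M\otimes \Lie(\Gamma)^{\vee}$. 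The Faltings extension/Hodge--Tate comparison identifies $\Lie(\Gamma)^{\vee}$ with $\Omega^1_X(\log)(-1)$ restricted to the chart, producing $\theta_{\s{F}}$ locally; functoriality in $\s{F}$, i.e.\ (1), is built into the construction.

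\emph{Globalization.} To glue these local operators one verifies that $\theta_M$ is independent of the choice of toric chart and of the chosen lattice. A change of toric coordinates acts by its Jacobian on both $\Omega^1_X(\log)$ and $\Lie(\Gamma)$ compatibly (chain rule plus the canonical nature of the Faltings extension), and two commensurable lattices share the same locally analytic vectors and hence the same infinitesimal action. This is the main technical obstacle, where Pan's one-dimensional argument has to be upgraded to the higher-dimensional log-smooth setting; once achieved, Kummer-\'etale descent yields the global map $\theta_{\s{F}}$.

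\emph{Properties (2)--(5).} For (2), on a toric chart $R\nu_*\s{F}$ is computed by continuous $\Gamma$-cohomology of $M$; local analyticity of $M$ over the abelian $p$-adic Lie group $\Gamma$ reduces this to the Chevalley--Eilenberg/Koszul complex of $\Lie(\Gamma)$, which under the identification above is precisely the Higgs complex of $\theta_{\s{F}}$. For (3), $\theta_{\s{F}}=0$ means the infinitesimal $\Gamma$-action vanishes, and local analyticity forces the $\Gamma$-action itself to be trivial, so $\s{F}$ descends to an ON Banach $\s{O}_X$-module on $X_{\ket}$; conversely, any such $\s{O}_X$-module carries trivial $\Gamma$-action and hence trivial Sen operator. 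For (4), the arithmetic Galois group commutes with the geometric $\Gamma$-action and so intertwines $\theta_{\s{F}}$; applying (2) to $\s{F}=\widehat{\s{O}}_X$, the summands $\Omega^i_X(\log)(-i)$ are of pure Hodge--Tate weight $i$, which forces the Higgs complex to split as claimed. Finally, (5) reduces to functoriality of the chart construction: $f:Y\to X$ induces a homomorphism $\Gamma_Y\to \Gamma_X$ whose differential is dual to the pullback $f^*\Omega^1_X(\log)\to \Omega^1_Y(\log)$, compatibly with the $\Gamma$-actions on $M$ and $f^*M$.
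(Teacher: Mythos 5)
Your overall architecture (local construction of the Sen operator from the $\Lie\Gamma$-action on locally analytic vectors of the toric tower, then gluing, then reading off (2)--(5)) matches the paper's, but the step you yourself flag as ``the main technical obstacle'' --- independence of the toric chart --- is not actually proved, and the sketch you give for it does not work. Two different toric charts $\psi_1,\psi_2$ on the same affinoid $X$ produce two different pro-Kummer-\'etale $\bb{Z}_p(1)^d$-torsors $X_{\infty,0}$ and $X_{0,\infty}$ whose Galois groups $\Gamma_1$, $\Gamma_2$ are \emph{not} related by any homomorphism; there is no ``Jacobian acting on $\Lie\Gamma$'' and no chain rule to invoke, because a change of coordinates does not induce a map of the towers. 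The paper's resolution (Proposition \ref{PropGluingKeyCase}, generalizing Pan's Lemma 3.4.3) is genuinely substantive: one passes to the product torsor $X_{\infty,\infty}=X_{\infty,0}\times_X X_{0,\infty}$, views the second tower as a $\Gamma_2$-torsor over the Sen theory of the first and so obtains a map $\Sen\colon \Lie\Gamma_1\otimes\widehat{\s{O}}_X\to\Lie\Gamma_2\otimes\widehat{\s{O}}_X$ by Proposition \ref{PropSenOperatorsTorsorLocal}, and then identifies this map with the Jacobian by comparing the explicit $1$-cocycles $\sigma_{k,X}$, $\sigma_{l,Y}$ under the canonical isomorphism $R^1\nu_*\widehat{\s{O}}_X(1)\cong\Omega^1_X(\log)$ (Proposition \ref{PropProjectionO}): the vanishing of $f^*\sigma_{k,X}-\sum_l a_{k,l}\sigma_{l,Y}$ in $H^1$ produces locally analytic elements $z_k$ with prescribed derivatives, which are then fed into an auxiliary unipotent extension of $(\Lie\Gamma_X)^{\vee}$ by $\bb{Q}_p$ to compute the Sen operator explicitly. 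None of this is implied by ``the canonical nature of the Faltings extension''; asserting the compatibility is asserting the conclusion of the key lemma.

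The same gap recurs in your treatment of (5): a morphism $f\colon Y\to X$ with independently chosen charts does \emph{not} induce a homomorphism $\Gamma_Y\to\Gamma_X$ (the charts are unrelated, and $f$ need not lift to the towers), so your reduction of functoriality to ``the differential of $\Gamma_Y\to\Gamma_X$ is dual to $f^*$ on differentials'' is not available; in the paper, (5) is again a direct consequence of Proposition \ref{PropGluingKeyCase} applied to the two torsors $Y_\infty$ and $f^*X_\infty$ over $Y$. A smaller inaccuracy: in (3), $\theta_{\s{F}}=0$ does not force the $\Gamma$-action to be trivial, only smooth (trivial on an open subgroup, by Corollary \ref{CoroProjectionFormula}), which is exactly why the descended module $\nu_*\s{F}$ is only an ON Banach $\s{O}_X$-module \emph{locally on the Kummer-\'etale topology} rather than on the nose.
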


The property of being a relative locally analytic $\widehat{\s{O}}_X$-module might look a bit mysterious. Nevertheless,  these sheaves arise naturally when studying locally analytic vectors of pro\'etale cohomology.  Let us explain in which context they appear.  Let $G$ be a compact $p$-adic Lie group and $\widetilde{X}\to X$ a pro-Kummer-\'etale $G$-torsor (e.g.  take $X$ a finite level modular curve and $\widetilde{X}$ the perfectoid modular curve).    Let $V$ be a $\bb{Q}_p$-Banach locally analytic representation of $G$,  for example,  we can take $V= \s{O}(\bb{G})$,  for some group affinoid neighbourhood $\bb{G}$ of $G$,  endowed with the left regular action.    Then $V$ defines a pro-Kummer-\'etale sheaf $V_{\ket}$ over $X$  by descending the $G$-representation $V$ along the torsor $\widetilde{X}\to X$.   By   Lemma \ref{LemmaBanachLocAn} down below,  there is a lattice $V^0\subset V$,  $\epsilon>0$, and an open subgroup $G_0\subset G$,  such that $G_0$ stabilizes $V^0$,  and that the action of $G_0$ on $V^0/p^{\epsilon}$ is trivial.   Therefore,  the pro-Kummer-\'etale $\widehat{\s{O}}_X$-module $V_{\ket}\widehat{\otimes}_{\bb{Q}_p} \widehat{\s{O}}_X$ is a relative locally analytic sheaf.  Indeed,  the restriction of $ V_{\ket}\widehat{\otimes}_{\bb{Q}_p} \widehat{\s{O}}_X$ to $X_{G_0}:=\widetilde{X}/G_0$ satisfies the conditions of  Definition \ref{DefRelLocAnIntro}.  In this situation we have a  more refined result.

\begin{theo}[Theorem \ref{TheoSenBundle}]
\label{TheoSenBundleofTorsorIntro}
Let $X$ be an fs log smooth adic space over  $\Spa(C,C^+)$ with log structure given by normal crossing divisors. Let   $G$ a $p$-adic Lie group and $\widetilde{X}\to X$ a  pro-Kummer-\'etale $G$-torsor.     Then there is an $\widehat{\s{O}}_X$-linear map\footnote{Here we see $\Lie G$ as the adjoint representation of $G$.}
\[
\theta_{\widetilde{X}}:  \widehat{\s{O}}_X  \otimes_{\bb{Q}_p}  (\Lie G)^{\vee}_{ \ket} \to \widehat{\s{O}}_X(-1)     \otimes_{\s{O}_{X}} \Omega_X^1(\log) 
\]
such that $\theta_{\widetilde{X}}\bigwedge \theta_{\widetilde{X}}=0$, called the geometric Sen operator of the torsor $\widetilde{X}$. Moreover, for any locally analytic Banach representation $V$ of $G$, we have a commutative diagram
\[
\begin{tikzcd}
V_{\ket}\widehat{\otimes }_{\bb{Q}_p} \widehat{\s{O}}_X \ar[r, "d_V\otimes \id_{\widehat{\s{O}}}"] \ar[rd,"\theta_{V}"'] & (V_{\ket}\widehat{\otimes}_{\bb{Q}_p} \widehat{\s{O}}_X )  \otimes_{\bb{Q}_p} (\Lie G)_{\ket}^{\vee}  \ar[d,"\id_{V}\otimes \theta_{\widetilde{X}}"] \\
& (V_{\ket} \widehat{\otimes}_{\bb{Q}_p} \widehat{\s{O}}_X(-1)) \otimes_{\s{O}_X}  \Omega^{1}_{X}(\log)
\end{tikzcd}
\]
such that $d_V:V\to  V\otimes_{\bb{Q}_p} (\Lie G)^{\vee}$ is the connection  induced by  derivations, and $\theta_V$ is the geometric Sen operator of $V_{\ket}\widehat{\otimes}_{\bb{Q}_p}\widehat{\s{O}}_X$ of Theorem \ref{TheoSenOperatorIntro}.

\begin{remark}
Let us explain the meaning of the condition $\theta_{\widetilde{X}}\wedge \theta_{\widetilde{X}}=0$ in Theorem \ref{TheoSenBundleofTorsorIntro}. The map $\theta_{\widetilde{X}}$ is adjoint to a map 
\[
\theta : \widehat{\s{O}}_X\to (\Lie G)_{\ket}\otimes_{\bb{Q}_p} \widehat{\s{O}}_X(-1)\otimes_{\s{O}_X} \Omega^1_{X}(\log)=:\n{E}.
\]
The condition $\theta_{\widetilde{X}}\wedge \theta_{\widetilde{X}}=0$ means that $\theta\wedge \theta =0$ in $\wedge^2 \n{E}$. 
\end{remark}

Moreover, let  $ H\to G$ be a morphism of $p$-adic Lie groups,    let  $Y$ an fs log smooth adic space over $(C,C^+)$  and  let $\widetilde{Y}\to Y'$ be an $H$-torsor.  Suppose we are given with a commutative diagram compatible with the group actions
\begin{equation*}
\begin{tikzcd}
\widetilde{Y}  \ar[r] \ar[d] & \widetilde{X} \ar[d]  \\ 
Y  \ar[r,"f"]& X
\end{tikzcd}.
\end{equation*}
Then the following square is commutative 
\begin{equation*}
\begin{tikzcd}
 f^*(\Lie G)^{\vee}_{\ket}\otimes_{\bb{Q}_p} \widehat{\s{O}}_Y \ar[d] \ar[r, "f^{*} \theta_{\widetilde{X}}"] & f^* \Omega_X^1(\log) \otimes_{\s{O}_Y} \widehat{\s{O}}_Y(-1)   \ar[d] \\
(\Lie H)^{\vee}_{\ket} \otimes_{\widehat{\bb{Q}}_p} \widehat{\s{O}}_Y \ar[r,"\theta_{\widetilde{Y}}"] &   \Omega_Y^1(\log) \otimes_{\s{O}_Y} \widehat{\s{O}}_Y(-1) 
\end{tikzcd}
\end{equation*}
\end{theo}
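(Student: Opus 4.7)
The plan is to construct $\theta_{\widetilde{X}}$ by applying Theorem \ref{TheoSenOperatorIntro} to a single ``universal'' locally analytic representation of $G$ and then to deduce all remaining assertions from the functoriality properties $(1)$ and $(5)$ of that theorem. First I would reduce, by working Kummer-\'etale locally on $X$, to the situation where $\widetilde{X} \to X$ is a pro-Kummer-\'etale $G_0$-torsor for a sufficiently small compact open subgroup $G_0 \subset G$. Next, I fix a $G_0$-stable affinoid neighbourhood $\bb{G}_0$ of $G_0$ in the rigid analytification of $G$ and consider the Banach locally analytic representation $V := \s{O}(\bb{G}_0)$ with the left regular action. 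By the discussion after Definition \ref{DefRelLocAnIntro}, the associated sheaf $\s{F} := V_{\ket} \widehat{\otimes}_{\bb{Q}_p} \widehat{\s{O}}_X$ is relative locally analytic, so Theorem \ref{TheoSenOperatorIntro} produces a Sen operator $\theta_V$ on $\s{F}$.

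The central step is to show that $\theta_V$ is compatible with the multiplicative structure on $\s{O}(\bb{G}_0)$: using functoriality in property $(1)$ applied to $G_0$-equivariant maps built out of multiplication on $\s{O}(\bb{G}_0)$ (together with the coproduct arising from the group law), $\theta_V$ is determined by a $\widehat{\s{O}}_X$-linear operator on the cotangent space $\f{m}_e / \f{m}_e^2 \cong (\Lie G)^\vee$ of $\bb{G}_0$ at the identity. Crucially, the vanishing of $\theta_V$ on the unit subrepresentation $\bb{Q}_p \cdot 1 \subset \s{O}(\bb{G}_0)$ follows from property $(3)$ applied to the trivial representation. I define $\theta_{\widetilde{X}}$ as this cotangent-space operator, viewed as a map
\[
\widehat{\s{O}}_X \otimes_{\bb{Q}_p} (\Lie G)^\vee_{\ket} \longrightarrow \widehat{\s{O}}_X(-1) \otimes_{\s{O}_X} \Omega^1_X(\log).
\]

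The compatibility diagram for a general Banach locally analytic representation $V'$ then follows by naturality applied to the orbit map $V' \to V' \widehat{\otimes} \s{O}(\bb{G}_0)$, $v \mapsto (g \mapsto g \cdot v)$. This map is $G_0$-equivariant for the appropriate diagonal action on the target, and its derivative at the identity is precisely $d_{V'}$. Invoking property $(1)$ of Theorem \ref{TheoSenOperatorIntro} for this map and using the characterization of $\theta_{\widetilde{X}}$ via evaluation at $e$ yields the desired commutative square. The identity $\theta_{\widetilde{X}} \wedge \theta_{\widetilde{X}} = 0$ follows from the Higgs complex of part $(2)$ applied to $\s{F}$, which forces $\theta_V \wedge \theta_V = 0$, together with the extraction of $\theta_{\widetilde{X}}$ from $\theta_V$ at the identity. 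Finally, functoriality in the torsor is a direct consequence of property $(5)$ applied to $f^*\s{F}$, combined with the naturality of the orbit-map construction under the morphism $H \to G$.

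The main obstacle I anticipate is the ``linearization at the identity'' step, i.e. showing that $\theta_V$ is controlled by a $(\Lie G)^\vee$-valued map rather than by its values on all of $\s{O}(\bb{G}_0)$. A clean formulation requires organizing the argument around the coaction $\s{O}(\bb{G}_0) \to \s{O}(\bb{G}_0) \widehat{\otimes} \s{O}(\bb{G}_0)$ arising from the group law and verifying that the two Sen operators on $\s{O}(\bb{G}_0) \widehat{\otimes} \widehat{\s{O}}_X$ obtained by applying functoriality in two different ways agree; this in turn reduces to the vanishing of the Sen operator on constant representations, which is property $(3)$.
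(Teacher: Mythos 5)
Your overall strategy is the same as the paper's: build $\theta_{\widetilde{X}}$ out of the Sen operator of the (analytic) regular representation, ``linearize'' it to a $\Lie G\otimes\widehat{\s{O}}_X$-valued operator, recover the general case from the orbit map $V'\to V'\widehat{\otimes}_{\bb{Q}_p}C^{la}(G,\bb{Q}_p)_{\star_2}$, and get functoriality from property (5). The problem is the central step, which you correctly identify as the main obstacle but do not actually resolve. Knowing that $\theta_V$ on $\s{O}(\bb{G}_0)\widehat{\otimes}_{\bb{Q}_p}\widehat{\s{O}}_X$ is ``determined by a cotangent-space operator'' amounts to knowing that $\theta_V$ is a left $G$-equivariant, $\widehat{\s{O}}_X$-linear \emph{derivation} of the function algebra, and this is not a formal consequence of properties (1)--(5) of Theorem \ref{TheoSenOperatorIntro}. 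Functoriality applied to the multiplication map $m\colon V\widehat{\otimes}V\to V$ only gives $\theta_V\circ m=(m\otimes\id)\circ\theta_{V\widehat{\otimes}V}$; to conclude the Leibniz rule you must identify $\theta_{V\widehat{\otimes}V}$ with $\theta_V\otimes 1+1\otimes\theta_V$, and this tensor-compatibility is exactly the input that is missing from the black-box statement. Your proposed reduction ``to the vanishing of the Sen operator on constant representations, which is property (3)'' does not supply it: property (3) only controls sheaves with $\theta=0$, not the behaviour of $\theta$ on completed tensor products or under the coproduct.

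The paper does not attempt this formal route; it goes back into the construction. Locally on toric charts, Theorem \ref{TheoSenFunctor} identifies the Sen module of $C^{h}(G,\bb{Q}_p)_{\star_2}\widehat{\otimes}_{\bb{Q}_p}\widetilde{B}_{\infty}$ with the ring $\widetilde{B}_{\infty}^{G_h'-an,\,\Gamma_n-an}$ of bi-analytic functions, on which $\Lie\Gamma$ visibly acts by $\widetilde{B}_{\infty}$-linear derivations; commutation with the left regular $G$-action then forces these to be right-invariant derivations, i.e.\ to come from a map $\Lie\Gamma\otimes\widetilde{B}_{\infty}\to\Lie G\otimes\widetilde{B}_{\infty}$ (Proposition \ref{PropSenOperatorsTorsorLocal}), and the globalization uses the gluing of Theorem \ref{TheoGluingCaseGamma}. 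So to repair your argument you must either prove the tensor/Leibniz compatibility of the geometric Sen operator as a separate lemma (which again requires the local decompletion, not just the statement of Theorem \ref{TheoSenOperatorIntro}), or argue directly on the local Sen module as the paper does. A smaller point: in the final functoriality step the paper factors the square through $Y\times_X\widetilde{X}$ and uses the faithfulness of the $\Lie G$-action on $C^{h}(G,\bb{Q}_p)$ to pass from an identity of Sen operators on sheaves to an identity of the $\Lie G$-valued maps $\theta_{\widetilde{X}}$, $\theta_{\widetilde{Y}}$; your appeal to ``naturality of the orbit-map construction'' needs this faithfulness argument to be made precise.
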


Let $\Sen_{\widetilde{X}}: \Omega^{1}_X(\log)^{\vee}\otimes_{\s{O}_X}\widehat{\s{O}}_X(1)\to (\Lie G)_{\ket}\otimes_{\bb{Q}_p} \widehat{\s{O}}_X$ be the dual of the Sen operator $\theta_{\widetilde{X}}$.  As a corollary of the previous theorem we deduce that the locally analytic vectors of the torsor $\widetilde{X}$ satisfy some differential equations.

\begin{cor}[Corollary \ref{corVanishingSenOpaofjaowd}]
\label{CoroTrivialActionSenIntro}
Let $|\widetilde{X}|$ be the underlying topological space of $\widetilde{X}$ and let $\widehat{\s{O}}_{X}|_{|\widetilde{X}|}$ be the restriction of $\widehat{\s{O}}_X$ to  a sheaf on $|\widetilde{X}|$.  Let $\s{O}^{la}_{\widetilde{X}} \subset \widehat{\s{O}}_X|_{|\widetilde{X}|}$ be the subsheaf consisting of locally analytic sections for the action of $G$. Then,  the action of $\s{O}^{la}_{\widetilde{X}}\otimes_{\bb{Q}_p} \Lie G$ on $\s{O}^{la}_{\widetilde{X}}$ by derivations vanishes when restricted to the image of the Sen map $\Sen_{\widetilde{X}}: \Omega^{1}_{X}(\log)^{\vee} \otimes_{\s{O}_X} \s{O}^{la}_{\widetilde{X}}(1)\to \Lie G \otimes_{\bb{Q}_p}  \s{O}^{la}_{\widetilde{X}}$. 
\end{cor}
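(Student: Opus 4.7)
The plan is to apply Theorem \ref{TheoSenBundleofTorsorIntro} to the Banach locally analytic representations whose associated sheaves build up $\s{O}^{la}_{\widetilde{X}}$, and then combine the resulting commutative diagram with the vanishing of the Sen operator of $\widehat{\s{O}}_X$ itself.

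First, working locally on $X_{\ket}$ I would write $\s{O}^{la}_{\widetilde{X}}$ as a filtered colimit of sheaves of the form $V_{n,\ket}\widehat{\otimes}_{\bb{Q}_p}\widehat{\s{O}}_X$, where $V_n$ is an analytic Banach representation of a shrinking family of open subgroups $G_n\subset G$ (playing the role of $G_n$-analytic vectors), together with compatible evaluation-at-$1$ embeddings $V_{n,\ket}\widehat{\otimes}_{\bb{Q}_p}\widehat{\s{O}}_X \hookrightarrow \widehat{\s{O}}_X|_{\widetilde{X}/G_n}$ whose colimit over $n$ recovers the locally analytic subsheaf. This is essentially the content of Lemma \ref{LemmaBanachLocAn} together with descent along the torsor $\widetilde{X}\to X$, and each sheaf is relative locally analytic by the discussion following Definition \ref{DefRelLocAnIntro}, so Theorems \ref{TheoSenOperatorIntro} and \ref{TheoSenBundleofTorsorIntro} apply.

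Next, since $\widehat{\s{O}}_X = \widehat{\s{O}}_X\widehat{\otimes}_{\s{O}_X}\s{O}_X$, property $(3)$ of Theorem \ref{TheoSenOperatorIntro} gives $\theta_{\widehat{\s{O}}_X}=0$. By the functoriality statement $(1)$, the evaluation embedding intertwines $\theta_{V_n}$ with $\theta_{\widehat{\s{O}}_X}=0$, so the image of $\theta_{V_n}$ in $\widehat{\s{O}}_X(-1)\otimes_{\s{O}_X}\Omega^{1}_X(\log)$ under the induced map on the target vanishes. By Theorem \ref{TheoSenBundleofTorsorIntro} this operator factors as $\theta_{V_n}=(\id\otimes \theta_{\widetilde{X}})\circ(d_{V_n}\otimes \id)$, and evaluation at $1$ converts $d_{V_n}$ into the infinitesimal action $d:\s{O}^{la}_{\widetilde{X}}\to \s{O}^{la}_{\widetilde{X}}\otimes_{\bb{Q}_p}(\Lie G)^{\vee}$. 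Passing to the colimit in $n$ yields the identity
\[
(\id\otimes \theta_{\widetilde{X}})\circ(d\otimes \id)=0
\]
as maps $\s{O}^{la}_{\widetilde{X}}\to \s{O}^{la}_{\widetilde{X}}\otimes_{\s{O}_X}\Omega^{1}_X(\log)(-1)$.

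Finally, dualising this identity via the pairing $\Lie G\otimes (\Lie G)^{\vee}\to \bb{Q}_p$ converts it precisely into the statement that for any section $\xi$ of $\Omega^{1}_X(\log)^{\vee}\otimes_{\s{O}_X}\s{O}^{la}_{\widetilde{X}}(1)$, the image $\Sen_{\widetilde{X}}(\xi)\in \Lie G\otimes_{\bb{Q}_p}\s{O}^{la}_{\widetilde{X}}$ acts as zero on $\s{O}^{la}_{\widetilde{X}}$ by derivations, which is the desired corollary. The main obstacle is the first step: rigorously presenting $\s{O}^{la}_{\widetilde{X}}$ as a colimit of sheaves treated by Theorem \ref{TheoSenBundleofTorsorIntro} in a way that is simultaneously compatible with the $G$-action, the evaluation-at-$1$ map into $\widehat{\s{O}}_X$, and the Sen formalism; once this identification is in place the remaining manipulations are formal consequences of the commutative squares above and duality.
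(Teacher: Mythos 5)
Your overall strategy is the same one the paper uses (pass through sheaves of analytic functions on $G$ tensored with $\widehat{\s{O}}_X$, use the factorization $\theta_{V}=(\id\otimes\theta_{\widetilde{X}})\circ(d_V\otimes\id)$ from Theorem \ref{TheoSenBundleofTorsorIntro}, and evaluate at $g=1$), but the pivotal step of your argument has a genuine gap. You invoke the functoriality statement of Theorem \ref{TheoSenOperatorIntro}(1) along the ``evaluation-at-$1$ embeddings'' $V_{n,\ket}\widehat{\otimes}_{\bb{Q}_p}\widehat{\s{O}}_X\hookrightarrow \widehat{\s{O}}_X|_{\widetilde{X}/G_n}$ to conclude that $\theta_{V_n}$ is intertwined with $\theta_{\widehat{\s{O}}_X}=0$. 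But evaluation at $1$ on $C^{h}(G_n,\bb{Q}_p)\widehat{\otimes}\widehat{\s{O}}_X$ is neither injective nor equivariant for the regular action used to descend $V_{n,\ket}$ along the torsor (for the left regular action one has $(g\star_1 f)(1)=f(g^{-1})$), so it does not define a morphism of pro-Kummer-\'etale $\widehat{\s{O}}_X$-modules over $X_{G_n}$ at all — it only exists after restriction to $\widetilde{X}_{\an}$, where the local system is trivialized. Hence Theorem \ref{TheoSenOperatorIntro}(1) cannot be applied to it, and the asserted vanishing of $(\mathrm{ev}\otimes\id)\circ\theta_{V_n}$ is unproved. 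The paper obtains the needed vanishing differently: the orbit map sends a locally analytic section $v$ to a $G$-invariant section $O(v)$ of $C^{la}(G,\bb{Q}_p)_{\star_1,\ket}\widehat{\otimes}_{\bb{Q}_p}\widehat{\s{O}}_X$, and invariant sections are automatically killed by the Sen operator because $R^{0}\eta_*\s{F}=\eta_*H^{0}(\theta_{\s{F}},\s{F})$ (Theorem \ref{TheoGluingCaseGamma}(2), packaged in Theorem \ref{TheoProjectionLocallyAnalyticVectors}); this, not functoriality along evaluation, is what replaces your second step.

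A second, smaller gap is your assertion that ``evaluation at $1$ converts $d_{V_n}$ into the infinitesimal action'' on $\s{O}^{la}_{\widetilde{X}}$. The intrinsic $\Lie G$-action on locally analytic sections corresponds, under the orbit map and evaluation at $1$, to the derivative of the $\star_2$-action, whereas the Sen operator of the descended sheaf factors through the derivative of the action used for descent ($\star_1$), and the coefficients of an element of $\mathrm{Im}(\Sen_{\widetilde{X}})\subset \s{O}^{la}_{\widetilde{X}}\otimes\Lie G$ are themselves acted upon, so one must also compare $\star_1$ with $\star_{1,3}$. Reconciling these (via $(h\star_2 f)(1)=(h^{-1}\star_1 f)(1)$ and the $\s{O}^{la}$-linearity of the coefficients) is exactly the chain of equalities in the paper's proof of Corollary \ref{CoroTrivialActionSen}; your proposal passes over it. Both gaps are repairable, but the repair essentially reproduces the paper's argument rather than your stated one.
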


To motivate the construction of the geometric Sen operator over rigid spaces,  we recall how the arithmetic Sen operator is defined.

Let $\bb{Q}_p(\zeta_{p^{\infty}})$ be the cyclotomic extension and $\bb{Q}^{cyc}_p$ its $p$-adic completion, let $\bb{C}_p$ be the $p$-adic completion of an algebraic closure of $\bb{Q}_p$ containing $\bb{Q}_p(\zeta_{p^{\infty}})$. We  denote $H=\Gal(\overline{\bb{Q}}_p/\bb{Q}_p(\zeta_{p^{\infty}}))$ and $\Gamma=\Gal(\bb{Q}_p(\zeta_{p^{\infty}})/\bb{Q}_p)$. Let $V$ be a finite-dimensional representation  of $\Gal_{\bb{Q}_p}$ over $\bb{Q}_p$, the Sen module $\Sen(V)$ attached to $V$ is the finite-dimensional $\bb{Q}_p(\zeta_p^{\infty})$-vector space consisting of the  finite $\Gamma$-vectors  of $(V\otimes_{\bb{Q}_p} \bb{C}_p)^{H}$. It turns out that the dimension of $\Sen(V)$ over $\bb{Q}_p(\zeta_{p^{\infty}})$ is equal to $\dim_{\bb{Q}_p} V$. Indeed, one has a $\Gal_{\bb{Q}_p}$-equivariant isomorphism 
\[
\Sen(V)\otimes_{\bb{Q}_p(\zeta_p^{\infty})} \bb{C}_p = V\otimes_{\bb{Q}_p} \bb{C}_p.
\] 
As it is explained in \cite{BC2}, an equivalent way to construct $\Sen(V)$ is by taking the $\Gamma$-locally analytic vectors of $ (V\otimes_{\bb{Q}_p} \bb{C}_p)^{H}$, in particular $\Sen(V)$ admits an action of $ \Lie \Gamma$. We let $\theta_{\bb{Q}_p}\in \Lie \Gamma$ be the natural basis arising from the clyclotomic character $\chi_{\cyc}:\Gamma\cong \bb{Z}_p^{\times}$. Since $\Gamma$ acts locally constantly on $\bb{Q}_p(\zeta_{p^{\infty}})$ (i.e. for each $v\in \bb{Q}_p(\zeta_{p^{\infty}})$ the orbit $\Gamma\cdot  v$ is finite), $\theta_{\bb{Q}_p}$ is a $\bb{Q}_p(\zeta_{p^{\infty}})$-linear operator. One defines the Sen operator of $V\otimes_{\bb{Q}_p} \bb{C}_p$ to be the $\bb{C}_p$-extension of scalars of $\theta_{\bb{Q}_p}$.  

Summarizing, one can consider $\Sen(V)$ as a decompletion of the semilinear representation $V\otimes_{\bb{Q}_p} \bb{C}_p$ by taking locally analytic vectors along the ``perfectoid cyclotomic coordinate'' $\bb{Q}_p^{cyc} / \bb{Q}_p$.  The Sen operator is then a ``differential operator'' obtained from the cyclotomic extension.

Let us now sketch the definition of the geometric Sen operator. For simplicity,  we assume that   $X=\bb{T}= \Spa(C\langle  T^{\pm 1} \rangle,  C^+\langle T^{\pm 1} \rangle)$ is a one dimensional torus.   Let $\s{F}$ be a relative locally analytic sheaf over $\bb{T}$,  suppose in addition that there is a  $p$-adically complete lattice $\s{F}^0\subset \s{F}$,  and $\epsilon>0$,  such that $\s{F}^0/p^{\epsilon}=^{ae} \bigoplus_I \s{O}^+_X/p^{\epsilon}$.     We want to compute the (geometric) pro\'etale cohomology $R\Gamma_{\proet}(\bb{T},  \s{F})$.  Let 
  \[
  \bb{T}_{n}= \Spa(C\langle T^{\pm 1/p^n} \rangle, C^+\langle T^{\pm 1/p^n} \rangle),\]  and let $\bb{T}_{\infty}= \varprojlim_n \bb{T}_{n}$ be the perfectoid torus.   The perfectoid torus $\bb{T}_{\infty}$ is a Galois cover of $\bb{T}$ with group $\Gamma= \bb{Z}_p(1)$.   By Scholze's almost acyclicity of $\s{O}^+_X/p$ in affinoid perfectoid spaces \cite[Proposition  7.13]{ScholzePerfectoid2012},  one has that 
\[
R\Gamma_{\proet}(\bb{T},  \s{F})= R\Gamma(\Gamma,  \s{F}(\bb{T}_{\infty})).
\]
By the comparison theorem between continuous and locally analytic cohomology  (Theorem \ref{TheoResumeLocAn}), one has that 
\begin{align*}
R\Gamma_{\proet}(\bb{T}, \s{F}) & = R\Gamma(\Gamma,  \s{F}(\bb{T}_{\infty})^{R\Gamma-la}) \\ 
& = R\Gamma(\Gamma^{sm}, R\Gamma(\Lie \Gamma , \s{F}(\bb{T}_{\infty})^{R\Gamma-la})),
\end{align*}
 where $V^{R\Gamma-la}$ are the derived locally analytic vectors of a $\Gamma$-representation $V$, $R\Gamma(\Lie \Gamma,-)$ is the Lie algebra cohomology (equivalently, the cohomology of a Chevalley-Eilenberg complex), and $R\Gamma(\Gamma^{sm},-)$ is the smooth group cohomology (equivalently, group cohomology for smooth or locally constant cochains),  see \S \ref{SubsecRecollectionsLocAnRep} for the definitions.   In other words,  we have separated the problem of computing pro\'etale cohomology in three steps:  first,  we need to compute the derived locally analytic vectors of $\s{F}(\bb{T}_{\infty})$.  Second,  we take the Lie algebra  cohomology of  $\s{F}(\bb{T}_{\infty})$,  and finally,  we take the $\Gamma$-invariants of a smooth representation.  

Let us focus in the first step which seems to be the more subtle.   For $n\geq m$ there are normalized traces
\[
R_{m}^n:  C\langle T^{\pm 1/p^n} \rangle \to  C \langle T^{\pm 1/p^m} \rangle
\]
where $R^n_m= \frac{1}{p^{n-m}} \sum_{\sigma\in p^m\Gamma/p^n\Gamma} \sigma$.   These extend to Tate traces 
\[
R_m:  C\langle T^{\pm 1/p^\infty} \rangle \to  C \langle  T^{\pm 1/p^{m}}\rangle
\]
such that,  for any $f\in C\langle T^{\pm 1/p^{\infty}}\rangle$,  the sequence $(R_{m}(f))_m$ converges to $f$.  Furthermore,  the tuple $(C\langle T^{\pm 1/p^{\infty}} \rangle,  \Gamma)$ satisfies the Colmez-Sen-Tate axioms of \cite{BC1}. 

Let  $(\zeta_{p^n})_n$ be a compatible system of primitive $p$-th power roots of unity,  and let $\psi:  \bb{Z}_p\xrightarrow{\sim} \Gamma$ be the induced isomorphism.   Using $\psi$ we define the affinoid group $\bb{G}_n$ which is a copy of the  additive group of radius $p^{-n}$ (i.e.  $\bb{G}_n(\bb{Q}_p)= p^n \bb{Z}_p$).    We will keep using the expression ``$p^n\Gamma$-analytic'' instead of $\bb{G}_n$-analytic.    The following theorem is a generalization of \cite[Proposition   3.3.1]{BC1} to relative locally analytic sheaves,  it can be seen as a decompletion theorem \`a la Kedlaya-Liu  \cite{KedlayaLiu2019relative} using locally analytic vectors.  

\begin{theo}[Theorem  \ref{TheoSenFunctor}]
There exists $n \gg 0$ depending on $\epsilon$ such that 
\[
\s{F}(\bb{T}_{\infty})= C\langle T^{\pm 1/p^{\infty}} \rangle \widehat{\otimes}_{C\langle T^{\pm 1/p^{n}} \rangle} \s{F}(\bb{T}_{\infty})^{p^n\Gamma-an}.
\]
Moreover,  we have that 
\[
\s{F}(\bb{T}_{\infty})^{R\Gamma-la}= \s{F}(\bb{T}_{\infty})^{\Gamma-la}= \varinjlim_{m} C\langle T^{\pm 1/p^m} \rangle \otimes_{C\langle T^{\pm 1/p^{n}} \rangle} \s{F}(\bb{T}_{\infty})^{p^n\Gamma-an}. 
\]
\end{theo}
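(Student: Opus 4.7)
The plan is to adapt the Berger-Colmez decompletion argument \cite[Proposition 3.3.1]{BC1} from finite-dimensional Galois representations to the relative $\epsilon$-analytic Banach setting. The key ingredients are the Colmez-Sen-Tate axioms for $(C\langle T^{\pm 1/p^\infty}\rangle, \Gamma)$ generalized in \S \ref{s:SenTheoryAxioms}, together with the hypothesis that $\s{F}^0/p^{\epsilon}$ is almost isomorphic to a direct sum of copies of $\s{O}^+_X/p^\epsilon$; this ``freeness mod $p^\epsilon$'' serves as the quantitative substitute for finite rank in the classical statement.

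First I would extend the Tate traces $R_m$ to operators $R_m^{\s{F}}$ on $\s{F}(\bb{T}_\infty)$: working modulo $p^\epsilon$, the almost trivialization reduces the construction to the case of a direct sum, where the Tate traces act componentwise; these lift by $p$-adic completeness to continuous operators on $\s{F}(\bb{T}_\infty)$ projecting approximately onto $\s{F}(\bb{T}_m)$, with uniform error bounds controlled by the almost isomorphism. Next I would choose $n$ large enough (depending on $\epsilon$) so that on $\ker R_n^{\s{F}}$ every $\gamma - 1$ with $\gamma \in p^n\Gamma$ is a strict contraction of sufficiently small operator norm; this uses the Colmez-Sen-Tate axiom that $\|\gamma - 1\|$ on $\ker R_n$ decays with $n$, combined with the uniform error bounds from the previous step. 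On such a subspace $\log\gamma$ converges and the orbit map $\gamma \mapsto \gamma\cdot x$ extends to a rigid analytic function of $\gamma \in \bb{G}_n$. Running the Berger-Colmez iteration then decomposes each $x \in \s{F}(\bb{T}_\infty)$ into a $p^n\Gamma$-analytic lift of $R_n^{\s{F}}(x)$ plus a convergent correction, yielding the identification
\[
\s{F}(\bb{T}_\infty) \simeq C\langle T^{\pm 1/p^\infty}\rangle \widehat{\otimes}_{C\langle T^{\pm 1/p^n}\rangle} \s{F}(\bb{T}_\infty)^{p^n\Gamma-an}.
\]

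The second identity follows by taking the colimit over $m \geq n$ of $p^m\Gamma$-analytic vectors; by the decomposition just established these identify with $C\langle T^{\pm 1/p^m}\rangle \widehat{\otimes}_{C\langle T^{\pm 1/p^n}\rangle} \s{F}(\bb{T}_\infty)^{p^n\Gamma-an}$, and their union computes $\s{F}(\bb{T}_\infty)^{\Gamma-la}$. The equality with derived locally analytic vectors reduces, via the same decomposition, to the acyclicity of the analytic-vectors functor on the base representations $C\langle T^{\pm 1/p^m}\rangle$, which is part of the general formalism recalled in Theorem \ref{TheoResumeLocAn}. The principal obstacle is the quantitative choice of $n$: one must control how the ``almost'' in the trivialization of $\s{F}^0/p^\epsilon$ propagates through the Tate traces, the convergence radius of $\log$, and the iteration series, and choose $n$ so that the strict contraction on $\ker R_n^{\s{F}}$ absorbs all of these cumulative errors; this is the main technical point that distinguishes the relative $\epsilon$-analytic setting from the finite-dimensional classical case.
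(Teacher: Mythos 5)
Your overall architecture (Tate traces plus a Berger--Colmez iteration, with the relative locally analytic lattice playing the role of finite rank) is the right family of ideas, but the quantitative mechanism at its core is inverted, and this is not a cosmetic slip. On $\ker R_n$ the Colmez--Sen--Tate axioms do \emph{not} say that $\gamma-1$ is a strict contraction whose norm decays with $n$ --- no such axiom exists, and in the torus example $\|\gamma^{(n)}-1\|$ on $\ker R_n$ stays of size essentially $1$ (it acts on $T^{j/p^{m}}$, $m>n$, by $\zeta_{p^{m-n}}^{j}-1$). What (CST2)/(CST2*) give is the opposite bound: $\gamma-1$ is \emph{invertible} on $\ker R_n$ with inverse bounded by $\varpi^{-c_3}$. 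This bounded-below property is exactly what drives the devissage (one must solve $(\gamma-1)M_0=W$ for the correction matrices, as in Lemma \ref{LemmaDevisage2} and Corollary \ref{CoroDevisageGammas}), and it is also what forces the kernel part to have \emph{no} (derived) locally analytic vectors (Lemma \ref{LemLAtensorwithX}). Your proposal to converge $\log\gamma$ and extend the orbit map analytically \emph{on} $\ker R_n^{\s{F}}$ would, if true, put the kernel inside $\s{F}(\bb{T}_{\infty})^{\Gamma-la}$ and thereby contradict the second displayed identity of the theorem; and with a contraction in place of invertibility the iteration producing the correction terms does not converge. Relatedly, your componentwise extension $R_n^{\s{F}}$ of the traces is not $\Gamma$-equivariant (the action in the chosen basis is twisted by the $1$-cocycle), and $\s{F}(\bb{T}_m)$ (honest invariants) is not the right target of any approximate projection --- the decompletion lands in analytic vectors, not invariants. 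The paper avoids both problems by running the approximation at the level of the cocycle: it conjugates $U$ into $\GL_I(A^{+}_{H',n})$ close to $1$ (Lemmas \ref{LemmaDevisage2}, \ref{CoroDevisageGammas}, \ref{LemmaAnaliticityMatrixB}) and only then defines the Sen module as the span of the new basis, its local analyticity coming from the smallness of the new cocycle together with local analyticity of $\Gamma$ on $A_{H',n}$.

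The second identity is also treated too quickly. The statement $\s{F}(\bb{T}_{\infty})^{R\Gamma-la}=\s{F}(\bb{T}_{\infty})^{\Gamma-la}=\varinjlim_m C\langle T^{\pm 1/p^m}\rangle\widehat{\otimes}_{C\langle T^{\pm 1/p^n}\rangle}\s{F}(\bb{T}_{\infty})^{p^n\Gamma-an}$ does not reduce to acyclicity of the analytic-vectors functor on the rings $C\langle T^{\pm 1/p^m}\rangle$: those are genuine locally analytic representations and unproblematic (Theorem \ref{TheoResumeLocAn} (3) handles the summand $\varinjlim_m A_m\widehat{\otimes} S_n(V)$). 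The real content is the vanishing $(X_{H',\infty}\widehat{\otimes}_{A_{H',n}}W)^{R\Gamma-la}=0$ for the trace-kernel factor, which in the paper is Lemma \ref{LemLAtensorwithX} and requires the explicit convergent series inverting $\gamma^{p^s}-1$ on $X_{H',m}\widehat{\otimes}_{\bb{Q}_p}C^{h}(\Gamma,\bb{Q}_p)$ for the $\star_{1,3}$-action, again using the bounded inverse from (CST2*). This ingredient is absent from your sketch, and under your (incorrect) contraction hypothesis on the kernel it would in fact be false; so as written the proposal cannot yield either half of the theorem without replacing its central estimate by the correct one.
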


The previous theorem shows that,  under certain conditions on $\s{F}$,  the derived locally analytic vectors of $\s{F}(\bb{T}_{\infty})$ are concentrated in degree $0$,  and that all the relevant information is already encoded in the $p^m\Gamma$-analytic vectors for some $m\gg 0$.   In particular,  we have that 
\begin{equation}
\label{eqComputationCohoSenOperatorsLocAn}
R\Gamma_{\proet}(\bb{T}_{\infty},  \s{F}) = R\Gamma(\Gamma^{sm}, R\Gamma(\Lie \Gamma , \s{F}(\bb{T}_{\infty})^{p^n\Gamma-an})).
\end{equation}

Thus,  the problem of computing pro\'etale cohomology has been reduced to the problem of computing Lie algebra cohomology.   The module $ \s{F}(\bb{T}_{\infty})^{p^n\Gamma-an}$ is not mysterious at all,  it is  the Higgs bundle attached to $\s{F}$ via the local $p$-adic Simpson correspondence of $\bb{T}$.

Now,  the action of $\Lie \Gamma$ is $C\langle T^{\pm  1/p^n}\rangle$-linear and $\Gamma$-equivariant.  By extending scalars, it induces a $\Gamma$-equivariant $C\langle  T^{\pm 1/p^{\infty}} \rangle $-linear action on $\s{F}(\bb{T}_{\infty})$.   Moreover, if $X$ has a form over $(K,K^+)$,   this action is $\Gal_K$-equivariant,  where $\Gal_K$ acts on $\Lie \Gamma$ via the cyclotomic character.   By  Proposition \ref{PropProjectionO}, $\Lie \Gamma \otimes C \langle  T^{\pm 1/p^{\infty}}\rangle= \Omega_{\bb{T}}^{1,\vee}(\bb{T}) \otimes  C \langle  T^{\pm 1/p^{\infty}}\rangle(1)$,  this shows that the action of $\Lie \Gamma$ defines an $\widehat{\s{O}}_X$-linear map of pro\'etale sheaves over $\bb{T}$
\[
\Sen_{\s{F}}:  \Omega^{1,\vee}_{\bb{T}} \otimes_{\s{O}_X} \s{F} (1) \to \s{F},
\]
or equivalently,  it defines  the geometric Sen operator  
\[
\theta_{\s{F}}:  \s{F} \to  \s{F}\otimes_{\s{O}_X} \Omega^1_{\bb{T}}(-1).  
\]

 In general, we shall construct the Sen operators locally on the Kummer-\'etale topology of  $X$ by taking charts to products of tori and closed affinoid discs.  The fact that $\theta_{\s{F}}$ does not depend on the charts and that it is functorial with respect to the sheaf and the space will be proved in \S \ref{ss:GeoSenOpe-Globalization}.

\subsection*{An overview of the paper}

In \S \ref{SubsecRecollectionsLocAnRep} we briefly review some basic facts of the theory of solid locally analytic representations of the author and Rodrigues Jacinto \cite{RRLocallyAnalytic,RRLocAn2}. In \S \ref{s:SenTheoryAxioms}-\ref{s:SenTheoryCohomology}, we extend the abstract formalism of Sen  theory of Berger-Colmez \cite{BC1} that will be used in the main applications of the paper. 

In \S \ref{ss:LogKummerSequence} we extend the computation of $R^1\nu_* \widehat{\s{O}}_X(1)=\Omega_X^1$ of  \cite{ScholzePerfectoidSurvey} from smooth to log smooth adic spaces; this follows formally from Scholze's proof using the formalism of log adic spaces. In  \S \ref{s:HTSrigidSenBundle} we construct the geometric Sen operator in local coordinates, proving local versions of Theorems \ref{TheoSenOperatorIntro} and \ref{TheoSenBundleofTorsorIntro}, then, in \S \ref{ss:GeoSenOpe-Globalization} we prove the global versions of  the theorems. In \S \ref{s:HTSrigidProetalecoho} we study locally analytic vectors of torsors of $p$-adic Lie groups. We finish the section in \S \ref{ss:HTSrigidpAdicSimpson} by discussing  the relation of  geometric Sen theory with the $p$-adic Simpson correspondence of \cite{LiuZhuRiemannHilbert,  DiaoLogarithmicHilbert2018, wang2021padic}.

\subsection*{Acknowledgments}

First and foremost,  I would like to thank my advisor Vincent Pilloni who guided me throughout the development of this project.  I want to thank George Boxer and Joaqu\'in Rodrigues Jacinto for their patience and enlightenment after trespassing to their office several times for discussing different stages of the work.   I am grateful  to the  participants of the study group in Lyon in the Spring of 2021 on Pan's work on locally analytic completed cohomology for the modular curve.    I would like to thank  Laurent Berger for his comments on an earlier version of the Sen theory formalism.   I also want to thank Fabrizio Andreatta, Gabriel Dospinescu, Andrew Graham,  Valentin Hernandez,  Sean Howe,  Ben Heuer,    Damien Junger and Arthur-C\'esar Le Bras for fruitful exchanges. Special thanks to Lue Pan for many conversations that enhanced the content of the document, in particular his suggestion to improve a cohomological computation that lead to  Proposition \ref{PropositionSenTheoryCohomology}. I want to thank the Max Planck Institute for Mathematics for the excellent mathematical exchanges that helped me to give a  better presentation of this paper. Finally, I want to thank the anonymous referee for the many corrections and suggestions that notably improved and simplified the exposition of the document.


\section{Abstract Sen theory after Berger-Colmez}
\label{ch:SenTheory}

Sen theory has shown to be a powerful tool in the Galois theory of $p$-adic fields.   For example,  it  is used to compute Galois cohomology over period rings:

\begin{prop}[{\cite[Proposition  8]{Tatepdivisible}} ]
Let $\bb{C}_p$ be the $p$-adic completion of an algebraic closure of $\bb{Q}_p$,  and let $G_{\bb{Q}_p}$ be the absolute Galois group.  For $i\in \bb{Z}$ we let $\bb{C}_p(i)$ denote the $i$-th Tate twist.  Then 
\[
H^k(G_{\bb{Q}_p},  \bb{C}_p(i))=\begin{cases}  0 & \mbox{if } i\neq 0   \\
\bb{Q}_p & \mbox{if } i=0 \mbox{ and } k=0 \\
\bb{Q}_p \log \chi_{\cyc} & \mbox{if } i=0 \mbox{ and } k=1
\end{cases}
\] 
where $\chi_{\cyc}: G_{\mathbb{Q}_p}\to \mathbb{Z}_p^{\times}$ is the cyclotomic character, and $\log: \mathbb{Z}_p^{\times}\to \mathbb{Z}_p$ is the logarithm map defined as $\log(a)= \log(a/[a])$, with $[a]\in \mu_{p-1}(\mathbb{Z}_p)$ the Teichm\"uller lift of $\overline{a}\in \mathbb{F}_p^{\times}$. 
\end{prop}

In \cite{BC1},  Berger-Colmez defined an axiomatic framework where Sen theory can be applied.    Using this formalism, different constructions  attached to  finite dimensional Galois representations become  formally the same:  the Sen module (relative to $\bb{Q}_p^{\cyc}$),  the overconvergent $(\varphi,\Gamma)$-module  (relative to $\widetilde{\bb{B}}^{\dagger}(\bb{Q}_p^{\cyc})$),  the module $\bbf{D}_{\mathrm{diff}}$ of differential equations (relative to $\bb{B}_{\dR}(\bb{Q}_p^{\cyc})$).   Moreover,  using   Sen theory   Berger-Colmez describe in  \cite{BC2}   the locally analytic vectors of completed Galois extensions of $\bb{Q}_p$ with group  isomorphic to a $p$-adic Lie group.   

The work of Lue Pan \cite{LuePan} is an excellent application of this tool.  Inspired from the work of Berger-Colmez,  Pan describes  the $p$-adic Simpson correspondence of the $\GL_2(\bb{Q}_p)$-equivariant local systems of the modular curves in terms of  sheaves over the flag variety.  Furthermore,  based on the strategy of \cite{BC2},   he manages  to use the axiomatic Sen theory to describe the Hodge-Tate structure of locally analytic  ``interpolations'' of finite rank local systems.   

The main goal  of this section is to provide a more conceptual understanding of the work of Pan and Berger-Colmez via the theory of (solid) locally analytic representations.  More precisely,   we will prove that the construction of the Sen module holds not only for finite rank representations,  but for a larger class of orthonormalizable (also denoted ON in this paper) Banach locally analytic representations.

\subsection{Recollections in locally analytic representations}
\label{SubsecRecollectionsLocAnRep}

In this  section we recall some tools from the theory of locally analytic representations of compact $p$-adic Lie groups. We will be only concerned in continuous representations on (colimits of) Banach spaces, so everything can be stated in the classical language of Schneider-Teitelbaum  \cite{SchTeitGL2,SchTeitDist} and Emerton \cite{Emerton}. 
However, we will use some technology from the theory of solid locally analytic representations, as developed in \cite{RRLocallyAnalytic} and \cite{RRLocAn2}, that will facilitate some technical arguments. For example, we will  consistently use the general comparison theorem between continuous and locally analytic cohomology. 

Let $G$ be a compact $p$-adic  Lie group and let $C^{la}(G,\bb{Q}_{p})$ be the $LB$ space of locally analytic functions of $G$, it admits a description as a colimit $C^{la}(G,\bb{Q}_p)=\varinjlim_h C^h(G,\bb{Q}_p)$ of $|p^{h}|$-analytic (or $h$-analytic for simplicity) functions of $G$, which depends on the choice of an uniform pro-$p$-subgroup $G_0\subset G$, see \cite[Section 2]{RRLocAn2}. Throughout this paper we will take an arbitrary choice of that $G_0$, and we will freely change  $G_0$ by a smaller subgroup when necessary.

 We let $C^{la}(G,\bb{Q}_p)_{\star_{1}}$ denote the left regular action (resp. $\star_2$ for the right regular action).  Let $\bb{Q}_{p}[[G]]=\bb{Z}_p[[G]][\frac{1}{p}]$ denote the rational Iwasawa algebra of $G$, seen as a solid $\bb{Q}_p$-algebra. Let $\Solid(\bb{Q}_p[[G]])$ be the abelian category of solid $\bb{Q}_p[[G]]$-representations, or equivalently the category of $\mathbb{Q}_p$-linear solid $G$-representations, cf. \cite[Definition 4.20]{RRLocallyAnalytic}. We let $\s{D}_{\sol}(\bb{Q}_p[[G]])$ be the derived category of $\Solid(\bb{Q}_p[[G]])$. 

\begin{definition}
\label{DefLocAn}
Let $V\in \s{D}_{\sol}(\bb{Q}_p[[G]])$.
\begin{enumerate}

\item The \textit{solid group cohomology} of $V$ is given by 
\[
R\underline{\Gamma}_{\sol}(G,V):=R\underline{\iHom}_{\bb{Q}_p[[G]]}(\bb{Q}_p,V),
\]
where  $\underline{\iHom}_{\bb{Q}_p[[G]]}$ is the solid Hom space of $\bb{Q}_p[[G]]$-modules\footnote{The object $\mathbb{Q}_p[[G]]$ is an algebra in solid $\mathbb{Q}_p$-vectors spaces, this makes $\Solid(\mathbb{Q}_p[[G]])$ naturally enriched on solid $\mathbb{Q}_p$-vector spaces.}.  We denote by  $R\Gamma_{\sol}(G,V):=R\underline{\Gamma}_{\sol}(G,V)(*)$ the underlying $\bb{Q}_p$-vector space. We also write $\underline{H}^i(G,V)$ for the $i$-th cohomology group of $R\underline{\Gamma}_{\sol}(G,V)$ and $H^i(G,V)=\underline{H}^i(G,V)(*)$ for its underlying solid $\mathbb{Q}_p$-vector space. 

\item The \textit{derived locally analytic vectors} of $V$ is the solid $\mathbb{Q}_p[[G]]$-module  defined as 
\[
V^{RG-la}:= R\underline{\Gamma}_{\sol}(G, (V\otimes^{L}_{\bb{Q}_{p,\sol}} C^{la}(G,\bb{Q}_p))_{\star_{1,3}}),
\]
where $\otimes^{L}_{\bb{Q}_{p,\sol}} $ is the solid tensor product, and the $\star_{1,3}$ is the diagonal action on $V$ and $C^{la}(G,\bb{Q}_p)_{\star_1}$. The $G$-action on $V^{RG-la}$ arises from the right regular action on $C^{la}(G,\mathbb{Q}_p)$.  If $V\in \Solid(\bb{Q}_p[[G]])$ we denote $V^{G-la}:= H^{0}(V^{RG-la})$.  

\item We say that a solid $G$-representation $V$ is \textit{locally analytic} if the natural map $V^{RG-la}\to V$ is a quasi-isomorphism of solid $\mathbb{Q}_p[[G]]$-modules (equivalently of solid $\mathbb{Q}_p$-vector spaces). 

\end{enumerate}
\end{definition}

The following theorem summarizes the main features of the category of solid locally analytic representations. 

\begin{theo}
\label{TheoResumeLocAn}
Let $\Rep^{LC}_{\bb{Q}_p}(G)$ be the category of $\bb{Q}_p$-linear compactly generated  complete locally convex continuous representations of $G$. The following hold:

\begin{enumerate}

\item   There is a fully faithful inclusion 
\[
\underline{(-)}:\Rep^{LC}_{\bb{Q}_p}(G)\hookrightarrow \Solid(\bb{Q}_p[[G]]).
\]

\item Under the inclusion of (1), there is a natural quasi-isomorphism of $\bb{Q}_p$-vector spaces 
\[
R\Gamma_{\sol}(G,\underline{V})= R\Gamma(G,V)
\]
between solid and continuous cohomology. 

\item The  functor $(-)^{RG-la}$ is idempotent. Let $\s{D}_{\sol}(\mathbb{Q}_p)^{G^{la}}\subset \s{D}_{\sol}(\bb{Q}_p[[G]])$ be the full  subcategory of locally analytic representations. Then  $\s{D}_{\sol}(\mathbb{Q}_p)^{G^{la}}$ is stable under colimits and  tensor products. Moreover, the inclusion admits a right adjoint given by   the functor of derived locally analytic vectors $(-)^{RG-la}$. 

\item Let $V\in \s{D}_{\sol}(\bb{Q}_p[[G]])$, then there is a natural equivalence 
\[
R\underline{\Gamma}_{\sol}(G,V)=R\underline{\Gamma}(G^{sm}, R\underline{\Gamma}(\Lie G, V^{RG-la})),
\] 
where $R\underline{\Gamma}(\Lie G,-)$ is (the solid enriched) Lie algebra cohomology, and $R\underline{\Gamma}(G^{sm},-)$ is  (the solid enriched) smooth group cohomology. 

\end{enumerate}
\end{theo}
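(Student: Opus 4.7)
The four statements summarize results from the author's joint work with Rodrigues Jacinto \cite{RRLocallyAnalytic,RRLocAn2}; the plan is to outline the strategy behind each part rather than repeat those proofs in detail.

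For part (1), I would combine two fully faithful embeddings. The Clausen--Scholze construction sends a compactly generated complete locally compact topological $\bb{Q}_p$-vector space to its associated condensed sheaf, yielding a solid $\bb{Q}_p$-module; a continuous $G$-action on $V$ then induces a module structure over the condensed group algebra $\bb{Q}_p[[G]]$. Full faithfulness reduces to the statement that solid $G$-equivariant morphisms between such objects are automatically continuous and topologically $G$-equivariant, which follows from the full faithfulness of the underlying embedding of topological $\bb{Q}_p$-vector spaces into solid modules.

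For part (2), I would realize both sides as derived functors of invariants. The solid cohomology $R\Gamma_{\sol}(G,-)$ is computed by the bar resolution of $\bb{Q}_p$ by free solid modules of the form $\bb{Q}_p[[G^{n+1}]]$; evaluating internal $\iHom$ against $\underline{V}$ recovers the classical continuous bar complex $C^{\mathrm{cont}}(G^{\bullet},V)$, whose cohomology is $R\Gamma(G,V)$. The essential technical input is the identification $R\underline{\iHom}_{\bb{Q}_{p,\sol}}(\underline{\bb{Q}_p[[G^{n+1}]]},\underline{V})=\underline{C^{\mathrm{cont}}(G^{n+1},V)}$, which uses the local compactness of $V$.

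For part (3), the key idea is to identify $(-)^{RG-la}$ with the internal Hom from the solid locally analytic distribution algebra $D^{la}(G,\bb{Q}_p)$, which is an idempotent algebra object over $\bb{Q}_p[[G]]$. Idempotency of the functor then reduces to the coalgebra identity $C^{la}(G,C^{la}(G,\bb{Q}_p))=C^{la}(G\times G,\bb{Q}_p)$ combined with a cohomological computation on one factor. Since $(-)^{RG-la}$ is a right adjoint, the inclusion $\s{D}_{\sol}(G^{la})\hookrightarrow\s{D}_{\sol}(\bb{Q}_p[[G]])$ preserves colimits; stability under tensor products follows because the tensor product of two modules over the idempotent algebra $D^{la}(G,\bb{Q}_p)$ inherits the same structure through the coproduct on $C^{la}(G,\bb{Q}_p)$.

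Part (4) is the hardest and contains the main content. The plan is to combine Lazard's comparison between continuous and Lie algebra cohomology for uniform pro-$p$ groups acting locally analytically with a Hochschild--Serre decomposition through an open normal uniform pro-$p$ subgroup $H\trianglelefteq G$. After replacing $V$ by $V^{RG-la}$, one identifies $R\Gamma(H,V^{RG-la})$ with $R\Gamma(\Lie H,V^{RG-la})$; this complex carries a residual smooth $G$-action whose outer cohomology recovers $R\Gamma(G^{sm},-)$ applied to $R\Gamma(\Lie G,V^{RG-la})$. The main obstacle is the precise solid formulation of the three cohomology theories and the verification that Lie algebra cohomology of a locally analytic representation is naturally smooth, so that the remaining outer layer is genuinely smooth group cohomology; this compatibility, expressed as the equivalence between solid $\bb{Q}_p[[G]]$-modules on which $\Lie G$ acts trivially and smooth $G$-representations, is the core technical result of \cite{RRLocallyAnalytic,RRLocAn2}.
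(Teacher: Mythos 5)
Your proposal is correct and takes essentially the same route as the paper: the paper gives no argument beyond citing Proposition 3.7 and Lemma 5.2 of \cite{RRLocallyAnalytic}, Propositions 3.1.11 and 3.2.3 of \cite{RRLocAn2}, and Theorems 5.3 and 5.5 of \cite{RRLocallyAnalytic} (or Theorem 6.3.4 of \cite{RRLocAn2}), which is exactly what you defer to, and your strategy sketches are consistent with how those references proceed. The only slight imprecision is in your sketch of (4), where Lazard-type comparison gives $R\Gamma(H,V^{RG-la})$ as Lie algebra cohomology together with the (smooth) $H$-invariants rather than $R\Gamma(\Lie H,V^{RG-la})$ alone; this is harmless since the invariants are absorbed into the outer smooth-cohomology layer and the actual proof is carried by the cited results.
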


\begin{proof}
Parts (1) and (2) follow from Proposition 3.7 and Lemma 5.2 of \cite{RRLocallyAnalytic} respectively. Part (3)  follows from  Proposition 3.1.11 and Proposition   3.2.3  of \cite{RRLocAn2}. Finally, part (4) follows from  Theorems 5.3 and 5.5 of \cite{RRLocallyAnalytic}, or  Theorem 6.3.4 of  \cite{RRLocAn2}.
\end{proof}

\begin{remark}\label{x92ejw}
In Theorem \ref{TheoResumeLocAn} (4)  smooth  $G$-cohomology and Lie algebra cohomology are as in \cite[Definition 1.0.4]{RRLocAn2}, namely, the right adjoints of the natural trivial representation map from solid $\mathbb{Q}_p$-vector spaces to the derived categories of  smooth $G$-representations or $U(\Lie G)$-modules respectively.  For objects in the heart of the $t$-structure of $G$-smooth representations,  \cite[Proposition 6.3.3]{RRLocAn2} shows that  smooth group-cohomology  can be computed via smooth cochains. By the standard Koszul resolution of $\mathbb{Q}_p$ as $U(\Lie G)$-module, Lie algebra cohomology can also be computed via the explicit Chevalley-Eilenberg complex. 
\end{remark}

An immediate consequence of the theorem is the following corollary:

\begin{cor}
\label{CoroComparisonVla}
Let $V\in \Solid(\bb{Q}_p[[G]])$ be  solid $G$-representation. Suppose that $V^{RG-la}= V^{G-la}$, i.e., that the higher derived locally analytic vectors vanish. Then there are isomorphisms 
\[
\underline{H}^{i}_{\sol}(G,V)= \underline{H}^{i}(\Lie G, V^{G-la})^{G}
\] 
of solid $\mathbb{Q}_p$-vector spaces.  In particular, if $V$ arises from a compactly generated locally convex $\bb{Q}_p$-vector space, we have isomorphisms
\[
H^{i}(G,V)=H^{i}(\Lie G, V^{G-la})^{G}
\]
of $\mathbb{Q}_p$-vector spaces. 
\end{cor}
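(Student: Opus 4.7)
The plan is to combine the master comparison formula of Theorem~\ref{TheoResumeLocAn}(4) with the vanishing of higher smooth group cohomology of a compact $p$-adic Lie group on smooth $\bb{Q}_p$-representations. Applying part (4) of that theorem directly gives
\[
R\Gamma_{\sol}(G,V) = R\Gamma(G^{sm}, R\Gamma(\Lie G, V^{RG-la})),
\]
and the hypothesis $V^{RG-la}=V^{G-la}$ places the derived locally analytic vectors in cohomological degree $0$, so the inner piece is simply the Chevalley--Eilenberg complex of the locally analytic representation $V^{G-la}$.

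The key observation is that each cohomology group $H^{q}(\Lie G, V^{G-la})$ carries a smooth $G$-action. The Lie algebra $\f{g}=\Lie G$ acts by zero on its own cohomology (the action on the Chevalley--Eilenberg complex is null-homotopic via the standard contraction), so the locally analytic $G$-action on $H^{q}(\Lie G, V^{G-la})$ has vanishing derivative at the identity; together with local analyticity this forces the action to be locally trivial, that is, smooth. Given this, the Grothendieck spectral sequence
\[
E_{2}^{p,q} = H^{p}(G^{sm}, H^{q}(\Lie G, V^{G-la})) \Rightarrow H^{p+q}_{\sol}(G,V)
\]
collapses onto the $p=0$ column. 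Indeed, for any smooth $\bb{Q}_p$-representation $W$ of the compact $p$-adic Lie group $G$ we have $W=\varinjlim_{N}W^{N}$ over open normal subgroups $N\trianglelefteq G$, and each quotient $G/N$ is a finite $p$-group whose order is invertible in $\bb{Q}_p$; the usual averaging argument then shows that higher finite-group cohomology with $\bb{Q}_p$-coefficients vanishes and $R\Gamma(G^{sm},W)=W^{G}$.

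This yields the desired identification $H^{i}_{\sol}(G,V)=H^{i}(\Lie G, V^{G-la})^{G}$, and the ``in particular'' clause follows by invoking Theorem~\ref{TheoResumeLocAn}(2), which identifies $H^{i}_{\sol}(G,V)$ with classical continuous cohomology $H^{i}(G,V)$ in the compactly generated locally convex setting. The main obstacle I anticipate is rigorously promoting the smooth-action observation on cohomology groups to a derived-categorical statement about the complex $R\Gamma(\Lie G, V^{G-la})$ that justifies running the spectral sequence; the cleanest route is probably to unpack the proof of Theorem~\ref{TheoResumeLocAn}(4) and note that $R\Gamma(\Lie G,-)$ naturally lands in a derived category of smooth $G$-representations for locally analytic inputs, in which case the collapse is automatic.
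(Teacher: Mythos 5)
Your proposal is correct and follows essentially the same route as the paper, whose proof is simply to invoke parts (2) and (4) of Theorem \ref{TheoResumeLocAn}; your additional work spelling out why $R\Gamma(G^{sm},-)$ is exact on smooth $\bb{Q}_p$-representations (averaging over finite quotients, whose orders are invertible in characteristic zero --- note these quotients need not be $p$-groups, but that does not matter) and why the Lie algebra cohomology carries a smooth action is exactly what the paper leaves implicit. The derived-level smoothness you worry about at the end is already packaged into the statement of Theorem \ref{TheoResumeLocAn}(4), so no further unpacking is needed.
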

\begin{proof}
This follows from parts  (2) and (4) of Theorem \ref{TheoResumeLocAn}. 
\end{proof}

The property of being a locally analytic representation roughly means that the operators $[g]-[1]$ for $g\in G$ have norm $<1$. An example of this phenomenon appears in the following lemma.

\begin{lem}
\label{LemmaBanachLocAn}
Let $V$ be a  $\mathbb{Q}_p$-Banach representaion of $G$. The following hold: 

\begin{enumerate}

\item  If $V$ is locally analytic then for any $\mathbb{Z}_p$-lattice $V_0\subset V$ there is an open compact subgroup $G_{0}$ stabilizing $V_0$  such that $G_0$ acts trivially on $V_0/p$. 

\item Conversely, suppose that there is a finite extension $K/\mathbb{Q}_p$, with uniformizer $\varpi$ such that there is an $\mathcal{O}_K$-lattice of $W_0\subset V\otimes_{\mathbb{Q}_p} K$ and an open compact subgroup $G_0$ stabilizing $W_0$,  with $G_0$ acting trivially on $W_0/\varpi$. Then $V$ is locally analytic.

\end{enumerate}

Moreover,  if  $V$ is locally analytic, then it is analytic in the following sense:  there is an uniform pro-$p$-subgroup $G_0\subset G$ (cf. \cite[Section 4]{SchTeitDist}) with coordinates $G_0\cong \mathbb{Z}_p^d$ such that the orbit map $\mathcal{O}_V:V\to C(G_0,V)$ factors through $C^{\an}(G_0,V)$ where $C^{\an}(G_0,V)=C^{\an}(G_0,\mathbb{Q}_p) \widehat{\otimes}_{\mathbb{Q}_p} V$ and $C^{\an}(G_0,\mathbb{Q}_p)\cong \mathbb{Q}_p\langle T_1,\ldots, T_d \rangle$ is the space of analytic functions of $G_0$ with respect to the coordinates $(T_1,\ldots, T_d)$ of $\mathbb{Z}_p^d$.
\end{lem}
\begin{proof}

Let us prove part (1). Let $V$  be a locally analytic representation of $G$. By shrinking $G$ we can assume without loss of generality that $G$ is an uniform pro-$p$-group and that we have an isomorphism of $p$-adic manifolds $G\cong \mathbb{Z}_p^d$. The powers $G^{p^n}$ are then uniform pro-$p$-groups and  we can write $C^{la}(G,\mathbb{Q}_p)=\varinjlim_{h\to \infty} C^{h}(G,\mathbb{Q}_p)$  where $C^{h}(G,\mathbb{Q}_p)$ is the space of analytic functions of the cosets  $G=\bigsqcup_{g\in G/G^{p^h} } gG^{p^h}$.  Therefore, since $V$ is locally analytic, its orbit map is a map of solid $\mathbb{Q}_p$-vector spaces
\[
\n{O}_V^{la}:V\to \varinjlim_{h} V\widehat{\otimes}_{\mathbb{Q}_p}   C^{h}(G,\mathbb{Q}_p).
\]
By \cite[Lemma 3.32]{RRLocallyAnalytic} there is some $h$ such that $\mathcal{O}_V$ factors through $O_V^{h}:V\to V\widehat{\otimes}_{\mathbb{Q}_p}   C^{h}(G,\mathbb{Q}_p)$.  The map $\n{O}_V^h$ is  $G$-equivariant for the natural action on $V$ and  for the right regular action on the tensor. One can find a suitable $\bb{Z}_p$-lattice $V_0\subset V$ such that $O^{h}_V$ sends $V_0$ to $V_0\widehat{\otimes}_{\bb{Z}_p} C^{h}(G,\mathbb{Z}_p)$  (with $C^h(G, \mathbb{Z}_p)$ the space of $\mathbb{Z}_p$-valued $h$-analytic functions).  But then it is easy to see that for any $k\geq 1$ there is $h'>h$  such that the action of $G^{p^{h'}}$ on $C^h(G,\mathbb{Z}_p)/p^k$ is trivial (eg. since $C^h(G,\mathbb{Z}_p)/p^k$ is a $\mathbb{Z}/p^k$-algebra of finite type and $G$ acts via algebra morphisms, so that it suffices to trivialize the action on algebra  generators).  Pick $k$ such that $\mathcal{O}^h_V(V_0)\cap p^k( V_0\widehat{\otimes}_{\bb{Z}_p} C^{h}(G,\mathbb{Z}_p)) \subset \mathcal{O}^h_V(p V_0)$ (which is guarantee by continuity of the orbit map), then $V_0/p$ is a subquotient of $V_0\widehat{\otimes}_{\bb{Z}_p} C^{h}(G,\mathbb{Z}_p)/p^k$ and so it is a trivial representation of $G^{p^{h'}}$ as wanted.

Part (2) follows  from  \cite[Proposition 3.3.3]{RRLocAn2} but let us give the details how the proposition is used. First, since the category of locally analytic representations is stable under colimits on $\mathbb{Q}_p$-solid $G$-modules (Theorem \ref{TheoResumeLocAn} (3)), it suffices to show that $W=V\otimes_{\mathbb{Q}_p} K$ is locally analytic (as $V$ is a retract of $W$). Then, the same proof of \cite[Proposition 3.3.3]{RRLocAn2} applies for  $p$ replaced by the pseudo-uniformizer $\varpi$ and the $\varpi$-adically complete representation $W_0$.

The last claim about the analyticity of  a suitable uniform compact open subgroup $G_0\subset G$ follows from the proof of (1). 
\end{proof}

Finally, we will need a projection formula for the functor of locally analytic vectors\footnote{This projection formula holds for general $G$-equivariant $A$-semilinear solid representations; we keep the assumptions in the lemma for simplicity in the exposition.}. 

\begin{lem}
\label{LemmaProjectionFormulaLocAnRep}
Let $A$ be a Banach $\bb{Q}_p$-algebra endowed with a locally analytic action of $G$. Let $X$ and $M$ be  $G$-equivariant Banach $A$-modules such that $M$ has an ON basis over $A$, and  such that the action of $G$ on $M$ is locally analytic. Then there is a natural equivalence  of $\mathbb{Q}_p$-solid $G$-representations 
\begin{equation}
\label{eqwjfied}
 \underline{X}^{RG-la}\otimes_{\underline{A},\sol}^{L} \underline{M} \xrightarrow{\sim} (\underline{X\widehat{\otimes}_A M})^{RG-la} 
\end{equation}
\end{lem}
\begin{proof} 

 By the projection formula of locally analytic vectors \cite[Corollary 3.1.15 (3)]{RRLocAn2} we know that, when $A=\mathbb{Q}_p$, the natural map  \eqref{eqwjfied} is an equivalence.  We want to reduce the general case to this situation by applying the Bar construction of the relative tensor product, cf. \cite[Discussion before Proposition 8.6.3]{MR1269324} or \cite[Construction 4.4.2.7]{HigherAlgebra}.

First, since $M\cong \widehat{\bigoplus}_I A$, by \cite[Lemma 3.13]{RRLocallyAnalytic} we have that $\underline{M}=\underline{A}\otimes^{L}_{\bb{Q}_p,\sol} P$ with $P=\widehat{\bigoplus}_I \bb{Q}_p$. Then, \textit{loc. cit.} implies that 
\[
\underline{X\widehat{\otimes}_A M } \cong \underline{X\widehat{\otimes}_{\bb{Q}_p} P} = \underline{X}\otimes^{L}_{\bb{Q}_{p,\sol}} \underline{P} \cong \underline{X} \otimes^{L}_{\underline{A},\sol} \underline{M},
\]
in particular $ \underline{X} \otimes^{L}_{\underline{A},\sol} \underline{M}=\underline{X} \otimes_{\underline{A},\sol} \underline{M}$ sits in degree $0$. Then, since Banach spaces are flat solid $\bb{Q}_p$-vector spaces (see \cite[Lemma 3.21]{RRLocallyAnalytic}), the Bar resolution for the tensor product gives rise a $G$-equivariant long exact sequence  of solid $\mathbb{Q}_p$-vector spaces
\[
\cdots \to   \underline{X}\otimes_{\bb{Q}_p,\sol} \underline{A}^{\otimes n} \otimes_{\bb{Q}_p,\sol} \underline{M} \to  \cdots  \to  \underline{X}\otimes_{\bb{Q}_p,\sol} \underline{M}   \to   \underline{X} \otimes_{\underline{A},\sol} \underline{M}\to 0.
\]
Then, by the projection formula of \cite[Corollary 3.1.15 (3)]{RRLocAn2}, we have that 
\[
  (\underline{X}\otimes_{\bb{Q}_p,\sol} \underline{A}^{\otimes n} \otimes_{\bb{Q}_p,\sol} \underline{M})^{RG-la}=  \underline{X}^{RG-la}\otimes_{\bb{Q}_p,\sol} \underline{A}^{\otimes n} \otimes_{\bb{Q}_p,\sol} \underline{M}.
\]
Since $(-)^{RG-la}$ has finite cohomological dimension (\cite[Lemma 3.2.2]{RRLocAn2}), one deduces that 
\[
\underline{X}^{RG-la} \otimes^{L}_{\underline{A},\sol} \underline{M}\xrightarrow{\sim} (\underline{X} \otimes^{L}_{\underline{A},\sol} \underline{M})^{RG-la}
\]
 is an equivalence as wanted. 
\end{proof}

\begin{remark}\label{RemhminusAnRep}
The key input in the proof of Lemma \ref{LemmaProjectionFormulaLocAnRep}  is the projection formula for locally analytic vectors of \cite[Corollary 3.1.15 (3)]{RRLocAn2}. An analogue statement holds for $h$-analytic vectors for a suitable notion of $h$-analyticity. Indeed, consider $C^{h^-}(G, \bb{Q}_p)=\varinjlim_{h'<h} C^{h'}(G, \bb{Q}_p)$ the space of locally analytic functions of $G$ that has radius of analyticity  strictly bigger than $p^{-h}$.  By the same proof of \cite[Proposition 3.2.10]{RRLocAn2} the functor of $h^{-}$-analytic vectors defined as the group cohomology of $C^{h^-}(G, \bb{Q}_p)\otimes^L_{\bb{Q}_{p,\sol}}-$ is idempotent and so there is a well defined full subcategory of $h^-$-analytic representations of $\s{D}_{\sol}(\bb{Q}[[G]])$. Then the same proof of \cite[Corollary 3.1.15]{RRLocAn2} applies, which in particular yields a projection formula for $h^-$-representations. As consequence, Lemma \ref{LemmaProjectionFormulaLocAnRep} also holds for $h^-$-analytic representations.  
\end{remark}

\subsection{Colmez-Sen-Tate axioms}
\label{s:SenTheoryAxioms}

In this section we introduce the abstract framework of Sen theory that will be needed in this paper. Our discussion follows  \cite{BC1}. 

Let $F/\mathbb{Q}_p$ be an algebraic extension with $p$-adic completion $\widehat{F}$ a perfectoid field.   Let $A$ be a sous-perfectoid $\widehat{F}$-algebra endowed with  the spectral norm $|-|$.  We let $A^0\subset A$ be the $p$-complete open  $\mathbb{Z}_p$-algebra of elements of norm $\leq 1$, equivalently, the open subalgebra of power bounded elements.  For a closed Banach subvector space $W\subset A$ we let $W^0=W\cap A^0$ be the open subspace of $W$ of elements of norm $\leq 1$. Let $I\subset \mathbb{R}_{\geq 0}$ be the positive additive valuation monoid of $\overline{F}$ normalized such that $|p|=p^{-1}$, given $\epsilon \in I\backslash 0$ we let $\varpi^{\epsilon} \in F$ be an algebraic pseudo-uniformizer with absolute value $|\varpi^{\epsilon}|=|p|^{\epsilon}$.    The additive submonoid $I\subset \mathbb{R}_{\geq 0}$ is dense as $\widehat{F}$ is a  perfectoid field, from now on we always take $\epsilon \in I$.  In the following we shall consider almost mathematics with respect to the maximal ideal $\f{m}_{\widehat{F}}\subset \n{O}_{\widehat{F}}$. 

\subsubsection{One dimensional Sen theory}

Let $\Gamma\cong \mathbb{Z}_p$ be a one parameter group with generator $\gamma$. Suppose that $\Gamma$ acts continuously on $A$,  in particular it acts by isometries with respect to the spectral norm. For $k\in \mathbb{N}_{\geq 0}$ a non negative integer, let $\Gamma^{(k)}:=\Gamma^{p^k}\subset \Gamma$.

\begin{definition}[Colmez-Sen-Tate axioms]
\label{DefinitionSenAxioms}

We consider the following axioms on the pair $(A,\Gamma)$:

\begin{itemize}

\item[(CST1)] \textit{Tate's normalized traces}. We are given with the following datum:

\begin{itemize}

\item[(a)] For $n\gg 0$ a closed $\mathbb{Q}_p$-Banach subalgebra $A_{n}\subset A$.

\item[(b)]   For $n\gg 0$ we have $A_{n}$-linear and $\Gamma$-equivariant projection maps $R_{n}: A\to A_n$, i.e. maps such that the composite $A_n\to A\xrightarrow{R_n} A_n$ is the identity morphism\footnote{In particular, all the algebras $A_n$ are sous-perfectoid and $A_n^0=A^0\cap A_n$ is the subring of power-bounded elements. Hence, our notation  $W^0:=W\cap A^0$  for closed Banach subspaces $W\subset A$ is consistent with that of Banach rings.}. We write $X_n:=\ker (R_n)$.

\end{itemize}

The previous datum is subject to the following conditions: 

\begin{enumerate}

\item We have $F\subset \bigcup_{n} A_n$. We let $F_n:= \widehat{F}\cap A_n$ and let  $I_n\subset I$ be its additive valuation submonoid. For $\epsilon\in I_n$ we choose  $\varpi^{\epsilon}\in F_n$.

\item For all $\epsilon>0$ there is  $n_1(\epsilon)\gg 0$ such that for $n\geq n_1(\epsilon)$ and $x\in A^0$  we have $R_{n}(x)  \in {\varpi}^{-\epsilon} A_{n}^0$.   In other words $||R_{n}(x)|| \leq |\varpi^{-\epsilon}| |x|$ for $x\in A$.    

\item Given $x\in A$  the sequence $(R_n(x))_{n\in \mathbb{N}}$ converges to $x$ in $A$.

\item  The action of $\Gamma$ on the Banach algebra $A_{n}$ is locally analytic.   Equivalently, by Lemma \ref{LemmaBanachLocAn}, for any $\epsilon \in I_n$ there is some $k\in \mathbb{N}$ (depending on $n$ and $\epsilon$)    such that the action of $\Gamma^{(k)}\subset \Gamma$ on  $A^0_{n}/\varpi^{\epsilon}$ is trivial.

\end{enumerate}

\item[(CST2)]\textit{Bounds for the vanishing of cohomology.} 

For all $\epsilon>0$ and $k\in \mathbb{N}$ there is $n_2(\epsilon,k) \geq n_1(\epsilon)$ such that if $n\geq n_2(\epsilon,k)$ the map $1-\gamma^{p^k}: X_{n}\to X_{n}$ is invertible with inverse satisfying $||(1-\gamma^{p^k})^{-1}||\leq |\varpi|^{-\epsilon}$.

\end{itemize}

We say that the tuple $(A,\Gamma,(R_n)_{n\in \mathbb{N}})$  is a \textit{one dimensional Sen theory} if the axioms (CST1) and (CST2)   hold. 
\end{definition}

The following  lemma will be used later in the paper. 

\begin{lem}\label{lemUniformCompletion}
Let $(A,\Gamma, (R_n))$ be a one dimensional Sen theory. Then the map $\varinjlim_{n} A_n^{0} \to A_{\infty}^0$ induces an almost isomorphism after $p$-adic completions. In particular, $A$ is the uniform completion of the colimit of Banach rings $\varinjlim_{n} A_n$.
\end{lem}
\begin{proof}
Let $B=\varinjlim_{n} A_n$ and $B^0=\varinjlim_{n} A_n^{0}$.  By Nakayama's lemma it suffices to see that the map 
\begin{equation}
\label{eq01i33rod}
B^{0}/\varpi\to A^{0}/\varpi
\end{equation}
is an almost equivalence. Let us first show that it is almost surjective. Let $a\in A^{0}$ and $\epsilon\in I$, we want to show that there is $b\in B^0$ such that $b-\varpi^{\epsilon} a \in \varpi A^{0}$. By (CST1) (3) there is some $n\gg 0$ such that $b:=R_n(\varpi^{\epsilon} a)$ satisfies $|b-\varpi^{\epsilon} a|\leq |\varpi|$. By (CST1) (2) we can even pick $n$ such that $b\in A^{0}_n$ proving what we wanted. 

Now we need to show that the map \eqref{eq01i33rod} is almost injective, we will even show that it is injective. For that, it suffices to see that $A^{0}_n\cap \varpi A^{0}= \varpi A^{0}_n$. But this is clear as  $A^0_{n}= A_n\cap A^0$. 
\end{proof}

\begin{exam}\label{exampleOneDimSen}

\begin{enumerate}

\item  The most naive example of a Sen theory is  a \textit{trivial Sen theory}. Namely, we let $A$ be a Banach ring endowed with a locally analytic action of $\Gamma$.  Then, the traces $R_n$ are just the identity maps on $A$, and it is straightforward to check that all the axioms of Definition \ref{DefinitionSenAxioms} hold.  One may think that this example is strange, but it actually shows up in practice, see Example \ref{ExamBoundaryTori} (5). 

\item The first non-trivial example of a Sen theory is the classical one presented by Tate \cite{Tatepdivisible} and Sen \cite{SenCotinuous}, given by an abelian totally wildly ramified extension $K_{\infty}$ of a discretely valued field $K$ with perfect residue field with $\Gal(K_{\infty},K)$ a $1$-dimensional $p$-adic Lie group.   In this case one takes $A=\widehat{K}_{\infty}$ to be the completion of the abelian extension, and the maps $R_n$ are constructed from normalized traces, see \cite[Proposition 4.1.1]{BC1}.  The prototypical example is given by the cyclotomic extension $\mathbb{Q}_p(\zeta_{p^{\infty}})$.

\item The next example relevant for us arises from geometry. Let $C/\mathbb{Q}_p$ be a perfectoid field containing $\bb{Q}_p^{\cyc}$. Let $A=  C \langle T^{\pm 1/p^{\infty}} \rangle $  be the ring of functions of the perfectoid torus and $A_{n}=C\langle T^{\pm 1/p^n}\rangle$. Let $\Gamma=\mathbb{Z}_p(1)$ be the Tate group of $p$-th powers root of unit, and let us fix $\gamma=(\zeta_{p^{n}})_n$ a compatible sequence of $p$-th power roots of $1$. We have a natural action of $\Gamma$ on $A$ given by 
\begin{equation}\label{eqkoawdwda}
\gamma^a\cdot T^{1/p^k} = \zeta_{p^k}^a T^{1/p^k} 
\end{equation}
for all $a\in \mathbb{Z}_p$.  The ring $A$ has a standard  Banach $C$-basis given by the elements $T^{r}$ for $r\in \mathbb{Z}[1/p]$.  We have maps $R_n: A\to A_n$ given by taking the projection on the elements  $\{T^r\}_r$ in the basis with $|r|_p \leq |p^{-n}|$. It is straightforward to check that the axioms of Definition \ref{DefinitionSenAxioms} hold in this situation. Indeed,  in this case one even has that $||R_n||\leq 1$ as operators, i.e. that they send power bounded elements on $A$ to power-bounded elements on $A_n$. Letting $X_{n}=\ker R_n$, one sees that $X_n$ is the Banach direct sum of the elements $T^r$ with $|r|>|p^{-n}|$. By \eqref{eqkoawdwda}, for $k \leq n$ one sees that 
\[
 (1-\gamma^{p^k}) T^{r} = (1-\zeta_{p^{s-k}}^a) T^r
\]
where $r=a/p^s$ with $(a,p)=1$. This immediately shows that $(1-\gamma^{p^k})$ is invertible on $X_n$.   Since $|1-\zeta_{p^m}^b|_p= \frac{1}{(p-1)p^{m-1}}$ for $(b,m)=1$, the inverse operator $(1-\gamma^{p^k})^{-1}$ has norm $\leq | 1-\zeta_{p^{n+1-k}}|_p =\frac{1}{(p-1)p^{n-k}}$.

\item We will also need a variant of the previous example. In this case we take $A=C\langle S^{1/p^{\infty}} \rangle$ the ring of functors of a perfectoid affinoid disc of radius $1$. We take $\Gamma=\mathbb{Z}_p(1)$ as before acting in the obvious way, the rings $A_n=C\langle S^{1/p^n}\rangle$ and the maps $R_n: A\to A_n$ given by projecting to the basis elements $\{S^r\}_r$ with $|r|\leq |p^{-n}|$. 

\end{enumerate}

\end{exam}

\begin{remark}\label{remarkaoqkqsdqw}
Strictly speaking, for having a Sen theory in Example \ref{exampleOneDimSen} (2) in the sense of Definition \ref{DefinitionSenAxioms}, one would need to take the restriction to an open subgroup of the Galois group. Since Sen theory is of infinitesimal nature, i.e. it only depends on open neighbourhoods of the identity of the group,  we will  omit this detail now and in the future. 
\end{remark}

\subsubsection{Products of Sen theories}

In the previous section we defined a Sen theory for one dimensional $p$-adic Lie groups. In the following  we extend the previous axioms to the case when $\Gamma\cong \mathbb{Z}_p^d$ as $p$-adic Lie group. We fix a basis $\gamma_i\in \Gamma$ with $i=1,\ldots, d$ and let $\Gamma_i:=\gamma_i^{\mathbb{Z}_p}\subset \Gamma$ the $1$-parameter subgroup induced by $\gamma_i$. Finally,  consider a continuous action of $\Gamma$ on a sous-perfectoid ring $A$.

\begin{definition}\label{defProdSenTheory}
A \textit{$d$-dimensional Sen theory}  $(A,\Gamma, (R_n^i)_{ i=1}^d)$ for $(A,\Gamma)$ is the datum of    Sen theories $(A,\Gamma_i, (R_n^i)_{n\in \bb{N}})$  in the sense of Definition \ref{DefinitionSenAxioms} (with closed subrings $A_{n}^i$) for all  $i=1,\ldots, d$, satisfying the following properties: 

\begin{itemize}

\item[(a)]  For all $i,j\in \{1,\ldots, d\}$ the traces commute, i.e. $R_n^iR_m^j=R_m^jR_n^i$ for all $n,m\geq 0$. 

\item[(b)] The subrings $A_n^i$ are $\Gamma$-stable and the traces $R_n^i$ are $\Gamma$-equivariant. 

\end{itemize}

For all $J\subset \{1,\ldots, d\}$ and all tuple $\underline{\bbf{n}}=(n_j)_{j\in J}\subset \mathbb{N}^J$ we let $A^J_{\underline{\bbf{n}}}\subset A$ be the closed subring consisting of the image of the operator $\prod_{j\in J} R_{n_j}^j:A\to A$. 

\end{definition}

\begin{remark}\label{remUnifCompletionProduct}
 Keep the notation of Definition \ref{defProdSenTheory} and  let $J\subset \{1,\ldots, d\}$.   By an inductive argument, Lemma \ref{lemUniformCompletion} implies that $A$ is the uniform completion of the colimit of Banach rings $\varinjlim_{\underline{n}\in \mathbb{N}^{J}} A_{\underline{n}^J}$, namely, by induction one can show that the map $\varinjlim_{\underline{n}\in \mathbb{N}^J} A_{\underline{n}}^{J,0}\to A^0$ is an almost isomorphism after $p$-completion. 
\end{remark}

\begin{exam}\label{ExampleSenTheoryProduct}
\label{ExamDiscTorus}
\label{ExamBoundaryTori}

\begin{enumerate}

\item A first example of this kind of multivariable Sen theory appears in the work of  Brinon \cite{BrinonSen} where he introduces generalizations of Sen theories in the case of non-archemedian fields with imperfect residue characteristic. Examples of such fields arise from completions of fields on rigid varieties at Zariski generic points in their Berkovich or adic spectrum. Since this theory will not be necessary for us we will not attempt to precisely construct a $d$-dimensional Sen theory as in Definition \ref{defProdSenTheory} in this situation.  

\item The main example for this paper will arise from products of tori. For later convenience,  we will introduce the relevant geometric notation.

Let $C/\bb{Q}_p$ be a perfectoid field containing $\bb{Q}_p^{\cyc}$ and $C^+\subset C$ an open bounded integrally closed subring.  Let  $\bb{T}_{C}:= \Spa(C\langle T^{\pm 1} \rangle,C^+\langle T^{\pm 1} \rangle)$ be the torus of dimension $1$ with coordinate $T$ over $\Spa(C,C^+)$.  For  non-negative integers $k,n\geq 0$ we let $\bb{T}^k_{C,n}$ be the $k$-th dimensional torus with coordinates $T_{1}^{1/p^n}, \cdots, T_{k}^{1/p^n}$. For $n=\infty$ we let $\bb{T}_{C,\infty}^k=\varprojlim_{n} \bb{T}^k_{C,n}$ be the $k$-th dimensional perfectoid torus with variables $T_1^{1/p^{\infty}},\ldots, T_k^{1/p^{\infty}}$.  Let $A$ be the ring of functions of $\bb{T}^k_{C,n}$ and let $\Gamma=\mathbb{Z}_p(1)^k$ acting on $A$ in the natural way. By taking completed tensor products over $C$,  the maps $R_n^i$ of the one-dimensional torus with coordinate $T_i$ of Example \ref{exampleOneDimSen} (3) give rise a Sen theory  $(A,\Gamma, (R_n^i)_n)$ in the sense of Definition \ref{defProdSenTheory}.  Note that in this case, the map $\mathbb{T}_{C,\infty}^k\to \bb{T}_{C}^k$ is a pro-finite-\'etale $\bb{Z}_p(1)^k$-torsor.

\item A variant of the previous example that will be needed on this paper is obtained by discs instead of tori, we  introduce the relevant geometric notation. Let $\bb{D}_{C}:= \Spa(C\langle S \rangle ,  C^+\langle S \rangle)$ be the $1$-dimensional disc  over $C$ with coordinate $S$. For non negative integers $k,n\geq 0$ let $\bb{D}_{C,n}^k$ be the $k$-th dimensional polydisc with coordinates $S_1^{1/p^n}, \ldots, S_k^{1/p^{n}}$. Let $\bb{D}_{C,\infty}^k=\varprojlim_{n} \bb{D}^k_{C,n}$ be the perfectoid polydisc of dimension $k$ and coordinates $S_1^{1/p^{\infty}}, \ldots, S_k^{1/p^{\infty}}$. By letting $A$ be the ring of functions of $\bb{D}_{C,\infty}^k$ and $\Gamma=\mathbb{Z}_p(1)^k$, the one dimensional Sen theories  of Example \ref{exampleOneDimSen} (4) will combine together to a Sen theory $(A,\Gamma, (R_n^i)_{n,i})$ in the sense of Definition \ref{defProdSenTheory}.  Note that in this case the map $\mathbb{D}_{C,\infty}^k\to \bb{D}_{C}^k$ is a pro-finite-Kummer-\'etale $\bb{Z}_p(1)^k$-torsor.

\item By mixing together the previous two examples of Sen theories, we can construct a Sen theory for the product $\mathbb{S}_{C}^{(e,d-e),\infty}:=\mathbb{T}^e_{C,\infty}\times \mathbb{D}_{C,\infty}^{d-e}$ of tori and polydiscs with coordinates $T_1,\ldots, T_e, S_{e+1},\cdots, S_d$. We shall write $\mathbb{S}_{C,n}^{(e,d-e)}:=\mathbb{T}^d_{C,n}\times \mathbb{D}_{C,n}^{d-e}$ for $n\in \mathbb{N}\cup\{\infty\}$. By letting $A$ to be the ring of functions of $\mathbb{S}_{C,\infty}^{(e,d-e)}$ and $\Gamma=\mathbb{Z}_p(1)^d$, one has a natural action of $\Gamma$ on $A$ and the Sen theories of (2) and (4) combine to a Sen theory on $(A,\Gamma)$.

\item We finally discuss an example that combines the mixed case of (4), and the trivial Sen theory of Example \ref{exampleOneDimSen} (1). Let $\bb{S}^{(e,d-e)}_{C}$ be as in (4). We see $\bb{S}^{(e,d-e)}_{C}$ as a log adic space endowed with log structure arising from the normal crossing divisor $S_{e+1}\cdots S_d=0$. For $i\in \{e+1,\ldots,d\}$ let $D_i\subset\bb{S}_{C}^{(e,d-e)}$ be the divisor defined by $S_i=0$, and given $J\subset \{e+1,\ldots, d\}$ we let $D_{J}=\bigcap_{i\in J} D_i$. We endow $D_{J}$ with the log structure $\n{M}$ given by pullback along the map $D_J\to \bb{S}_{C}^{(e,d-e)}$. For $n\in \bb{N}\cup \{\infty\}$ let $D_{J,n}=\bb{S}_{C,n}^{(e,d-e)}\times_{\bb{S}_{C}^{(e,d-e)}} D_J$ where the fiber product is an fs log adic space, see \cite[Proposition 2.3.27]{DiaoLogarithmic2019}. Without loss of generality let us assume that $J=\{i+1,\ldots,d\}$ for some $i\geq e$, the underlying adic space of $D_{J,\infty}$ is the perfectoid space
\[
D_J=\Spa ( C\langle \underline{T}^{\pm \frac{1}{p^{\infty}}}, S_{e+1}^{\frac{1}{p^{\infty}}}, \ldots, S_{i}^{\frac{1}{p^{\infty}}} \rangle, C^+\langle \underline{T}^{\pm \frac{1}{p^{\infty}}}, S_{e+1}^{\frac{1}{p^{\infty}}}, \ldots, S_{i}^{\frac{1}{p^{\infty}}} \rangle)=\bb{S}_{C}^{(e,i-e)}.
\]
On the other hand, the log structure $\n{M}$ of $D_{\infty}$ is induced by the map of monoids
\[
\delta: (\bb{N}[\frac{1}{p}])^{d}\to  A_{J}:= C\langle \underline{T}^{\pm \frac{1}{p^{\infty}}}, S_{e+1}^{\frac{1}{p^{\infty}}}, \ldots, S_{i}^{\frac{1}{p^{\infty}}} \rangle
\]
mapping the $j$-th component to $S_j$ if $j\leq i$, and to $0$ if $j>i$. In particular, $(D_{J,\infty},\n{M}_{\infty}) \to (D_J,\n{M})$ is a $\Gamma=\bb{Z}_p(1)^d$-torsor in the pro-Kummer-\'etale site of $(D_J,\n{M})$.   Note, however, that only the first $e+i$ components of $\Gamma$ act non-trivially on $A_J$. The  natural normalized traces attached to the coordinates  $T_1,\ldots, T_d, S_{e+1},\ldots, S_i$ give rise to normalized traces on $A_J$ and they give rise to a $d$-dimensional Sen theory  for the pair $(A_J,\Gamma)$ where the last $d-i$ components act trivially on $A_J$.  

\end{enumerate}

\end{exam}

\subsubsection{Base change and retracts of Sen theories}

In this last subsection we prove some stability properties of Sen theories that will be necessary for our applications. First, we need a stability under retracts in a suitable sense:

\begin{lem}\label{LemmaSenTheoryProjection}
Let $\Gamma=\mathbb{Z}_p$ and consider a  one dimensional Sen theory  $(A, \Gamma, (R_n))$,  let $B\subset A$ be a closed Banach $\widehat{F}$-subalgebra  stable under $\Gamma$ and $\pi:A\to B$ a $\Gamma$-equivariant  $B$-linear projection.  Suppose that $ \pi R_n=R_n \pi$ for all $n\gg 0$ so that the maps $R_n$  restrict to  maps $R'_n : B\to B$. Then  the tuple $(B, \Gamma, (R'_n)_n)$ is a  one dimensional Sen theory. 
\end{lem}
\begin{proof}
Since $B$ is a retract of $A$ via the map $\pi$, $B$ is sous-perfectoid and we have that $B^0=B\cap A^0$. Thus, the restriction of the spectral norm of $A$ to $B$ agrees with the spectral norm of $B$. The condition $\pi R_n=R_n \pi$ implies that the traces $R_n$ leave $B$ stable, and so their images define closed Banach subalgebras $B_n=B\cap A_n$. The axiom (CST1) of Definition \ref{DefinitionSenAxioms} holds for $(B,\Gamma, R_n' )$. It is left to see that the axiom (CST2 holds), so let $\epsilon\in I$ and let  $n\gg k$ be such that $1-\gamma^{p^k}$ is invertible on $X_n$ with inverse $(1-\gamma^{p^k})^{-1}$  bounded by $\varpi^{-\epsilon}$. Let $X_n'$ be the kernel of $R_{n}'$. We have that $X_{n}'= X_n\cap B$ and so $1-\gamma^{p^k}$ is also invertible on $X_n'$ (since the projection $\pi$ must commute with $(1-\gamma^{p^k})^{-1}$ in $X_n$) and its inverse is bounded by $\varpi^{-\epsilon}$. This proves the lemma. 
\end{proof}

For extending Sen theories along \'etale maps we will need the following approximation of \'etale maps for affinoid perfectoid rings.

\begin{lem}\label{lemCompletionPerfd}
Let  $\{A_i\}_{i\in I}$ be a filtered system of sous-perfectoid $\widehat{F}$-algebras with uniform completed colimit $A=(\varinjlim_i A_i^0)^{\wedge -p }[\frac{1}{p}]$ given by a perfectoid ring. Let $A\to A'$ be an \'etale extension of $A$ that  factors as  a composite of finite \'etale extensions and rational localizations.
\begin{enumerate}

\item  There is some index $i$ and an \'etale extension $A_i\to A_i'$ that factors as a composite of finite \'etale maps and rational localizations such that $A'=A\widehat{\otimes}_{A_i} A_i'$.

\item   Let $i\in I$ and $A_i\to A_i'$ an \'etale map that factors as a composite of rational localizations and finite \'etale maps. For $j\geq i$ let $A_j':= A_j\otimes_{A_i} A_i'$. Then for any $\epsilon>0$ there is some $j\geq i$ such that the map 
\[
A_j'^{0}\widehat{\otimes}_{A_j^0} A^0 \to A'^0
\]
has  cokernel killed by $\varpi^{\epsilon}$.
\end{enumerate}

\end{lem}
\begin{proof}
Part (1) follows from \cite[Lemma 6.13 (ii)]{ScholzePerfectoid2012} for rational localizations and from  \cite[Proposition 5.4.53]{GabberRameroalmost} for finite \'etale maps. The general case is done by an inductive argument.

Part (2) is  \cite[Lemma 4.5 (iii)]{ScholzeHodgeTheory2013}. Note that, in the notation of \textit{loc. cit.} the fact that the $R_i$ are  Tate algebras of finite type is never used in the proof, only that they are uniform Tate algebras and that the \'etale extension $R_i\to S_i$ remains uniform. This holds in our situation since the algebras $A_i$ are sous-perfectoid.
\end{proof}

The following lemma extends actions of profinite groups along \'etale maps.

\begin{lem}\label{LemExtensionOfgrouos}
Let $\Pi$ be a profinite group acting on a sous-perfectoid ring $A$ over $\bb{Q}_p$. Let  $A\to A'$ be an \'etale map. Then, there is an open compact subgroup $\Pi'\subset \Pi$ and a continuous action $\rho$ of $\Pi'$ on $A$ extending that on $A$.  Given two of such extensions $\rho_1$  to $\Pi'_1$ and $\rho_1$ to $\Pi_2'$, there is a compact  open subgroup $\Pi'_3\subset \Pi'_1\cap \Pi_2'$ such that $\rho_1$ and $\rho_2$ agree on $\Pi'_3$.  Furthermore, if $\Pi$ is a $p$-adic Lie group and the action on $A$ is locally analytic, then the action of $\Pi'$ on $A'$ is also locally analytic. 
\end{lem}
\begin{proof}
  One can separate the problem in open immersions and finite \'etale maps. For open immersions $U=\Spa (A',A'^+)\subset \Spa(A',A'^{0})$ it suffices to take the stabilizer $G_U$ of $U$ in $G$. The case of finite \'etale maps is \cite[Lemma]{ScholzeLubinTate} (except that there it is stated only for strongly noetherian Tate rings but the argument works in general).  

By shrinking $\Pi$ we can assume that it acts on both $A$ and $A'$. It is left to show that the action of $\Pi$ on $A'$ is locally analytic if it is on $A$. Since the map $A\to A'$ is topologically of finite type there is a ring of definition $A_0\subset A$ and elements $f_1,\ldots, f_k\in A'^{0}$ such that the $p$-complete algebra $A_0':=A_0\langle f_1,\ldots, f_k\rangle \subset A'$ generated by the elements $f_i$  is a ring of definition of $A'$. By Lemma \ref{LemmaBanachLocAn} it suffices to show that the action of $\Pi$ on $A_0'/p$ factors through a finite quotient. Since the action on $A$ is locally analytic the same lemma implies that the action on $A_0/p$ factors through a finite quotient, by trivializing the action of the elements $f_i$ modulo $p$ (which is possible since the action of $\Pi$ in $A'_0/p$ is smooth) one gets the desired factorization.   
\end{proof}

\begin{lem}\label{SenTracesBanach}
Let $(A, \Gamma ,(R_n)_n)$ be a one dimensional Sen theory.  Let $n\gg 0$ be an integer such that there is a Banach basis $\{e_{i}\}_{i\in I}$ of $A^0$  as $A_n^0$-module (contained in $A^0$) such that

\begin{itemize}

\item  $I=\bigcup_{k\geq n} I_k$ with $I_k$ a sequential system of  finite subsets, 

\item    for $m\geq n$
 \[
 A_m^0=\bigoplus_{i\in I_m} e_{i}A_n^0,
 \]
 
 \item  for $m\geq n$ the traces $R_m: A\to A_m$ are given by the projection along the basis $\{e_i\}_{i\in I_m}$. 
 
 \end{itemize}
 
 \begin{enumerate}
\item   The solid base change along $A_n\to A$ is flat.   

\item  Let $M$ be a Banach $A_n$-module and consider the base change $M_A=M\widehat{\otimes}_{A_n} A$ as well as $M_{m}= M\widehat{\otimes}_{A_n} A_m$. Then $M_m\subset M_A$ is a closed Banach $A$-module, the base change of $R_m: A\to A_m$ gives rise to a retract $R_m: M_A\to M_m$ and for $x\in M_A$ the sequence of elements $(R_m(x))_{m}$ in $M_A$ converges to $x$. 
\end{enumerate}
\end{lem}
\begin{proof}
 
 By hypothesis $A$ is a Banach direct sum of copies of $A_m$ for $m\geq n$ giving rise to an isomorphism $A\cong A_{n}\widehat{\otimes}_{\mathbb{Q}_p} \widehat{\bigoplus}_{I} \mathbb{Q}_p$ of $A_n$-modules.   Since   Banach $\mathbb{Q}_p$-vector spaces are flat for the solid tensor product \cite[Lemma 3.21]{RRLocallyAnalytic}, by base change  the solid tensor product along $A_m\to A$ is flat for all $m\geq n$, and then so is the $p$-complete tensor product of Banach $A_m$-modules. The fact that $M_m\subset M_A$ is a closed Banach $A_m$-module and that we have the retract $M_A\to M_m$ is clear.  This proves (1). 
 
For part (2),  by assumption on $A$ we can write 
\[
M_A= \widehat{\bigoplus}_{i\in I} e_i M
\]
and the projection maps $M_A\to M_n$ are given by projecting onto a suitable subbasis of the $\{e_{i}\}_{i\in I}$. The convergence of the sequence $(R_m(x))_x$ to $x\in M_A$ is then clear.  
\end{proof}

Finally, the following lemma  extends Sen theories along \'etale maps under certain hypothesis that will occur in practice. 

\begin{lem}\label{LemmaEtaleBaseChange}
Let $\Gamma=\mathbb{Z}_p$ and consider a Sen theory  $(A, \Gamma, (R_n)_n)$ as in Definition \ref{DefinitionSenAxioms} with $A$ perfectoid. Let $A\to A'$ be an \'etale map obtained as a composite of finite \'etale maps and rational localizations. 

\begin{enumerate}

\item  There is some $k\gg 0$ and an \'etale map $A_k\to A_k'$ that factors as a composite of rational localizations and finite \'etale maps such that $A'\cong A\otimes_{A_k} A_{k}'$. Furthermore, after taking a smaller open subgroup if necessary,  the action of $\Gamma$ on $A_k$ extends to $A_k'$ and so to $A'$.  For $n\geq k$ let $R'_n:A'\to A'_n$ be the base change of the map $R_n:A\to A_n$ along $A_{k}\to A_{k}'$.

 \item  Suppose that there is  $n\gg 0$  an integer such that there is an $A_n^0$-Banach basis $\{e_{i}\}_{i\in I}$  as  in Lemma \ref{SenTracesBanach}.    Then the tuple $(A',\Gamma, (R'_n)_n) $ is a Sen theory in the sense of Definition \ref{DefinitionSenAxioms}.
 
\end{enumerate}
 
\end{lem}
\begin{proof}

Part (1) follows from Lemma \ref{lemCompletionPerfd} (1) and Lemma \ref{LemExtensionOfgrouos}. Indeed, $A$ is the uniform completion of $\varinjlim_n A_n$ by Lemma \ref{lemUniformCompletion}.

 Let us now suppose that $A_k\to A_k'$ is an \'etale map that factors as a composite of  rational localizations and finite \'etale maps, and suppose that the action of $\Gamma$ on $A_k$ extends to $A_k'$.   For $n\geq k$ let $A_n':=A_{k}'\widehat{\otimes}_{A_k} A_n$ and let  $R_n': A'\to A_n'$ be the $A_{k}'$-linear extension of the map  $R_n:A\to A_n$. We want to see that the axioms (CST1) and (CST2) of Definition \ref{DefinitionSenAxioms} hold for $(A',\Gamma, (R'_n)_n)$.

 We first show the axioms of (CST1). The axiom (CST1) (1) is obvious, the axiom (CST1) (3) follows from Lemma \ref{SenTracesBanach} when $M=A'_k$, and the axiom (4) follows from  Lemma \ref{LemExtensionOfgrouos}.  We need to show that the traces $R'_n$ satisfy the desired bound condition of (CST1) (2).  Let $\epsilon>0$, by Lemma \ref{lemCompletionPerfd} there is some $N\gg 0$ such that for $m\gg N$ the map 
 \[
 A_m'^0\widehat{\otimes}_{A^0_m} A^0\to A'^0
 \]
 has cokernel killed by $\varpi^{\epsilon}$. Note that the map must be injective since $A^0$  has a Banach basis as $A_m^0$-module. In particular,  $ A_m'^0\widehat{\otimes}_{A^0_m} A^0$ is a ring of definition of $A'$.   Similarly,  by (CST1) (2) we can take $N\gg 0$  such that for all $m\gg N$ the trace $R_m$ induces a map  
 \[
 R_m: A^0 \to \varpi^{-\epsilon} A_m^0. 
 \]
 This implies that the base change $R'_m: A'\to A_m'$ induces a map
 \[
 R'_m:  A_m'^0\widehat{\otimes}_{A^0_m} A^0 \to \varpi^{-\epsilon} A_m'^0.
 \]
Hence, $R'_m$ induces a map $R_m': A'_0\to \varpi^{-2\epsilon} A_{m}'^0$. Since $\epsilon$ was arbitrary this proves the axiom.

Next we show that the axiom (CST2) holds.   Let $X_m=\ker{R_m}$ and let $X'_m$ be its base change along $A_k\to A'_k$. Let $\epsilon>0$, by taking $m$ big enough we can assume that the injective map $X^0_{m}\widehat{\otimes}_{A_m^0} A_{m}'^0 \to X_m'^0:= X_m'\cap A'^0$   has cokernel killed by $\varpi^{\epsilon}$. Therefore, $\widetilde{X}_m'^0:=X^0_{m}\widehat{\otimes}_{A_m^0} A_{m}'^0$ is open in $X'^0$ and we can prove the axiom (CST2) using the subspace  $\widetilde{X}_m'^0$ instead of $X_m'^0$. Let $\gamma\in \Gamma$ be a generator.  By (CST2) for $\epsilon>0$ and $s\in \mathbb{N}$ there is $n_{2}(\epsilon, s)\geq n_1(\epsilon)$ such that if $n\geq n_2(\epsilon,s)$ the map $1-\gamma^s: X_n\to X_n$  has an inverse $(1-\gamma^{p^s})^{-1}$ of norm $\leq |\varpi|^{-\epsilon}$. This means that the map $1-\gamma^{p^s}: X_n^0\to X_n^0$ has an inverse up to a $\varpi^{\epsilon}$-factor
\[
(1-\gamma^{p^s})^{-1}: X^0_n\to  \varpi^{-\epsilon } X_n^0. 
\] 
On the other hand, by taking $s\gg 0$, since the action of $\Gamma$ on $A_k$ is locally analytic, by Lemma \ref{LemmaBanachLocAn} we can assume that $\gamma^{p^s}$ acts trivially on $A_k^0/\varpi^{2\epsilon}$ and $A_{k}'^0/\varpi^{2\epsilon}$.  Consider the image $Y^0_n=(1-\gamma^{p^s}) X_n^0$, the space $Y^0_n$ is contained in $X^0_n$, contains $\varpi^{\epsilon} X^0_n$, and is stable under the multiplication by $A_k^0$. 

By assumption (2) the $A_k^0$-module $X_n^0$ has an ON basis, say $\{f_j\}_{j\in J}$. Since the elements $\varpi^{\epsilon}$ are algebraic over $\mathbb{Q}_p$, we can choose $s\gg 0$ such that $\gamma^{p^s}$ is $\varpi^{\epsilon}$-linear. Since $1-\gamma^{p^s}$ is an isomorphism on $X_n^0$, the previous shows that the image of $\{f_j\}_{j\in J}$ by $1-\gamma^{p^s}$ gives rise to an ON basis of $Y^0_{n}$ as $A_k^0$-module. Indeed, one has an $A^0_k$-linear map of torsion free $\varpi$-complete  $A_k^0$-modules
\[
\widehat{\bigoplus}_{j\in J} A_k^0 (1-\gamma^{p^s}) f_j \to Y^0_{\varpi}
\]
which is an equivalence modulo $\varpi^{2\epsilon}$, so by Nakayama's lemma it must be an isomorphism. 

We deduce that the base change $Y_n'^0=Y^0_{n}\otimes_{A_k^0} A_k'^0$ is an ON Banach $A_k'^0$-module and that it gives rise to an open ball in $Y_n'^0[\frac{1}{p}]=X_n'^0$  such that 
\[
\varpi^{\epsilon} \widetilde{X}_n'^0 \subset Y_n'^0 \subset \widetilde{X}_n'^0. 
\] 
Finally, one sees that  the induced map $1-\gamma^{p^s}: X_n'\to X_n'$ sends $\widetilde{X}_n'^0$ to $Y_n'^0$. By reducing modulo $\varpi^{2\epsilon}$ this map is $A_k^{'0}/\varpi^{2\epsilon}$-linear and is the base change along $A_k^0\to A_k'^0$ of the isomorphism 
\[
1-\gamma^{p^s}:X_n^0/\varpi^{2\epsilon} \xrightarrow{\sim} Y_n^0/\varpi^{2\epsilon}. 
\]
By Nakayama's lemma this implies that $1-\gamma^{p^s}: \widetilde{X}_n'^0\xrightarrow{\sim} Y_n'^0$ is also an isomorphism. One deduces that $1-\gamma^{p^s}: X_n'\to X_n'$ has an inverse $(1-\gamma^{p^s})^{-1}$ with norm $\leq |\varpi^{-\epsilon}|$ with respect to the Banach ball $\widetilde{X}'^0_n$. Since $\widetilde{X}'^0_n \subset X_n'^0$ has cokernel killed by $\varpi^{\epsilon}$, one deduces that the norm   of $(1-\gamma^{p^s})^{-1}$ with respect to the ball $X_n'^0$ is $\leq |\varpi^{-2\epsilon}|$. As $\epsilon$ was arbitrary this shows (CST2) and finishes the proof of the lemma. 
\end{proof}

\begin{remark}\label{remLemmaApplies}
Thanks to the Examples \ref{exampleOneDimSen} (2)-(4), Lemma \ref{LemmaEtaleBaseChange} applies for the perfectoid torus,  the perfectoid disc and  the cyclotomic tower. This will cover all the relevant applications in this paper. 
\end{remark}

We conclude with the construction of Sen theories for smooth affinoid spaces. We keep the notation of Example \ref{ExampleSenTheoryProduct}.

\begin{prop}\label{PropSenTheoryAffinoids}
Let $(X,\mathcal{M}_X)$ be a log affinoid  adic space over a perfectoid field $(C,C^+)$ containing $\mathbb{Q}_p^{\cyc}$  with $X=\Spa (A,A^+)$. Suppose that there is an \'etale chart
\[
\psi: X\to \mathbb{S}^{(e,d-e)}_C
\]
written as a composite of finite \'etale maps and rational localizations,  where the log structure of $X$ arises from the log structure of the polydisc term on the right hand side. Let $\mathbb{S}^{(e,d-e)}_{K,\infty}$ be the perfectoid product of tori and discs endowed with the action of $\Gamma= \mathbb{Z}_p(1)^d$, and let $X_{\infty}= X_{\infty} \times_{\mathbb{S}^{(e,d-e)}_{C}} \mathbb{S}^{(e,d-e)}_{C,\infty}$ be its base change to a pro-Kummer-\'etale $\Gamma$-torsor of $X$. Let $A_{\infty}$ be the ring of functions of $X_{\infty}$, and let $R_{n}'^i$ be the base change of the Sen traces of $\mathbb{S}^{(e,d-e)}_{C,\infty}$ to $A_{\infty}$. Then the tuple $(A_{\infty}, \Gamma , (R_n'^i)_{n} )$ is a $d$-dimensional Sen theory. A similar statement holds for the rings of functions $A_{\infty,J}$ obtained by boundary divisors for some subset $J\subset \{e+1,\ldots, d\}$ as in Example \ref{ExamDiscTorus} (5). 
\end{prop}
\begin{proof}
The one dimensional case follows from  \ref{LemmaEtaleBaseChange}, the higher dimensional case follows from induction on the number of coordinates. 
\end{proof}

\subsection{Relative locally analytic representations}
\label{s:SenTheoryRelLocAnRep}

Let $A$ be a sous-perfectoid ring  and let $A^0\subset A$ be the subring of power bounded elements. Let $\Pi$ be a profinite group acting continuously on $A$. 

The goal of this section is to introduce a relative analogue of ON Banach locally analytic representations over $A$ for the semilinear action of $\Pi$. The motivation is  provided by Lemma \ref{LemmaBanachLocAn},   saying that,   a continuous action of a compact $p$-adic Lie group $G$ on a Banach space  $V$ is locally analytic  if and only if there is a  $G$-stable lattice $V^0\subset V$  such that  $G$ acts through a finite quotient on $V^0/p$.  

Moreover, in order to adapt the decompletions of \cite{BC1} in Section \ref{s:SenTheoryFunctor} for relative locally analytic representations, we shall need to study infinite dimensional analogues of $1$-cocycles.  This requires a topological understanding of the automorphism group of an infinite dimensional ON Banach space: 

\begin{lem} \label{lemTopEnd}
For $I$ and $J$  index sets,  we have isomorphisms of $A$ and $A^0$-modules
  \begin{equation}
  \label{eqHomBanachs}
  \begin{aligned}
\Hom_{A}(\widehat{\bigoplus}_I A, \widehat{\bigoplus}_J A)= (\prod_{I} \widehat{\bigoplus_J} A^0)[\frac{1}{p}],  \\ 
\Hom_{A^0}(\widehat{\bigoplus}_I A^0, \widehat{\bigoplus}_J A^0)= \prod_{I} \widehat{\bigoplus_J} A^0 
  \end{aligned}
  \end{equation}
Where the $\Hom$  spaces is as continuous $A$ or $A^0$-modules.  We endow the $\Hom_{A^{0}}$ space with its natural product topology, and the $\Hom_{A}$  space with the locally convex topology making  the $\Hom_{A^{0}}$  subspace  bounded. Equivalently, we endow  the previous $\Hom_{A^0}$ and $\Hom_{A}$ spaces with the compact open topology (these two topologies agree thanks to \cite[Proposition 4.2]{ClausenScholzeCondensed2019}).  
    \proof
    We have that $\iHom_{A}(\widehat{\bigoplus}_I A, \widehat{\bigoplus}_J A)= \iHom_{A^0}(\widehat{\bigoplus}_I A^0, \widehat{\bigoplus}_J A^0) [\frac{1}{p}]$, so it suffices to prove the second equality. One has that 
    \begin{eqnarray*}
\iHom_{A^0}(\widehat{\bigoplus}_I A^0, \widehat{\bigoplus}_J A^0) & = \varprojlim_{n} \iHom_{A^0/p^n}(\bigoplus_I A^0/p^n, \bigoplus_J A^0/p^n) \\ 
& =  \varprojlim_{n} \prod_{I} \bigoplus_J A^0/p^n \\ 
& = \prod_{I} \widehat{\bigoplus}_J A^0. 
    \end{eqnarray*}
    \endproof
\end{lem}

\begin{remark}
Let $V$ be an ON Banach $A$-module with fixed ON-lattice $V^0$ over $A^0$. Classically one would endow $\End_{A}(V)$ with a Banach topology for the operator norm $||-||$ with respect to the norm  of $V$ determined by $V^0$. Concretely, the elements of norm $\leq 1$ in $\End_A(V)$  correspond to $\End_{A^0}(V^0)$. On the other hand, the topology used in Lemma \ref{lemTopEnd} is the compact open topology, which naturally agrees with the condensed topology. One has a natural continuous map 
\[
\End_A'(V)\to \End_A(V)
\]
where the LHS  is endowed with the operator norm topology while the RHS is endowed with the compact open topology. This map is an isomorphism on underlying vector spaces but it is not an isomorphism of condensed $A$-modules. However, the existence of this map shows that a sequence of endomorphisms of $V$ that converges for the operator norm topology will also converge for the compact open topology. 
\end{remark}

\begin{definition}
  Let $V$ be an ON Banach $A$-module. We define the topological group $\iAut_A(V)$ to be the pullback 
\[
\begin{tikzcd}
\Aut_A(V) \ar[r] \ar[d] &  \iEnd_{A}(V) \times \End_{A}(V) \ar[d, "\phi"]  \\ 
\{\id_{V}\} \times \{\id_{V}\} \ar[r] & \iEnd_A(V) \times \End_A(V)
\end{tikzcd}
\]
where $\phi(f,g)=(f\circ g, g\circ f)$.  Equivalently, it is the closed subspace of $\End_{A}(V) \times \End_{A}(V)$ consisting on those pairs $(f,g)$ such that $f\circ g=\id_V = g\circ f$.  If $V^0$ is an ON  $A^0$-lattice on $V$ we define $\Aut_{A^0}(V^0)$ in a similar way. 
\end{definition}

The following lemma will be used  to construct invertible elements in $\iAut_A(V)$.

\begin{lem}
\label{LemInvertibleMatrix}
 Let $V$ be an ON Banach $A$-module with fixed ON-lattice $V^0$ over $A^0$. Let $M \in \End_A(V)$ be an endomorphism whose  operator norm with respect to $V^0$ satisfies $||M||\leq  |\varpi^{\epsilon}|$ for some $\epsilon>0$.   Then $1-M \in \iAut_{A}(V)$ and its inverse is given by the convergent series $(1-M)^{-1}= \sum_{n=0}^\infty M^n$.  
\proof
Write  $V^0=\widehat{\bigoplus}_{I} A^0$  so that $\End_A(V)\cong (\prod_{I}\widehat{\bigoplus}_I A^0) [\frac{1}{p}]$.  It is enough to show  that $\sum_{n=0}^\infty M^n$ converges in $\iEnd_A(V)$, and that the sequence $((1-M)  \sum_{n=0}^{m} M^n)_{m\in \bb{N}}$ converges to $\id_V$.   By hypothesis $M'=\frac{1}{\varpi^\epsilon}M $ is an operator of $V^0$,  thus $\sum_{n=0}^\infty M^n= \sum_{n=0}^\infty \varpi^{\epsilon n} M'^{n}$ converges since $\iEnd_{A^0}(V^0)\cong \prod_{I} \widehat{\bigoplus}_I A^0$ is $p$-adically complete, and both $\varpi^{\epsilon}$ and $p$ are topologically nilpotent units of $A$.    One shows in a similar way that the sequence $((1-M)\sum_{n=0}^m M^{n})_{m\in \bb{N}} $ converges to $\id_{V}$ finishing the proof. 
\endproof
\end{lem}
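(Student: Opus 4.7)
The plan is to prove invertibility by the classical Neumann series argument, now adapted to the ON Banach setting. The first step is to reinterpret the operator-norm condition $\|M\| \leq |\varpi^\epsilon|$ in terms of the integral lattice: writing $V \cong \widehat{\bigoplus}_I A$ with unit ball $V^0 \cong \widehat{\bigoplus}_I A^+$, the hypothesis is equivalent to saying that $M$ sends $V^0$ into $\varpi^\epsilon V^0$, or equivalently that $M' := \varpi^{-\epsilon} M$ lies in $\iEnd_{A^+}(V^0)$. This reformulation is where the preceding identification $\iEnd_{A^+}(V^0) \cong \prod_I \widehat{\bigoplus}_I A^+$ becomes essential, since it turns the statement about an operator on a potentially infinite-rank Banach module into a statement about an element of a $p$-adically complete ring.

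The next step is to show that the partial sums $S_m := \sum_{n=0}^{m} M^n$ converge in $\iEnd_A(V)$. Rewriting $M^n = \varpi^{n\epsilon}(M')^n$, one sees that each $(M')^n$ lies in the integral lattice $\iEnd_{A^+}(V^0)$, while $\varpi^\epsilon$ is a topologically nilpotent unit of $A$. Combined with the $p$-adic (and simultaneously $\varpi$-adic) completeness of $\prod_I \widehat{\bigoplus}_I A^+$, the sequence $(S_m)$ is Cauchy and admits a limit $S \in \iEnd_A(V)$.

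Finally, the identities $(1-M)S = \id_V = S(1-M)$ follow from the telescoping identity $(1-M)S_m = \id_V - M^{m+1}$ together with $M^{m+1} \to 0$ for the same topologically nilpotent reason as above. Passing to the limit only requires continuity of composition with respect to the product topology on $\iEnd_{A^+}(V^0) \cong \prod_I \widehat{\bigoplus}_I A^+$, which is immediate by reduction modulo $p^n$. This places $S$ in $\iAut_A(V)$, as required.

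I do not anticipate any real obstacle here; the content is essentially a routine Banach-algebraic calculation. The only subtlety is bookkeeping for the product/colimit topology on $\iEnd_A(V)$, but this has already been resolved by the preceding lemma, so the argument reduces cleanly to recognizing the Neumann series in this topological framework.
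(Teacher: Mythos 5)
Your proposal is correct and follows essentially the same route as the paper: identify $\iEnd_{A^+}(V^0)\cong \prod_I \widehat{\bigoplus}_I A^+$, factor $M^n=\varpi^{n\epsilon}(M')^n$ with $M'$ integral, and use $p$-adic completeness together with topological nilpotence of $\varpi^{\epsilon}$ to get convergence of the Neumann series and the limit identity $(1-M)\sum_n M^n=\id_V$. The explicit telescoping identity is just a slightly more detailed spelling-out of the paper's final sentence.
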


Let $\Pi$ be a profinite group acting continuously on $A$.   Given an index set $I$, we denote by $e_i$ the standard basis of $\widehat{\bigoplus}_I A$, and    write $\GL_I(A)= \iAut_{A} (\widehat{\bigoplus}_I A)$  (resp. for $\GL_I(A^0)$).  The groups $\GL_I(A)$ and $\GL_I(A^0)$ have a natural continuous action of $\Pi$ on the coefficients.  Then,    ON  Banach $A$-semilinear representations of ``rank $I$'' with fixed ON basis  are equivalent to continuous $1$-cocycles of $\Pi$ on $\GL_I(A)$. 

 More precisely,  consider an $A$-semilinear representation $\rho: \Pi\times V \to V$ on an ON $A$-Banach module and  let $\Upsilon: \widehat{\bigoplus}_{i\in I} A e_i \xrightarrow{\sim } V$  be an $A$-linear isomorphism. Let $\sigma$ be the $A$-semilinear action on $\widehat{\bigoplus}_{i\in I} A e_i $ where $\Pi$ acts trivially on the  $e_i$'s.    One defines the $1$-cocycle attached to $\rho$ as
 \begin{equation}\label{eqConstCocycle}
U_{\rho}: \Pi \to \GL_I(A)
 \end{equation}
 via the formula 
 \[
 U_{\rho}(g)=   \Upsilon^{-1} \circ \rho(g)  \circ \Upsilon \circ  \sigma(g)^{-1} . 
 \]

Conversely, given $U: \Pi\to \GL_I(A)$  a continuous $1$-cocycle,  one has attached the representation $\rho_U$ on $V$ via the formula 
\[
\rho_{U}(g) =\Upsilon \circ  U(g) \circ \sigma(g) \circ \Upsilon^{-1}.
\]
One easily verifies that this produces a bijection between $A$-linear representation of $\Pi$ on $V$ (with fixed ON-basis) and continuous $1$-cocycles of $\Pi$ on $\GL_I(A)$.

We give the following definition of relative locally analytic ON Banach representation.

\begin{definition}
\label{DefRelLABanach}
An ON Banach $A$-semilinear  representation $\rho: \Pi\times V\to V$   is said relative locally analytic if  there exists a basis $\{v_i\}_{i\in I}$  generating an ON lattice $V^0$ over $A^0$ such that: 
\begin{itemize}

\item  There is  $\Pi' \subset \Pi$  an open subgroup stabilizing $V^0$ and  $\epsilon >0$ such that the action of $\Pi'$ on $\{v_i \mod \varpi^{\epsilon}\}_{i\in I}$ is  trivial.  

\end{itemize}
We say that $\{v_i\}_{i\in I}$ is a relative locally analytic basis of $V$.  
\end{definition}

\begin{remark}
The previous definition can be rewritten in terms of $1$-cocycles.  Indeed, let $\rho$ be a continuous $A$-semilinear representation of $\Pi$ on an ON $A$-Banach module $V$. Let $\{v_i\}_{i\in I}$ be a fixed ON basis generating an ON $A^0$-lattice $V^0$ stable under the action of $\Pi$. By the construction of  \eqref{eqConstCocycle}, attached to $\rho$ we have a $1$-cocycle $U_{\rho}$ in $\GL_I(A)$. Since $\Pi$ leaves $V^0$-stable it even lives in $\GL_I(A^0)$. Then $\{v_i\}_{i\in I}$ is a relative locally analytic basis if  there is some $\varpi^{\epsilon}$ such that the composite  $U_{\rho}: \GL_I(A^0)\to \GL_I(A^0/\varpi^{\epsilon})$ factors through a finite quotient $\Pi/\Pi'$, namely, that the $1$-cocycle $U_{\rho}$ is trivial on $\Pi'\subset \Pi$ modulo $\varpi^{\epsilon}$.
\end{remark}

The following lemma says that composition  with matrices in $\iAut_A(V)$ that are close enough to $\id_{V}$ preserves relative locally analytic basis.  

\begin{lem}
\label{LemConjLAbasis}
Let $V$ be a relative locally analytic representation of $\Pi$,  $\bbf{v}=\{v_i\}_{i\in I}$ a relative locally analytic basis, and  $V^0$  the $A^0$-lattice spanned by $\{v_i\}_{i\in I}$.   Let $\psi \in \iEnd_{A}(V)$ be an operator such that  $||1-\psi||\leq  |\varpi^{\epsilon}|$ for the operator norm induced by $V^0$ and  some $\epsilon >0$.  Then $\psi(\bbf{v})=\{\psi(v_i)\}_{i\in I}$ is  a relative locally analytic basis of $V$.  
\end{lem}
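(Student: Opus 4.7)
The plan is to show that the new family $\psi(\bbf{v})$ is still an ON basis of the same lattice $V^0$ and that its associated $1$-cocycle remains trivial modulo some small power of $\varpi$ after possibly shrinking $\epsilon$. The calculation reduces to an application of the ultrametric inequality once the transformation formula for the new cocycle is in place.

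First, I would apply Lemma \ref{LemInvertibleMatrix} to the operator $\id - \psi$: this gives that $\psi \in \iAut_A(V)$, with $\psi^{-1} = \sum_{n \geq 0}(\id - \psi)^n$, and in particular $||\psi^{-1} - \id|| \leq |\varpi^{\epsilon}|$. Consequently both $\psi$ and $\psi^{-1}$ preserve the lattice $V^0$, so $\psi$ actually restricts to an element of $\iAut_{A^+}(V^0)$. It follows that $\psi(\bbf{v}) = \{\psi(v_i)\}_{i \in I}$ is an ON $A^+$-basis of $V^0$, since it is the image of an ON basis under a lattice automorphism.

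Next, let $\Upsilon: \widehat{\bigoplus}_I A \to V$ be the isomorphism induced by $\bbf{v}$, and write $\widetilde{\psi} := \Upsilon^{-1} \circ \psi \circ \Upsilon \in \GL_I(A^+)$. Unwinding Definition \ref{Def:cocycle} for the basis $\psi(\bbf{v})$, one sees $\sigma_{\psi(\bbf{v})}(g) = \psi \circ \sigma_{\bbf{v}}(g) \circ \psi^{-1}$ (the $A$-linearity of $\psi$ ensures semilinearity is preserved), and a direct computation yields
\[
U'(g) = \widetilde{\psi}^{-1} \cdot U(g) \cdot \widetilde{\psi}^g,
\]
where $U$ is the cocycle of $\bbf{v}$ and $\widetilde{\psi}^g$ denotes the matrix obtained by applying $g$ coefficient-wise to the entries of $\widetilde{\psi}$.

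Finally, pick $\Pi' \subset \Pi$ and $\epsilon_1 > 0$ as in Definition \ref{Def:cocycle} for the basis $\bbf{v}$, and write $U(g) = 1 + K_g$ with $||K_g|| \leq |\varpi^{\epsilon_1}|$ for $g \in \Pi'$, and $\widetilde{\psi} = 1 + \widetilde{M}$ with $||\widetilde{M}|| \leq |\varpi^{\epsilon}|$. The key identity is
\[
U'(g) - 1 = \widetilde{\psi}^{-1}\bigl(U(g) - 1\bigr)\widetilde{\psi}^g + \widetilde{\psi}^{-1}\bigl(\widetilde{\psi}^g - \widetilde{\psi}\bigr).
\]
Since $\widetilde{\psi}^{\pm 1}$ and $\widetilde{\psi}^g$ all lie in $\GL_I(A^+)$ (with operator norm $\leq 1$), the first summand has norm at most $|\varpi^{\epsilon_1}|$. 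For the second summand, $\widetilde{\psi}^g - \widetilde{\psi} = \widetilde{M}^g - \widetilde{M}$; because $\Pi$ acts by isometries on $A$, each entry of $\widetilde{M}^g$ still lies in $\varpi^{\epsilon} A^+$, and the ultrametric inequality gives $||\widetilde{M}^g - \widetilde{M}|| \leq |\varpi^{\epsilon}|$ uniformly in $g$. Combining the two bounds yields $||U'(g) - 1|| \leq |\varpi^{\delta}|$ for $\delta = \min(\epsilon, \epsilon_1)$ and every $g \in \Pi'$, and consequently $U'(g) \in \GL_I(A^+)$ and is trivial modulo $\varpi^{\delta}$. This is precisely the content of Definition \ref{Def:cocycle} for the new basis, so $\psi(\bbf{v})$ is relative locally analytic.

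The only point where some care is needed is that one might naively worry about uniform continuity of the $\Pi$-action on the infinite matrix $\widetilde{M}$ in order to control $\widetilde{M}^g - \widetilde{M}$; the observation that saves us is that the two terms are individually $\varpi^{\epsilon}$-small, so the ultrametric inequality bypasses any such continuity issue.
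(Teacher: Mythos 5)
Your proof is correct, but it runs through a different mechanism than the paper's. The paper verifies Definition \ref{DefRelLABanach} directly at the level of basis vectors: since $\|1-\psi\|\leq|\varpi^{\epsilon}|$ one has $\psi(v_i)\equiv v_i \bmod \varpi^{\epsilon''}V^0$ with $\epsilon''=\min\{\epsilon,\epsilon'\}$, and because $\Pi'$ stabilizes $V^0$, acts by isometries, and fixes the $v_i$ modulo $\varpi^{\epsilon'}$, it also fixes the $\psi(v_i)$ modulo $\varpi^{\epsilon''}$ — a two-line argument with no cocycle manipulation. You instead pass to the cocycle reformulation of Definition \ref{Def:cocycle}, establish the change-of-basis formula $U'(g)=\widetilde{\psi}^{-1}U(g)\,g(\widetilde{\psi})$, and bound $U'(g)-1$ by splitting it as $\widetilde{\psi}^{-1}(U(g)-1)\widetilde{\psi}^g+\widetilde{\psi}^{-1}(\widetilde{\psi}^g-\widetilde{\psi})$; all the individual steps (invertibility and $A^+$-integrality of $\psi^{\pm1}$ via Lemma \ref{LemInvertibleMatrix}, integrality of $\widetilde{\psi}^g$ from the isometric action on $(A,A^+)$, the ultrametric estimate on $\widetilde{M}^g-\widetilde{M}$) are sound, and the two formulations of relative local analyticity are explicitly declared equivalent in Definition \ref{Def:cocycle}, so nothing is missing. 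What your route buys is that it makes explicit the conjugation formula $M^{-1}U_g\,g(M)$ that the paper uses repeatedly afterwards (e.g.\ in Corollary \ref{CoroDevisageGammas} and the proof of Theorem \ref{TheoSenFunctor}), and it records that $\psi(V^0)=V^0$, which the paper leaves implicit; what it costs is length — the basis-level congruence argument gets the same conclusion essentially for free.
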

\begin{proof}
Let $\Pi'\subset \Pi$ be an open subgroup stabilizing $V^0$, and let $\epsilon'>0$ such that the action of $\Pi'$ on $\{v_i\mod \varpi^{\epsilon'}\}_{i\in I}$ is trivial.   Let $\epsilon''=\min\{\epsilon, \epsilon'\}$,  then  $\psi(v_i)\equiv v_i \mod \varpi^{\epsilon''}$ and $\Pi'$ acts on $\{\psi(v_i)\mod \varpi^{\epsilon''} \}_{i\in I}$ trivially.  This proves the lemma.  
\end{proof}

\begin{exam}
\label{ExampleRelativeLocAnRep}

\begin{enumerate}

\item Let $\Pi=G$ be a compact $p$-adic Lie group and $W$ be a Banach locally analytic representation over $\bb{Q}_p$.  Then,  by Lemma  \ref{LemmaBanachLocAn},   $W\widehat{\otimes}_{\bb{Q}_p} A$ is a relative locally analytic representation of $\Pi$.

\item Slightly more generally,  suppose that $\Pi$ admits  by  quotient  $\Pi\to G$ a compact $p$-adic Lie group.  Let $W$ be a Banach locally analytic representation of $G$ over  $\bb{Q}_p$.  Then $W\widehat{\otimes}_{\bb{Q}_p}A$ is a relative locally analytic representation of $\Pi$.  This is the situation we will face in the applications to rigid spaces.

\item Let $V$ be a finite free semilinear representation of $\Pi$, then $V$ is relative locally analytic. Indeed,  let $V^0\subset V$ be a $\Pi$-stable $A^0$-lattice with basis $v_1,\ldots, v_k$. Then there is some open subgroup $\Pi_0$ acting trivially on all the $v_i$ modulo $pV^0$. 
\end{enumerate}
\end{exam}

\subsection{The Sen functor}
\label{s:SenTheoryFunctor}

Let $A$ be a sous-perfectoid ring and $\Gamma\cong \bb{Z}_p^d$ a torsion free abelian $p$-adic Lie group of dimension $d$. Let  $(A, \Gamma, (R_n^i)_{i=1}^d)$ be a $d$-dimensional Sen theory. In this section we define the \textit{Sen functor} for relative locally analytic ON $A$-Banach representations of $\Gamma$; this is nothing but the decompletion by taking (derived) locally analytic vectors.

\begin{definition}[The Sen functor]
\label{DefinitionSenFunctor}
Let $V$ be a  relative locally analytic ON $A$-Banach representation of $\Gamma$.  We define the \textit{Sen module of $V$} to be  the subspace of locally analytic vectors for the action of $\Gamma$:
\[
S(V):= V^{\Gamma-la}.
\]
We also define the \textit{derived Sen module of $V$} to be  the solid complex of derived $\Gamma$-locally analytic vectors. 
\[
RS(V):=V^{R\Gamma-la}.
\]
\end{definition}
\begin{remark}\label{RemarkSenFunctorSolid}
In Definition \ref{DefinitionSenFunctor} (2) we see $V$ as a solid $\bb{Q}_{p}$-vector space.  Note also that  the  derived Sen module can be constructed for any complex of solid $A$-semilinear $\Gamma$-representation. 
\end{remark}

Before stating the main theorem, let us study the behaviour of passing through analytic vectors for the action of $\Gamma$ on $A$.  Write $C^{la}(\Gamma, \bb{Q}_p)=\varinjlim_{h\to \infty} C^{h}(\Gamma, \bb{Q}_p)$ as a colimit of $h$-analytic functions.  The following lemma says that the $h$-analytic vectors of $A$ are sandwiched with the images of the Sen traces.  

\begin{lem}\label{LemLocAnVectorsRingSenTheory}
Let $(A, \Gamma, R_n)$ be a one dimensional Sen theory and let $\gamma\in \Gamma$ be a generator. The following holds:

\begin{enumerate}
\item  Given $h>0$ there is $n(h) $ such that for $k\geq n (h)$ one has $X_k^{Rh-an}=0$. In particular $A^{Rh_n-an}= A_{n(h)}^{Rh-an}$, and the natural map $A^{Rh-an}\to A$  induces an  inclusion of subalgebras of $A$
\[
A^{h-an}\subset A_{n(h)}.
\]

\item Conversely, given $n\in \bb{N}$ there is $h(n)>0$ such that $A_n$ is $h(n)$-analytic. In particular we have an inclusion of subalgebras of $A$
\[
A_n\subset A^{h(n)-an}.
\]

\item Taking colimits as $n\to \infty$ we get that 
\[
A^{R\Gamma-la}=A^{\Gamma-la}=\varinjlim_{n} A_n.
\]

\end{enumerate}
\end{lem}
\begin{proof}
The second statement is just (CST4) (4). For the first statement, we need to see that given $h>0$ there is $n(h)$ such that for $k\geq n(h)$ the space $X_k=\ker(R_n:A\to A_n)$ has no $h$-analytic vectors.

By Lemma \ref{LemmaBanachLocAn} given  $h>0$ and $\epsilon>0$  there is some $s\in \bb{N}$ such that the action of $1-\gamma^{p^s}$ satisfies $||(1-\gamma^{p^s})||\leq |\varpi^{2\epsilon}|$ as operator on $C^{h}(\Gamma, \bb{Q}_p)$. By (CST2) given $\epsilon>0$ and $s\in \bb{N}$ there is $n(\epsilon,s)$ such that for $k\geq n(\epsilon,s)$ the operator $1-\gamma^{p^s}$ is invertible on $X_k$ with inverse satisfying $|| (1-\gamma^{p^s})^{-1}|| \leq |\varpi|^{-\epsilon}$.   Then, the operator $1-\gamma^{p^s}$ is invertible on $X_k\widehat{\otimes}_{\bb{Q}_p} C^h(\Gamma, \bb{Q}_p)$ with inverse given by the power series
\[
(1-\gamma^{p^s})^{-1}=-\sum_{i=0}^\infty \gamma^{-p^s}(\gamma^{-p^s}-1)^{-(i+1)}\otimes (\gamma^{p^s}-1)^{i}.  
\]
This implies that 
\[
X^{Rh-an}_k =R\Gamma(\Gamma, X_k\widehat{\otimes}_{\bb{Q}_p} C^h(\Gamma, \bb{Q}_p))= 0.
\]
Taking $n(h)=n(s, \epsilon)$ for a fix $\epsilon$ we get the first statement.

For the third statement, it suffices to write $A= X_k\oplus A_k$ and to know that for some $h$ (which can be large as long as $k$ is even larger) one has $X^{Rh-an}_k=0$. Hence 
\[
A^{Rh-an}=A_k^{Rh-an}. 
\] 
Taking colimits as $h,k\to \infty$ one then gets 
\[
A^{R\Gamma-la}=\varinjlim_{k,h} A_k^{Rh-an} = \varinjlim_{k} A_k
\]
as wanted. 
\end{proof}

 Let $(A, \Gamma, (R_{n}^i)_{i=1}^d)$ be a $d$-dimensional Sen theory, for a tuple $\underline{\bbf{n}}\in \bb{N}^d$ we let $A_{\underline{\bbf{n}}}= \prod_{i=1}^d R_{n_i}^i (A)$ be the image of the composition of Sen trances. For $n\in \bb{N}$ we will simply write $A_n = \prod_{i=1}^d R_{n}^i(A)$.   We  state the main theorem of this section regarding the good behaviour of the Sen functor for relative locally analytic ON $A$-Banach representations,   cf.   \cite[Proposition 3.3.1]{BC1}. 

\begin{theo}
\label{TheoSenFunctor}
  Let $V$ be a relative locally analytic ON $A$-Banach representation of $\Gamma$ with fixed ON $A^0$-lattice $V^0$ and $\bbf{v}=\{v_i\}_{i\in I}$ a relative locally analytic $A^0$-basis of $V^0$.  The following hold:
\begin{enumerate}

\item For all $\epsilon>0$ close enough to $zero$  and $n\gg 0$  the $A$-module  $V$ contains a  closed ON Banach $A_{n}$-submodule  $S_{n}(V)$ with ON $A_n^0$-lattice $S_n(V)^0= S_n(V)\cap V^0$ such that:

\begin{itemize}

\item[\normalfont(a)] $S_n(V)^0$ admits an ON basis $\bbf{v}'=\{ v_i'\}_{i\in I}$  with $v_i'\equiv v_i \mod \varpi^{\epsilon} V^0$.  In particular $A\widehat{\otimes}_{A_{n}} S_{n}(V)= V$.

\item[\normalfont(b)] The $A_{n}$-module  $S_{n}(V)$ is stable under the action of $\Gamma$.  Moreover,  $S_{n}(V)$ is a locally analytic representation.

\item[\normalfont(c)] For $m\geq n$ we have $S_{m}(A)=A_m\widehat{\otimes}_{A_n} S_{n}(A)$. 

\end{itemize}
We call $S_m(V)$ the \textit{$m$-th Sen module of $V$}, or the \textit{Sen module of $V$ over $A_m$}.

\item Given $h\gg 0$ depending on $V$, there is $n(h)\in \bb{N}$  with $n(h)\to \infty$ as $h\to \infty$ such that: 

\begin{itemize}

\item[\normalfont(a)] The Banach $A^{h-an}$-module $V^{h-an}$ admits an ON-basis.

\item[\normalfont(b)] We have a map $A^{h-an}\to A_{n(h)}$ as in Lemma \ref{LemLocAnVectorsRingSenTheory} and   
\[
S_{n(h)}(A)= A_{n(h)}\widehat{\otimes}_{A^{h-an}} V^{h-an}.
\]
In particular, for $n\gg 0$ the $A_n$-module $S_n(A)$  of (1) is unique and equal to the (base change of the) $h$-analytic vectors of $V$ for a suitable $h$. 

\end{itemize}

\item We have that 
 \[
 RS(V)=S(V)=\varinjlim_n S_{n}(V).
 \]
 In other words,  the derived Sen functor is concentrated in degree $0$ and equal to the colimit along $n$ of the decompletions $S_n(V)$.   
\end{enumerate}

\end{theo}

  The proof-strategy of Theorem \ref{TheoSenFunctor} consists of  two steps. First, we  generalize the decompletion  of \cite{BC1} from finite rank $A$ modules to ON Banach $A$-modules; this will show part (1) of the theorem.  Second, we use the decompletion of step one and  Lemma \ref{LemLocAnVectorsRingSenTheory} to deduce parts (2) and (3).

\subsubsection{Reduction to the one dimensional case}

As a first technical step, let us reduce Theorem \ref{TheoSenFunctor} to the one dimensional case by a simple inductive argument.

\begin{prop}\label{PropDevisageInductionSenTheory}
Suppose that Theorem \ref{TheoSenFunctor} holds for one dimensional Sen theories, then it holds for Sen theories of arbitrary dimensions. 
\end{prop}
\begin{proof}
Let $\Gamma\cong \bb{Z}_p^d$ and let $(A,\Gamma, (R_n^i)_{i=1}^d)$ be a $d$-dimensional Sen theory. Suppose by induction that  Theorem \ref{TheoSenFunctor} holds for $d-1$-dimensional Sen theories. Write $\Gamma= \Gamma_{1}\times \Gamma_{[2,d]}$ as a product of the first coordinate and the last $d-1$-coordinates. Let $R_n^1: A\to A^1_n$ be the Sen traces attached to the first coordinate. Then $(A, \Gamma_{1}, R_n^1)$ is a one dimensional Sen theory by definition, and by assumption Theorem \ref{TheoSenFunctor} holds for it. Thus, for all $\epsilon>0$ close enough to $0$ and $n\gg 0$ we have a Sen module $S^1_n(V)\subset V$ over $A_n^1$ such that: 

\begin{itemize}

\item[i.] There is an ON basis $\{v_i'\}_{i\in I}$ of $S^1_n(V)^0=S^1_n(V)\cap V^0$ such that $v_i'\equiv v_i \mod \varpi^{\epsilon} V^0$. 

\item[ii.] For $n\gg 0$ we can recover $S_n^1(V)$ by the formula 
\[
S_n^1(V)= A_n^1\otimes_{A^{\Gamma_1, h-an}} V^{\Gamma_1, h-an}
\]
where $W^{\Gamma_1,h-an}$ are the $h$-analytic vectors  of $W$ for the action of $\Gamma_1$ (for a suitable choice of $h$). In particular, the action of $\Gamma$ leaves $S_n^1(V)$-stable. 

\item[iii.] We have $RS^1(V)=S^1(V)=\varinjlim_{n} S_n^1(V)$, i.e. the higher locally analytic vectors for the action of $\Gamma_1$-vanish. 

\end{itemize}

Let us now fix $n_1\gg 0$  such that the previous properties hold for all $m\geq n_1$. Then, the property (i) above implies that $\{v_i'\}_{i\in I}$ is a relative locally analytic basis of $S_n^1(V)$ as $A_{n}^1$-module for the action of $\Gamma_{[2,d]}$. By Lemma \ref{LemmaSenTheoryProjection} the tuple $(A_n^1, \Gamma_{[2,d]}, (R_m^i)_{i=2}^d)$ is a $(d-1)$-dimensional Sen theory. By the inductive hypothesis, for all $m\gg n_1$ we have a Sen module $S_m(V)$ over $A_m = \prod_{i=2}^d R^i_{m} (A^1_m)$ satisfying the analogue properties (i)-(iii) above for the group $\Gamma_{[2,d]}$. 

We now deduce that the properties (i)-(iii) above hold for the whole group $\Gamma$ (after increasing $m$ if necessary). The property (i) does not depend on the group. For the property (ii), since the action of $\Gamma_{1}$ was already locally analytic on $S_m^1(V)$ and then so is its action on $S_m(V)$, then by  \cite[Proposition 3.4.2]{RRLocAn2} the action of $\Gamma$ on $S_m^1(V)$ is also locally analytic. Since $S_m^1(V)$ is a Banach space, Lemma \ref{LemmaBanachLocAn} implies that it is $h$-analytic as $\Gamma$-representation for some $h$. By increasing $m$ we can then find  $h$ such that 
\[
S_m(V)= A_m\otimes_{A^{h-an}} V^{h-an}
\]
obtaining (ii). Note that, by taking $h$ and $m$ such that $A_m\subset A^{h-an}$, and that $S_m(V)$ is $h$-analytic, one has that 
\begin{equation}\label{eqkoaemfoawq}
V^{h-an}= (A\widehat{\otimes}_{A_m} S_m(V))^{h-an}= A^{h-an}\widehat{\otimes}_{A_m} S_m(V),
\end{equation}
in particular $V^{h-an}$ is also an ON $A^{h-an}$-Banach representation of $\Gamma$. 

Let us briefly justify the projection formula of $h$-analytic vectors of \eqref{eqkoaemfoawq}: by using the argument of Lemma \ref{LemmaProjectionFormulaLocAnRep} one reduces to the projection formula $(V\widehat{\otimes}_{\bb{Q}_p} W)^{Rh-an} = V^{Rh-an}\otimes^L_{\bb{Q}_{p,\sol}} W$ for $W,V$ Banach representations of $\Gamma$ with $V$ being $h$-analytic. The derived locally analytic vectors of the tensor $V\widehat{\otimes}_{\bb{Q}_p} W$ is group cohomology of the representation $C^h(\Gamma, \bb{Q}_p)\widehat{\otimes}_{\bb{Q}_p} W \widehat{\otimes}_{\bb{Q}_p} V$ where $\Gamma$ acts on $W$ and $V$ as usual, and in the $h$-analytic functions via the left regular action. Since the action of $V$ is $h$-analytic, there is a $\Gamma$-equivariant isomorphism 
\[
\psi:C^h(\Gamma,\bb{Q}_p)\widehat{\otimes}_{\bb{Q}_p} V \cong C^h(\Gamma,\bb{Q}_p)\widehat{\otimes}_{\bb{Q}_p} V_0
\]
 where the action of $\Gamma$ in the RHS is trivial on $V_0$ and the left regular action of the $h$-analytic functions; this map is given by sending a function $f:\Gamma\to V$ to the function $\widetilde{f}(g)=g^{-1}f(g)$. Thus, we have that 
 \[
 \begin{aligned}
 (V\widehat{\otimes}_{\bb{Q}_p} W)^{Rh-an} & = R\Gamma(G, C^h(\Gamma, \bb{Q}_p)\widehat{\otimes}_{\bb{Q}_p} W \widehat{\otimes}_{\bb{Q}_p} V) \\
 & \cong  R\Gamma(G, C^h(\Gamma, \bb{Q}_p)\widehat{\otimes}_{\bb{Q}_p} W \widehat{\otimes}_{\bb{Q}_p} V_0) \\ 
 & = R\Gamma(G, C^h(\Gamma, \bb{Q}_p)\widehat{\otimes}_{\bb{Q}_p} W ) \otimes_{\bb{Q}_{p,\sol}}^{L} V\\ 
 & = W^{Rh-an} \otimes_{\bb{Q}_{p,\sol}}^{L} V
 \end{aligned} 
 \]
 where in the  second equivalence we use the isomorphism $\psi$, in the third equivalence we use the projection formula for group cohomology, and  the last equivalence is the definition of $h$-analytic vectors.

 For (iii), fix $m$ such that the Sen module $S_m(V)$ is defined. Then $V=A\widehat{\otimes}_{A_m} S_m(V)$. We pick $h$ such that $A_m$ and $S_m(V)$ are $h$-analytic. Then, by Lemma \ref{LemmaProjectionFormulaLocAnRep} one has 
\[
V^{R\Gamma-la}=(A\widehat{\otimes}_{A_m} S_m(V))^{R\Gamma-la}=A^{R\Gamma-la}\otimes^L_{A,\square} S_m(V). 
\]
For $k\gg m$ we can write $A=X_k^1\oplus A_m^1$. Lemma \ref{LemLocAnVectorsRingSenTheory} implies that $\varinjlim_{k\to \infty} (X_k^1)^{R\Gamma_1-la}=0$. And so that $A^{R\Gamma_1-la}=\varinjlim_{n} A_{n}^1$. By an inductive argument one deduces that $A^{R\Gamma-la}=\varinjlim_{n} A_n$ which yields
\[
RS(V)=V^{R\Gamma-la}=A^{R\Gamma-la}\otimes^L_{A,\sol} S_m(V) = \varinjlim_{k\geq m} A_k \otimes^L_{A,\sol} S_m(V)  = \varinjlim_{k\geq m} S_k(V)= S(V)
\]
sits in degree $0$ and is the colimit of the Sen modules $S_m(V)$ over $A_m$. This finishes the proof of condition (iii) and so the proof of the  proposition. 
\end{proof}

The following lemma will reduce the proof of Theorem \ref{TheoSenFunctor} to the construction of the Sen modules $S_n(V)$. 

\begin{lem}\label{LemReductiontoDevisage}
Keep the notation of Theorem \ref{TheoSenFunctor} and suppose that $\Gamma$ is a one dimensional $p$-adic Lie group. Suppose that we can find Sen  modules $S_m(V)$ over $A_m$ for $m\gg 0$ satisfying parts (1) and (2) of Theorem  \ref{TheoSenFunctor}. Then part (3) of the theorem holds, namely, 
\[
RS(V)=S(V)=\varinjlim_m S_m(V).
\]
\end{lem}
\begin{proof}
Let us take $h>0$ and $m\gg 0$ such that $A^{h-an}\subset A_m$, $V^{h-an}$ is an ON-Banach module over $A^{h-an}$ and $S_m(V)= A_m\widehat{\otimes}_{A^{h-an}} V^{h-an}$. Then $S_m(V)$ is a locally analytic representation of $\Gamma$ and by Lemma \ref{LemmaProjectionFormulaLocAnRep} we have that 
\[
RS(V)=V^{R\Gamma-la}= (A\widehat{\otimes}_{A_m} S_m(V))^{R\Gamma-la}=A^{R\Gamma-la}\otimes_{A_m,\sol}^L S_m(V).
\]
By Lemma \ref{LemLocAnVectorsRingSenTheory} we get that $A^{R\Gamma-la}=\varinjlim_k A_k$ which  implies what we want. \end{proof}

\subsubsection{Decompletion after \cite{BC1}}

Let $A$ be a sous-perfectoid ring. Let  $(A,\Gamma, R_n)$ be a one dimensional Sen theory with $\gamma\in \Gamma$ a generator of $\Gamma$. Given $n\gg 0$ let $c_1,c_2>0$ be such that $||R_{n}||\leq |\varpi|^{-c_1}$ as a map $A\to A_n$ and $||(1-\gamma)^{-1}|| \leq |\varpi|^{-c_2}$ on $X_n=\ker(R_n)$. Recall that we can take $c_1,c_2\to 0$ as $n\to \infty$. We let $V$ be an ON $A$-Banach representation of $\Gamma$  with $A^0$-lattice $V^0$. Suppose that $V$ is relative locally analytic and that we are given with a relative locally analytic basis $\{v_i\}_{i\in I}$ of $V^0$ as in Definition \ref{DefRelLABanach}. Let $U_{V}:\Gamma\to \GL_I(A)$ be the $1$-cocycle constructed in \eqref{eqConstCocycle} depending on the basis $\{v_i\}_{i\in I}$, we will simply write $U=U_{V}(\gamma)$.

\begin{lem}[{ \cite[L\`emme 3.2.3]{BC1} }]
\label{LemmaDevisage2}

Let $\delta,  a, b \in \bb{R}_{>0}$ be  such that $a\geq c_1+c_2+\delta$ and $b\geq \sup \{a+c_1,  2c_1+2c_2+\delta \}$.  Suppose that the following hold:
\begin{itemize}
\item[i.]  $U=1+U_{1}+U_{2}$ where $U_{1}\in \prod_{I} \widehat{\bigoplus}_I A^0_{n} $ and $U_{2}\in \prod_{I} \widehat{\bigoplus}_I A^{0}$.

\item[ii.] $U_{1}\equiv 0 \mod \varpi^a$ and $U_{2}\equiv 0 \mod \varpi^b$. 
\end{itemize} 
Then there exists $M\in \GL_I(A^{0})$ with $M\equiv 1 \mod \varpi^{b-c_2-c_3}$ such that 
\begin{itemize}
\item[i.] $M^{-1} U \gamma(M)= 1+V_{1}+ V_{2}$ with $V_{1}\in \prod_{I} \widehat{\bigoplus}_I A^{0}_{n}$ and $V_{2}\in \prod_{I}\widehat{\bigoplus}_I A^{0}$.

\item[ii.] We have $V_{1}\equiv 0 \mod \varpi^a$ and $V_{2}\equiv 0 \mod \varpi^{b+\delta}$.  

\end{itemize}
\end{lem}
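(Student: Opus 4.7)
The proof will follow the classical Tate--Sen gauge transformation strategy, adapted from finite-rank $A$-modules to the present infinite-rank (ON Banach) setting. The plan is to seek $M$ in the form $M = 1 + N$, with $N$ lying entrywise in the ``bad'' kernel $\prod_I \widehat{\bigoplus}_I X^+_{H',n}$, chosen so that the conjugation $M^{-1} U_i \gamma_i(M)$ exactly cancels the bad part of $U_{i,2}$ up to order $\varpi^{b+\delta}$, while only a controlled piece migrates into the new Sen part $V_{i,1}$.

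First I would apply the normalized trace $R_{H',n}$ entrywise to decompose $U_{i,2} = U_{i,2}^{\mathrm{good}} + U_{i,2}^{\mathrm{bad}}$, where $U_{i,2}^{\mathrm{good}} := R_{H',n}(U_{i,2})$ lies in $\prod_I \widehat{\bigoplus}_I A^+_{H',n}$ and $U_{i,2}^{\mathrm{bad}} \in \prod_I \widehat{\bigoplus}_I X^+_{H',n}$; both have norm at most $|\varpi^{b-c_2}|$ by (CST1)(4). Expanding the cocycle identity $U_i \gamma_i(U_j) = U_j \gamma_j(U_i)$, projecting onto $\ker R_{H',n}$, and using that $U_{i,1}, U_{j,1}$ are invisible to this projection while $R_{H',n}$ commutes with $\gamma$ on $A^{H'}$ (axiom (CST1)(3)), the projection of the cocycle relation becomes
\[
(\gamma_i - 1)(U_{j,2}^{\mathrm{bad}}) - (\gamma_j - 1)(U_{i,2}^{\mathrm{bad}}) \equiv 0 \pmod{\varpi^{2a - c_2}},
\]
so $(U_{i,2}^{\mathrm{bad}})_{i}$ is an approximate $1$-cocycle in the Koszul complex of $\langle \bbfgamma \rangle$ on $X^+_{H',n}$. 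Using axiom (CST2), namely that $H^{1}$ and $H^{2}$ of this Koszul complex are $\varpi^{c_3}$-torsion, I would first correct $(U_{i,2}^{\mathrm{bad}})_i$ by a $1$-cochain of size $\varpi^{2a - c_2 - c_3}$ to obtain a genuine $1$-cocycle modulo $\varpi^{b+\delta}$, and then produce a $0$-cochain $N \in \prod_I \widehat{\bigoplus}_I X^+_{H',n}$ of norm at most $|\varpi^{b - c_2 - c_3}|$ with
\[
(\gamma_i - 1)(N) \equiv -U_{i,2}^{\mathrm{bad}} \pmod{\varpi^{b+\delta}}.
\]

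Setting $M = 1 + N$, Lemma \ref{LemInvertibleMatrix} guarantees that $M \in \GL_I(A^{+,H'})$ and $M \equiv 1 \pmod{\varpi^{b - c_2 - c_3}}$. Writing $V_i = U_{i,1} + U_{i,2}$, a direct expansion gives
\[
M^{-1} U_i \gamma_i(M) = 1 + U_{i,1} + U_{i,2}^{\mathrm{good}} + \bigl[U_{i,2}^{\mathrm{bad}} + (\gamma_i - 1)(N)\bigr] + \bigl[V_i \gamma_i(N) - N V_i\bigr] - N \gamma_i(N) + R,
\]
with $R$ a remainder quadratic (or higher) in $N$. I would then declare $V_{i,1} := U_{i,1} + U_{i,2}^{\mathrm{good}}$, which indeed lies in $\prod_I \widehat{\bigoplus}_I A^+_{H',n}$ with norm at most $|\varpi^{a}|$ since $b - c_2 \geq a$. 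All the remaining summands are absorbed into $V_{i,2} \in \prod_I \widehat{\bigoplus}_I A^{+,H'}$, with $|V_{i,2}| \leq |\varpi^{b+\delta}|$: the cohomological residue by construction; the mixed term $V_i \gamma_i(N) - N V_i$ by the estimate $a + b - c_2 - c_3 \geq b + \delta$, equivalent to $a \geq c_2 + c_3 + \delta$; and the $N$-quadratic part $N \gamma_i(N) + R$ by $2(b - c_2 - c_3) \geq b + \delta$, equivalent to $b \geq 2c_2 + 2c_3 + \delta$.

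The principal technical obstacle is the cohomological step producing $N$ from the approximate cocycle $(U_{i,2}^{\mathrm{bad}})_i$: one must simultaneously use the $\varpi^{c_3}$-torsion of $H^2$ to upgrade the approximate cocycle to an exact one (modulo $\varpi^{b+\delta}$) and the $\varpi^{c_3}$-torsion of $H^1$ to extract the primitive, all while keeping a uniform norm bound of $|\varpi^{b-c_2-c_3}|$ on $N$. The careful bookkeeping of the two losses $c_2$ (from the trace) and $c_3$ (from the Koszul cohomology) is precisely what forces the hypotheses $a \geq c_2 + c_3 + \delta$ and $b \geq \sup\{a + c_2,\; 2c_2 + 2c_3 + \delta\}$ in the statement.
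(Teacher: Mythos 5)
Your strategy is the same as the paper's: decompose $U_{i,2}$ via the normalized trace $R_{H',n}$, project the cocycle identity onto $\ker R_{H',n}$ to see that the bad parts form an approximate $1$-cocycle valued in $\prod_I\widehat{\bigoplus}_I X^{+}_{H',n}$, apply (CST2) twice (first to replace it by an exact cocycle, then to find a primitive $N$), and set $M=1+N$; your final absorption of the error terms into $V_{i,1}$ and $V_{i,2}$, including the inequalities $b-c_2\geq a$, $a+b-c_2-c_3\geq b+\delta$ and $2(b-c_2-c_3)\geq b+\delta$, matches the paper's bookkeeping.

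There is, however, one quantitative step where your stated bound is too weak to close the argument as written. You estimate the failure of the cocycle relation for $(U_{i,2}^{\mathrm{bad}})_i$ by $\varpi^{2a-c_2}$ and accordingly correct it by a $1$-cochain of size $\varpi^{2a-c_2-c_3}$; but the congruence $(\gamma_i-1)(N)\equiv -U_{i,2}^{\mathrm{bad}} \pmod{\varpi^{b+\delta}}$ then requires $2a-c_2-c_3\geq b+\delta$, which does not follow from $a\geq c_2+c_3+\delta$ and $b\geq \sup\{a+c_2,2c_2+2c_3+\delta\}$ (take $a$ minimal and $b$ large). The repair is to use the sharper estimate the expansion actually gives: the purely $U_{\cdot,1}$-quadratic term has entries in $A^{+}_{H',n}$ (using that $\gamma_i\in C_{H'}$ preserves $A_{H',n}$ by (CST1)(3)) and is killed by $1-R_{H',n}$, while every surviving quadratic term contains at least one factor bounded by $\varpi^{b}$, so the projected relation holds modulo $\varpi^{a+b-c_2}$. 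The (CST2)-correction to an exact cocycle is then of size $\varpi^{a+b-c_2-c_3}$, and $a+b-c_2-c_3\geq b+\delta$ is exactly the hypothesis $a\geq c_2+c_3+\delta$. This is the computation in the paper's proof; with this adjustment your argument coincides with it.
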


\begin{proof} The proof is exactly the same as the one of  \cite[L\`emme 3.2.3]{BC1}, we give the details for completeness. 
Let $R_{n}:  A\to A_{n}$ be the Sen trace  and let $X_{n}$ be its kernel.    Since we have the decomposition $A= A_{n}\oplus X_{n}$,  the following space decomposes via $R_{n}$:
\[
(\prod_{ I} \widehat{\bigoplus}_{I}  A^0)[\frac{1}{p}] = (\prod_{ I} \widehat{\bigoplus}_{I}  A^{0}_n)[\frac{1}{p}]\oplus (\prod_{ I} \widehat{\bigoplus}_{I}  X_{n})[\frac{1}{p}]. 
\]
Then,  using the bound of (CST1) (2),  we can write $U_{2}= R_{n}(U_{2})+  (1-R_n)(U_2)$ with:
\begin{itemize}
\item  $R_{n}(U_{2}) \in \prod_{i} \widehat{\bigoplus} A^0_{n}$ and $R_{n}(U_{2})\equiv 0 \mod \varpi^{b-c_2}$.

\item  $(1-R_n)(U_2)\in \prod_{i} \widehat{\bigoplus} X^0_{n}$ and $(1-R_n)(U_2)\equiv 0 \mod \varpi^{b-c_2}$.
\end{itemize}  Using (CST2) we can write $(1-R_n)(U_2)= (1-\gamma) V$ with $V\in \prod_{i} \widehat{\bigoplus} X^0_{n}$ such that $V\equiv 0\mod \varpi^{b-c_2-c_1}$. We now modify the cocycle $U$ by the matrix $M=1+V$, this amounts to the computation 
\[
(1+V)^{-1}U \gamma(1+V). 
\]
By the conditions on $a$, $b$ and $\delta$ we have that  $V^2$,  $U_1\gamma(V)$, $U_2\gamma(V)$, $VU_1$ and $VU_2$   are equivalent  to $0 \mod   \varpi^{b+\delta}$.  Reducing modulo $\varpi^{b+\delta}$ we find that 
\[
\begin{aligned}
(1+V)^{-1}U \gamma(1+V) & \equiv (1-V) U (1+\gamma(V))  \mod \varpi^{b+\delta} \\ 
& \equiv  1-V+U_1+U_2+\gamma(V) \mod \varpi^{b+\delta}\\ 
& \equiv 1+U_1+ R_n(U_2) \mod \varpi^{b+\delta}. 
\end{aligned}
\]
This shows that $M^{-1}U\gamma(M)= 1+V_1+V_2$ with $V_1=U_1+R_n(U_2)$ and satisfying the requirements of the lemma. 
\end{proof}

\begin{cor}
\label{CoroDevisageGammas}
Let $\delta>0$ and $b\geq 2 c_2+2 c_3+\delta$.  Let $U\in \GL_I(A^{0})$ be a matrix such that  $U\equiv 1 \mod \varpi^b$.   Then there exists $M\in \GL_I(A^{0})$ with $M\equiv 1 \mod \varpi^{b-c_3-c_2}$ such that  $M^{-1} U \gamma(M)\in \GL_I(A^{0}_{n})$. 
\proof
By Lemma \ref{LemmaDevisage2} there exists $M^{(1)}\in \GL_I(A^{0})$ with $M^{(1)}\equiv 1 \mod \varpi^{b-c_2-c_3}$ such that 
\[
U^{(1)}:=M^{(1),-1} U \gamma(M^{(1)})\in \GL_I(A^0_n) \mod \varpi^{b+\delta}.
\]
Let $k\in \bb{N}_{\geq 1}$,  by induction we can find matrices $M^{(k)}\in \GL_I(A^{0})$ with $M^{(k)}\equiv 1 \mod \varpi^{b+\delta(k-1)-c_2-c_3}$ with 
\[
U^{(k)}:= M^{(k),-1} U^{(k-1)}\gamma(M^{(k)}) \in \GL_I(A_{n}^0)\mod \varpi^{b+\delta k}. 
\]
Taking $k\to \infty$,  and $M:=M^{(1)}M^{(2)}\cdots$ one sees that the matrix  $U'=M^{-1} U \gamma(M)$ takes values in $\GL_I(A^0_{n})$, and that $U'\equiv 1 \mod \varpi^{b-c_2-c_3}$. 
\endproof
\end{cor}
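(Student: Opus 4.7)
The plan is to deduce this corollary from Lemma \ref{LemmaDevisage2} by iteration followed by a convergence argument. I first observe that the initial cocycle $(U_i)_{i=1}^{d}$ fits the hypotheses of Lemma \ref{LemmaDevisage2} trivially: writing $U_i = 1 + U_{i,1} + U_{i,2}$ with $U_{i,1} = 0 \in \prod_I \widehat{\bigoplus}_I A^+_{H',n}$ and $U_{i,2} = U_i - 1 \in \prod_I \widehat{\bigoplus}_I A^{+,H'}$, the zero element satisfies $U_{i,1} \equiv 0 \mod \varpi^a$ for every $a$. Since $b \geq 2c_2 + 2c_3 + \delta$, I may pick $a = b - c_2$, which satisfies the lemma's constraints $a \geq c_2 + c_3 + \delta$ and $b \geq \sup\{a + c_2,\, 2c_2 + 2c_3 + \delta\}$. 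The lemma then produces $M^{(1)} \in \GL_I(A^{+,H'})$ with $M^{(1)} \equiv 1 \mod \varpi^{b - c_2 - c_3}$ such that the conjugated cocycle $U^{(1)}_i := (M^{(1)})^{-1} U_i \gamma_i(M^{(1)})$ decomposes as $1 + V^{(1)}_{i,1} + V^{(1)}_{i,2}$ with $V^{(1)}_{i,1} \equiv 0 \mod \varpi^{a}$ and $V^{(1)}_{i,2} \equiv 0 \mod \varpi^{b + \delta}$.

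I then iterate: at step $k \geq 2$, the cocycle $(U^{(k-1)}_i)$ satisfies the hypotheses of Lemma \ref{LemmaDevisage2} with the same parameter $a = b - c_2$ and with $b$ replaced by $b_{k-1} := b + (k-1)\delta$. The constraints $a \geq c_2 + c_3 + \delta$ and $b_{k-1} \geq a + c_2 = b$ hold automatically (the latter because $\delta > 0$), so the lemma yields $M^{(k)} \equiv 1 \mod \varpi^{b + (k-1)\delta - c_2 - c_3}$ improving the $A^{+,H'}$-error to $\varpi^{b + k\delta}$. I then form the infinite product $M := M^{(1)} M^{(2)} \cdots$; since each $M^{(k)} - 1$ has operator norm bounded by $|\varpi|^{b + (k-1)\delta - c_2 - c_3} \to 0$, the partial products $\prod_{k=1}^{N} M^{(k)}$ form a Cauchy sequence in the $p$-adically complete space $\prod_I \widehat{\bigoplus}_I A^{+,H'} \cong \iEnd_{A^{+,H'}}(\widehat{\bigoplus}_I A^{+,H'})$, and Lemma \ref{LemInvertibleMatrix} ensures that the limit $M$ belongs to $\GL_I(A^{+,H'})$ with $M \equiv 1 \mod \varpi^{b - c_2 - c_3}$.

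It remains to observe that $M^{-1} U_i \gamma_i(M)$ is the limit of the conjugated cocycles $U^{(k)}_i = (M^{(1)} \cdots M^{(k)})^{-1} U_i \gamma_i(M^{(1)} \cdots M^{(k)})$, and that the $A^{+,H'}$-error parts $V^{(k)}_{i,2}$ vanish in the limit; hence $M^{-1} U_i \gamma_i(M) \in \GL_I(A^+_{H',n})$. The hardest step of the argument is really contained in Lemma \ref{LemmaDevisage2} itself (which uses the cohomological vanishing axiom (CST2) to solve a $1$-cocycle equation); at the level of the corollary, the main bookkeeping is the verification that the choice $a = b - c_2$ is preserved across iterations and continues to satisfy the lemma's constraints, which is where the strengthening from $b \geq c_2 + c_3 + \delta$ to $b \geq 2c_2 + 2c_3 + \delta$ becomes essential.
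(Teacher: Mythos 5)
Your proposal is correct and follows the same route as the paper: iterate Lemma \ref{LemmaDevisage2} (with the initial cocycle fed in via the trivial decomposition $U_{i,1}=0$), improve the $A^{+,H'}$-error by $\varpi^{\delta}$ at each step, and pass to the convergent infinite product $M=M^{(1)}M^{(2)}\cdots$. Your explicit bookkeeping of the fixed parameter $a=b-c_2$ is a valid way of making precise what the paper's induction leaves implicit, and the remaining details (Cauchyness of the partial products, closedness of $\prod_I\widehat{\bigoplus}_I A^+_{H',n}$ in the limit) match the intended argument.
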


\begin{lem}[{ \cite[L\`emme 3.2.5]{BC1} }]
\label{LemmaAnaliticityMatrixB}
Let  $B\in \GL_I(A)$.  Suppose that we are given with $V_{1}, V_{2}\in \GL_I(A^0_{n})$ such that  $V_{1} \equiv V_{2}\equiv 1 \mod \varpi^{c_2+\epsilon}$ for some $\epsilon >0$,  and that $\gamma(B)= V_{1}BV_{2}$.  Then $B\in \GL_I(A_{n})$. 
\end{lem}
\begin{proof}
Consider $C=B-R_{n}(B)$,  then $\gamma(C)= V_{1} C V_{2}$.  We have 
\[
\gamma(C)-C=(V_{1}-1)C V_{2} + V_{1} C(V_{2}-1)-(V_{1}-1)C(V_{2}-1).
\]
Then $C\in \varpi^r  \prod_I \widehat{\bigoplus}_I X^{0}_n $ implies $\gamma(C)-C\in \varpi^{r+c_2+\epsilon} \prod_I\widehat{\bigoplus}_I X^{0}_n$.  On the other hand,  (CST2) provides an isomorphism $1-\gamma:X_n\cong X_n$ whose inverse has norm $\leq |\varpi^{-c_2}|$. We deduce that $C\in \varpi^{r+\epsilon} \prod_{I} \widehat{\bigoplus}_I X^{0}_n$.  Since $r$ was arbitrary, one gets that $C\in \varpi^{r+s\epsilon} \prod_{I} \widehat{\bigoplus}_I X^{0}_n$ for all $s\geq 1$ and so that $C=0$, or equivalently, that $B=R_n(B)$ proving what we wanted. 
\end{proof}

We finally prove parts (1) and (2) of Theorem \ref{TheoSenFunctor}.

\begin{prop}
Let $(A,\Gamma, R_n)$ be a one dimensional Sen theory. Then  parts (1) and (2) of Theorem \ref{TheoSenFunctor}  hold. 
\end{prop}

\begin{proof}
Let $V$ be an ON $A$-Banach relative locally analytic representation of $\Gamma$ with ON $A^0$-Banach lattice $V^0\subset V$. Let $\{v_i\}_{i\in I}$ be a relative locally analytic basis of $V^0$ and let $U_{V}: \Gamma\to \GL_I(A^0)$ be the $1$-cocycle attached to $V$  and the fixed basis. Given $\gamma\in \Gamma$ we will write $U_{\gamma}$ for $U_V(\gamma)$. 

Let us write $\Gamma_k:=\Gamma^{p^k}$ for a basis of open neighbourhoods of $1$ in $\Gamma$, we let $\gamma_k\in \Gamma_n$ be a generator.  Since $V$ is relative locally analytic, there is some $\epsilon>0$ and $k\in \bb{N}$ such that $U_{\gamma}\equiv 1 \mod \varpi^{\epsilon}$ for all $\gamma\in \Gamma_n$. Let us fix a $k\in \bb{N}$ satisfying this condition. 

Now, by (CST1), for $n\gg k$ the trace map $R_n:A\to A_n$ has norm $\leq |\varpi^{-c_1}|$ with $c_1\to 0$ as $n\to \infty$ . By (CST2), for  $k$ which is fixed, we can find $m_0\gg k$  and a constant $c_2>0$ such that for all $m\geq m_0$ the operator $1-\gamma_k$ is invertible in $X_m=\ker(R_m)$ with inverse of norm bounded by $|\varpi^{-c_2}|$. We can take $c_2\to 0$ as $m\to \infty$. Taking the constants $c_1$ and $c_2$ such that $\epsilon> 2c_1+2c_2$, we fix $n=m$ satisfying the previous conditions. Thus, we found some $n\gg k$ such that   $||R_n||\leq |\varpi^{-c_1}|$ and that $1-\gamma_k$ is invertible on $X_n$ with inverse satisfying $||(1-\gamma_k)^{-1}||\leq |\varpi^{-c_2}|$.  

Then $U(\gamma_k)\in \GL_I(A^0)$ is a matrix satisfying $U\equiv 1 \mod \varpi^{\epsilon}$ and by taking $\delta>0$ such that $\epsilon> 2c_1+2c_2+\delta$, Corollary \ref{LemmaAnaliticityMatrixB} says that there is a matrix $M\in \GL_I(A^0)$ with $M\equiv 1 \varpi^{\epsilon-c_1+c_2}$ such that $M^{-1}U(\gamma_k) \gamma_k(M)\in \GL_I(A^0_n)$.  Let $U':\Gamma\to \GL_I(A^0)$ be the cocycle  $U'(\gamma):= M^{-1}U(\gamma)\gamma(M)$, then $U'$ takes values in $\GL_I(A^0_n)$ for $\gamma\in \Gamma_k$ by construction. We claim that  $U'$ is already defined over $A^0_n$. Indeed, let $\gamma\in \Gamma$, then, as $\Gamma$ is commutative, we have the relation $\gamma\gamma_k=\gamma_k\gamma$ which yields the identity at the level of cocycles
\[
U'(\gamma)\gamma(U'(\gamma_k))=U'(\gamma\gamma_k)=U'(\gamma_k\gamma)= U'(\gamma_k) \gamma_k(U'(\gamma)).
\]
Therefore, 
\[
\gamma_k(U'(\gamma))=U'(\gamma_k)^{-1} U(\gamma) \gamma U'(\gamma_k).
\]
Since $U'(\gamma_k)\equiv  1\mod \varpi^{\epsilon}$ and $\epsilon>c_2$, Lemma \ref{LemmaAnaliticityMatrixB} shows that $U'(\gamma)\in \GL_I(A_n)$ and so that $U'$ is defined over $A_n$.

Translating from cocycles to representations, $U'$ gives rise to an ON $A_n$-Banach representation $S_n(V)$ of $\Gamma$ such that $A\widehat{\otimes}_{A_n}{S_n(V)}\cong V$ as $A$-semilinear $\Gamma$-representation. For $m\geq n$ we simply define $S_m(V):=A_m\widehat{\otimes}_{A_n} S_n(V)$. Since the action of $\Gamma$ is locally analytic on $A_n$, and the action of $\Gamma_k$ is trivial on the modified basis $v_i'=M v_i$ of $S_n(V)$ modulo some power of $\varpi$, Lemma \ref{LemmaBanachLocAn} implies that the action of $\Gamma$ on $S_n(V)$ is locally analytic (and so $h$-analytic for some radius $p^{-h}$).   It is straightforward to check that the representation $S_n(V)$ satisfies the condition (1) of the theorem. 

Let us now show that part (2) follows  formally from part (1). Suppose that $S_n(V)$  and $V_n$ are $h$-analytic for some $h>0$. Then by \eqref{eqkoaemfoawq} we have that 
\[
V^{h-an}=(A\widehat{\otimes}_{A_n} S_n(V))^{h-an}=  A^{h-an}\widehat{\otimes}_{A_n} S_n(V)
\] 
and so $V^{h-an}$ is an ON $A^h$-Banach representation of $\Gamma$. Moreover, taking $m\gg n$ such that $A^h\subset A_m$ we get that 
\[
A_m\widehat{\otimes}_{A^{h-an}} V^{h-an}= A_m\widehat{\otimes}_{A_n} S_n(V)= S_m(V),
\]
proving part (2). 
\end{proof}

\subsection{Group cohomology via Sen theory}
\label{s:SenTheoryCohomology}

We finish the general discussion of Sen theory with some applications to the computation of group cohomology. Let $A$ be a sous-perfectoid ring, $\Gamma\cong \bb{Z}_p^d$ a torsion free abelian compact $p$-adic Lie group of dimension $d$ and $(A,\Gamma, (R_n^i)_{i=1}^d)$ a $d$-dimensional Sen theory on $A$.  We make the following additional assumption that will always hold in practice:

\begin{hypothesis}\label{HypoONBasisSen}
 The maps $A_n\to A_m$ are finite flat and $A$ has an ON-basis over $A_n$ for all $n\gg 0$.
\end{hypothesis}

A first corollary is the computation of group cohomology in terms of Lie algebra and smooth cohomology.

\begin{cor}
\label{CoroGroupCohoAsSen}
Let $V$ be a relative locally analytic ON $A$-Banach representation of $\Gamma$.   Then 
\[
R\Gamma(\Gamma,  V)=   R\Gamma(\Gamma^{sm}, R\Gamma(\Lie \Gamma,  S(V) )). 
\]
In particular, 
\[
H^{i}(\Gamma,V)= H^{i}(\Lie \Gamma, S(V))^{\Gamma}.
\]
\end{cor}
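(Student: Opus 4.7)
The plan is to combine Theorem \ref{TheoResumeLocAn}(4) with the degeneration result of Theorem \ref{TheoSenFunctor}(2) via a Hochschild--Serre reduction for the extension $1 \to H' \to \Pi' \to \Gamma_{H'} \to 1$. First, using the almost purity statement of Lemma \ref{LemmaAlmostPurity}(2), one has $R\Gamma(H', V) = V^{H'}$ concentrated in degree zero, so Hochschild--Serre collapses to
\[
R\Gamma(\Pi', V) = R\Gamma(\Gamma_{H'}, V^{H'}).
\]

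Next, viewing $V^{H'}$ as a solid $\Gamma_{H'}$-representation, Theorem \ref{TheoResumeLocAn}(4) rewrites this as
\[
R\Gamma(\Gamma_{H'}^{sm}, R\Gamma(\Lie \Gamma_{H'}, (V^{H'})^{R\Gamma_{H'}-la})).
\]
The key ingredient is now Theorem \ref{TheoSenFunctor}(2): under the strong decomposability hypothesis, the derived locally analytic vectors of $V^{H'}$ are concentrated in degree zero and coincide with $S_{H'}(V)$. Substituting gives the first assertion of the corollary.

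For the ``in particular'' part, the associated Hochschild--Serre spectral sequence reads
\[
E_2^{p,q} = H^p(\Gamma_{H'}^{sm}, H^q(\Lie \Gamma_{H'}, S_{H'}(V))) \Rightarrow H^{p+q}(\Pi', V).
\]
I would argue this degenerates along the column $p=0$. The action of $\Gamma_{H'}$ on the Lie algebra cohomology $H^q(\Lie \Gamma_{H'}, S_{H'}(V))$ is smooth, because the derivation action of $\Lie \Gamma_{H'}$ is trivial on its own cohomology, so every class is fixed by an open subgroup of $\Gamma_{H'}$. On the other hand, for the compact $p$-adic Lie group $\Gamma_{H'}$ acting smoothly on a $\bb{Q}_p$-vector space, $R\Gamma(\Gamma_{H'}^{sm}, -)$ can be computed as a colimit over cohomologies of finite quotients with $\bb{Q}_p$-coefficients; since these finite group cohomologies with rational coefficients vanish in positive degree (being torsion), only $H^0(\Gamma_{H'}^{sm}, -) = (-)^{\Gamma_{H'}}$ contributes. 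This yields the stated formula. The only nontrivial step is the verification of smooth-cohomology vanishing; the rest is a bookkeeping assembly of the previous theorems.
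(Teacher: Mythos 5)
Your argument is correct and follows essentially the same route as the paper: Hochschild–Serre together with almost purity (Lemma \ref{LemmaAlmostPurity}) reduces to $R\Gamma(\Gamma_{H'},V^{H'})$, and then Theorem \ref{TheoResumeLocAn}(4) combined with Theorem \ref{TheoSenFunctor}(2) gives the displayed identity. Your explicit degeneration argument for the smooth-cohomology column (smoothness of the $\Gamma_{H'}$-action on Lie algebra cohomology plus vanishing of higher smooth cohomology with $\bb{Q}_p$-coefficients) is exactly the content of Corollary \ref{CoroComparisonVla}, which the paper simply cites for the ``in particular'' statement.
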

\begin{proof}
By Theorem \ref{TheoSenFunctor} we have that $V^{R\Gamma-la}=S(V)=\varinjlim_n S_n(V)$ sits in degree $0$ and is the colimit of the decompletion by traces. Then, Theorem \ref{TheoResumeLocAn} (4) yields the desired result. 
\end{proof}

We will consider a last hypothesis that holds in the main geometric application of this paper.  In the arithmetic case  over $\bb{Q}_p$,  this hypothesis is key in the proof of the Ax-Sen-Tate theorem.

\begin{itemize}
\item[(AST)]  

\begin{enumerate}

\item  A $1$-dimensional Sen theory  $(A,\Gamma,(R_n))$ satisfies the Ax-Sen-Tate property if the following conditions hold:

\begin{itemize}

\item[i.]  $A_{n}=A^{\Gamma_n}$, where $\Gamma_n=\Gamma^{p^n}$.

\item[ii.] The traces $R_{n}:  A\to A_{n}$ are constructed from normalized traces
\begin{gather*}
R_{n}^{m}:  A_{m}\to A_{n} \\
x \mapsto \frac{1}{p^{m-n}} \sum_{g\in \Gamma_{m}/\Gamma_{n}} g(x). 
\end{gather*}
More precisely, the colimit as $m\to \infty$ of the operators $R_{n}^m$ define an operator  $R_n: \varinjlim_m A_m\to A_n$ that completes to an operator on uniform completions $R_n:A=(\varinjlim_m A_m)^{u}\to A_n$ which agrees with the Sen trace, cf. Lemma \ref{lemUniformCompletion}. 
\end{itemize}

\item Let $(A, \Gamma, (R_n^i)_{i=1}^d)$ be a $d$-dimensional Sen theory. Write $\Gamma=\Gamma_1\times \Gamma_{[2,d]}\cong \bb{Z}_p\times \bb{Z}_p^{d-1}$. We say that  $(A, \Gamma, (R_n^i)_{i=1}^d)$  satisfies the Ax-Sen-Tate property if $(A, \Gamma_1, (R_{n}^1))$ satisfies the Ax-Sen-Tate property and for all $n\gg 0$ the $d-1$-dimensional Sen theories $(A_n^1, \Gamma_{[2,d]}, (R_n^i)_{i=2}^d)$ satisfies the Ax-Sen-Tate property. 

\end{enumerate}
\end{itemize}

\begin{exam}\label{ExamAXT}
\begin{enumerate}

\item Let $K$ be a $p$-adic discretely valued field over $\bb{Q}_p$ with perfect residue field. Then the cyclotomic tower  $K\to K^{\cyc}$ satisfies the axiom (AST). 

\item Let $(C,C^+)$ be a perfectoid field containing $\bb{Q}_p^{\cyc}$ and let  $X$ be an affinoid fs log smooth adic space over $(C,C^+)$  admitting a chart $\psi: X\to \bb{S}^{(e,d-e)}_{C}$ that factors as composite of finite \'etale maps and rational localizations. Let $\bb{S}^{(e,d-e)}_{C,\infty}$ be the perfectoid product of tori and discs obtained by taking $p$-th power roots in the coordinates and let $X_{\infty}=\Spa(A_{\infty}, A_{\infty}^+)$ be the pullback over $X$. Let $\Gamma$ be the Galois group of $X_{\infty}\to X$. Then $A_{\infty}$ is a perfectoid ring in by Proposition \ref{PropSenTheoryAffinoids} the Sen traces $R_n^i$ of Example \ref{ExamDiscTorus} (4) extend to a Sen theory $(A_{\infty}, \Gamma, (R_n^i)_{n})$ satisfying the (AST) axiom.  The same holds for the rings of functions of the boundary divisor $D_{J,\infty}\subset X_{\infty}$ defined by a subset $J\subset \{e+1,\ldots, d\}$. 

\end{enumerate}
\end{exam}

A Sen theory satisfying the Ax-Sen-Tate axiom can be endowed with a Sen operator as follows. 

\begin{definition}
\label{Definition:SenOperator}
Let $(A,\Gamma, (R_n^{i})_{i=1}^d)$ be a $d$-dimensional Sen theory satisfying (AST).   Let $V$ be a relative locally analytic ON $A$-Banach representation  of $\Gamma$.   The Sen operator  of $V$ is the $A$-linear map
\[
\theta_V:  V \to V \otimes_{\bb{Q}_p} (\Lie \Gamma)^{\vee} 
\]
given by the $A$-extension of scalars of the connection 
\[
S(V)\to S(V) \otimes_{\bb{Q}_p} (\Lie \Gamma)^{\vee}. 
\]
Equivalently, we define the Sen operator
\[
\Sen_{V}: \Lie \Gamma \otimes V\to V
\]
to be the extension of scalars of the derivation
\[
\Lie \Gamma\otimes S(V) \to S(V).
\]

The Higgs cohomology $R\Gamma(\theta_V,V)$ of $V$ is defined as the complex
\[
0\to V\xrightarrow{\theta_V} V\otimes (\Lie \Gamma_{H'})^{\vee} \to \cdots \to V\otimes \bigwedge^d (\Lie \Gamma_{H'})^{\vee} \to 0,
\]
which is nothing but the $A$-extension of scalars of the Chevalley-Eilenberg complex of $S(V)$ computing $\Lie \Gamma$-cohomology.  We also denote $H^{i}(\theta_V,V):= H^{i}(R\Gamma(\theta_V,V))$.
\end{definition}

In the rest of the section we suppose that the $d$-dimensional Sen theory $(A,\Gamma,(R_n^i)_{i=1}^d)$ satisfies the Ax-Sen-Tate axiom.     The next result describes some cohomological properties of $A$-semilinear $\Gamma$-representations in terms of their Sen operators. 

\begin{prop}
\label{PropositionSenTheoryCohomology}
Let $V$ and $W$ be a relative locally analytic ON $A$-Banach representations  of $\Gamma$, and $W\to V$ an $A$-linear $\Gamma$-equivariant map.  Consider the exact sequence in solid $\bb{Q}_p$-vector spaces
\[
0 \to K \to W \to V \to Q \to 0
\]    Then the derived Sen modules $RS(-)$ of $K$ and $Q$ (cf. Remark \ref{RemarkSenFunctorSolid}) are in degree $0$, and we have an exact sequence of solid $\bb{Q}_p$-vector spaces
\[
0\to S(K)\to S(W)\to S(V)\to S(Q)\to 0. 
\]
In particular, we have isomorphisms of solid group cohomology
\[
\underline{H}^{i}(\Gamma,V)=\underline{H}^{i}(\theta_V, V)^{\Gamma}.
\]
\end{prop}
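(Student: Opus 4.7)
The plan is to split the four-term sequence at its image, descend the map to the Sen-module level via Theorem \ref{TheoSenFunctor}(1), and then combine Theorem \ref{TheoSenFunctor}(2) with the flatness built into the base change $A\,\widehat{\otimes}_{A_{H',n}}-$ to conclude the vanishing and exactness. The cohomology identity will then follow from Corollary \ref{CoroGroupCohoAsSen} together with an $A$-base-change of the Chevalley--Eilenberg complex.

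First I would split the given sequence as two short exact sequences
\[ 0 \to K \to W \to I \to 0 \quad \text{and} \quad 0 \to I \to V \to Q \to 0, \]
where $I$ is the image of $W \to V$ in solid $\bb{Q}_p$-vector spaces. Applying $RS_{H'}$ produces two long exact sequences, and since Theorem \ref{TheoSenFunctor}(2) gives that $RS_{H'}(W)$ and $RS_{H'}(V)$ are concentrated in degree zero, the entire statement reduces to showing $RS_{H'}(I)$ is concentrated in degree zero (and then a diagram chase reads off the short exact sequence on Sen modules).

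For this key reduction, I would apply Theorem \ref{TheoSenFunctor}(1) simultaneously for $W$ and $V$: for $n$ large enough one has $W = A \,\widehat{\otimes}_{A_{H',n}} S_{H',n}(W)$ and $V = A \,\widehat{\otimes}_{A_{H',n}} S_{H',n}(V)$ with $\Gamma_{H'}$-locally analytic Sen modules. By functoriality of the $p^n\Gamma$-analytic vector construction the $\Pi$-equivariant map descends to an $A_{H',n}$-linear, $\Gamma_{H'}$-equivariant map $\varphi : S_{H',n}(W) \to S_{H',n}(V)$ between ON Banach modules, whose kernel $K_n$, image $I_n$, and cokernel $Q_n$ remain $\Gamma_{H'}$-locally analytic $A_{H',n}$-modules. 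Applying $A \,\widehat{\otimes}_{A_{H',n}}-$ and using its exactness (equivalently, the decomposition $A = A_{H',n} \oplus X_{H',n}$ combined with Lemma \ref{LemLAtensorwithX}, which kills any contribution of $X_{H',n}$ to derived locally analytic vectors) identifies $K = A \,\widehat{\otimes}_{A_{H',n}} K_n$, $I = A \,\widehat{\otimes}_{A_{H',n}} I_n$, $Q = A \,\widehat{\otimes}_{A_{H',n}} Q_n$ as relative locally analytic ON $A$-Banach representations. Theorem \ref{TheoSenFunctor}(2) then applies to each, giving concentration in degree zero of $RS_{H'}$ and Sen modules $S_{H'}(-) = \varinjlim_m A_{H',m} \,\widehat{\otimes}_{A_{H',n}} (-)_n$; feeding these into the two long exact sequences yields the desired four-term exact sequence.

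For the final identity, Corollary \ref{CoroGroupCohoAsSen} already gives $H^i(\Pi', V) = H^i(\Lie \Gamma_{H'}, S_{H'}(V))^{\Gamma_{H'}}$. Since $V = A \,\widehat{\otimes}_{A_{H',\infty}} S_{H'}(V)$ and $\theta_V$ is the $A$-linear extension of the Lie algebra action on $S_{H'}(V)$, the Higgs complex of $V$ is the $A$-base change of the Chevalley--Eilenberg complex of $S_{H'}(V)$. Flatness of $A$ over $A_{H',\infty}$ then yields $H^i(\theta_V, V) = A \,\widehat{\otimes}_{A_{H',\infty}} H^i(\Lie \Gamma_{H'}, S_{H'}(V))$, and taking smooth $\Gamma_{H'}$-invariants on both sides (noting that the $\Gamma_{H'}$-action on Lie algebra cohomology is smooth, that $A^{\Gamma_{H'}\text{-}sm} = A_{H',\infty}$, and that the complementary $X_{H',\infty}$-summand contributes nothing by Lemma \ref{LemLAtensorwithX}) gives $H^i(\theta_V, V)^{\Gamma_{H'}} = H^i(\Lie \Gamma_{H'}, S_{H'}(V))^{\Gamma_{H'}}$, completing the proof.

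The main obstacle is the descent/flatness step: one must verify that $I_n$ and $Q_n$ remain ON $A_{H',n}$-Banach modules with locally analytic $\Gamma_{H'}$-action, and that the base change $A \,\widehat{\otimes}_{A_{H',n}}-$ exactly commutes with taking kernel, image, and cokernel. This requires strictness of $\varphi$, which should be extracted from the $\varpi^{c_2+c_3}$-smallness of the matrix of base change provided by Theorem \ref{TheoSenFunctor}(1)(c).
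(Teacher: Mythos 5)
Your plan founders on the descent step, and the gap is exactly the point the paper is organized to avoid. You descend the map to $\varphi\colon S_{H',n}(W)\to S_{H',n}(V)$ and assert that its kernel, image and cokernel $K_n,I_n,Q_n$ are (locally analytic) ON Banach $A_{H',n}$-modules and that $K=A\widehat{\otimes}_{A_{H',n}}K_n$, $I=A\widehat{\otimes}_{A_{H',n}}I_n$, $Q=A\widehat{\otimes}_{A_{H',n}}Q_n$ are relative locally analytic ON $A$-Banach representations. This is false in general: an arbitrary $A$-linear $\Pi$-equivariant map of ON Banach modules need not be strict, its image need not be closed, and $Q$ (hence $Q_n$) can be non-Hausdorff --- the remark right after the proposition stresses precisely this, which is why the statement is formulated in solid $\bb{Q}_p$-vector spaces. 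Your proposed repair, extracting strictness from the $\varpi^{c_2+c_3}$-smallness of the base-change matrix in Theorem \ref{TheoSenFunctor}, cannot work: that bound controls the cocycle/basis of a single representation and says nothing about the given morphism (take $W=V=\widehat{\bigoplus}_{\bb{N}}A$ with trivial cocycle and the equivariant map $e_i\mapsto p^ie_i$, which is injective with non-closed image). The exactness of $A\widehat{\otimes}_{A_{H',n}}-$ (and of $A\widehat{\otimes}_{A_{H',\infty}}-$ in your last paragraph) is likewise asserted, not proved: the decomposition $A^{H'}=A_{H',n}\oplus X_{H',n}$ and Lemma \ref{LemLAtensorwithX} concern locally analytic vectors, not flatness of the (solid) base change along $A_{H',n}\to A$, and commuting a completed tensor with kernels and cokernels of non-strict maps is exactly the kind of claim that needs an argument. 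Finally, even granting that $RS_{H'}(I)$ sits in degree $0$, your reduction is incomplete: the long exact sequence for $0\to K\to W\to I\to 0$ only gives $R^1S_{H'}(K)\cong\coker\bigl(S_{H'}(W)\to S_{H'}(I)\bigr)$, so you still need surjectivity there (which is also what exactness of the four-term sequence at $S_{H'}(V)$ requires), and that is not supplied.

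The paper's proof takes a different route that needs none of this. It first reduces to the case $\Gamma_{H'}\cong\bb{Z}_p$ by factoring the traces $R_{H',n}=R^d_{H',n}\circ\cdots\circ R^1_{H',n}$ and inducting along the intermediate algebras $A^{[i]}_{H',n}$. In the one-dimensional case the hypercohomology spectral sequence of the two-term complex $[W\to V]$ (whose cohomology sheaves are $K$ and $Q$) has $E_2$-page concentrated in $[0,1]^2$, hence degenerates; comparing with the cohomology of $[S_{H'}(W)\to S_{H'}(V)]$, the vanishing of $H^2$ forces $R^1S_{H'}(Q)=0$, and the two short exact sequences through the image $B$ then give $RS_{H'}(B)=S_{H'}(B)$ and $RS_{H'}(K)=S_{H'}(K)$, with Theorem \ref{TheoSenFunctor}(2) applied only to $W$ and $V$ themselves --- no strictness, no Banach structure on the subquotients, and no flatness of base change. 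The cohomology identity is then deduced from Corollary \ref{CoroGroupCohoAsSen} plus the exactness statement just proved (so that taking locally analytic vectors commutes with the cohomology of the Higgs complex), rather than from a base-change-and-flatness argument. If you want to salvage your strategy you would have to actually prove the solid flatness and strictness statements you invoke; as written the proposal does not establish the proposition.
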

\begin{remark}
One of the reasons we need to see $W$ and $V$ as solid $\bb{Q}_p$-vector spaces in the previous proposition is for the Sen module of $Q$ to be well defined. Otherwise $Q$ could be a non-Hausdorff topological space and there would not be a good theory of locally analytic vectors. 
\end{remark}

We thank Lue Pan for the explanation of the following argument.    

\begin{proof}
By an inductive argument in the dimension of the Sen theory we can assume without loss of generality that $\Gamma\cong \bb{Z}_p$. Consider the map $f:W\to V$, by taking $h$-analytic vectors and extending scalars to some $A_n$ such that $A^{h-an}\subset A_n$, Theorem \ref{TheoSenFunctor} (2) gives a decompletion of $f$ to a map $f_n: S_n(W)\to S_n(V)$. By Hypothesis \ref{HypoONBasisSen},  $A$ has an ON basis over $A_n$ and so it is a flat solid $A_n$-module being isomorphic to $V\otimes_{\bb{Q}_p,\sol} A_n$ with $V$  a Banach $\bb{Q}_p$-vector space, cf. \cite[Lemma 3.21]{RRLocallyAnalytic}.  Let $S_n(K):=\ker (f_n)$ and $S_n(Q):= \coker(f_n)$, these are locally analytic representations of $\Gamma$. The flatness of $A_n\to A$ implies that 
\[
K=A\otimes_{A_n,\square}^L S_n(K) \mbox{ and } Q=A\otimes_{A_n,\square}^L S_n(Q).
\]
Passing to locally analytic vectors, we see by the projection formula (Lemma \ref{LemmaProjectionFormulaLocAnRep}) and by Lemma \ref{LemLocAnVectorsRingSenTheory} that 
\[
RS(K)= K^{R\Gamma-la} = (A)^{R\Gamma-la}\otimes_{A_n,\square}^L S_n(K)= \varinjlim_{m} A_m \otimes_{A_n}^L S_n(K)= S(K),
\]
similarly for $Q$, where in the last equivalence we used that the maps $A_n\to A_m$ are finite flat.  We deduce that  $K$ and $Q$ have no higher locally analytic vectors and  thus that we have an exact sequence of locally analytic representations
\[
0\to S(K)\to S(W)\to S(V)\to S(Q)\to 0. 
\]

Let us now show the last statement about the computation of $\Gamma$-cohomology on $V$. By Corollary \ref{CoroGroupCohoAsSen} we know that $R\Gamma(\Gamma, V)= R\Gamma(\Gamma^{\sm}, R\Gamma(\Lie \Gamma, S(V)))$ and so that 
\[
H^i(\Gamma, V)= H^i(\Lie \Gamma, S(V))^{\Gamma}.
\] 
Fix a basis of $\Lie \Gamma$  and consider the exact sequence 
\[
0\to K\to V\xrightarrow{\theta_V} V \to Q\to 0
\]
given by the Sen operator of $V$. Then the previous point shows that we have an exact sequence 
\[
0\to S(K)\to S(V)\xrightarrow{\theta_V} S(V)\to S(Q)\to 0.
\]
We deduce that 
\[
H^i(\theta_V , V)^{R\Gamma-la}= H^i(\Lie \Gamma, S(V))
\]
and by taking invariants  that 
\[
H^i(\theta_V,V)^{\Gamma}= H^i(\Lie \Gamma, S(V))^{\Gamma}= H^i(\Gamma, V)
\]
proving what we wanted. 
\end{proof}

\begin{cor}
\label{CoroProjectionFormula}
Keep the notation of Proposition \ref{PropositionSenTheoryCohomology}. Suppose that $\theta_V=0$, then there is an equivalence
\[
R\Gamma(\Gamma,V)=\bigoplus_{i=0}^{d} V^{\Gamma}\otimes \bigwedge^i \Lie \Gamma[-i].
\] 
Moreover, for $n$ big enough we have that $S_{n}(V)=V^{\Gamma^{p^n}}$ and  so
\[
V= A\widehat{\otimes}_{A_{n}} V^{\Gamma^{p^n}}.
\]
\end{cor}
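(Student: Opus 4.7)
The plan is to translate the vanishing $\theta_V=0$ into a group-theoretic triviality at finite level, and then feed this into the cohomological formalism provided by Corollary \ref{CoroGroupCohoAsSen}. First I would observe that by Definition \ref{Definition:SenOperator}, $\theta_V$ is the $A$-linear extension of a derivation $\Lie \Gamma_{H'}\otimes S_{H',n}(V)\to S_{H',n}(V)$ on the Sen module, so its vanishing forces that derivation on $S_{H',n}(V)$ itself to vanish for every $n$ large enough (using the split inclusion $S_{H',n}(V)\hookrightarrow V=A\widehat{\otimes}_{A_{H',n}}S_{H',n}(V)$ from Theorem \ref{TheoSenFunctor}(1)(b) together with the trace $R_{H',n}$). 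Next, Theorem \ref{TheoSenFunctor}(1)(c) together with Lemma \ref{LemmaBanachLocAn} tells me that the action of $\Gamma_{H'}$ on $S_{H',n}(V)$ is $p^n\Gamma_{H'}$-analytic, and an analytic action whose infinitesimal action vanishes is trivial on the corresponding open subgroup; so $p^n\Gamma_{H'}$ acts trivially on $S_{H',n}(V)$ for $n\gg 0$.

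Armed with this, I would write $V^{H'}=A^{H'}\widehat{\otimes}_{A_{H',n}}S_{H',n}(V)$ and take $p^n\Gamma_{H'}$-invariants. The second factor is now fixed pointwise, and the Ax--Sen--Tate axiom supplies $(A^{H'})^{p^n\Gamma_{H'}}=A^{\Pi_{H',n}}=A_{H',n}$, which gives
\[
(V^{H'})^{p^n\Gamma_{H'}}=A_{H',n}\widehat{\otimes}_{A_{H',n}}S_{H',n}(V)=S_{H',n}(V).
\]
The identity $V=A\widehat{\otimes}_{A_{H',n}}(V^{H'})^{p^n\Gamma_{H'}}$ is then immediate from Theorem \ref{TheoSenFunctor}(1)(b), so the second assertion of the corollary drops out.

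For the cohomological decomposition I would apply Corollary \ref{CoroGroupCohoAsSen} to obtain
\[
R\Gamma(\Pi',V)=R\Gamma(\Gamma_{H'}^{sm},R\Gamma(\Lie \Gamma_{H'},S_{H'}(V))).
\]
Because $\Lie \Gamma_{H'}$ is abelian and acts trivially on $S_{H'}(V)$, the Chevalley--Eilenberg differential vanishes and the inner term splits as $\bigoplus_{i=0}^{d} S_{H'}(V)\otimes \bigwedge^{i}(\Lie \Gamma_{H'})^{\vee}[-i]$. The outer smooth cohomology of $\Gamma_{H'}\cong \bb{Z}_p^d$ acting on the smooth $\bb{Q}_p$-representation $S_{H'}(V)=\varinjlim_n (V^{H'})^{p^n\Gamma_{H'}}$ reduces to the invariants $V^{\Pi'}$, since the higher cohomology of a finite $p$-group on a $\bb{Q}_p$-vector space vanishes by a Maschke-type argument and smooth cohomology is the colimit of these. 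This yields the desired $\bigoplus_{i=0}^{d} V^{\Pi'}\otimes \bigwedge^{i}(\Lie \Gamma_{H'})^{\vee}[-i]$.

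The main obstacle will be the first step, namely converting the vanishing of the infinitesimal action on $S_{H',n}(V)$ into an honest triviality of the $p^n\Gamma_{H'}$-action. This requires carefully matching the radius of analyticity of $S_{H',n}(V)$ supplied by Theorem \ref{TheoSenFunctor}(1) with the convergence range of $\exp$, and is precisely the reason why the identification $S_{H',n}(V)=(V^{H'})^{p^n\Gamma_{H'}}$ only holds for $n$ sufficiently large; the remaining steps are then routine bookkeeping on the Chevalley--Eilenberg complex of a trivial abelian Lie algebra action and on smooth cohomology of $\bb{Z}_p^d$.
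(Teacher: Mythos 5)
Your proof is correct and follows essentially the same route as the paper: $\theta_V=0$ forces the $\Lie\Gamma_{H'}$-action on the Sen module to vanish, hence (by analyticity) $S_{H',n}(V)=(V^{H'})^{p^n\Gamma_{H'}}$ for $n\gg 0$, and the cohomological splitting then comes from Corollary \ref{CoroGroupCohoAsSen}/Proposition \ref{PropositionSenTheoryCohomology} with the split Higgs (Chevalley--Eilenberg) complex together with Theorem \ref{TheoSenFunctor}. Your version merely spells out details the paper leaves implicit (the invariants computation via (AST) and the vanishing of higher smooth cohomology).
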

\begin{proof}

 The first claim of the Corollary follows from Proposition \ref{PropositionSenTheoryCohomology} since the Higgs complex of $V$ is split.

For the second claim,  by hypothesis we have $\theta_{V}=0$, this means that the action of $\Lie \Gamma$ on $S(V)=(V)^{\Gamma-la}$ is zero, so that $S(V)$ is a smooth representation of $\Gamma$. Since $A^{\Gamma^{p^n}-an}=A^{\Gamma^{p^n}}=A_n$ by the (AST) axiom, by Theorem \ref{TheoSenFunctor} (2) there is $n>>0$ such that $S_{n}(V)=V^{\Gamma^{p^n}-an}= V^{\Gamma^{p^n}}$. 

\end{proof}

\begin{remark}
\label{RemarkProjectionFormula}
The splitting of the cohomology  of  Corollary \ref{CoroProjectionFormula} depends on the Lie algebra $\Lie \Gamma$ and so on the group $\Gamma$. In applications we will allow $\Gamma$ to vary, so to guaranty that the splitting is independent of $\Gamma$ we shall need some additional structure (eg. Hodge-Tate weights arising from an arithmetic Galois action).
\end{remark}


\section{Geometric Sen theory}
\label{ch:HTSrigid}

 Let $\bb{Q}_p^{\cyc}$ be the completed cyclotomic extension of $\bb{Q}_p$, $(C,C^+)$  a perfectoid field over $\bb{Q}_p^{\cyc}$, and let $X$ be an fs   log smooth adic space over $(C,C^+)$ with log structure given by reduced normal crossing divisors. For $?\in \{\an,\et, \ket, \proet,\proket\}$ we let $X_{?}$ denote the corresponding site over $X$ (see   \cite[Example 2.3.17 and Sections 4 and 5]{DiaoLogarithmic2019}).  We let $\widehat{\s{O}}^{(+)}_X$  denote the (bounded) complete structural  sheaf over $X_{\proket}$, and for $?\in \{\an,\et, \ket \} $ we let $\s{O}^{+}_{X}$ be the (bounded) structural sheaf on $X_{?}$.  We also let $\nu_X:X_{\proket}\to X_{\ket}$ and $\eta_X:X_{\proket}\to X_{\an}$ be the projection of sites, if $X$ is clear from the context we write $\nu$ and $\eta$ instead.   Suppose that $(C,C^+)$ is the completion of an algebraic extension of a discretely valued field with perfect residue field $(K,K^+)$, and that $X$ has a form $X'$ over $(K,K^+)$, we shall write  $\OBdr^{(+)}$ and $\OC:= \gr^0(\OBdr)$ for the log de Rham and Hodge-Tate period sheaves over $X'_{\proket}$,  see  \cite{DiaoLogarithmicHilbert2018}.

The main goal of this section is to use the abstract Sen theory formalism of Section \ref{ch:SenTheory} to study  the  Hodge-Tate cohomology of $X$,  obtaining Theorems \ref{TheoSenOperatorIntro} and \ref{TheoSenBundleofTorsorIntro} of the introduction.      In Section \ref{ss:LogKummerSequence} we prove that $R\nu_*^{1} \widehat{\s{O}}_X(1)\cong \Omega^1_{X}(\log)$, where $\nu: X_{\proket}\to X_{\ket}$ is the projection of sites following and argument of Scholze.  In Section \ref{s:HTSrigidSenBundle} we  construct the geometric  Sen operator of $X$ locally on  toric coordinates, for which we are essentially reduced to the Sen theory of a product of tori and discs as in Example \ref{ExamDiscTorus} (4).   Then, in Section \ref{ss:GeoSenOpe-Globalization}, we show that these local constructions of the Sen operator glue, following an argument suggested by Lue Pan using the isomorphism $R^1\nu_* \widehat{\s{O}}_X\cong \Omega^1_X(\log)$.  Finally  in  Section \ref{s:HTSrigidProetalecoho},  we apply the previous results  to  study the locally analytic vectors of the completed structural sheaf of pro-Kummer-\'etale torsors  of $p$-adic Lie groups.  We finish  by explaining the relation of  geometric Sen theory with the works of \cite{LiuZhuRiemannHilbert, DiaoLogarithmicHilbert2018, wang2021padic}.

All the fiber products considered in the next sections are as fs log adic spaces in the sense of \cite[Proposition   2.3.27]{DiaoLogarithmic2019}, in particular they  might differ from the fiber products of usual adic spaces (but both agree for trivial log structures).   We shall consider almost mathematics with respect to the ideal $\f{m}_C\subset C^+$ of topologically nilpotent elements of $C$.

\subsection{Log-Kummer exact sequence}
\label{ss:LogKummerSequence}

Let $X$ be an fs log smooth adic space over $(C,C^+)$ with log structure given by normal crossing divisors. Equivalently, locally in the \'etale topology, $X$ admits an \'etale map towards $\bb{S}^{(e,d-e)}_C:=\bb{T}^{e}_C\times \bb{D}^{d-e}_C$ with \[ \bb{T}^{e}_C:= \Spa(C\langle T_1^{\pm 1},\ldots, T_e^{\pm 1}\rangle ,C^+\langle T_1^{\pm 1},\ldots, T_d^{\pm 1}\rangle )\] and \[\bb{D}^{d-e}_C:=\Spa (C\langle S_{e+1},\ldots, S_d  \rangle,  C^+\langle S_{e+1},\ldots, S_d\rangle),\] such that the log structure of $X$ is the pullback of the log structure on $\bb{S}^{(e,d-e)}_C$ defined by $S_{e+1}\cdots S_d=0$. An \'etale map $\psi:X\to \bb{S}^{(e,d-e)}_C$ factoring as composite of finite \'etale maps and rational localizations is called a \textit{toric chart} of $X$ (also called a frame in the literature).

The log-Kummer exact sequence is constructed as follows.

\begin{lem}
Let $\n{M}_X$ be the Kummer-\'etale sheaf of monoids defining the log structure of $X$, and let $\n{M}_X^{gr}$ be  its group of fractions. We have a short exact sequence of pro-Kummer-\'etale sheaves over $X$
\[
0\to \widehat{\bb{Z}}_p(1) \to \varprojlim_p \n{M}_X^{gr} \to \n{M}_X^{gr} \to 0
\]
where the limit is given by multiplication by $p$. 
\end{lem}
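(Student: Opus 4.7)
The plan is to deduce the statement from the corresponding short exact sequences of Kummer-\'etale sheaves obtained by ``multiplication by $p^n$'' on $\n{M}_X^{gr}$, and then pass to the inverse limit in the pro-Kummer-\'etale topos.

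First I would establish, for each $n \geq 1$, the exact sequence of Kummer-\'etale sheaves
\[
0 \to \mu_{p^n} \to \n{M}_X^{gr} \xrightarrow{\cdot p^n} \n{M}_X^{gr} \to 0.
\]
Surjectivity of multiplication by $p^n$ is essentially the defining property of the Kummer-\'etale topology (see \cite{DiaoLogarithmic2019}): any section of $\n{M}_X^{gr}$ admits a $p^n$-th root after a Kummer-\'etale cover, since the extraction of a $p^n$-th root of a section of the log structure is a standard Kummer-\'etale cover. The kernel of $\cdot p^n$ is by definition $\mu_{p^n}$, which is an \'etale (hence Kummer-\'etale) subsheaf.

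Second, I would pull these exact sequences back to the pro-Kummer-\'etale site along the projection $\nu: X_{\proket} \to X_{\ket}$, which is an exact functor on abelian sheaves. Interpreting the ``inverse limit'' sheaf $\varprojlim_p \n{M}_X^{gr}$ in the pro-Kummer-\'etale topos as the sheaf associated to the pro-system $(\n{M}_X^{gr})_n$ with $p$-th power transition maps, I would show:
\begin{enumerate}
\item the natural map $\varprojlim_p \n{M}_X^{gr} \to \n{M}_X^{gr}$ (projection onto the last term) is surjective in the pro-Kummer-\'etale topology, because any local section $s$ of $\n{M}_X^{gr}$ lifts to a compatible system of $p^n$-th roots after the pro-Kummer-\'etale cover obtained by extracting $p^n$-th roots of $s$ for every $n$;
\item the kernel of this projection is $\varprojlim_n \mu_{p^n}=\widehat{\bb{Z}}_p(1)$, by direct identification under the previous exact sequences.
\end{enumerate}

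The main obstacle is the interchange of the short exact sequence with the inverse limit: this is where one needs the Mittag--Leffler condition for the transition maps of $(\mu_{p^n})_n$ (which holds since these are surjective on geometric stalks), ensuring the vanishing of the derived inverse limit. Once this is in place, combining the surjectivity of (1) with the kernel computation of (2) yields the desired short exact sequence. The result is in fact the pro-Kummer-\'etale analogue of the classical Kummer sequence, and the entire argument runs parallel to the smooth case treated in \cite{ScholzePerfectoidSurvey}, with ``Kummer-\'etale'' replacing ``\'etale'' throughout.
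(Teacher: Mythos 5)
Your proof is correct, but it follows a genuinely different route from the paper. The paper does not prove the finite-level log Kummer sequences at all: it starts from the already-known sequence $0\to \widehat{\bb{Z}}_p(1)\to \varprojlim_p \s{O}_X^{\times}\to \s{O}_X^{\times}\to 0$ and reduces the statement for $\n{M}_X^{gr}$ to showing that the quotient $\n{M}_X^{gp}/\s{O}_X^{\times}$ is a $\bb{Z}[\frac{1}{p}]$-module, which is checked on Kummer-\'etale geometric stalks (away from the divisor the stalk is $0$; on the divisor it is $\bb{Q}_{\geq 0}^{k}$, whose group of fractions is a $\bb{Q}$-vector space). You instead prove the finite-level sequences $0\to\mu_{p^n}\to \n{M}_X^{gr}\xrightarrow{\cdot p^n}\n{M}_X^{gr}\to 0$ on $X_{\ket}$ (valid: roots of units are extracted \'etale-locally since $p$ is invertible, roots of chart elements by standard Kummer covers, and the kernel is $\mu_{p^n}$ because $\overline{\n{M}}_X^{gp}$ is torsion-free for sharp saturated stalks) and then pass to the limit in $X_{\proket}$. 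Your approach is the more ``classical Kummer theory'' argument and does not use the special shape of the nc-divisor stalks, at the cost of needing the finite-level exactness (which is available in \cite{DiaoLogarithmic2019}) and a limit argument; the paper's argument piggybacks entirely on the $\s{O}_X^{\times}$ case plus a one-line divisibility computation. One caveat on your limit step: stalkwise surjectivity of the transition maps of $(\mu_{p^n})_n$ plus ``Mittag--Leffler'' is not, in an arbitrary topos, enough to kill the derived inverse limit; the correct justification is that the pro-Kummer-\'etale topos is replete (as established in \cite{DiaoLogarithmic2019}), where countable inverse limits along epimorphisms are exact. Alternatively, note that your direct argument in item (1) -- constructing the pro-Kummer-\'etale tower extracting compatible $p$-power roots of a given section -- together with left exactness of $\varprojlim$ for the kernel identification in item (2) already yields the short exact sequence, so the derived-limit discussion can be dispensed with entirely.
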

\begin{proof}
Consider the usual Kummer short exact sequence
\[
0\to \widehat{\bb{Z}}_p(1)\to \varprojlim_p \s{O}_{X}^{\times} \to \s{O}_{X}^{\times} \to 0
\] 
where $\s{O}_{X}$ is the uncompleted structural sheaf.  To prove the lemma it suffices to see that the quotient 
\[
\underline{\n{M}}_X^{gp}= \n{M}_X^{gp}/\s{O}_{X}^{\times}
\]
is a $\bb{Z}[\frac{1}{p}]$-module. This property can be checked at the level of geometric points $\overline{x}$ of $X_{\ket}$. If $\overline{x}$ is disjoint to the divisor $D$ defining the log structure then $\underline{\n{M}}_{X,\overline{x}}^{gp}=0$ and we are done. Otherwise, we can assume that $X$ has a toric chart $X\to \bb{S}^{(e,d-e)}_C$, which implies that 
\begin{equation}\label{eqpskodjfadw}
\overline{\n{M}}_{X,\overline{x}}\cong \bb{Q}_{\geq 0}^{k}
\end{equation}
with $0\leq k\leq d-e$. Indeed, \'etale locally the log structuree of $X$ around $x$ is modelled by the monoid $\bb{N}^k$ for $0\leq k\leq d-e$, and the equation \eqref{eqpskodjfadw} follows from \cite[Construction 4.4.3]{DiaoLogarithmic2019}. Then, the group of fractions of $\overline{\n{M}}_{X,\overline{x}}$ is a  $\bb{Q}$-vector space proving the claim.
\end{proof}

\begin{prop}[{\cite[Proposition 3.23]{ScholzePerfectoidSurvey} and \cite[Proposition 2.25]{BenLineBundles}}]
\label{PropProjectionO}
Let $X$ be as before, and let $\nu:X_{\proket} \to X_{\ket}$ be the projection of sites. Then there is a natural isomorphism
\[
R^{1}\nu_* \widehat{\s{O}}_X(1) \cong \Omega_{X}^1(\log)
\]
making the following diagram commute
\[
\begin{tikzcd}
\n{M}_X^{\times} \ar[d, "\delta"] \ar{r} & R^1 \nu_* \widehat{\bb{Z}}_p(1)  \ar{d}\\
\Omega_X^1(\log)  \ar[r,"\sim"]& R^1 \nu_*\widehat{\s{O}}_X(1)
\end{tikzcd}
\]
obtained by the log-Kummer-\'etale sequence and the log-differential map $\delta$.
\end{prop}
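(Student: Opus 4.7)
The plan is to adapt Scholze's proof of \cite[Proposition 3.23]{ScholzePerfectoidSurvey} to the logarithmic setting via toric charts. I would first construct the comparison morphism, then verify it is an isomorphism via a local Koszul-complex computation, and finally check compatibility with $\delta$.

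To produce the map $\Omega^1_X(\log) \to R^1\nu_*\widehat{\s{O}}_X(1)$, apply $R\nu_*$ to the log-Kummer exact sequence of the preceding lemma, obtaining a boundary morphism $\n{M}_X^{gr} \to R^1\nu_*\widehat{\bb{Z}}_p(1)$. Composing with the natural map $R^1\nu_*\widehat{\bb{Z}}_p(1) \to R^1\nu_*\widehat{\s{O}}_X(1)$ induced by $\widehat{\bb{Z}}_p(1) \hookrightarrow \widehat{\s{O}}_X(1)$ yields $\beta : \n{M}_X^{gr} \to R^1\nu_*\widehat{\s{O}}_X(1)$. Since the target is an $\s{O}_X$-module, I would check that $\beta$ factors through $d\log : \n{M}_X^{gr} \to \Omega^1_X(\log)$ by verifying that on the subsheaf $\s{O}_X^\times$ it recovers Scholze's non-logarithmic $d\log$ and that on the toric sections $T_i$, $S_j$ its image coincides with $d\log T_i$, $d\log S_j$ (to be established in the next step).

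To see that the resulting morphism $\Omega^1_X(\log) \to R^1\nu_*\widehat{\s{O}}_X(1)$ is an isomorphism, the question is local on $X_{\ket}$, so fix a toric chart $X \to \bb{S}^{(e,d-e)}_C$ and let $\widetilde{X} \to X$ be the pro-Kummer-\'etale $\Gamma$-torsor obtained by adjoining all $p$-power roots of the coordinates, where $\Gamma = \bb{Z}_p(1)^d$. Since $\widetilde{X}$ is affinoid perfectoid, the logarithmic almost-acyclicity result from \cite{DiaoLogarithmic2019} yields
\[
R\Gamma_{\proket}(X, \widehat{\s{O}}_X) = R\Gamma(\Gamma, A_\infty), \qquad A_\infty := \widehat{\s{O}}_X(\widetilde{X}),
\]
and the right-hand side is computed by a Koszul complex on topological generators $\gamma_1, \ldots, \gamma_d$ of $\Gamma$. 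Decomposing $A_\infty$ into weight spaces $\underline{T}^{\underline{a}}\underline{S}^{\underline{b}}$ indexed by $(\underline{a},\underline{b}) \in \bb{Z}[1/p]^d$, the integral weights yield the trivial $\Gamma$-module $\s{O}_X(X)$, contributing $\s{O}_X(X) \otimes \bigwedge^\bullet (\Lie \Gamma)^\vee$ to the cohomology, while on every non-integral weight some $\gamma_i - 1$ acts by the non-zero scalar $\zeta_{p^n}^{a_i}-1$, producing an acyclic summand after a standard almost-mathematics argument (cf.\ Example \ref{ExamDiscTorus}). This gives $R^1\nu_*\widehat{\s{O}}_X \cong \s{O}_X \otimes_{\bb{Z}_p} (\Lie \Gamma)^\vee$, and after the Tate twist the identification $(\Lie \Gamma)^\vee(1) \cong \bb{Z}_p^d$ afforded by the chosen coordinates recovers $\Omega^1_X(\log)$, the $i$-th basis vector mapping to $d\log T_i$ or $d\log S_j$.

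Finally I would verify compatibility: under the local identification, the log-Kummer boundary sends $T_i$ to the cocycle $\gamma_j \mapsto \delta_{ij}$ in $H^1(\Gamma, \widehat{\bb{Z}}_p(1))$, which in $R^1\nu_*\widehat{\s{O}}_X(1)$ corresponds exactly to $d\log T_i = \delta(T_i)$, and similarly for $S_j$; this checks both that $\beta$ factors through $d\log$ and that the square in the statement commutes. The local isomorphism is functorial in \'etale maps between toric charts and therefore glues to the desired global morphism of Kummer-\'etale sheaves. The main obstacle I anticipate is controlling the denominators $\zeta_{p^n}^{a_i}-1$ on the non-integral weight spaces when producing a contracting homotopy for the Koszul complex; this is precisely where working in the almost category of $\s{O}^+_X/p$ (via Scholze's almost-acyclicity for log affinoid perfectoids) is essential.
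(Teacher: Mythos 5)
Your overall architecture (construct a map out of the log-Kummer boundary, compute locally on a toric chart via a Koszul complex, then glue) is reasonable, but there is a genuine gap at the decisive step: the factorization of $\beta:\n{M}_X^{gr}\to R^1\nu_*\widehat{\s{O}}_X(1)$ through $\delta=\dlog$. Since the image of $\delta$ generates $\Omega^1_X(\log)$ as an $\s{O}_X$-module, an $\s{O}_X$-linear map $\lambda$ with $\lambda\circ\delta=\beta$ is unique if it exists, and this uniqueness is what makes the chart-local constructions glue; but to invoke it you must verify $\lambda\circ\delta=\beta$ on \emph{all} sections of $\n{M}_X^{gr}$, in particular on arbitrary units $u\in\s{O}_X^{\times}$, not only on the chart coordinates $T_i,S_j$. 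Your chart computation pins down $\lambda$ on the basis $\dlog T_i,\dlog S_j$ and shows agreement with $\beta$ there, but for a general unit $u$ with $du/u=\sum_i a_i\,\dlog T_i$ you still need to know that the Kummer cocycle of $u$ equals $\sum_i a_i\sigma_i$ in $H^1(\Gamma,\widehat{\s{O}}_X(1)(\widetilde{X}))$; a general $u$ is not a monomial, so this cannot be read off from the weight-space decomposition, and the phrase ``on $\s{O}_X^\times$ it recovers Scholze's non-logarithmic $\dlog$'' is exactly the statement to be proven, not a check. Beware also of circularity: the chart-independence/functoriality you appeal to is essentially Corollary \ref{CoroFunctHTiso} and Proposition \ref{PropGluingKeyCase} of the paper, both of which are deduced \emph{from} this proposition and so cannot be used in its proof.

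The paper closes this gap differently: after the uniqueness remark it localizes to a toric chart, uses Scholze's approximation argument (\cite[Lemma 3.24]{ScholzePerfectoidSurvey}, which only needs the chart to factor through rational localizations and finite \'etale maps) to realize $X$ as the base change of a space defined over a discretely valued field, and then runs Scholze's proof verbatim with the log-Faltings extension $0\to \widehat{\s{O}}_{X'}(1)\to \gr^1\s{O}\!\bb{B}^+_{\dR,\log,X'}\to \Omega^1_{X'}(\log)\otimes\widehat{\s{O}}_{X'}\to 0$; the boundary map of this extension produces the map $\Omega^1_X(\log)\to R^1\nu_*\widehat{\s{O}}_X(1)$ canonically (no choice of chart), and the explicit elements $\log(\underline{T}^{-1}[\underline{T}^\flat])$, $\log(u^{-1}[u^\flat])$ give precisely the compatibility with $\delta$ on all of $\n{M}_X^{gr}$. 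If you want to avoid period sheaves, a direct Kummer-theoretic route does exist (this is roughly the approach in the cited work of Heuer), but it requires a dedicated argument for the compatibility on arbitrary units; that argument is the real content and is missing from your write-up. A smaller point: your weight-space decomposition of $A_\infty$ into monomials is literally valid only for $\bb{S}^{(e,d-e)}_C$ itself; for a general chart you need the almost-\'etale descent/Tate-trace machinery (as in Example \ref{ExamDiscTorus} and Proposition \ref{PropRingSatisfiesSenAxioms}) to reduce to the model case, which you gesture at and which is indeed standard.
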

\begin{remark}
We prefer to keep the Tate twist   even  if $C$ contains $\bb{Q}_P^{\cyc}$; in the case where $X$ has a form $X'$ over $(K,K^+)$,  the isomorphism of Proposition \ref{PropProjectionO} is Galois equivariant.   Later in \S \ref{s:HTSrigidProetalecoho} we will  show that, if $X$ admits the form $X'$,  $R\nu_* \widehat{\s{O}}_X =\bigoplus_{i=1}^d \Omega^i_X(\log)(-i) [-i]$, i.e. that the pro-Kummer-\'etale cohomology of $\widehat{\s{O}}_X$ naturally splits thanks to the Galois action.  
\end{remark}
\begin{proof}
If $X$ as trivial log structure this is \cite[Proposition 3.23]{ScholzePerfectoidSurvey}, let us show that the same argument holds in the log-smooth situation. First, the image of the  map $\delta$ generates $\Omega_X^1(\log)$ as $\s{O}_X$-module, so if such an  equivalence exists it must be unique. We can then argue locally in the \'etale topology of $X$ and assume it has toric coordinates $X\to \bb{S}^{(e,d-e)}_C$. Then, using the approximation argument of \cite[Lemma 3.24]{ScholzePerfectoidSurvey} (which only requires that the map $X\to \bb{S}^{(e,d-e)}_C$ factors as composites of finite \'etale maps and rational localizations), we can assume that $X\to \bb{S}^{(e,d-e)}_C$ arises from base change of an \'etale map $X'\to \bb{S}^{(e,d-e)}_{Y}=Y\times_{\Spa \bb{Q}_p} \bb{S}^{(e,d-e)}_{\bb{Q}_p}$ via a map $\Spa (C,C^+)\to Y$, where $Y$ is smooth of finite type over $\bb{Q}_p$. We endow $X'$ with the log structure arising from the normal crossing divisors of $\bb{S}^{(e,d-e)}_{\bb{Q}_p}$. Finally, the same argument of \textit{loc. cit.} holds by using instead the log-Faltings extension of $X'$:
\[
0\to \widehat{\s{O}}_{X'}(1)\to \gr^1 \s{O}\!\bb{B}^+_{\dR,\log,X'} \to \Omega_{X'}^1(\log)\otimes \widehat{\s{O}}_{X'}\to 0,
\]
we left the details to the reader. 
\end{proof}

\begin{cor}
\label{CoroFunctHTiso}
Let $f:Y\to X$ be a map of fs log smooth adic spaces over $(C,C^+)$ with normal crossing divisors. The following diagram is commutative 
\[
\begin{tikzcd}
f^*\Omega^{1}_X(\log) \ar[d, "f^*"] \ar[r,"\sim"]  & f^* R^1\nu_{X,*} \widehat{\s{O}}_X(1) \ar[d,"f^*"] \\ 
\Omega^{1}_Y(\log) \ar[r,"\sim"] & f^* R^1\nu_{Y,*} \widehat{\s{O}}_Y(1) .
\end{tikzcd}
\]
\end{cor}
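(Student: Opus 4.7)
The plan is to exploit the characterization of the horizontal isomorphism given by Proposition \ref{PropProjectionO}: the isomorphism $\Omega^{1}_X(\log)\xrightarrow{\sim} R^{1}\nu_{X,*}\widehat{\s{O}}_X(1)$ is the unique $\s{O}_X$-linear map whose composition with the log-differential $\delta:\n{M}_X^{gp}\to \Omega^{1}_X(\log)$ equals the connecting homomorphism of the log-Kummer sequence composed with the inclusion $\widehat{\bb{Z}}_p(1)\hookrightarrow \widehat{\s{O}}_X(1)$. Since the image of $\delta$ generates $\Omega^{1}_X(\log)$ as $\s{O}_X$-module, the image of $f^{*}\delta$ generates $f^{*}\Omega^{1}_X(\log)$ as $\s{O}_Y$-module, so it suffices to check commutativity of the square on local sections of the form $f^{*}\delta(m)$ for $m\in f^{-1}\n{M}_X^{gp}$.

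First I would note that the morphism $f:Y\to X$ induces, on pro-Kummer-\'etale sites, a natural map of sheaves of monoids $f^{-1}\n{M}_X^{gp}\to \n{M}_Y^{gp}$ compatible with the log-differentials, i.e.\ $\delta_Y\circ f^{*}=f^{*}\circ \delta_X$. Similarly, the log-Kummer short exact sequence is functorial in the fs log adic space, giving a morphism of short exact sequences
\[
\begin{tikzcd}
f^{-1}\widehat{\bb{Z}}_p(1) \ar[d,equal] \ar[r] & f^{-1}\varprojlim_p \n{M}_X^{gp} \ar[d] \ar[r] & f^{-1}\n{M}_X^{gp} \ar[d] \\
\widehat{\bb{Z}}_p(1) \ar[r] & \varprojlim_p \n{M}_Y^{gp} \ar[r] & \n{M}_Y^{gp}.
\end{tikzcd}
\]
Applying $R\nu_{Y,*}$ and combining with the base change map $f^{*}R^{1}\nu_{X,*}(-)\to R^{1}\nu_{Y,*}f^{*}(-)$, one obtains that the connecting homomorphism $\n{M}^{gp}\to R^{1}\nu_{*}\widehat{\bb{Z}}_p(1)$ is functorial in the adic space. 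Combined with the analogous statement for the inclusion $\widehat{\bb{Z}}_p(1)\hookrightarrow \widehat{\s{O}}(1)$, this shows that both paths around the square agree after precomposing with $f^{*}\delta$, and hence agree on all of $f^{*}\Omega^{1}_X(\log)$.

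This is essentially a formal argument once the characterization in Proposition \ref{PropProjectionO} is available; the only mild subtlety is the compatibility between the base change map $f^{*}R^{1}\nu_{X,*}\widehat{\s{O}}_X(1)\to R^{1}\nu_{Y,*}\widehat{\s{O}}_Y(1)$ (which constitutes the right vertical arrow of the square) and the connecting homomorphism of the log-Kummer sequence. This compatibility, however, is a standard consequence of the naturality of edge maps in the Leray spectral sequence applied to the morphism of ringed sites $Y_{\proket}\to X_{\proket}$, so no significant obstacle is expected.
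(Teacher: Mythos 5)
Your proposal is correct and follows essentially the same route as the paper: invoke Proposition \ref{PropProjectionO} to characterize the isomorphism by its compatibility with the log-differential $\delta$, use that the image of $\delta$ generates $\Omega^1_X(\log)$ (hence $f^*\delta$ generates $f^*\Omega^1_X(\log)$), and then reduce to the functoriality of $\delta$ and of the log-Kummer connecting map under $f$. The paper states these ingredients more tersely, while you spell out the compatibility of the base change map with the connecting homomorphism, but there is no substantive difference.
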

\begin{proof}
This follows from Proposition \ref{PropProjectionO}, the commutative diagram 
\[
\begin{tikzcd}
f^*\n{M}_X^{\times} \ar[r, "\delta"] \ar[d] & f^*\Omega_X^1(\log)  \ar[d]\\
\n{M}_Y^{\times} \ar[r, "\delta"] &  \Omega_Y^1(\log),
\end{tikzcd}
\]
and the fact that the image of $\delta$ generates the sheaf of differentials as vector bundles.
\end{proof}

We give a different construction of the isomorphism $R^1\nu_*\widehat{\s{O}}_X(1)\cong \Omega^1_{X}(\log)$ suggested by the referee.  We first need a lemma:

\begin{lem}\label{LemComputationCohomologyToricCoordinates}
Let $X$ be an fs log adic space over $(C,C^+)$ and let $V\in X_{\proket}$ be an object in the pro-Kummer-\'etale site of $X$. Let $V_{\infty}\to V$ be a pro-Kummer-\'etale torsor with Galois group $\Pi$ a profinite group with trivial pro-$p$-Sylow subgroup, and with $V_{\infty}$ a log affinoid perfectoid space (see \cite[Definition 5.3.1]{DiaoLogarithmic2019}).

Let $\s{F}$ be a $p$-torsion free $p$-adically complete $\widehat{\s{O}}_X^+$-module,  pro-Kummer-\'etale sheaf on $X_{\proket}$ which is almost acyclic on log affinoid perfectoid spaces and such that $\s{F}/p$ arises from the fully-faithful map $\widetilde{X}_{\ket}\hookrightarrow \widetilde{X}_{\proket}$ of topoi (see \cite[Proposition 5.1.7]{DiaoLogarithmic2019}). Then there is an almost quasi-isomorphism
\[
R\Gamma_{\proket}(V , \s{F}) =^{ae} \s{F}(V).
\] 
 Moreover, for any $\epsilon>0$ one has 
\[
R\Gamma_{\proket}(V, \s{F}/p^{\epsilon}) =^{ae} \s{F}(V)/p^{\epsilon}. 
\]
\end{lem}
\begin{proof}
 Since $\s{F}$ is $p$-complete and $p$-torsion free we have $\s{F}=R\varprojlim_{n} \s{F}/^{\bb{L}} p^n$ as sheaves in $X_{\proket}$. Moreover, $\s{F}/^{\bb{L}} p^n= \s{F}/p^n$ sits in degree $0$. One formally has that 
 \[
 R\Gamma_{\proket}(V , \s{F}) = R\varprojlim_{n} R\Gamma_{\proket}(V , \s{F}/p^n)= R\varprojlim_{n} R\Gamma_{\ket}(V , \s{F}/p^n)
 \]
 where in the last equivalence we use \cite[Proposition 5.1.7]{DiaoLogarithmic2019}.  Since $V_{\infty}$ is log affinoid perfectoid, $\s{F}$ is almost acyclic on $V_{\infty}$ which implies that $\s{F}/p^{\epsilon}$ is almost acyclic on $V_{\infty}$ and $(\s{F}/p^{\epsilon})(V_{\infty})=^{ae} \s{F}(V_{\infty})/p^{\epsilon}$ for all $\epsilon>0$. Namely, from the short exact sequence 
 \begin{equation}\label{eqpksdpawa}
 0\to \s{F}\xrightarrow{p^{\epsilon}} \s{F} \to \s{F}/p^{\epsilon}\to 0
 \end{equation}
 we get an almost short exact sequence
 \[
 0\to \s{F}(V_{\infty}) \xrightarrow{p^{\epsilon}} \s{F}(V_{\infty}) \to (\s{F}/p^{\epsilon})(V_{\infty})\to 0. 
 \]

 Since $V_{\infty}\to V$ is a $\Pi$-torsor one deduces that 
 \[
 R\Gamma_{\proket}(V , \s{F})=^{ae}R\varprojlim_{n} R\Gamma(\Pi , \s{F}(V_{\infty})/p^n) 
 \]
 and that 
 \[
 R\Gamma_{\ket}(V, \s{F}/p^{\epsilon})=^{ae}R\Gamma(\Pi, \s{F}(V_{\infty})/p^{\epsilon}). 
 \]
 
Notice that the action of $\Pi$ on $\s{F}(V_{\infty})/p^{\epsilon}$ is smooth as $\s{F}/p^{\epsilon}$  arises from the Kummer-\'etale site.  Now, since $\Pi$ has no pro-$p$-sylow subgroup, it admits a Haar measure modulo $p^\epsilon$ for all $\epsilon>0$, and  $\Pi$-cohomology is exact on smooth representations modulo $p^{\epsilon}$. This implies that 
\[
R\Gamma_{\proket}(V, \s{F}/p^{\epsilon})=^{ae} (\s{F}/p^{\epsilon})(V).
\]
 On the other hand, for  $m\geq n$ the map $(\s{F}/p^m)(V)\to (\s{F}/p^n)(V)$ is almost surjective and one gets that 
\[
 R\Gamma_{\proket}(V , \s{F})=^{ae} R\varprojlim_{n} (\s{F}/p^n)(V)= \lim_n (\s{F}/p^n)(V)=\s{F}(V)
\]
proving the first claim.  We deduce the second claim from the almost aciclicity of $\s{F}$ in $V$ and the sort exact sequence \eqref{eqpksdpawa}. 
\end{proof}

\begin{prop}\label{PropProjectionO2}
Let $X$ be an fs log adic space over $(C,C^+)$ and consider the  short exact sequence of pro-Kummer-\'etale sheaves
\[
0\to \widehat{\s{O}}_X(1)\to \bb{B}_{\dR}^+/t^2\to \widehat{\s{O}}_X\to 0
\]
where $t$ is a generator of the kernel $\bb{B}^+_{\dR}\to \widehat{\s{O}}_X$.  Then the connecting map 
\[
d:\s{O}_X=\nu_* \widehat{\s{O}}_X \to R^1 \nu_{*} \widehat{\s{O}}_X(1)
\]
is a $C$-linear continuous derivation inducing a natural map of $\s{O}_X$-modules 
\[
\beta:\Omega^1_X\to R^1 \nu_{*} \widehat{\s{O}}_X(1).
\]
Moreover, the map $\beta$ extends uniquely to an isomorphism 
\[
\Omega^1_{X}(\log)\xrightarrow{\sim} R^1 \nu_{*} \widehat{\s{O}}_X(1).  
\]
\end{prop}
\begin{proof}
All statements are \'etale local on $X$, so we can assume without loss of generality that $X=\Spa(A,A^+)$ is affinoid and admits a chart $\psi: X\to \bb{S}^{(e,d-e)}_{C}$.  Let $T_1,\ldots, T_e$ be the torus coordinates  and $S_{e+1},\ldots, S_{d}$ the disc coordinates of $\bb{S}^{(e,d-e)}_{C}$. Let $\bb{S}^{(e,d-e)}_{C, \infty}$ be the pro-Kummer-\'etale cover over $\bb{S}^{(e,d-e)}_{C}$ obtained by  all $n$-th roots of the variables $T_i$ and $S_j$ for all $n\in \bb{N}$. By \cite[Definition 5.3.1]{DiaoLogarithmic2019},   $\bb{S}^{(e,d-e)}_{C, \infty}$ is a log affinoid perfectoid space and then so is its pulback $X_{\infty}=\Spa(A_{\infty}, A^+_{\infty})$ over $X$. Note that $X_{\infty}\to X$ is a pro-Kummer-\'etale torsor for the group $\widehat{\bb{Z}}(1)^{d}$ obtained as the Tate module of roots of unity. 

Let $\bb{S}^{(e,d-e)}_{C, p^{\infty}}$ be the pro-Kummer-\'etale $\Gamma=\bb{Z}_p(1)^d$-torsor over $\bb{S}^{(e,d-e)}_{C, p^{\infty}}$ obtained by taking only $p$-th power roots of the $T_i$ and $S_i$, let $X_{p^{\infty}}=\Spa (A_{p^{\infty}},A_{p^{\infty}}^+ )$ be its pullback to $X$. The map $X_{\infty}\to X_{p^{\infty}}$ is a $\widehat{\bb{Z}}^{(p),k}(1)$-torsor, where $\widehat{\bb{Z}}^{(p)}=\prod_{\ell\neq p} \widehat{\bb{Z}}_{\ell}$  has trivial pro-$p$-Sylow subgroups. Since $\s{O}^+_X$ is almost acyclic on log affinoid perfectoids (equiv. the topologically nilpotent elements $\s{O}^{\circ\circ}_X\subset \s{O}^+_X$ are acyclic, cf. \cite[Theorem 5.4.3]{DiaoLogarithmic2019}),    Lemma \ref{LemComputationCohomologyToricCoordinates} implies that 
\[
R\Gamma_{\proket}(X_{p^{\infty}}, \s{F}) = \s{F}(X_{p^{\infty}})
\]
for $\s{F}=\widehat{\s{O}}_X$ and $\bb{B}^+_{\dR}/t^2$. 

Then, since $X_{p^{\infty}}\to X$ is a $\Gamma$-pro-Kummer-\'etale torsor, one has an isomorphism  of cohomologies 
\[
R\Gamma_{\proket}(X, \s{F})=R\Gamma(\Gamma, \s{F}(X_{p^{\infty}}))
\]
where the RHS is the continuous cohomology of Banach representations. By Proposition \ref{PropSenTheoryAffinoids} there are Sen traces $R_{n}^i:A_{p^{\infty}}\to A^i_{p^{\infty},n}$ arising from the standard traces of the product of tori and polydiscs $\bb{S}^{(e,d-e)}_C$, and the triple $(A_{p^{\infty}}, \Gamma, (R_{n}^i))$  is a $d$-dimensional Sen theory. Note that as $A$-semilinear $\Gamma$-representation, $A_{p^{\infty}}$ and $A_{p^{\infty}}(1)$ are trivial, so they have trivial Sen operator and by Corollary \ref{CoroProjectionFormula} we have that 
\[
R\Gamma(\Gamma, A_{p^{\infty}})\cong \bigoplus_{i=0}^d A \otimes  \bigwedge^i(\Lie \Gamma)^{\vee}[-i]. 
\]
In particular, $H^1(X, \widehat{\s{O}}_X(1))$ is a free $A$-module of rank $d$. Let us now see that the connecting map 
\[
d:A\to  H^1_{\proket}(X, \widehat{\s{O}}_X(1))\cong A^d
\]
is a derivation. For this, we see $H^1_{\proket}(X, \widehat{\s{O}}_X(1))$ as isomorphic to the group cohomology $H^1(\Gamma, A_{\infty}(1))=H^1(\Gamma, A(1))\cong A^d$ which can be computed via $1$-cocycles of $\Gamma$.   The map $d$ is constructed as follows: let $\gamma_1,\ldots, \gamma_d\in \Gamma$ be the standard  basis obtained by fixing a sequence of $p$-th power roots of unity $\epsilon=(\zeta_{p^n})_n$. Let $a,b\in A\subset A_{p^{\infty}}$ and let $\widetilde{a}, \widetilde{b}\in \bb{B}^{+}_{\dR}/t^2(A_{p^{\infty}})$ be lifts of $a$ and $b$ respectively. The map $d$ sends the element $a$ to the $1$-cocycle of $\Gamma$ on $A$ given by the tuple $d(a)=((1-\gamma_1)(\widetilde{a}), \ldots, (1-\gamma_d)(\widetilde{a}))\in A^d$. One has that 
\[
(1-\gamma_i)(\widetilde{a}\widetilde{b})= (1-\gamma_i)(\widetilde{a}) \gamma_i(\widetilde{b}) + \widetilde{a}(1-\gamma_i)(\widetilde{b}) = d(a) \gamma_i(b) + a d(b)=d(a)b+a d(b)
\]
as element in $A$, showing that  $d$ satisfies the Leibniz rule. The map $d$ is clearly $C$-linear as $\Gamma$ acts trivially on $\bb{B}^+_{\dR}(C)$. This proves that $d$ is a $C$-linear derivation and so that it induces a natural map 
\[
\beta:\Omega^1_A\to H^1_{\proket}(X, \widehat{\s{O}}_X(1)).
\]
We now want to show that the map $\beta$ induces an isomorphism $\Omega^1_A(\log)\xrightarrow{\sim}  H^1_{\proket}(X, \widehat{\s{O}}_X(1))$, note that if this isomorphism exists it must be unique since $\Omega_X^1\to \Omega_X^1(\log)$ is an inclusion of $A$-vector bundles of same rank and $ H^1_{\proket}(X, \widehat{\s{O}}_X(1))$ is a vector bundle itself.

By construction, the space of log differentials of $X$ has basis given by $\frac{dT_1}{T_1},\ldots, \frac{dT_e}{T_e}, \frac{dS_{e+1}}{S_{e+1}},\ldots, \frac{dS_d}{S_d}$. Thus, in order to show that $\beta$ induces the desired isomorphism it suffices to compute the map $d$ on the coordinates $T_i$ and $S_j$. For this, take $[T^{\flat}_i]\in \bb{B}^{+}_{\dR}(A_{p^{\infty}})$ the Teichm\"uller lift of the sequence of $p$-power roots of $T_i$ (resp. $[S^{\flat}_j]$ for the $S_j$). Then
\[
d(T_i)=((1-\gamma_1)([T^{\flat}_i]), \ldots, (1-\gamma_d)([T^{\flat}_i]))
\]
which vanishes in all but the $i$-th entry which is equal to  the class of
\[
(1-[\epsilon]))[T^{\flat}_i]=  \frac{(1-[\epsilon])}{(1-[\epsilon]^{1/p}) } (1-[\epsilon]^{1/p}) [T^{\flat}_i]= t (1-[\epsilon]^{1/p}) [T^{\flat}_i]
\] 
in $A(1)\cong A$ which is nothing but $\theta((1-[\epsilon]^{1/p}) [T^{\flat}_i])= (1-\zeta_p) T_i$ (after trivializing the Tate twist with $t$), with $\theta: \bb{B}^+_{\dR}\to \widehat{\s{O}}_X$ being Fontaine's map. Similarly, one has that $d(S_j)=(1-\zeta_p) S_j$.  Therefore, if $\{v_i\}_{i=1}^d$ is the standard basis of $A^d$,  the map $\beta$ sends $\frac{dT_i}{T_i}$ to the vector $(1-\zeta_p)v_i$ and $dS_j$ to the vector $(1-\zeta_p) S_j v_j$. This implies that $\beta$ extends uniquely to an isomorphism 
\[
\Omega^1_A(\log)\xrightarrow{\sim} H^1_{\proket}(X, \widehat{\s{O}}_X(1))
\] 
proving what we wanted.
\end{proof}

\begin{remark}
One can easily show that the two isomorphisms $\Omega^1_X(\log)\cong R^1\nu_* \widehat{\s{O}}_X$ of Propositions \ref{PropProjectionO} and \ref{PropProjectionO2} are the same.  For this it suffices to see that one has a morphism of short exact sequences
\[
\begin{tikzcd}
0  \ar[r] & \bb{Z}_p(1)  \ar[r] \ar[d]& \varprojlim_p \n{M}_X  \ar[d,"{[\alpha(-)]}"] \ar[r]& \n{M}_X  \ar[r] \ar[d, "\alpha"] & 0 \\ 
0 \ar[r]& \widehat{\s{O}}_X(1)  \ar[r]& \bb{B}^{+}_{\dR}/t^2  \ar[r]& \widehat{\s{O}}_X  \ar[r] & 0 
\end{tikzcd}
\]
where the middle map sends a sequence $(a^{1/p^n})_n$ in $\n{M}_X$ to the Teichm\"uller lift $[\alpha(a^{1/p^n}))_n]$ where $\alpha: \n{M}_X\to \s{O}_X\to \widehat{\s{O}}_X$ is the monoid map of the log structure. 
\end{remark}

\subsection{The geometric Sen operator: local computation}
\label{s:HTSrigidSenBundle}

In this section we prove a local version of Theorems \ref{TheoSenOperatorIntro} and \ref{TheoSenBundleofTorsorIntro} depending on toric charts. Let $(C,C^+)$ be a perfectoid field containing $\bb{Q}_p^{\cyc}$. Let $X$ be an fs log adic space over $(C,C^+)$ with log structure arising from reduced normal crossing divisors. We define the relevant pro-Kummer-\'etale sheaves that will admit decompletions via locally analytic vectors.

\begin{definition}
\label{DefRelativeLocAnSheaf}
 A  pro-Kummer-\'etale $\widehat{\s{O}}_X$-module   $\s{F}$  over $X$  is  a  \textit{relative locally analytic ON Banach $\widehat{\s{O}}_X$-sheaf}\footnote{The abbreviation \textit{ON} comes from orthonormalizable, meaning that locally we have a Banach basis.} if there is a Kummer-\'etale  cover $\{U_i\}_{i\in I}$ of $X$ such that:
 \begin{itemize}
 
 \item[i.]   For all $i$,  the restriction $\s{F}|_{U_i}$ admits a $p$-adically complete  $\widehat{\s{O}}^+_X$-lattice  $\s{F}^0_i$.
 
 \item[ii.]  There is   $\epsilon>0$ (depending on $i$) such that $\s{F}^0_i /p^{\epsilon}\cong^{ae} \bigoplus_{J} \s{O}^+_X/p^{\epsilon} $ as almost $\s{O}^+_X/p^{\epsilon}$-modules\footnote{Recall that $\s{O}_X^+/p^{\epsilon}=\widehat{\s{O}}^+_X/p^{\epsilon}$ as $\widehat{\s{O}}^+_X$ is the $p$-completion of $\s{O}_X^+$.} for some index set $J$.   
 
 \end{itemize}
\end{definition}

\subsubsection{The set-up}

\label{ss:HTSrigidSetup}

 For the rest of the section we will assume that $X=\Spa (A,A^+)$ is affinoid and  has a toric chart $\psi: X\to \bb{S}^{(e,d-e)}_C$ where $\bb{S}^{(e,d-e)}_C=\bb{T}^{e}_{C}\times \bb{D}^{d-e}_C$, and that $\psi$ factors as the composite of finite \'etale maps and rational localizations. We highlight that, by definition of $X$, the charts $\psi$ exist \'etale locally on $X$, see \cite[Example 2.3.17]{DiaoLogarithmic2019}. We let $T_i$ for $i=1,\ldots, e$ and $S_j$ for $j=e+1,\ldots, d$ denote the coordinates of the torus and disc components  of $\bb{S}^{(e,d-e)}_C$ respectively.  Let $\bb{S}^{(e,d-e)}_{C,{\infty}}$ be the pro-Kummer-\'etale torsor over $\bb{S}^{(e,d-e)}_{C}$ obtained by taking $p$-th power roots of the coordinates $T_i$ and $S_j$, and let $X_{{\infty}}=\Spa(A_{{\infty}}, A^+_{{\infty}})$ be its pullback along $\psi$. Let $\Gamma=\bb{Z}_p(1)^d$ denote the Galois group of $X_{{\infty}}\to X$, we let $\gamma_1,\ldots, \gamma_d$ denote the coordinates of $\Gamma$ obtained after fixing  $\epsilon=(\zeta_{p^n})_n$ a compatible sequence of $p$-th power roots of unity. 
 
 \begin{remark} \label{RemarkAciclycitySheafF} 
Note that the underlying adic space of $X_{{\infty}}$ is an affinoid perfectoid space, but  as an object in $X_{\proket}$ it is not a log affinoid  perfectoid space in the sense of \cite[Definition 5.3.1]{DiaoLogarithmic2019}, namely, the sheaf of monoids of $X_{{\infty}}$ is not modelled in a  $n$-divisible  monoid for $n\neq p$. However, thanks to Lemma \ref{LemComputationCohomologyToricCoordinates} the sheaves $\s{F}$ of Definition \ref{DefRelativeLocAnSheaf}  are acyclic on $X_{{\infty}}$ (after passing to a Kummer-\'etale cover such that $\s{F}$ has a lattice $\s{F}^0$ as in  the definition). This yields a   quasi-isomorphism
 \[
 R\Gamma_{\proket}(X, \s{F})\cong  R\Gamma(\Gamma, \s{F}(X_{\infty}))
 \]
 between the pro-Kummer-\'etale cohomology of $\s{F}$ and the continuous $\Gamma$ cohomology of its $X_{{\infty}}$-points, whenever $\s{F}$ admits such lattice $\s{F}^0$.  Furthermore, the lemma also implies that $\s{F}^0(X_{\infty})/p^{\epsilon}\cong^{ae} \bigoplus_{j\in J} A^+_{\infty}/p^{\epsilon} $ as $\Gamma$-representations. Then, after  modifying the lattice $\s{F}^0(X_{\infty})$ if necessary, the $\Gamma$-representation $\s{F}(X_{\infty})$ is a relative locally analytic  ON $A_{\infty}$-Banach representation of $\Gamma$ as in Definition \ref{DefRelLABanach}.   Furthermore, the natural map 
 \[
 \s{F}(X_{\infty})\widehat{\otimes}_{A_{\infty}} \widehat{\s{O}}_{X_{\infty}} \xrightarrow{\sim} \s{F}|_{X_{\infty}}
 \]
 is an equivalence of $\Gamma$-equivariant pro-Kummer-\'etale sheaves on $X_{\infty}$. 
 \end{remark}

\subsubsection{Local version of Theorem \ref{TheoSenOperatorIntro}}

 Proposition \ref{PropSenTheoryAffinoids} implies that the ring $A_{\infty}$ has Sen traces $R_n^i: A_{\infty}\to A_n^i$ for $i=1,\ldots, d$ such that the triple $(A_{\infty},\Gamma, (R_n^i)_n)$ is a $d$-dimensional Sen theory as in Definition \ref{defProdSenTheory}. We deduce the following proposition: 
 
\begin{prop}
\label{PropMainTheo1LocalVersion}
Let $\s{F}$ be a relative locally analytic ON $\widehat{\s{O}}_X$-module over $X$ admitting a lattice $\s{F}^0$ such that $\s{F}^{0}/p^{\epsilon}\cong^{ae} \bigoplus \s{O}^+_X/p^{\epsilon}$ for some $\epsilon>0$. Then there is a $\widehat{\s{O}}_X$-linear local geometric Sen operator functorial on $\s{F}$ (but a priori depending on the chart $\psi$) 
\[
\theta_{\s{F}}:\s{F}\to \s{F} \otimes_{\s{O}_X} \Omega^1_X(\log)(-1)
\]
such that: 
\begin{enumerate}
\item $\theta_{\s{F}}$ is a Higgs field, i.e, $\theta_{\s{F}}\wedge \theta_{\s{F}} =0$. 

\item  Higgs cohomology computes pro-Kummer-\'etale cohomology, namely,  if $\nu:X_{\proket}\to X_{\ket}$ and $\eta:X_{\proket}\to X_{\an}$ are the projection of sites, then
\[
R^{i} \nu_{*} \s{F} = \nu_* H^{i}(\theta_{\s{F}}, \s{F})  \mbox{ and } R^{i} \eta_{*} \s{F} = \eta_* H^{i}(\theta_{\s{F}}, \s{F}) ,
\]
where $H^{i}(\theta_{\s{F}}, \s{F})$ is the cohomology of the Higgs complex
\[
0\to \s{F} \to \s{F}\otimes_{\s{O}_X}\Omega^1_X(\log)(-1) \to\cdots \to \s{F}\otimes_{\s{O}_X}  \Omega^d_X(\log)(-d) \to 0.
\]
\item Suppose that $\theta_{\s{F}}=0$, then there is a natural equivalence
\[
R\nu_* \s{F}\cong  \bigoplus_{i=0}^d \nu_*\s{F}\otimes_{\s{O}_X}\Omega^i_X(\log)(-i)[-i] \mbox{ and } R\eta_* \s{F}= \bigoplus_{i=0}^d \eta_*\s{F}\otimes_{\bb{Q}_p} \otimes_{\s{O}_X}\Omega^i_X(\log)(-i)[-i]
\]
depending on the toric chart $\psi$.  Moreover, $\nu_*\s{F}$ is an ON $\s{O}_X$-Banach sheaf locally finite Kummer-\'etale on $X$, and we have 
\[
\s{F}=\widehat{\s{O}}_X\widehat{\otimes}_{\s{O}_X} \nu_*\s{F}.
\]
Conversely, if $\s{G}$ is a locally ON Banach $\s{O}_X$-module in the Kummer-\'etale topology, then the geometric Sen operator of $\widehat{\s{O}}_X\widehat{\otimes}_{\s{O}_X} \s{G}$ vanishes. 
\end{enumerate}

We write $\Sen_{\s{F}}:  \Omega^1_X(\log)^{\vee}(1)\otimes_{\s{O}_X} \s{F}\to \s{F}$ for the adjoint of $\theta_{\s{F}}$.

\end{prop}
\begin{proof}
By Remark \ref{RemarkAciclycitySheafF} the sheaf $\s{F}$ is acyclic on $X_{\infty}$ and $\s{F}(X_{\infty})$ is a relative locally analytic ON $A_{\infty}$-Banach representation of $\Gamma$. Since $(A_{\infty},\Gamma)$ admits Sen traces $R_{n}^i$ and $(A_{\infty}, \Gamma, (R_n^i)_n)$ is a Sen theory by  Proposition \ref{PropSenTheoryAffinoids}, we have an $A_{\infty}$-linear $\Gamma$-equivariant  Sen operator 
\begin{equation}\label{eqSenOperatorMap1oqejwfq}
\theta_{\s{F}}:\s{F}(X_{\infty}) \to  \s{F}(X_{\infty}) \otimes (\Lie \Gamma)^{\vee}. 
\end{equation}
By Proposition \ref{CoroProjectionFormula} we can  identify  (depending on the chart $\psi$)
\[
H^1_{\proket}(X, \widehat{\s{O}}_X)\cong A\otimes_{\bb{Q}_p} (\Lie\Gamma)^{\vee}.
\]
From Propositions \ref{PropProjectionO} or \ref{PropProjectionO2} we can also naturally identify  (independently of the chart $\psi$)
\[
H^1_{\proket}(X, \widehat{\s{O}}_X(1)) = \Omega^1_A(\log).
\]
Combining these two isomorphisms the map \eqref{eqSenOperatorMap1oqejwfq} becomes
\[
\theta_{\s{F}}: \s{F}(X_{\infty})\to \s{F}(X_{\infty})\otimes_{A} \Omega^1_A(\log)(-1). 
\]
Taking the completed base change to $\widehat{\s{O}}_{X_{\infty}}$ and keeping track of the $\Gamma$-equivariance we have constructed an $\widehat{\s{O}}_X$-linear map of pro-Kummer-\'etale sheaves on $X_{\proket}$ 
\[
\theta_{\s{F}}: \s{F}\to  \s{F}(X_{\infty})\otimes_{A} \Omega^1_A(\log)(-1). 
\]
The map $\theta_{\s{F}}$ is clearly functorial on $\s{F}$ (though a priori it depends on $\psi$) being constructed by taking the derivation of $\Lie \Gamma$ on the locally analytic vectors of $\s{F}(X_{\infty})$. From the construction it is also clear that $\theta_{\s{F}}$ is a Higgs field, obtaining (1). The comparison between invariants of Higgs cohomology and pro-\'etale cohomology is a consequence of Proposition \ref{PropositionSenTheoryCohomology}, this gives (2). The cohomology computation of (3) follows from Proposition \ref{CoroProjectionFormula}, similarly for the statements about $\nu_* \s{F}$ being ON $\s{O}_X$-Banach module locally finite Kummer-\'etale and $\widehat{\s{O}}_X\widehat{\otimes}_{\s{O}_X} \nu_* \s{F}=\s{F}$. 
\end{proof}

\subsubsection{Local version of Theorem \ref{TheoSenBundleofTorsorIntro}}

Next, we construct the geometric Sen operator for a pro-Kummer-\'etale torsor with group given by a $p$-adic Lie group. We let $G$ denote a compact $p$-adic Lie group and let  $\widetilde{X}\to X$ be a pro-Kummer-\'etale torsor over $X$ with Galois group $G$.

\begin{prop}
\label{PropSenOperatorsTorsorLocal}
Let $V$ be a  locally analytic Banach representation of $G$, and let $V_{\ket}$ be the pro-Kummer-\'etale sheaf over $X$ constructed by $V$ via the $G$-torsor $\widetilde{X}\to X$. Then there is a  geometric Sen operator functorial on $V$ (but a priori depending on the chart $\psi$)
\begin{equation}\label{eqLocalMapEquivSenAjosfa}
\theta_{\widetilde{X}}:  \widehat{\s{O}}_X \otimes_{\bb{Q}_p} (\Lie G)_{\ket}^{\vee}\to  \widehat{\s{O}}_X \otimes_{\s{O}_X} \Omega^1_X(\log)(-1), 
\end{equation}
or dually a map
\[
\Sen_{\widetilde{X}}:  \Omega^1_X(\log)^{\vee}(1) \otimes_{\s{O}_X} \widehat{\s{O}}_X \to (\Lie G)_{\ket}\otimes_{\bb{Q}_p} \widehat{\s{O}}_X,
\]
where $\Lie G$ is endowed with the adjoint action of $G$, and such that we have a commutative diagram of pro-Kummer-\'etale sheaves:
\[
\begin{tikzcd}
V_{\ket}\widehat{\otimes }_{\bb{Q}_p} \widehat{\s{O}}_X \ar[r, "d_V\otimes \id_{\widehat{\s{O}}_X}"] \ar[rd,"\theta_{V}"'] & (V_{\ket}\widehat{\otimes}_{\bb{Q}_p} \widehat{\s{O}}_X )  \otimes_{\bb{Q}_p} (\Lie G)_{\ket}^{\vee}  \ar[d,"\id_{V}\otimes \theta_{\widetilde{X}}"] \\
& (V_{\ket} \widehat{\otimes}_{\bb{Q}_p} \widehat{\s{O}}_X)  \otimes_{\s{O}_X} \Omega^1_X(\log)(-1)
\end{tikzcd}
\]
such that $d_V:V\to  V\otimes_{\bb{Q}_p} (\Lie G)^{\vee}$ is induced by the derivation, and $\theta_V$ is the geometric Sen operator of $V_{\ket}\widehat{\otimes}_{\bb{Q}_p}\widehat{\s{O}}_X$ of Proposition \ref{PropMainTheo1LocalVersion}. 
\end{prop}
\begin{proof}
First, we highlight that by Lemma \ref{LemmaBanachLocAn}  $V_{\ket}\widehat{\otimes}_{\bb{Q}_p} \widehat{\s{O}}_X$ is a  relative locally analytic ON-$\widehat{\s{O}}_X$-sheaf on $X_{\proket}$, and so by Proposition \ref{PropMainTheo1LocalVersion} it admits a geometric Sen operator $\theta_V$.  We want to show that $\theta_V$ actually only depends on an universal map as in \eqref{eqLocalMapEquivSenAjosfa}. Let $W=C^h(G, \bb{Q}_p)_{\star_2}$ be the space of $h$-analytic functions of $G$ (for a fixed local chart $G_0\cong \bb{Z}_p^k$ and radius of analyticity $p^{-h}$) endowed with the right regular action of $G$. Let $\widetilde{X}_{\infty}=\widetilde{X}\times_X X_{\infty}$ be the pullback in the pro-Kummer-\'etale site of $X$ with ring of functions $\widetilde{A}_{\infty}$. The map $\widetilde{X}_{\infty}\to X$ is a $G\times \Gamma$ pro-Kummer-\'etale torsor. 

By construction, the action of the Sen operators of $\Lie \Gamma$ on the algebra $W\widehat{\otimes}_{\bb{Q}_p} \widetilde{A}_{\infty}$ is by left $G$-invariant $\widetilde{A}_{\infty}$-linear derivations. Indeed, set $\s{F}= W_{\ket}\widehat{\otimes}_{\bb{Q}_p}\widehat{\s{O}}_X$, the sheaf $\s{F}$ is an $\widehat{\s{O}}_X$-algebra, so its evaluation at $X_{\infty}$ is an $A_{\infty}$-algebra. By construction, the Sen operators are given by the derivations of $\Lie \Gamma$ on the locally analytic vectors $S(\s{F}(X_{\infty}))=\s{F}(X_{\infty})^{\Gamma-la}$. Since $\s{F}(X_{\infty})^{\Gamma-la}$ is an algebra, this action by derivations satisfies the Leibniz rule. In addition, $\Lie \Gamma$ acts $A_{\infty}^{\Gamma-la}=A_{\infty}^{\Gamma-sm}$-linearly on $\s{F}(X_{\infty})^{\Gamma-la}$. Since $\s{F}$ is nothing but the base change of $S(\s{F}(X_{\infty}))$ from $A_{\infty}^{\Gamma-sm}$ to $\widehat{\s{O}}_X$, one deduces that $\Lie \Gamma$ acts on $\s{F}$ by $\widehat{\s{O}}_X$-linear derivations. Furthermore, the action of $\Lie \Gamma$ is $G$-equivariant for the left regular action of $\s{F}$, hence the Sen operators act by left $G$-invariant derivations.  

 The previous shows that the action of $\Lie \Gamma$ on  $W\widehat{\otimes}_{\bb{Q}_p} \widetilde{A}_{\infty}$  must factor through an $\widetilde{A}_{\infty}$-linear  $G\times \Gamma$-equivariant map 
\[
\Sen_{\widetilde{X}}: \Lie \Gamma\otimes_{\bb{Q}_p} \widetilde{A}_{\infty} \to \Lie G \otimes_{\bb{Q}_p} \widetilde{A}_{\infty},
\]
where $\Lie G$ acts on $W$ via right derivations (i.e.,  left $G$-invariant derivations).  Dually, we have  a map 
\[
\theta_{\widetilde{X}}: \widetilde{A}_{\infty}  \otimes_{\bb{Q}_p}  (\Lie G)^{\vee}  \to  \widetilde{A}_{\infty} \otimes_{\bb{Q}_p}  (\Lie \Gamma)^{\vee}\cong  \widetilde{A}_{\infty}\otimes_{A}\Omega_A^1(\log) (-1).
\]
In particular, we have a $G\times \Gamma$-equivariant commutative diagram 
\[
\begin{tikzcd}
W\widehat{\otimes}_{\bb{Q}_p} \widetilde{A}_{\infty} \ar[r, "d_W\otimes \id"] \ar[rd, "\theta_V"'] &  W\widehat{\otimes}_{\bb{Q}_p} \widetilde{A}_{\infty} \otimes_{\bb{Q}_p} (\Lie G)^{\vee} \ar[d,"\id_V\otimes \theta_{\widetilde{X}}"] \\ 
& W\widehat{\otimes}_{\bb{Q}_p} \widetilde{A}_{\infty} \otimes_A \Omega^1_{A}(\log)(-1).
\end{tikzcd}
\]
Taking base change to $\widehat{\s{O}}_{\widetilde{X}_{\infty}}$ and keeping track of the $G\times \Gamma$-equivariance, we have a map of pro-Kummer-\'etale sheaves over $X_{\proket}$ as in \eqref{eqLocalMapEquivSenAjosfa} which is compatible with the geometric Sen operator of $W$. Let now $V$ be a general Banach locally analytic representation of $V$, then by using the orbit map we get a $G$-equivariant inclusion 
\[
\mathcal{O}_V:V\hookrightarrow C^{h}(G, \bb{Q}_p)_{\star_2}\widehat{\otimes}_{\bb{Q}_p} V_0
\]
for some $h>0$ where $V_0$ has the trivial action of $G$. Passing to pro-Kummer-\'etale sheaves and since the formation of $\theta_V$ is functorial on $V$, we deduce that $\theta_{\widetilde{X}}$ also computes the geometric Sen operator of $V$ finishing the proof of the proposition. 
\end{proof}

\begin{remark}
Proposition \ref{PropSenOperatorsTorsorLocal} shows that  in order to compute the geometric Sen operator of a torsor it suffices to compute the geometric Sen  operator of a faithful representation of $\Lie G$. 
\end{remark}

 A direct consequence of the previous proposition is the vanishing of the action of the geometric Sen operators at infinite level. Let $\widetilde{A}=\widehat{\s{O}}_X(\widetilde{X})$ be the alebra of completed functions of $\widetilde{X}$.

\begin{cor}\label{corVanishingSenOpaofjaowd}
Let $V=C^{la}(G, \bb{Q}_p)_{\star_1}$ be the left regular locally analytic representation of $G$ and let $V_{\ket}$ be the pro-Kummer-\'etale sheaf over $X$ obtained from $V$ via the $G$-torsor $\widetilde{X}\to X$. The following holds: 
\begin{enumerate}

\item $H^0_{\proket}(X, V_{\ket}\widehat{\otimes}_{\bb{Q}_p} \widehat{\s{O}}_X) = \widetilde{A}^{G-la}$.

\item Consider the action of $\widetilde{A}^{G-la}\otimes_{\bb{Q}_p} \Lie G$ on $\widetilde{A}^{G-la}$ by derivations. Then the restriction of this  action to the geometric Sen operators 
\[
\Sen_{\widetilde{X}}: \widetilde{A}^{G-la}\otimes_A \Omega^1_A(\log)(-1)\to \widetilde{A}^{G-la}\otimes_{\bb{Q}_p} \Lie G
\]
vanishes. 

\end{enumerate}
\end{cor}
\begin{proof}
For the first claim note that 
\[
H^0_{\proket}(X, V_{\ket}\widehat{\otimes}_{\bb{Q}_p} \widehat{\s{O}}_X) = H^0(G, C^{la}(G,\bb{Q}_p)\widehat{\otimes}_{\bb{Q}_p} \widetilde{A})= \widetilde{A}^{G-la}.
\]
For the second statement, by Proposition \ref{PropMainTheo1LocalVersion} we  have that 
\[
H^0_{\proket}(X, V_{\ket}\widehat{\otimes}_{\bb{Q}_p} \widehat{\s{O}}_X)= H^0_{\proket}(X, (V_{\ket}\widehat{\otimes}_{\bb{Q}_p} \widehat{\s{O}}_X)^{\theta_V=0})= H^0(G,( C^{la}(G,\bb{Q}_p)\widehat{\otimes}_{\bb{Q}_p} \widetilde{A})^{\theta_{V,\star_1}=0} ).
\]
In other words, if $\mathrm{Orb}: \widetilde{A}^{G-la}\to  C^{la}(G, \bb{Q}_p)\widehat{\otimes}_{\bb{Q}_p}  \widetilde{A}^{G-la}$ denotes  the orbit map $a\mapsto (g\mapsto ga)$, it factors through the subspace $ (C^{la}(G, \bb{Q}_p)\widehat{\otimes}_{\bb{Q}_p}  \widetilde{A}^{G-la} )^{\theta_{V,\star_1}=0}$ where $\theta_V$ acts by zero via $\widetilde{A}^{G-la}$-linear left derivations. 

Let $g\in G$ and $f: G\to \widetilde{A}^{G-la}$ a locally analytic function. Let us write $\star_2$ for the right regular action. We have that 
\[
(g\star_1 f)(1)= f(g^{-1}) = (g^{-1}\star_2 f)(1). 
\]
This implies that for $\f{X}\in \widetilde{A}^{G-la}\otimes_{\bb{Q}_p} \Lie G$ and $f$ as before, 
\[
(\f{X} \star_1 f) (1)= - (\f{X}\star_2 f)(1).
\]

On the other hand, a retract of the orbit map $\mathrm{Orb}$ is given by evaluation at $1$, additionally, the orbit map is $G$-equivariant for the right regular action on the locally analytic functions. Thus, for $\f{X}\in \mathrm{Im}( \Sen_{\widetilde{X}})\subset \widetilde{A}^{G-la}\otimes_{\bb{Q}_p} \Lie G$  and $a\in \widetilde{A}^{G-la}$ we get that 
\[
\f{X}\cdot a = (\f{X}\star_2 \mathrm{Orb}(a)) (1) = - (\f{X}\star_1 \mathrm{Orb}(a)) (1) = 0
\]
proving the vanishing of Sen operators on $\widetilde{A}^{G-la}$ as wanted. 
\end{proof}

\subsubsection{Vanishing of higher locally analytic vectors}

As a first application of Proposition \ref{PropSenOperatorsTorsorLocal} let us prove a local vanishing of higher locally analytic vectors at infinite level. Let $\psi:X\to \bb{S}^{(e,d-e)}_C$  be as before. Let $G$ be a compact $p$-adic Lie group and let $\widetilde{X}\to X$ be a pro-Kummer-\'etale torsor. Let us write $X=\Spa(A,A^+)$ and let $\widetilde{A}:=\widehat{\s{O}}_X(\widetilde{X})$ be the completed functions of $\widetilde{X}$.

\begin{prop}\label{PropLocalVanishingHigherLocAn}
Suppose that the geometric Sen operator
\[
\theta_{\widetilde{X}}: \widehat{\s{O}}_X \otimes_{\bb{Q}_p} (\Lie G)^{\vee}_{\ket} \to \widehat{\s{O}}_X \otimes_{\s{O}_X} \Omega^1_X(\log)(-1)
\]
is surjective.  Let $V=C^{la}(G, \bb{Q}_p)$ be the space of locally analytic functions of $G$ endowed with the left regular action, and let $V_{\ket}$ be the pro-Kummer-\'etale sheaf over $X$ attached to $V$ via the $G$ torsor $\widetilde{X}\to X$. Then the natural map 
\[
\widetilde{A}^{G-la} \xrightarrow{\sim} R\Gamma_{\proket}(X,V_{\ket}\widehat{\otimes}_{\bb{Q}_p} \widehat{\s{O}}_X)
\]
is a quasi-isomorphism. In particular, the RHS is concentrated in degree $0$. 
\end{prop}
\begin{proof}
By Corollary \ref{corVanishingSenOpaofjaowd} we know that  $H^0_{\proket}(X, V_{\ket}\widehat{\otimes}_{\bb{Q}_p} \widehat{\s{O}}_X)= \widetilde{A}^{G-la}$. Therefore, it suffices to prove the vanishing of higher cohomology groups. 

Write $V=\varinjlim_h V_h$ with $V_h=C^{h}(G,\bb{Q}_p)$ spaces of $h$-analytic representations for $h\to \infty$. Since $X$ is qcqs we have that 
\[
 R\Gamma_{\proket}(X,V_{\ket}\widehat{\otimes}_{\bb{Q}_p} \widehat{\s{O}}_X=\varinjlim_h  R\Gamma_{\proket}(X,V_{h,\ket}\widehat{\otimes}_{\bb{Q}_p} \widehat{\s{O}}_X).
\]
By Proposition \ref{PropSenOperatorsTorsorLocal}, the Sen operator of $V_h$ arises from the left derivation of $\Lie G$ and the dual of the map $\theta_{\widetilde{X}}$. On the other hand, Proposition \ref{PropMainTheo1LocalVersion} (2) we have that 
\[
H^i_{\proket}(X, V_h\widehat{\otimes}_{\bb{Q}_p} \widehat{\s{O}}_X) = H^0_{\proket}(X, H^i(\theta_{V_h},V_h\widehat{\otimes}_{\bb{Q}_p} \widehat{\s{O}}_X )).
\] 
Therefore, in order to prove the vanishing of higher cohomology groups it suffices to show that 
\begin{equation}\label{eqVanishingLocanarjojwnqoe}
H^i(\theta_V, V\widehat{\otimes}_{\bb{Q}_p} \widehat{\s{O}}_X ):=\varinjlim_{h} H^i(\theta_{V_h}, V_h\widehat{\otimes}_{\bb{Q}_p} \widehat{\s{O}}_X )=0
\end{equation}
for $i\geq 1$.  

The vanishing will follow essentially from the Poincar\'e-Birkhoff-Witt theorem by using a complementary basis of the Sen operators in $\Lie G$  to construct local coordinates of the group $G$. To make this idea precise, it is more convenient to take colimits along compact open subgroups $G_0\subset G$.  For $G_0\subset G$ normal compact open  let $X_{G_0}\to X$ be the finite Kummer-\'etale extension obtained by the quotient $\widetilde{X}/G_0$ in the pro-Kummer-\'etale site. One has that 
\[
C^{la}(G, \bb{Q}_p)= \mathrm{Ind}_{G_0}^{G}( C^{la}(G_0,\bb{Q}_p)),
\]
and by Shapiro's lemma we have that 
\[
R\Gamma_{\proket}(X, (C^{la}(G,\bb{Q}_p))_{\ket}\widehat{\s{O}}_{\bb{Q}_p} \widehat{\s{O}}_X)=R\Gamma_{\proket}(X_{G_0}, (C^{la}(G_0,\bb{Q}_p))_{\ket}\widehat{\s{O}}_{\bb{Q}_p} \widehat{\s{O}}_X).
\]
Therefore, in order to show that \eqref{eqVanishingLocanarjojwnqoe} vanishes it will suffice to work locally on $G$, and even to work as $G_0\to 1$.  We fix $\f{X}_1,\ldots, \f{X}_g$ a basis of the Lie algebra $\Lie G$ and for $h>>0$ we let $\mathring{\bb{G}}_h$ be the Stein analytic group induced by the exponential of the basis $p^{h}\f{X}_1,\ldots, p^{h}\f{X}_g$. Then $\mathring{\bb{G}}_h$ is isomorphic to the open polydisc $\mathring{\bb{D}}^g_{\bb{Q}_p}(p^{-h})$ of radius $p^{-h}$ and dimension $g$. Given $h\gg 0$ we  let $G_h=\mathring{\bb{G}}_h(\bb{Q}_p)$, for $h\gg 0$ it is an open compact subgroup of $G$. We can write the space of locally analytic functions as the colimit 
\[
C^{la}(G,\bb{Q}_p)=\varinjlim_{h\to \infty}  \mathrm{Ind}_{G_h}^G\s{O}(\mathring{\bb{G}}_h).
\]
Therefore, to prove the vanishing of \eqref{eqVanishingLocanarjojwnqoe} it suffices to show that the following colimit vanishes 
\[
\varinjlim_h H^0(X_{G_h},H^i(\theta_V, \s{O}(\mathring{\bb{G}}_h)_{\ket}\widehat{\otimes}_{\bb{Q}_p}\widehat{\s{O}}_X) )= 0,
\]
where $\s{O}(\mathring{\bb{G}}_h)_{\ket}$ is the pro-Kummer-\'etale sheaf  over $X_{G_h}$ associated to $\s{O}(\mathring{\bb{G}}_h)$. Let us take an arbitrary  map $U=\Spa(R,R^+)\to \widetilde{X}$ with $U$ log affinoid perfectoid, it suffices to see that 
\[
\varinjlim_h H^i(\theta_V, \s{O}(\mathring{\bb{G}}_h)\widehat{\otimes}_{\bb{Q}_p} R) = 0. 
\]

By hypothesis, we have a direct sum decomposition 
\[
\Lie G \otimes_{\bb{Q}_p} R \cong \Omega^1_X(\log)(1)\otimes_{A} R \oplus W.
\]
Fix $R^0$-lattices  $W^0\subset W$ and $\s{L}\subset \Omega^1_X(\log)(1)\otimes_{A} R $. For $r\gg 0$ the $R^0$-lattice $p^{r}W^0\oplus p^r\s{L}$ of $\Lie G \otimes_{\bb{Q}_p} R $ is stable under the Lie bracket and it gives rise to Stein analytic groups $\mathring{\bb{H}}_r$ over $\Spa(R,R^+)$. Given $h$ there is  some $r$ such that $p^{r}W^0\oplus p^r\s{L}$ contains the lattice $R^0  p^h \f{X}_1\oplus \cdots R^0 p^h \f{X}_g $ and conversely.  We deduce that 
\begin{equation}\label{eqpkawpmaoef}
\varinjlim_{h} \s{O}(\mathring{\bb{G}}_h)\widehat{\otimes}_{\bb{Q}_p} R =  \varinjlim_r \s{O}(\mathring{\bb{H}}_r)
\end{equation}
as modules over the $R$-linear Lie algebra $R\otimes \Lie G$.  Finally, we can write 
\[
\s{O}(\mathring{\bb{H}_r})= \s{O}(\mathring{\exp(p^r \s{L})})  \widehat{\otimes}_{R} \s{O}(\mathring{\exp(p^r W^0)}) 
\] 
 as modules over the Lie algebra $R\otimes_{\bb{Q}_p}\Lie \Gamma$ (acting by left derivations on the left term), where the exponentials of the lattices are simply given by open polydiscs over $\Spa(R,R^+)$ after picking a basis. Finally, the action of the Sen operators $\Lie \Gamma$ is induced by the left regular action,  and so it only acts in the term $\s{O}(\mathring{\exp(p^r \s{L})}) $ of the tensor product. Since $\Lie \Gamma$ is abelian,  this action is induced by the linear action of $p^r\s{L}$ on the adic space, and by picking coordinates the cohomology 
 \[
 R\Gamma(\Lie \Gamma, \s{O}(\mathring{\exp(p^r \s{L})}) )
 \]
 is identified with the de Rham cohomology of $\s{O}(\mathring{\exp(p^r \s{L})})$. One has 
 \[
  R\Gamma(\Lie \Gamma, \s{O}(\mathring{\exp(p^r \s{L})}) )= R
 \]
 by the Poincar\'e lemma for open polydiscs \cite[Lemma 26]{Tamme}. We deduce that 
 \[
 R\Gamma(\theta_{V}, \s{O}(\mathring{\bb{H}}_r))\cong \s{O}(\mathring{\exp(p^r W^0)}) 
 \]
 is concentrated in degree $0$. This implies that the $\theta_V$-cohomology of \eqref{eqpkawpmaoef} is in degree $0$ and therefore that \eqref{eqVanishingLocanarjojwnqoe} holds. This finishes the proof of the proposition. 
\end{proof}

\begin{remark}\label{remVanishingProductasfos}
Let $G$ be a compact $p$-adic Lie group and let $\widetilde{X}\to X$ be a pro-Kummer-\'etale torsor. Let $X_{\infty}\to X$ be the $\Gamma$-torsor that arises from the coordinates, then the $G\times \Gamma$-torsor $\widetilde{X}_{\infty}=\widetilde{X}\times_X X_{\infty}\to X$ satisfies the hypothesis of Proposition \ref{PropLocalVanishingHigherLocAn}. 
\end{remark}

\subsection{The geometric Sen operator: globalization}
\label{ss:GeoSenOpe-Globalization}

In Propositions \ref{PropMainTheo1LocalVersion}  and \ref{PropSenOperatorsTorsorLocal} we proved  local versions of Theorems \ref{TheoSenOperatorIntro} and \ref{TheoSenBundleofTorsorIntro} respectively. In order to obtain the global version we need to show that the Sen operators glue in toric charts $\psi$ according  to $\Omega^{1}_X(\log)$.

\subsubsection{Key case}

Let $X$ and $Y$ be affinoid fs log smooth adic spaces over $(C,C^{+})$, and suppose we have two toric charts $\psi_X:X\to \bb{S}_{C}^{(e,d-e)}$ and $\psi_Y: Y \to \bb{S}_C^{(g,h-g)}$ respectively. Let $X_{\infty}$ and $Y_{\infty}$ be the pro-Kummer-\'etale torsors over $X$ and $Y$ obtained by taking $p$-th power roots of the coordinates of the charts, and let $\Gamma_X$ and $\Gamma_Y$ denote the Galois groups of $X_{\infty}\to X$ and $Y_{\infty}\to Y$ respectively.  Let $f:Y\to X$ be a morphism over $(C,C^+)$, and let $f^{*}X_{\infty}= Y\times_{X} X_{\infty}$ seen as an object in $X_{\proket}$. The following is a direct generalization of \cite[Lemma 3.4.3]{LuePan}, we thank Lue Pan for the simplifications of a previous proof.

\begin{prop}
\label{PropGluingKeyCase}
Let $\nu_X:X_{\proket}\to X_{\ket}$ and $\nu_Y:Y_{\proket}\to Y_{\ket}$ be the projection of sites.  We have a commutative diagram of $\widehat{\s{O}}_Y$-linear pro-Kummer-\'etale sheaves over $Y$
\[
\begin{tikzcd}
(\Lie \Gamma_X)^{\vee}\otimes_{\bb{Q}_p} \widehat{\s{O}}_Y \ar[r, "\theta_{f^* X_{\infty}}"] & \Omega^1_{Y}(\log)(-1) \otimes_{\s{O}_Y}\widehat{\s{O}}_Y \\ 
f^*\Omega^1_X(\log)(-1)\otimes_{\s{O}_Y}\widehat{\s{O}}_Y \ar[ur, "f^*"'] \ar[u, "\widetilde{\beta}"] & 
\end{tikzcd}
\]
where the upper horizontal map is the Sen operator of the $\Gamma_X$-torsor $f^*X_{\infty}\to Y$ of Proposition \ref{PropSenOperatorsTorsorLocal}, the diagonal map is the pullback of differentials, and the left vertical map is the composite 
\[
\Omega^1_X(-1)\xrightarrow{\beta} R^1\nu_{X,*} \widehat{\s{O}}_X \cong (\Lie \Gamma_X)^{\vee} \otimes_{\bb{Q}_p } \s{O}_X
\]
with the first map being the natural isomorphism of Propositions \ref{PropProjectionO} and \ref{PropProjectionO2}, and the second map is obtained from Corollary \ref{CoroProjectionFormula} by computing $R^1\nu_{X,*}\widehat{\s{O}}_X$ as $\Gamma_X$-cohomology. 

\end{prop}
\begin{proof}
For proving the proposition, it suffices to compute the geometric Sen operators over $Y$ for a faithful representation of $\Gamma_X$. Let $V$ be the unique unipotent algebraic representation of $\Gamma_X$ fitting in a short exact sequence
\begin{equation}\label{eqpakefpawaw}
0\to \bb{Q}_pe \to V \to (\Lie \Gamma_X)^{\vee}\to 0
\end{equation}
such that: 

\begin{itemize}

\item The action of $\Lie \Gamma_X$ is trivial on the left and right terms of \eqref{eqpakefpawaw}.

\item Let $\f{Y}^{\vee }\in (\Lie \Gamma_X)^{\vee}$ and $\f{X}\in \Lie \Gamma_X$. Then, for any lift $\widetilde{\f{Y}}^{\vee}\in V$ of $\f{Y}$ we have  $\f{X}\cdot \widetilde{\f{Y}}^{\vee}= \f{Y}^{\vee}(\f{X}) \cdot e$. 

\end{itemize}

It is clear that $V$ is a faithful representation of $\Gamma_X$. Let $V_{Y,\ket}$ be the pro-Kummer-\'etale sheaf over $Y$ that arises from $V$ via the $\Gamma_X$-torsor $f^*X_{\infty}\to Y$ and let 
\[
\Sen_{V,Y}: (\Omega_Y^1(\log))^{\vee}(1)\otimes_{\s{O}_Y}\widehat{\s{O}}_Y \otimes_{\bb{Q}_p}  V_{Y,\ket} \to V_{Y,\ket}\otimes_{\bb{Q}_p}\widehat{\s{O}}_Y  
\] 
be the local Sen operator of $V_{\ket}\otimes_{\bb{Q}_p}\widehat{\s{O}}_Y$ of Proposition \ref{PropMainTheo1LocalVersion}.   Similarly, we let $V_{X,\ket}$ be the pro-Kummer-\'etale sheaf over $X$ associated to $V$ via the $\Gamma_X$-torsor $X_{\infty}\to X$. We also have a geometric Sen operator over $X$
\[
\Sen_{V,X}: (\Omega_X^1(\log))^{\vee}(1)\otimes_{\s{O}_X}\widehat{\s{O}}_X \otimes_{\bb{Q}_p}  V_{X,\ket} \to V_{X,\ket}\otimes_{\bb{Q}_p}\widehat{\s{O}}_X. 
\]
Note that $f^*V_{X,\proket}=V_{Y,\proket}$ as pro-Kummer-\'etale sheaves. We want to show that the following diagram is commutative 
\begin{equation}\label{eqkpasojoaenfa}
\begin{tikzcd}
(\Omega_Y^1(\log))^{\vee}(1)\otimes_{\s{O}_Y}\widehat{\s{O}}_Y \otimes_{\bb{Q}_p}  V_{Y,\ket} \ar[rd, "\Sen_{V,Y}"] \ar[d, "f_*"'] &    \\
f^*(\Omega_X^1(\log))^{\vee}(1)\otimes_{\s{O}_Y}\widehat{\s{O}}_Y \otimes_{\bb{Q}_p}  V_{Y,\ket} \ar[r, "f^* \Sen_{V,X}"']  &  V_{Y,\ket}\otimes_{\bb{Q}_p}\widehat{\s{O}}_Y,
\end{tikzcd}
\end{equation}
where $f_*: (\Omega_Y^1(\log))^{\vee}\to f^*(\Omega_X^1(\log))^{\vee}$ is dual to the pullback map of log differentials. 

For this we need to do a computation. Write $X=\Spa (A,A^+)$ and $Y=\Spa (B,B^+)$ as affinoid spaces. Let $X_{\infty}=\Spa(A_{\infty}, A^+_{\infty})$ and $Y_{\infty}=\Spa(B_{\infty}, B_{\infty}^+)$ be the perfectoid torsors arising from the coordinates of $X$ and $Y$ respectively, and let us write $f^*X_{\infty}\times_X Y_{\infty}= \Spa(B_{\infty,\infty},B_{\infty,\infty}^+)$ for the $\Gamma_{X}\times\Gamma_Y$-torsor over $Y$. We have maps of rings 
\[
\begin{tikzcd}
B \ar[r] & B_{\infty} \ar[r] & B_{\infty,\infty} \\
A \ar[u]\ar[r]  & A_{\infty}  \ar[ur] &  
\end{tikzcd}
\]
By Remark \ref{remVanishingProductasfos} we have that  $B_{\infty,\infty}^{R(\Gamma_X\times\Gamma_Y)-la}=B_{\infty,\infty}^{\Gamma_X\times\Gamma_Y-la}$ sits in degree zero. By Lemma \ref{LemComputationCohomologyToricCoordinates} we can compute 
\[
H^1_{\proket}(Y, \widehat{\s{O}}_Y) \cong H^1(\Gamma_X\times \Gamma_Y, B_{\infty,\infty}).
\] 
By Theorem \ref{TheoResumeLocAn} (4) and the vanishing of higher locally analytic vectors of $B_{\infty,\infty}$  we have that 
\[
H^1(\Gamma_X\times \Gamma_Y, B_{\infty,\infty})= H^{1}(\Lie \Gamma_X\times\Lie \Gamma_Y, B_{\infty,\infty}^{\Gamma_X\times \Gamma_Y-la})^{\Gamma_X\times \Gamma_Y}. 
\]
The map $\beta$ induces an isomorphism 
\begin{equation}\label{eqpakpfaow}
\Omega^1_{B}(\log)(-1)= H^1_{\proet}(Y, \widehat{\s{O}}_Y) \cong H^{1}(\Lie \Gamma_X\times\Lie \Gamma_Y, B_{\infty,\infty}^{\Gamma_X\times \Gamma_Y-la})^{\Gamma_X\times \Gamma_Y}.  
\end{equation}

Let $(\gamma_{i,X})_{i=1}^d$ and $(\gamma_{j,Y})_{j=1}^{h}$ be the standard  basis of the groups $\Gamma_X=\bb{Z}_p(1)^d$ and $\Gamma_Y=\bb{Z}_p(1)^{h}$ obtained after fixing a compatible sequence of $p$-th power roots of unit $\epsilon=(\zeta_{p^n})_n$. Let $(\log \gamma_{i,X})$ and $(\log \gamma_{j,Y})$ be the  corresponding basis of the Lie algebras.

By Corollary \ref{CoroFunctHTiso} we have a commutative diagram
\[
\begin{tikzcd}
\Omega^1_B(\log)(-1)  \ar[r, "\beta_{Y}"]& H^1_{\proet}(Y, \widehat{\s{O}}_Y)  \ar[r,"\sim"] & H^1(\Gamma_Y, B) \cong (\Lie  \Gamma_Y)^{\vee}\otimes_{\bb{Q}_p} B \\ 
\Omega^1_A(\log)(-1) \ar[u, "f^*"] \ar[r, "\beta_X"] & H^1_{\proet}(Y, \widehat{\s{O}}_Y) \ar[r,"\sim"] \ar[u, "f^*"] & H^1(\Gamma_X, A) \cong (\Lie  \Gamma_X)^{\vee} \otimes_{\bb{Q}_p} A \ar[u,"A"]
\end{tikzcd}
\]
where we have identified group cohomology with Lie algebra cohomology of the trivial representation, and where $A=(a_{i,j})$ is the induced matrix from the basis $((\log \gamma_{i,X}^{\vee}))_i$ to $((\log \gamma_{j,Y})^{\vee})_j$.   

Thanks to \eqref{eqpakpfaow}, and the interpretation of Lie algebra cohomology as cocycles, we can find locally analytic functions $(z_{j})_{j=1}^h$ in $B_{\infty,\infty}^{\Gamma_X\times\Gamma_Y-la}$ such that 
\begin{equation}\label{awprkpwfawaef}
\log \gamma_{l,X}\cdot z_k = \delta_{k,l} \mbox{ and } \log \gamma_{l, Y} \cdot z_k = -a_{k,l}.  
\end{equation}
Indeed, the $1$-cocycle $(\log \gamma_{k,X})^{\vee}-\sum_{j} a_{k,j} (\log \gamma_{j,Y})^{\vee}$ in the Koszul complex of $B^{\Gamma_X\times\Gamma_Y-la}$  for $\Lie \Gamma_X\times \Lie \Gamma_Y$ is cohomologically trivial and so there exists an element $z_k\in B^{\Gamma_X\times\Gamma_Y-la}$ such that 
\[
\sum_i (\log \gamma_{i,X}\cdot z_k) (\log \gamma_{i,X})^{\vee} + \sum_j (\log \gamma_{j,Y}\cdot z_k) (\log \gamma_{j,Y})^{\vee}=  (\log \gamma_{k,X})^{\vee}-\sum_{j} a_{k,j} (\log \gamma_{j,Y})^{\vee}
\]
which yields the equations \eqref{awprkpwfawaef}.  

Consider a splitting as $\bb{Q}_p$-vector space $V\cong \bb{Q}_pe\oplus (\Lie \Gamma_X)^{\vee}$ and write $\widetilde{(\log \gamma_{i,X})}^{\vee}$ for the lift of $(\log \gamma_{i,X})^{\vee}$.  Using the elements $v_k$, and after shrinking $\Gamma_X$ and $\Gamma_Y$ if necessary for them to act analytically on the $z_k$, we find a $\Gamma_X\times\Gamma_Y$-equivariant isomorphism 
\[
(V\otimes_{\bb{Q}_p} B_{\infty,\infty})\cong B_{\infty,\infty}e \oplus \bigoplus_{i=1}^{h} v_k B_{\infty,\infty}
\]
such that $v_k= \widetilde{(\log \gamma_{k,X})}^{\vee}-z_k\cdot e$ has a trivial action of $\Gamma_X$. Taking invariants under $\Gamma_{X}$ we get that 
\[
(V\otimes_{\bb{Q}_p} B_{\infty,\infty})^{\Gamma_X}\cong B_{\infty}e \oplus \bigoplus_{i=1}^{h} v_k B_{\infty}
\] 
as $B_{\infty}$-semilinear $\Gamma_Y$-representation.  The basis $(e, v_1,\ldots, v_h)$ is $\Gamma_Y$-locally analytic, $e$ is already $\Gamma_{Y}$-invariant, and the other vectors satisfy 
\[
(\log \gamma_{j,Y}) \cdot v_k = - ((\log \gamma_{j,Y}) \cdot z_k) e = a_{k,j} e. 
\]
This shows the commutativity of \eqref{eqkpasojoaenfa} proving the proposition
\end{proof}

\begin{theo}
\label{TheoGluingCaseGamma}
Let $\s{F}$ be an relative locally analytic ON Banach $\widehat{\s{O}}_X$-sheaf over $X$. Then the local geometric Sen operators of Proposition \ref{PropMainTheo1LocalVersion} glue to an $\widehat{\s{O}}_X$-linear global geometric Sen operator
\[
\theta_{\s{F}}:\s{F}\to \s{F}\otimes_{\s{O}_X} \Omega^{1}_X(\log) (-1).
\]
Furthermore, the following properties hold:

\begin{enumerate}

\item The formation of $\theta_{\s{F}}$ is functorial in $\s{F}$ and $\theta_{\s{F}}\wedge \theta_{\s{F}}=0$.

\item Let $\nu:X_{\proket}\to X_{\ket}$ be the projection from the pro-Kummer-\'etale site to the Kummer-\'etale site, then there is a natural equivalence
\[
R^{i}\nu_* \s{F}= \nu_* H^{i}(\theta_{\s{F}},\s{F}),
\]
where $H^{i}(\theta_{\s{F}},\s{F})$ is the cohomology of the Higgs complex
\[
0\to \s{F}\xrightarrow{\theta_{\s{F}}} \s{F}\otimes_{\s{O}_X} \Omega^{1}_X(\log)(-1) \to\cdots \to \s{F}\otimes_{\s{O}_X} \Omega_{X}^{d}(\log)(-d)\to 0.
\]

\item Suppose that $\theta_{\s{F}}=0$, then  $\nu_* \s{F}$ is locally on the Kummer-\'etale topology of $X$ an ON  $\s{O}_X$-Banach module and  $\s{F}= \widehat{\s{O}}_X \widehat{\otimes}_{\s{O}_{X}}\nu_*\s{F}$. Conversely, for any locally ON $\s{O}_X$-Banach module $\s{G}$ the pullback $\widehat{\s{O}}_X\otimes_{\s{O}_X} \s{G}$ has trivial Sen operator.

\item  If $X$ has a form $X'$ over a discretely valued field with perfect residue field $(K,K^+)$, then $\theta_{\s{F}}$ is Galois equivariant. In particular, we recover the natural splitting 
\[
R\nu_* \widehat{\s{O}}_X = \bigoplus_{i=0}^d \Omega_X^{i}(\log)(-i)[-i]. 
\] 

\item Let $Y$ be another fs log smooth adic space over $(C,C^{+})$, and let $f:Y\to X$ be a morphism. Then there is a commutative diagram of geometric Sen operators
\[
\begin{tikzcd}
f^{*}\s{F} \ar[r, "f^* \theta_{\s{F}}"] \ar[rd, "\theta_{f^*\s{F}}"']& f^{*}\s{F} \otimes_{\s{O}_Y} f^{*}\Omega_X^{1}(\log)(-1) \ar[d, "\id\otimes f^*"] \\ 
			& f^{*}\s{F}\otimes_{\s{O}_Y} \Omega^{1}_Y(\log)(-1).
\end{tikzcd}
\]

\end{enumerate}
\end{theo}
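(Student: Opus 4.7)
The plan is to glue the local Sen operators produced by Proposition \ref{PropMainTheo1LocalVersion} through the identification of their target with $\Omega^1_X(\log)(-1)$, using Proposition \ref{PropGluingKeyCase} as the essential compatibility input. More concretely, I would cover $X$ by affinoid opens $\{U_i\}$ in the Kummer-\'etale topology admitting toric charts $\psi_i:U_i\to \bb{S}^{(e,d-e)}_C$, which exist by definition of a log smooth adic space with normal crossing divisors. For each such chart, Proposition \ref{PropMainTheo1LocalVersion} supplies a local operator
\[
\theta_{\s{F},\psi_i}: \s{F}|_{U_i}\to \s{F}|_{U_i}\otimes_{\bb{Q}_p}(\Lie\Gamma_i)^{\vee},
\]
where $\Gamma_i\cong\bb{Z}_p(1)^d$ is the Galois group of the perfectoid toric cover attached to $\psi_i$. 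Composing with the explicit cocycle isomorphism of Remark \ref{RemarkExplicitProjectionTori}, which gives $(\Lie\Gamma_i)^{\vee}\otimes_{\bb{Q}_p}\widehat{\s{O}}_X\cong \Omega^1_X(\log)(-1)\otimes_{\s{O}_X}\widehat{\s{O}}_X$, yields a candidate operator with values in $\s{F}|_{U_i}\otimes_{\s{O}_X}\Omega^1_X(\log)(-1)$. On the overlap $U_i\cap U_j$, Proposition \ref{PropGluingKeyCase} applied to the identity with respect to the two charts $\psi_i,\psi_j$ produces a commutative diagram which says that these two candidates agree; hence they assemble into a global $\theta_{\s{F}}$.

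Properties (1), (2), (3) and (5) would then follow essentially for free from the local statements. Functoriality (1) holds because the local operators arise from the derivation of the Sen module of Definition \ref{Definition:SenOperator}, which is functorial in any $A$-semilinear $\Pi$-equivariant map, and this functoriality descends through the chart-independent gluing. The comparison with $R\nu_{*}$ in (2) is Kummer-\'etale local on $X$, so it reduces to Proposition \ref{PropMainTheo1LocalVersion}(2) after transporting the Higgs complex through the chart-dependent identification of $(\Lie\Gamma_i)^{\vee}$ with $\Omega^1_X(\log)(-1)$; the same is true of (3). For (5), given $f:Y\to X$ and a toric chart $\psi$ on an affinoid open of $X$, one picks any toric chart on a Kummer-\'etale cover of its pullback, and applies Proposition \ref{PropGluingKeyCase} directly; the diagram in (5) is then precisely what Proposition \ref{PropGluingKeyCase} asserts, combined with the compatibility of the Sen operator with pullback of relative locally analytic sheaves.

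The main obstacle, which Proposition \ref{PropGluingKeyCase} addresses, is the independence of the construction from the chosen toric chart: a priori the two local operators land in different auxiliary spaces $(\Lie\Gamma_i)^{\vee}$, and their agreement through the transition matrix $(a_{k,\ell})$ of $\Omega^1_X(\log)$ is nontrivial — it requires the existence of the locally analytic vectors $z_k$ killing the discrepancy cocycle and the comparison with a faithful unipotent representation, all of which are done in \emph{loc.\ cit.} For property (4), I would argue that if $X$ has a form $X'$ over $(K,K^+)$, one can choose the toric charts $\psi_i$ over $K$; then the Tate normalized traces entering the construction of Proposition \ref{PropRingSatisfiesSenAxioms} are Galois-equivariant (they are averages over Galois groups of $K$-rational finite covers), hence so is $\theta_{\s{F},\psi_i}$, and the isomorphism in Proposition \ref{PropProjectionO} is manifestly Galois-equivariant by its construction from the log-Kummer sequence. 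The canonical splitting of $R\nu_{*}\widehat{\s{O}}_X$ then follows from applying (2) and (3) to $\s{F}=\widehat{\s{O}}_X$ (whose Sen operator vanishes, since $\widehat{\s{O}}_X$ is the pullback of $\s{O}_X$), together with the fact that the Hodge-Tate twists $\bb{C}_p(-i)$ have pairwise distinct $\Gal_K$-weights, which forces the natural Galois-equivariant splitting of the complex into its cohomology.
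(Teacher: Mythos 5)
Your proposal is correct and follows essentially the same route as the paper: reduce to two toric charts over an affinoid, take Proposition \ref{PropGluingKeyCase} as the chart-compatibility input, deduce (1)--(3) and (5) from the local statements of Propositions \ref{PropMainTheo1LocalVersion} and \ref{PropSenOperatorsTorsorLocal}, and obtain (4) from Galois equivariance of the twisted identification together with the distinct Hodge--Tate weights. The only step worth making explicit (as the paper does) is how the torsor-level diagram of Proposition \ref{PropGluingKeyCase} controls the Sen operator of a general $\s{F}$ on overlaps: one decompletes $\s{F}$ over the product torsor $X_{\infty,0}\times_X X_{0,\infty}$ via Proposition \ref{PropRingSatisfiesSenAxioms} and Theorem \ref{TheoSenFunctor}, and uses Proposition \ref{PropSenOperatorsTorsorLocal} to identify the chart-$\psi_1$ Sen action on $\s{F}$ with the $\Lie \Gamma_2$-derivation on the chart-$\psi_2$ decompletion precomposed with the torsor Sen map, after which the agreement you assert follows.
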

\begin{proof}

The gluing of the local Sen operators of Proposition \ref{PropMainTheo1LocalVersion} follows from Proposition \ref{PropGluingKeyCase}. Indeed, we can suppose that $X$ is affinoid and that has two charts $\psi_1,\psi_2:X\to \bb{S}_C^{(e,d-e)}$ (these exists by definition of $X$ having log structure arising from reduced normal crossing divisors, see \cite[Example 2.3.17]{DiaoLogarithmic2019}). Let $X_{\infty,0}$ and $X_{0,\infty}$ be the pro-Kummer-\'etale $\Gamma$-torsors obtained from the perfectoid toric charts $\psi_1$ and $\psi_2$-respectively, let $X_{\infty,\infty}=X_{\infty,0}\times_X X_{0,\infty}$. Let us write $\Gamma_1$ and $\Gamma_2$ for the Galois groups of $X_{\infty,0}$ and $X_{0,\infty}$ respectively.  Then,  by Proposition \ref{PropSenTheoryAffinoids} and   Theorem \ref{TheoSenFunctor} there is $n>>0$ such that  we have a $\Gamma_1\times\Gamma_2$-equivariant isomorphism
\[
\widehat{\s{O}}_X(X_{\infty,\infty}) \widehat{\otimes}_{\s{O}_X(X_{\infty,0})^{p^{n}\Gamma_1}}\s{F}(X_{\infty,0})^{p^{n}\Gamma_1-an} = \s{F}(X_{\infty,\infty}) =\widehat{\s{O}}_X(X_{\infty,\infty}) \widehat{\otimes}_{\s{O}_X(X_{0,\infty})^{p^{n}\Gamma_2}}\s{F}(X_{0,\infty})^{p^{n}\Gamma_2-an}.
\]
Therefore, by Proposition \ref{PropSenOperatorsTorsorLocal}, the action of the Sen operators  $\Lie \Gamma_1\otimes_{\bb{Q}_p} \widehat{\s{O}}_X \cong (\Omega^1_X(\log))^{\vee}(1)\otimes_{\s{O}_X}\widehat{\s{O}}_X$  on $\s{F}$  via the chart $\psi_1$ can be either computed as the extension of the scalars via the natural derivations on $\s{F}(X_{\infty,0})^{p^n\Gamma_1-an}$, or as the extension of the scalars of the action of $\Lie \Gamma_2$ on $\s{F}(X_{0,\infty})^{p^n\Gamma_2-an}$ by precomposing with the Sen map 
\[
\Sen: (\Lie \Gamma_1)\otimes_{\bb{Q}_p}\widehat{\s{O}}_X \to (\Lie \Gamma_2)\otimes_{\bb{Q}_p}\widehat{\s{O}}_X\cong (\Omega^1_X(\log))^{\vee}(1)\otimes_{\s{O}_X}\widehat{\s{O}}_X.
\]
These two actions agree thanks to Proposition \ref{PropGluingKeyCase}. 

Then, part (1) follows from the gluing and the functoriality of Proposition \ref{PropMainTheo1LocalVersion}. Parts (2) and (3) are   consequences of parts (2) and (3)  of \textit{loc. cit.} respectively. For part (4), note that the Galois action of $\Gal_K$ on the Lie algebra $\Lie \Gamma$ induced by some local toric coordinate $\psi:X\to \bb{S}_{C}^{(e,d-e)}$ is given by the cyclotomic character $\chi$. Finally, part (5) follows from Proposition \ref{PropGluingKeyCase} by using an analogue argument as in the beginning of the proof. 
\end{proof}

Under further assumptions on the space $X$ and the sheaf $\s{F}$ we can even compute the projection to the analytic site:

\begin{cor}
\label{CoroProjectionAnalyticSite}
Let $\eta:X_{\proket}\to X_{\an}$ be the projection of sites.  Suppose that  that $\s{F}$ admits a lattice $\s{F}^+$ such that $\s{F}^{+}/p^{\epsilon}\cong^{ae}\bigoplus_I \s{O}^+_X/p^{\epsilon}$ for some $\epsilon>0$ and some index set $I$, and that $X$ admits toric charts locally in the analytic topology.  Then there are natural isomorphisms
\[
R^{i}\eta_* \s{F} = \eta_* H^{i}(\theta_{\s{F}},\s{F}). 
\]
\end{cor}
\begin{proof}
This follows from Theorems \ref{TheoSenFunctor}, \ref{TheoGluingCaseGamma}  and Corollary \ref{CoroProjectionFormula} by applying Sen theory to toric coordinates arising locally in the analytic topology of $X$. 
\end{proof}

\subsubsection{Gluing for general $G$}

We have made all the preparations to show Theorem \ref{TheoSenBundleofTorsorIntro}.  

\begin{theo}
\label{TheoSenBundle}
Let $X$ be an fs log smooth adic space over  $\Spa(C,C^+)$ with log structure given by normal crossing divisors. Let   $G$ a $p$-adic Lie group and $\widetilde{X}\to X$ a  pro-Kummer-\'etale $G$-torsor.     Then the geometric Sen operators of Proposition \ref{PropSenOperatorsTorsorLocal} given by local charts of $X$ glue to a morphism of  $\widehat{\s{O}}_X$-vector bundles over $X_{\proket}$
\[
\theta_{\widetilde{X}}:  \widehat{\s{O}}_X  \otimes_{\bb{Q}_p}  (\Lie G)^{\vee}_{ \ket} \to \widehat{\s{O}}_X(-1)     \otimes_{\s{O}_{X}} \Omega_X^1(\log) 
\]
such that $\theta_{\widetilde{X}}\bigwedge \theta_{\widetilde{X}}=0$. In particular, for any locally analytic Banach representation $V$ of $G$, we have a commutative diagram
\[
\begin{tikzcd}
V_{\ket}\widehat{\otimes }_{\bb{Q}_p} \widehat{\s{O}}_X \ar[r, "d_V\otimes \id_{\widehat{\s{O}}}"] \ar[rd,"\theta_{V}"'] & (V_{\ket}\widehat{\otimes}_{\bb{Q}_p} \widehat{\s{O}}_X )  \otimes_{\bb{Q}_p} (\Lie G)_{\ket}^{\vee}  \ar[d,"\id_{V}\otimes \theta_{\widetilde{X}}"] \\
& (V_{\ket} \widehat{\otimes}_{\bb{Q}_p} \widehat{\s{O}}_X(-1)) \otimes_{\s{O}_X}  \Omega^{1}_{X}(\log)
\end{tikzcd}
\]
such that $d_V:V\to  V\otimes (\Lie G)^{\vee}$ is induced by  derivations, and $\theta_V$ is the geometric Sen operator of $V_{\ket}\widehat{\otimes}_{\bb{Q}_p}\widehat{\s{O}}_X$.

Moreover, let  $ H\to G$ be a morphism of $p$-adic Lie groups,    let  $Y$ an fs log smooth adic space over $(C,C^+)$  and  let $\widetilde{Y}\to Y'$ be an $H$-torsor.  Suppose we are given with a commutative diagram compatible with the group actions
\begin{equation}
\label{eqMaoTorsorsSen}
\begin{tikzcd}
\widetilde{Y}  \ar[r] \ar[d] & \widetilde{X} \ar[d]  \\ 
Y  \ar[r,"f"]& X
\end{tikzcd}.
\end{equation}
Then the following square is commutative 
\begin{equation}
\label{eqFunctSenBundle}
\begin{tikzcd}
 f^*(\Lie G)^{\vee}_{\ket}\otimes_{\bb{Q}_p} \widehat{\s{O}}_Y \ar[d] \ar[r, "f^{*} \theta_{\widetilde{X}}"] & f^* \Omega_X^1(\log) \otimes_{\s{O}_Y} \widehat{\s{O}}_Y(-1)   \ar[d] \\
(\Lie H)^{\vee}_{\ket} \otimes_{\widehat{\bb{Q}}_p} \widehat{\s{O}}_Y \ar[r,"\theta_{\widetilde{Y}}"] &   \Omega_Y^1(\log) \otimes_{\s{O}_Y} \widehat{\s{O}}_Y(-1) 
\end{tikzcd}
\end{equation}

\end{theo}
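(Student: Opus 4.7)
The plan is to construct $\theta_{\widetilde{X}}$ by gluing the local Sen operators produced by Proposition \ref{PropSenOperatorsTorsorLocal}, using the intrinsic identification $\widehat{\s{O}}_X \otimes_{\bb{Q}_p} (\Lie\Gamma)^\vee \cong \widehat{\s{O}}_X(-1) \otimes_{\s{O}_X} \Omega^1_X(\log)$ coming from Proposition \ref{PropProjectionO} and Remark \ref{RemarkExplicitProjectionTori}. First I would work Kummer-\'etale locally on $X$: given a toric chart $\psi \colon X \to \bb{S}_C^{(e,d-e)}$, form the pro-Kummer-\'etale $\Gamma$-torsor $X_\infty \to X$ with $\Gamma \cong \bb{Z}_p(1)^d$ and the $G \times \Gamma$-torsor $\widetilde{X} \times_X X_\infty$. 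Proposition \ref{PropSenOperatorsTorsorLocal} supplies a chart-dependent Sen map $\theta^{\psi}_{\widetilde{X}} \colon \widehat{\s{O}}_X \otimes_{\bb{Q}_p} (\Lie G)^{\vee}_{\ket} \to \widehat{\s{O}}_X \otimes_{\bb{Q}_p} (\Lie \Gamma)^{\vee}$, and postcomposing with the above identification of the target with $\widehat{\s{O}}_X(-1) \otimes_{\s{O}_X} \Omega^1_X(\log)$ yields a candidate $\widetilde{\theta}^{\psi}_{\widetilde{X}}$ for the global Sen bundle map.

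The key step is to show $\widetilde{\theta}^{\psi}_{\widetilde{X}}$ is independent of $\psi$. For this I appeal to the commutative triangle of Proposition \ref{PropSenOperatorsTorsorLocal}: for every locally analytic Banach $G$-representation $V$, the associated Sen operator factors as $\theta_V = (\id_V \otimes \widetilde{\theta}^{\psi}_{\widetilde{X}})\circ (d_V \otimes \id_{\widehat{\s{O}}_X})$. By Theorem \ref{TheoGluingCaseGamma}(1), the term $\theta_V$ is functorial in $V$ and intrinsic to $V_{\ket}\widehat{\otimes}\widehat{\s{O}}_X$, with values landing in $V_{\ket}\widehat{\otimes}\widehat{\s{O}}_X(-1) \otimes_{\s{O}_X}\Omega^1_X(\log)$. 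Letting $V$ range over a family of locally analytic representations whose combined derivatives $d_V$ separate elements of $(\Lie G)^\vee$ over $\widehat{\s{O}}_X$---for instance, the regular representations $C^{h}(G, \bb{Q}_p)_{\star_2}$, for which $\Lie G$ acts faithfully---forces $\widetilde{\theta}^{\psi}_{\widetilde{X}}$ itself to be intrinsic. This produces the global map $\theta_{\widetilde{X}}$, and both the commutative diagram with $\theta_V$ and the identity $\theta_{\widetilde{X}} \wedge \theta_{\widetilde{X}} = 0$ then globalize directly from Proposition \ref{PropSenOperatorsTorsorLocal}.

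For the functoriality square \eqref{eqFunctSenBundle}, I work locally with compatible toric charts of $X$ and $Y$, using that any chart of $Y$ can be refined so as to factor Kummer-\'etale locally through the pullback of a chart of $X$. The claim then reduces to the functoriality of the local Sen operator---visible from the construction in Proposition \ref{PropSenOperatorsTorsorLocal}, since $\theta^\psi$ arises from the $\Lie\Gamma$-action on $\widetilde{B}_\infty$, which is manifestly natural in maps of torsors---combined with Corollary \ref{CoroFunctHTiso}, which ensures that the identification of $(\Lie \Gamma)^\vee \otimes \widehat{\s{O}}$ with $\Omega^1(\log)(-1) \otimes \widehat{\s{O}}$ is compatible with $f^*$. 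The main obstacle will be the well-definedness of the gluing: one must verify that the family of derivatives $d_V$ is rich enough to pin down $\widetilde{\theta}^\psi_{\widetilde{X}}$ uniquely from the intrinsic family of $\theta_V$. This should come down to a concrete computation on the trivialization of $V_{\ket}\widehat{\otimes}\widehat{\s{O}}_X$ over $\widetilde{X}\times_X X_\infty$, exploiting the faithfulness of the $\Lie G$-action on $C^{la}(G,\bb{Q}_p)$ used already inside the proof of Proposition \ref{PropSenOperatorsTorsorLocal}.
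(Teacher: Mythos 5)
Your construction and gluing of $\theta_{\widetilde{X}}$ is essentially the paper's own argument: reduce via the orbit map to $V=C^{h}(G,\bb{Q}_p)_{\star_2}$, observe that the geometric Sen operator of $\s{F}=C^{h}(G,\bb{Q}_p)_{\star_2,\ket}\widehat{\otimes}_{\bb{Q}_p}\widehat{\s{O}}_X$ is already intrinsic and global by Theorem \ref{TheoGluingCaseGamma}, and use that locally (Proposition \ref{PropSenOperatorsTorsorLocal}) it acts by left $G$-invariant $\widehat{\s{O}}_X$-linear derivations, so faithfulness of the $\Lie G$-action pins down a unique chart-independent map $\theta_{\widetilde{X}}$. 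This part of the proposal is correct and coincides with the paper.

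The gap is in the functoriality square \eqref{eqFunctSenBundle}. The claim that any chart of $Y$ can be refined so as to factor Kummer-\'etale locally through the pullback of a chart of $X$ is false for a general morphism $f:Y\to X$ (think of a closed immersion, or any non-\'etale $f$): the pullback of a toric chart of $X$ is not a toric chart of $Y$, and $f^{*}X_{\infty}$ is a $\Gamma_X$-torsor over $Y$ that is genuinely different from the torsor $Y_{\infty}$ attached to a chart of $Y$. The actual content of the change-of-space compatibility is the comparison of the Sen operator of the $\Gamma_X$-torsor $f^{*}X_{\infty}$ over $Y$, computed by Sen theory on a chart of $Y$, with the pullback map on log differentials; this is exactly Proposition \ref{PropGluingKeyCase} (the $H^{1}$ comparison via Corollary \ref{CoroFunctHTiso}, the locally analytic primitives $z_k$, and the auxiliary unipotent representation), packaged as Theorem \ref{TheoGluingCaseGamma} (5). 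It is neither ``manifest'' from the construction nor a consequence of Corollary \ref{CoroFunctHTiso} alone, which only says the Hodge--Tate identification $\Omega^{1}\cong R^{1}\nu_{*}\widehat{\s{O}}(1)$ commutes with $f^{*}$, not that the Sen operator of the pulled-back torsor does. The paper's route, which your argument should follow, factors the square \eqref{eqMaoTorsorsSen} through $Y\times_X\widetilde{X}$ and treats two cases: for $f=\id$ and a group morphism $H\to G$, view $C^{h}(G,\bb{Q}_p)$ as a locally analytic representation of both groups and use faithfulness of $\Lie G$; for $\widetilde{Y}=f^{*}\widetilde{X}$, apply Theorem \ref{TheoGluingCaseGamma} (5) to $\s{F}=C^{h}(G,\bb{Q}_p)_{\star_2,\ket}\widehat{\otimes}_{\bb{Q}_p}\widehat{\s{O}}_X$ and again use faithfulness. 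Your sketch omits the change-of-group case entirely and replaces the key input by an incorrect chart-factorization; both defects are repairable, but as written the functoriality step does not go through.
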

\begin{proof}
We first construct the geometric Sen operator $\theta_{\widetilde{X}}$ for the $G$-torsor $\widetilde{X}\to X$.  Let $V$ be a locally analytic representation of $G$ on a Banach space, and consider the inclusion of the orbit map 
\[
V\hookrightarrow V \widehat{\otimes}_{\bb{Q}_p} C^{la}(G,\bb{Q}_p)_{\star_2},
\] 
where $G$ acts via the right regular action on the right hand side. By writing $C^{la}(G,\bb{Q}_p)=\varinjlim_h C^{h}(G,\bb{Q}_p)$ as colimit of $h$-analytic  functions of $G$ (for some fixed  local coordinates), we can just assume without loss of generality that $V=C^{h}(G,\bb{Q}_p)_{\star_2}$ endowed with the right regular action. Let $\s{F}=C^{h}(G,\bb{Q}_p)_{\star_2,\ket}\widehat{\otimes}_{\bb{Q}_p} \widehat{\s{O}}_X$ be the relative locally analytic ON $\widehat{\s{O}}_X$-Banach module over $X$ defined by $V$, and let $\theta_{\s{F}}:\s{F}\to \s{F}\otimes_{\s{O}_X} \Omega^{1}_{X}(\log)(-1)$ be the geometric Sen operator of $\s{F}$ of Theorem \ref{TheoGluingCaseGamma}. Consider the adjoint map
\[
\Sen_{\s{F}}: \Omega^{1,\vee}_X(\log)(1)\otimes_{\s{O}_X} \s{F}\to \s{F}.
\]
By Proposition \ref{PropSenOperatorsTorsorLocal}, locally in the Kummer-\'etale topology of $X$,  $\Sen_{\s{F}}$ acts by $\widehat{\s{O}}_X$-linear derivations on $\s{F}$ that are in addition left $G$-equivariant. Then, the local maps of  Proposition \ref{PropSenOperatorsTorsorLocal} glue to an $\widehat{\s{O}}_X$-linear morphism
\[
\Sen_{\widetilde{X}}: \Omega^{1,\vee}_X(\log)\otimes_{\s{O}_X} \widehat{\s{O}}_X(1)\to (\Lie G)_{\ket}\otimes_{\bb{Q}_p}\widehat{\s{O}}_X,
\]
or equivalently, by taking adjoints to a map 
\[
\theta_{\widetilde{X}}: (\Lie G)_{\ket}^{\vee}\otimes_{\bb{Q}_p} \widehat{\s{O}}_X \to  \Omega^{1}_X(\log)\otimes_{\s{O}_X} \widehat{\s{O}}_X(-1)
\]
satisfying the conclusion of the theorem.

Next, we prove functoriality for the commutative square   \eqref{eqMaoTorsorsSen}.  Consider the space $C^{la}(G,\bb{Q}_p)=\varinjlim_h C^{h}(G,\bb{Q}_p)$ of locally analytic functions of $G$, written as colimit of $h$-analytic functions.  By restriction, we can also see $C^{h}(G,\bb{Q}_p)$ as a $H$-representation via the map $H\to G$.   Let 
\[
\s{F}=C^{h}(G,\bb{Q}_p)_{\star_2,\ket}\widehat{\otimes}_{\bb{Q}_p} \widehat{\s{O}}_X
\] 
be the pro-Kummer-\'etale sheaf over $X$ associated to $C^h(G,\bb{Q}_p)$. The sheaf $f^*\s{F}$ over $Y$ is also the  $\widehat{\s{O}}_Y$-extension of scalars of the pro-Kummer-\'etale sheaf associated to the $H$-representation $C^h(G,\bb{Q}_p)|_{H}$. Let $\theta_{\s{F}}$ and $\theta_{f^*\s{F}}$ be the geometric Sen operators of $\s{F}$ and $f^*\s{F}$ respectively.    By Theorem \ref{TheoGluingCaseGamma} (5) we have a commutative diagram 
\[
\begin{tikzcd}
f^{*}\s{F} \ar[r, "f^* \theta_{\s{F}}"] \ar[rd, "\theta_{f^*\s{F}}"']& f^{*}\s{F} \otimes_{\s{O}_Y} f^{*}\Omega_X^{1}(\log)(-1) \ar[d, "\id\otimes f^*"] \\ 
			& f^{*}\s{F}\otimes_{\s{O}_Y} \Omega^{1}_Y(\log)(-1).
\end{tikzcd}
\]
The action of $f^*\theta_{\s{F}}$ and $\theta_{f^*\s{F}}$ is also  given by left $G$-invariant $\widehat{\s{O}}_Y$-linear derivations, this implies that they must arise from the Lie algebra action of $\Lie G$ of  the  square  \eqref{eqFunctSenBundle} proving that it commutes.  This finishes the proof of the theorem. 
\end{proof}

The Sen morphism should encode the directions of perfectoidness of $X$.    We have the following conjecture, which is a generalization of a theorem of Sen saying that a $p$-adic Galois representation of $p$-adic field has vanishing Sen operator if and only if it is potentially  unramified,  see Corollary 3.32 of \cite{FontaineOuyang}.  

\begin{conjecture}
\label{Conjecture1}
Let $X$ be an fs log smooth adic space over $(C,C^+)$ with normal crossing divisors, let $G$ be a compact $p$-adic Lie group and $\widetilde{X}\to X$ a pro-Kummer-\'etale $G$-torsor. Then the  geometric Sen operator $\theta_{\widetilde{X}}:  \widehat{\s{O}}_X \otimes_{\widehat{\bb{Q}}_p} (\Lie G)^{\vee}_{\ket} \to   \widehat{\s{O}}_X \otimes_{\s{O}_X} \Omega_X^1(\log)^{\vee}(-1) $  is surjective if and only if $\widetilde{X}$ is a perfectoid space.  
\end{conjecture}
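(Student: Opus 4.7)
This is a conjecture rather than an established result, so the plan that follows is tentative, but the overall structure should proceed as follows. Work locally, choosing a toric chart $\psi: X \to \bb{S}_{C}^{(e,d-e)}$ and setting $X_{\infty} := X \times_{\psi, \bb{S}_{C}^{(e,d-e)}} \bb{S}_{C,\infty}^{(e,d-e)}$, the canonical perfectoid $\Gamma$-cover of $X$ with $\Gamma \cong \bb{Z}_p(1)^d$. Form the pro-Kummer-\'etale $G \times \Gamma$-torsor $\widetilde{X}^{\flat} := \widetilde{X} \times_X X_{\infty}$. By Remark \ref{RemarkExplicitProjectionTori}, the Sen operator of $X_{\infty}$ as a $\Gamma$-torsor is the canonical isomorphism $(\Lie \Gamma)^{\vee} \otimes \widehat{\s{O}}_X \xrightarrow{\sim} \Omega_X^1(\log) \otimes \widehat{\s{O}}_X(-1)$. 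Applying the functoriality square of Theorem \ref{TheoSenBundle} to the two projections $\widetilde{X}^{\flat} \to \widetilde{X}$ and $\widetilde{X}^{\flat} \to X_{\infty}$, one sees that $\theta_{\widetilde{X}^{\flat}}$ decomposes on $(\Lie G)^{\vee} \oplus (\Lie \Gamma)^{\vee}$ as $\theta_{\widetilde{X}} \oplus \theta_{X_{\infty}}$, so $\theta_{\widetilde{X}^{\flat}}$ is automatically surjective regardless of $\widetilde{X}$; this comparison will do most of the bookkeeping in both directions.

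For the forward direction (perfectoid implies surjective), the idea is to exploit that if $\widetilde{X}$ is perfectoid then both $\widetilde{X}$ and $\widetilde{X}^{\flat}$ are perfectoid and the morphism $\widetilde{X}^{\flat} \to \widetilde{X}$ is a pro-\'etale cover between perfectoid spaces, hence controlled by almost purity of \cite[Proposition 7.13]{ScholzePerfectoid2012}. Concretely, one should show that in this situation the $\Gamma$-action on $\widehat{\s{O}}_X(\widetilde{X}^{\flat})$ admits a Sen decomposition whose complement over the image of $\theta_{\widetilde{X}}$ vanishes, forcing $\theta_{\widetilde{X}}$ to already hit all of $\Omega_X^1(\log)(-1)$. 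Equivalently, through the Higgs complex description of Theorem \ref{TheoGluingCaseGamma}(2) and the vanishing of higher pro-Kummer-\'etale cohomology of $\widehat{\s{O}}$ on a perfectoid space, the Higgs complex of $\widehat{\s{O}}_{\widetilde{X}}$ should have no higher cohomology, which by a rank computation forces $\theta_{\widetilde{X}}$ to be surjective.

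For the converse (surjective implies perfectoid), by Nakayama applied to $\widehat{\s{O}}_X$-vector bundles on the Kummer-\'etale site, one can pass to a suitable Kummer-\'etale cover on which there exist sections $\xi_1, \ldots, \xi_d$ of $(\Lie G)_{\ket}$ whose images under the Sen map form an $\widehat{\s{O}}_X(-1)$-basis of $\Omega^1_X(\log) \otimes \widehat{\s{O}}_X(-1)$. These assemble into a map of Lie algebras $\bb{Z}_p^d \to \Lie G$, which exponentiates to a continuous morphism $\Gamma' \to G$ of $p$-adic Lie groups with $\Gamma' \cong \bb{Z}_p^d$. Let $N = \ker(\Gamma' \to G)$; the induced $\Gamma'/N$-torsor $\widetilde{X}/N \to X$ has, by functoriality of Theorem \ref{TheoSenBundle}, Sen operator an isomorphism. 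One then wants to conclude that any such $\bb{Z}_p^d$-torsor over $X$ with invertible Sen operator must coincide, up to a refinement of the Kummer-\'etale cover, with the canonical perfectoid tower coming from some toric chart; granting this, $\widetilde{X}/N$ is perfectoid, and the pro-Kummer-\'etale map $\widetilde{X} \to \widetilde{X}/N$ allows one to bootstrap to perfectoidness of $\widetilde{X}$ itself using almost purity.

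The main obstacle is the claim just invoked in the converse: that a $\bb{Z}_p^d$-torsor with Sen operator an isomorphism must coincide, up to Kummer-\'etale refinement, with the canonical perfectoid tower associated to some toric chart. This is an \emph{integration} problem for the Sen operator and appears to be the crux of the conjecture, since it is precisely the non-formal input which goes beyond what Theorems \ref{TheoGluingCaseGamma} and \ref{TheoSenBundle} supply. A natural avenue would be through the $p$-adic Simpson correspondence recalled in \S \ref{ss:HTSrigidpAdicSimpson}: a relative locally analytic sheaf equipped with the canonical Higgs field ought to descend to a vector bundle on a specific perfectoid cover, and one would hope to identify this cover with $\widetilde{X}/N$. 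A secondary subtlety is to pin down precisely what ``$\widetilde{X}$ is perfectoid'' means, since $\widetilde{X}$ is a priori only a pro-Kummer-\'etale object; the intended meaning is presumably that $\widetilde{X}$ is representable by a perfectoid space in the sense of \cite{ScholzePerfectoid2012}, and this representability should in fact be part of the conclusion of the converse direction.
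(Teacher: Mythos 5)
This statement is Conjecture \ref{Conjecture1}: the paper does not prove it in either direction, and the remark following it makes clear that it is left open (only consequences, and the fact that the Shimura-variety torsors satisfy the surjectivity hypothesis by an independent argument, are recorded). So there is no proof in the paper to compare with, and your outline — which you correctly flag as tentative — does not close the conjecture; beyond the ``integration problem'' you isolate, both halves of your plan have concrete gaps.

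For ``perfectoid $\Rightarrow$ surjective'', the Higgs-complex argument conflates two different sheaves. Perfectoidness of $\widetilde{X}$ controls, via almost acyclicity, the cohomology of the full pushforward of $\widehat{\s{O}}$ along $\widetilde{X}\to X$, which is built from \emph{continuous} functions on $G$ and is not relative locally analytic (no open subgroup of $G$ acts trivially on $C(G,\bb{Z}_p)/p$), so Theorem \ref{TheoGluingCaseGamma} does not apply to it. The Sen/Higgs formalism only computes the cohomology of the locally analytic part $C^{la}(G,\bb{Q}_p)_{\star_1,\ket}\widehat{\otimes}_{\bb{Q}_p}\widehat{\s{O}}_X$, and the vanishing of its higher Higgs cohomology is exactly what Theorem \ref{TheoProjectionLocallyAnalyticVectors} deduces \emph{from} surjectivity via a Poincar\'e lemma; you cannot feed it back in without first relating continuous and locally analytic cohomology of the torsor, which is the non-formal content. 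The proposed ``rank computation'' is also undefined: $\theta_{\widetilde{X}}$ is an $\widehat{\s{O}}_X$-linear map from a bundle of rank $\dim G$ to one of rank $d$, and vanishing of some cohomology groups of a Koszul-type complex does not by itself force surjectivity of the first differential.

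For ``surjective $\Rightarrow$ perfectoid'', the construction of $\Gamma'\to G$ does not make sense as stated: the sections $\xi_1,\ldots,\xi_d$ whose $\Sen_{\widetilde{X}}$-images form a basis live in $(\Lie G)_{\ket}\otimes_{\bb{Q}_p}\widehat{\s{O}}_X$, i.e.\ they are $\widehat{\s{O}}_X$-linear combinations of Lie algebra elements, generally non-constant, non-commuting, and not contained in any fixed $d$-dimensional $\bb{Q}_p$-subalgebra of $\Lie G$; they cannot be exponentiated to a morphism $\bb{Z}_p^d\to G$, so the auxiliary torsor $\widetilde{X}/N$ is not available. (In the motivating examples the image of $\Sen_{\widetilde{X}}$ is an $\widehat{\s{O}}_X$-family of nilpotent subalgebras varying with the Hodge--Tate period, not the $\widehat{\s{O}}_X$-span of a fixed subgroup.) Even granting such a subgroup, identifying a $\bb{Z}_p^d$-torsor with invertible Sen operator with a toric perfectoid tower is precisely the open step; the $p$-adic Simpson results of \S\ref{ss:HTSrigidpAdicSimpson} decomplete along a chosen toric tower and do not produce a perfectoid cover from Higgs data. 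The parts of your outline that are genuinely backed by the paper are the decomposition $\theta_{\widetilde{X}\times_X X_\infty}=\theta_{\widetilde{X}}\oplus\theta_{X_\infty}$ (functoriality in Theorem \ref{TheoSenBundle} together with Remark \ref{RemarkExplicitProjectionTori}) and your observation that representability of $\widetilde{X}$ by a perfectoid space has to be part of what the conclusion means.
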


\begin{remark}
In the work \cite{RCLocAnACoho}, we show that  the pro(-Kummer-)\'etale torsors defining the infinite level  Shimura varieties satisfy the hypothesis of the Conjecture \ref{Conjecture1}.    The proof of this fact never uses the perfectoidness of the  Shimura variety,   only  the $p$-adic Riemann-Hilbert correspondence of  \cite{DiaoLogarithmicHilbert2018}. Moreover, in \cite{RCLocAnACoho} we use the explicit construction of the geometric Sen operators for Shimura varieties to prove the vanishing part of the rational Calegari-Emerton conjectures, see \cite{CalegariEmerton}.
\end{remark}

\begin{remark}
In his  original work  \cite{MR319949,MR314853}, Sen shows that an algebraic extension $K$ of $\bb{Q}_p$ of Galois group a $p$-adic Lie group $G$  is deeply ramified (equivalently, that its completion is perfectoid) if and only if the Sen operator seen as an object $\theta\in \Lie G\otimes_{\bb{Q}_p}\widehat{K}$   is non zero.  Recently in \cite{he2024perfectoidness}, He proved that the Conjecture \ref{Conjecture1}  holds for  residue fields of  rigid varieties. This pointwise perfectoidness criteria is  sufficient for proving the vanishing of the Calegari-Emerton conjectures integrally.  
\end{remark}

\subsection{Locally analytic vectors of pro-Kummer-\'etale towers}
\label{s:HTSrigidProetalecoho}

We keep the previous notations,  i.e., $(C,C^+)$ is a perfectoid field containing $\bb{Q}_p^{\cyc}$,   $X$ is an fs log smooth adic space over  $(C,C^+)$ with log structure given by normal crossing divisors,  $G$ a compact $p$-adic Lie group,  and $\widetilde{X}$ a pro-Kummer-\'etale $G$-torsor over $X$.  In this last section we apply Theorem \ref{TheoGluingCaseGamma} to study the locally analytic vectors of $\widehat{\s{O}}_{\widetilde{X}}$ for the action of $G$, we also extend the cohomology computations of the theorem to log adic spaces arising from the boundary of $X$.

\subsubsection{Pro-Kummer-\'etale cohomology of the boundary}

Let $X$ and $\widetilde{X}\to X$ be as before.  Let $D\subset X$ be the boundary divisor. By definition,  \'etale locally on $X$,  $D$ can be written as a disjoint union of irreducible components $D=\bigcup_{a\in I} D_a$,  where the finite intersections of the $D_a$'s are smooth, cf. \cite[Examples 2.3.17 and 2.3.18]{DiaoLogarithmic2019}. For the rest of the section we work with a boundary divisor $D$ decomposed as an union $D=\bigcup_{a\in I}= D_a$ as before.     Given $J\subset I$ a finite subset  we set $D_J=\bigcap_{a\in I} D_a $ endowed with the log structure pulled back from $X$,  and  write $\iota_{J}:D_{J}\subset X$ for the inclusion map.  We  denote by $\widehat{\s{O}}_{D_J}^{(+)}$  the sheaf $\iota_{J,*} \widehat{\s{O}}_{D_J}^{(+)}$ over $X_{\proket}$.    

In this section shall write $\s{O}_{X,\ket}$ and $\s{O}_{X,\an}$ for the structural sheaves on the Kummer-\'etale site and the analytic site respectively. We also denote by  $\s{O}_{D_J,\ket}$ and $\s{O}_{D_J,\an}$ for the sheaves defined by the boundary divisor $D_J$. 

We start with  a partial extension of Theorem \ref{TheoGluingCaseGamma} to  the boundary.

\begin{prop}
\label{TheoCohomologyLASheavesBoundary}
Let $\s{F}$ be a relative locally analytic $\widehat{\s{O}}_X$-module over $X_{\proket}$ and let $\theta_{\s{F}}:\s{F}\to \s{F}\otimes_{\s{O}_X}\Omega^{1}_{X}(\log)(-1)$ be the geometric Sen operator of $\s{F}$. Let $\nu_*:X_{\proket}\to X_{\ket}$ be the projection of sites, and  let $J\subset I$ be a finite subset. Then there is a natural isomorphism 
\begin{equation}
\label{eqCohoGroupsBoundary}
R^{i}\nu_*  \iota_{J,*}\iota_{J}^{*} \s{F}= \nu_*( H^{i}(\theta_{\s{F}}, \iota_*\iota_{J}^{*}\s{F})). 
\end{equation}
Moreover, let $\eta:X_{\proket}\to X_{\an}$ be the projection of sites and suppose that $\s{F}$ has a lattice $\s{F}^{+}$ such that $\s{F}^{+}/p^{\epsilon}=^{ae}\bigoplus_I \s{O}^+_{X}/p^{\epsilon}$ as Kummer-\'etale $\s{O}^+_X/p^{\epsilon}$-modules for some $\epsilon>0$. Then 
\begin{equation}
\label{eqCohoGroupsBoundary2}
R^{i}\eta_*  \iota_{J,*}\iota_{J}^{*} \s{F}= \eta_*( H^{i}(\theta_{\s{F}}, \iota_*\iota_{J}^{*}\s{F})) . 
\end{equation}
\end{prop}

\begin{proof}
All the statements are local on $X$ for the Kummer-\'etale or analytic topology, so we can assume that we have a toric chart $\psi:X\to \bb{S}_{C}^{(e,d-e)}$, and take $D=D_J$ to be the vanishing locus of $S_{e+1}=\cdots= S_{d}=0$. We also write $\iota\colon D\to X$ for the inclusion map. We can also assume that $\s{F}$ admits an almost $\widehat{\s{O}}^+_X$-lattice $\s{F}^+$  as in Definition \ref{DefRelativeLocAnSheaf}. In particular, notice that 
\[
\iota_*\iota^*( \s{F}^+/p^{\epsilon}) \cong^{ae} \bigoplus_{I} \s{O}^+_{D}/p^{\epsilon}. 
\]

 Let $X_{\infty}$ be the $\Gamma\cong \bb{Z}_p(1)^{d}$-torsor over $X$ obtained via $\psi$ by taking $p$-th power roots of the  coordinates $T_i$ and $S_j$, set  $D_{\infty}=X_{\infty}\times_X D$. Let $A_{\infty}=\widehat{\s{O}}_X(X_{\infty})$ be the sheaf of functions of $X_{\infty}$ and $A_{\infty, D}= \widehat{\s{O}}_{D}(D_{\infty})$. By Example \ref{ExampleSenTheoryProduct} (5), the boundary of $\bb{S}_{\infty}^{(e,d-e)}=\Spa(C\langle T_i^{\frac{1}{p^{\infty}}},S_j^{\frac{1}{p^{\infty}}} \rangle, C^+\langle T_i^{\frac{1}{p^{\infty}}},S_j^{\frac{1}{p^{\infty}}} \rangle)$ admits Sen traces for the action of $\Gamma$, and by Proposition \ref{PropSenTheoryAffinoids}   these Sen traces extend to a $d$-dimensional Sen theory $(A_{D,\infty}, \Gamma, (R_n^i)_{n,i})$ to the boundary of $X_{\infty}$. 

By  Lemma \ref{LemComputationCohomologyToricCoordinates} applied to  $\iota_{*}\iota^*\s{F}$, we have  that 
\[
R\Gamma(X_{\infty}, \iota_{*}\iota^*\s{F}) =\s{F}(D_{\infty}).
\]
The condition on $\s{F}^+/p^{\epsilon}$ together with derived Nakayama's lemma yield
\[
\s{F}(D_{\infty})=^{ae} A_{D,\infty}\widehat{\otimes}^L_{A_{\infty}} \s{F}(X_{\infty}) 
\]
 as $\Gamma$-equivariant $A_{\infty}$-modules. 
 
The Sen operators of $\s{F}(D_{\infty})$ are constructed from the action of $\Lie \Gamma$ on its locally analytic vectors. By the projection formula of locally analytic vectors of Lemma \ref{LemmaProjectionFormulaLocAnRep} we get that 
\[
\begin{aligned}
\s{F}(D_{\infty})^{\Gamma-la}& =(A_{D,\infty}\widehat{\otimes}^L_{A_{\infty}} \s{F}(X_{\infty}))^{\Gamma-la}  \\
	& = (A_{D,\infty}\widehat{\otimes}^L_{A_{\infty}^{\Gamma-la}} \s{F}(X_{\infty})^{\Gamma-la})^{\Gamma-la} \\
	& = A_{D,\infty}^{\Gamma-la}\widehat{\otimes}^L_{A_{\infty}^{\Gamma-la}} \s{F}(X_{\infty})^{\Gamma-la}.
\end{aligned}
\] 
where in the second equivalence we have used Theorem \ref{TheoSenFunctor} to decomplete $\s{F}(X_{\infty})$ via locally analytic vectors.

This shows that the geometric Sen operators of $\iota_*\iota^*\s{F}$ are just the base change along $\widehat{\s{O}}_X\to \widehat{\s{O}}_{D}$ of the geometric Sen operators of $\s{F}$.  Since $(A_{D,\infty}, \Gamma, (R_n^i)_{n,i})$ is a Sen theory satisfying (AST), the equations \eqref{eqCohoGroupsBoundary} and  \eqref{eqCohoGroupsBoundary} follow from Proposition  \ref{PropositionSenTheoryCohomology} as the left hand sides are computed via $\Gamma$-group cohomology. 
\end{proof}

\subsubsection{Locally analytic vectors of \texorpdfstring{$\widehat{\s{O}}_X$}{Lg}}
\label{ss:HTSrigidLocAnO} 

Let $(C,C^{+})$  be perfectoid field containing $\bb{Q}_p^{\cyc}$, $X$  an fs log smooth adic space over $(C,C^+)$ with 
normal crossing divisors, and $\widetilde{X}\to X$ a pro-Kummer-\'etale torsor for a compact $p$-adic Lie group $G$.  For an open subgroup $G_0\subset G$ we let $X_{G_0}$ be the finite Kummer-\'etale extension over $X$ given by the quotient $\widetilde{X}/G_0$ in $X_{\proket}$. The space $\widetilde{X}$ has an underlying topological space given by $|\widetilde{X}|=\varprojlim_{G_0\subset G} |X_{G_0}|$, where $|X_{G_0}|$ is the underlying adic space of $X_{G_0}$. The analytic site of $\widetilde{X}$ is the site of disjoint unions of open subspaces of  $|\widetilde{X}|$.

\begin{definition}
Let $\widehat{\s{O}}_{\widetilde{X}}$ be the restriction of the completed structural sheaf of $X_{\proket}$ to the analytic site of $\widetilde{X}$. We let $\s{O}^{la}_{\widetilde{X}}\subset \widehat{\s{O}}_{\widetilde{X}}$ be the presheaf mapping a qcqs open subspace $\widetilde{U}\subset \widetilde{X}$ to the Ind-Banach space
\[
\s{O}^{la}_{\widetilde{X}}(\widetilde{U}):=\widehat{\s{O}}_{\widetilde{X}}(\widetilde{U})^{G_{\widetilde{U}}-la}
\]
of $G_{\widetilde{U}}$-locally analytic sections of $\widehat{\s{O}}_{\widetilde{X}}(\widetilde{U})$, where $G_{\widetilde{U}}$ is the stabilizer of $\widetilde{U}$.
\end{definition}

\begin{lem}
The presheaf $\s{O}^{G-la}_{\widetilde{X}}$ is a sheaf for the analytic topology of $\widetilde{X}$. 
\end{lem}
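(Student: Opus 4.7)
The plan is to verify separatedness and gluing for $\s{O}^{la}_{\widetilde{X}}$. Separatedness will be automatic, since $\s{O}^{la}_{\widetilde{X}}$ sits inside the sheaf $\widehat{\s{O}}_{\widetilde{X}}$.

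For gluing, I would start with a qcqs open $\widetilde{U}\subset \widetilde{X}$, an open cover $\{\widetilde{U}_i\}_{i\in I}$ by qcqs opens, and compatible sections $s_i \in \s{O}^{la}_{\widetilde{X}}(\widetilde{U}_i)$. Using quasi-compactness, I would first extract a finite subcover $\widetilde{U}_1,\ldots,\widetilde{U}_n$ and form the open subgroup $H := \bigcap_{i=1}^{n} G_{\widetilde{U}_i}$ of $G$; openness is guaranteed because any qcqs open of $\widetilde{X}=\varprojlim_{G'\subset G} X_{G'}$ descends to some finite-level $X_{G'}$ and hence contains $G'$ in its stabilizer. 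Since $H$ stabilizes every $\widetilde{U}_i$, it also stabilizes $\widetilde{U}$, so $H\subset G_{\widetilde{U}}$ and every $s_i$ is a fortiori $H$-locally analytic. The sheaf property of $\widehat{\s{O}}_{\widetilde{X}}$ then produces a unique $s \in \widehat{\s{O}}_{\widetilde{X}}(\widetilde{U})$ restricting to the $s_i$, and the heart of the argument is to show that $s$ already lies in $\widehat{\s{O}}_{\widetilde{X}}(\widetilde{U})^{H-la}$.

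To achieve this, I plan to apply the functor $(-)^{RH-la}$ to the equalizer diagram
\[
0 \to \widehat{\s{O}}_{\widetilde{X}}(\widetilde{U}) \to \prod_{i=1}^{n} \widehat{\s{O}}_{\widetilde{X}}(\widetilde{U}_i) \rightrightarrows \prod_{i,j} \widehat{\s{O}}_{\widetilde{X}}(\widetilde{U}_i\cap \widetilde{U}_j).
\]
By Theorem \ref{TheoResumeLocAn}(3) the functor $(-)^{RH-la}$ is a right adjoint, hence preserves limits and in particular equalizers, so the sequence obtained by applying $H$-locally analytic vectors remains exact. The tuple $(s_i)_i$ lies in $\prod_i \widehat{\s{O}}_{\widetilde{X}}(\widetilde{U}_i)^{H-la}$ and satisfies the equalizer condition; its unique lift to $\widehat{\s{O}}_{\widetilde{X}}(\widetilde{U})^{H-la}$ will then agree with $s$ by uniqueness of the glue in $\widehat{\s{O}}_{\widetilde{X}}$.

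To conclude, I would invoke the fact that local analyticity of a continuous representation depends only on a neighborhood of the identity: if $H\subset G_{\widetilde{U}}$ is open, then any $H$-locally analytic vector in a continuous $G_{\widetilde{U}}$-representation is automatically $G_{\widetilde{U}}$-locally analytic. This will give $s \in \s{O}^{la}_{\widetilde{X}}(\widetilde{U})$, completing the gluing axiom. The only subtle step I anticipate is the compatibility of $(-)^{RH-la}$ with the closed embedding $\widehat{\s{O}}_{\widetilde{X}}(\widetilde{U}) \hookrightarrow \prod_i \widehat{\s{O}}_{\widetilde{X}}(\widetilde{U}_i)$, but this follows cleanly from the right-adjoint description in the solid framework of \cite{RRLocallyAnalytic, RRLocAn2}.
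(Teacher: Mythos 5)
Your proof is correct and takes essentially the same route as the paper: both glue through the \v{C}ech equalizer for $\widehat{\s{O}}_{\widetilde{X}}$ on a finite cover by qcqs opens, choose a common open subgroup stabilizing all the pieces, and conclude by left-exactness of the locally analytic vectors functor, with the only difference being that the paper justifies this exactness concretely (tensoring the Banach equalizer with $C^{la}(G_0,\bb{Q}_p)_{\star_1}$ and taking $G_0$-invariants) while you invoke the right-adjoint property of $(-)^{RH-la}$. If you keep the adjunction argument, note that it preserves homotopy limits, so to pass from the derived fiber to the abelian kernel you should add the (easy) remark that derived locally analytic vectors of a module in degree $0$ are coconnective — or simply argue as the paper does, that $\bigl(-\widehat{\otimes}_{\bb{Q}_p} C^{la}(H,\bb{Q}_p)\bigr)^{H}$ is left exact.
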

\begin{proof}
Note that for any qcqs open subspace $\widetilde{U}\subset \widetilde{X}$, the ring $\widehat{\s{O}}_{\widetilde{X}}(\widetilde{U})$ is a $\bb{Q}_p$-Banach space. Let $\{\widetilde{V}_i\}_{i=1}^{s}$ be an open cover of $\widetilde{U}$ by qcqs open subspaces. We have a short exact sequence
\begin{equation}
\label{eqShortExactOhat}
0\to \widehat{\s{O}}_{\widetilde{X}}(\widetilde{U}) \to \prod_{i} \widehat{\s{O}}_{\widetilde{X}}(\widetilde{V}_i) \to \prod_{i,j}\widehat{\s{O}}_{\widetilde{X}}(\widetilde{V}_i \cap \widetilde{V}_j).
\end{equation}
Let $G_0\subset G$ be an open compact subgroup stabilizing all the $\widetilde{V}_i$'s, then $G_0$ stabilizes $\widetilde{U}$ and the intersections $\widetilde{V}_i \cap \widetilde{V}_j$, and \eqref{eqShortExactOhat} is a $G_0$-equivariant exact sequence of Banach spaces. Then, tensoring with $C^{la}(G_0, \bb{Q}_p)_{\star_1}$ and taking $G_0$-invariant vectors we get an exact sequence
\[
0\to \s{O}^{la}_{\widetilde{X}}(\widetilde{U}) \to \prod_{i} \s{O}^{la}_{\widetilde{X}}(\widetilde{V}_i) \to \prod_{i,j}\s{O}^{la}_{\widetilde{X}}(\widetilde{V}_i \cap \widetilde{V}_j).
\]
proving what we wanted. 
\end{proof}

The following definition is useful to construct sheaves over $\widetilde{X}$.

\begin{definition}
\label{DefinitionTowerSheaf}
Let $G_0\subset G$ be an open compact subgroup, denote  by $\eta_{G_0}: X_{G_0,\proket}\to X_{G_0,\an}$ the projection of sites. Let $\s{F}:=(\s{F}_{G_0})_{G_0\subset G}$ be a  compatible sequence of pro-Kummer-\'etale sheaves on $X_{G_0}$ in the sense that if $G_0'\subset G_0$ we have a map $\psi_{G_0}^{G_0'}:\s{F}_{G_0}|_{X_{G_0'}}\to \s{F}_{G_0'}$, and that for $G_0''\subset G_0'\subset G_0$ we have $\psi_{G_0'}^{G_0''} \circ \psi_{G_0}^{G_0'}=\psi_{G_0}^{G_0''}$. We define 
\[
R\eta_{\infty,*} \s{F}:= \varinjlim_{G_0} R\eta_{G_0,*} \s{F}_{G_0}
\]
seen as sheaf  over $|\widetilde{X}|$.
\end{definition}

Let $D\subset X$ be the boundary divisor and  suppose that $D=\bigcup_{i\in I} D_i$ is written as a disjoint union of irreducible divisors with smooth finite intersections. Given $J\subset I$ a finite subset we let $D_{J}=\cap_{i\in J} D_i$, we declare $D_{\emptyset}= X$. 

\begin{theo}
\label{TheoProjectionLocallyAnalyticVectors}
Let $\eta:  X_{ \proket}\to X_{\an}$ be the projection of sites, and let 
\[
C^{la}(\Lie G, \bb{Q}_p)_{\star_1,\ket}:=(C^{la}( G_0, \bb{Q}_p)_{\star_1,\ket} )_{G_0\subset G}
\]
 be the ind-sequence of pro-Kummer-\'etale sheaves obtained from left regular representations over the tower $(X_{G_0})_{G_0\subset G}$ as in Definition \ref{DefinitionTowerSheaf}. Suppose that the following axiom holds: 
 
 \begin{itemize}
 
\item[(BUN)]  The geometric Sen operator of $\widetilde{X}$
\[
\theta_{\widetilde{X}}: \widehat{\s{O}}_X \otimes_{\bb{Q}_p} (\Lie G)^{\vee}_{\ket} \to \widehat{\s{O}}_X \otimes_{\s{O}_X} \Omega^1_X(\log)(-1)
\]
is surjective.
 \end{itemize}

Then, for any finite subset $J\subset I$, we have that 

\begin{gather*}
R\eta_{\infty,*} (C^{la}(\Lie G, \bb{Q}_p)_{\star_1,\ket}\widehat{\otimes}_{\bb{Q}_p}  \widehat{\s{O}}_{D_J})= \s{O}_{D_J}^{la},
\end{gather*}
where  $\s{O}_{D_J}^{la}$ is the sheaf of locally analytic sections of $\widehat{\s{O}}_{D_J}$ restricted to $|\widetilde{X}|$.   Furthermore, we have an exact sequence of sheaves in $|\widetilde{X}|$
\begin{equation}
\label{eqpafpasmfa}
\s{O}^{la}_{\widetilde{X}}\to \bigoplus_{a\in I} \s{O}^{la}_{D_a}\to \bigoplus_{\substack{J\subset I \\ |J|=2}}  \s{O}^{la}_{D_J}\to\cdots \to \s{O}_{D_I}^{la}\to 0
\end{equation}
induced by the boundary divisors. 

\end{theo}
\begin{proof}
This follows formally from Propositions \ref{PropLocalVanishingHigherLocAn} and  \ref{TheoCohomologyLASheavesBoundary}. Indeed, we can work locally in the analytic topology of $X$, and even after a finite Kummer-\'etale cover of the form $X_{G_0}$. Thus, we can assume without loss of generality that $X=\Spa(A,A^+)$ is affinoind and that it admits a chart to $\bb{S}^{(e,d-e)}_{C}$ with coordinates $T_i, S_j$ such that $D=D_J$ is the vanishing locus of   $S_{e+1}=\cdots=S_d=0$.  Let us write $V_{G_0}= C^{la}(G_0,\bb{Q}_p)_{\star_1}$ for the left regular representation of $G_0$, we want to see that 
\[
\varinjlim_{G_0} R\Gamma(X_{G_0,\proket}, V_{G_0,\ket}\widehat{\otimes}_{\bb{Q}_p} \widehat{\s{O}}_{D})=\s{O}^{la}_{D}
\]
The vanishing of higher cohomology groups follows from Proposition \ref{PropLocalVanishingHigherLocAn}. The identification of the degree $0$-cohomology is also clear. 

It is left to show that \eqref{eqpafpasmfa} is an exact sequence.  By Theorem \ref{TheoGluingCaseGamma} (2) we know that 
\[
\s{O}^{la}_{D}(\widetilde{X})=  \varinjlim_{G_0} H^0_{\proket}(X_{G_0}, (V_{G_0}\widehat{\otimes}_{\bb{Q}_p} \widehat{\s{O}}_{X} )^{\theta_{V}=0}). 
\]
By the proof of Proposition \ref{PropLocalVanishingHigherLocAn}, the filtered system of pro-\'eatle sheaves $( V_{G_0}\widehat{\otimes}_{\bb{Q}_p} \widehat{\s{O}}_{X} )^{\theta_{V}=0})$ on the tower $X_{G_0}$ is ind-equivalent to a system of pro-\'etale sheaves $(\s{F}_{G_0})_{G_0}$ where each $\s{F}_{G_0}$ is a relative locally analytic sheaf on $X_{G_0}$ admitting a lattice $\s{F}_{G_0}^+$ as in Definition \ref{DefRelativeLocAnSheaf}. Indeed, in the notation of the proposition, these are the sheaves corresponding to 
\[
\s{O}(\exp(p^rW^0))=\s{O}(\bb{H}_r)^{\Lie \Gamma=0}. 
\]
Hence, we can write 
\[
\s{O}^{la}_{D}(\widetilde{X})=\varinjlim_{G_0}  \s{F}_{G_0}(X_{G_0}). 
\]

Let $X_{\infty}$ be the $\Gamma$-torsor over $X$ obtained by adding $p$-th power roots of the coordinates $S_i$ and $T_j$, set $\widetilde{X}_{\infty}=\widetilde{X}_{\infty}$. For $G_0\subset G$ an open compact subgroup denote $X_{G_0,\infty}=X_{G_0}\times_X X_{\infty}$.  Then, we can write 
\[
H^0(X_{G_0}, \s{F}_{G_0})= H^0(G_0\times \Gamma, \s{F}_{G_0}(\widetilde{X}_{\infty}))= H^0(\Gamma, \s{F}(X_{G_0,\infty})). 
\]
By Proposition \ref{PropSenTheoryAffinoids} we have a Sen theory on $\s{O}(X_{G_0,\infty})$ for the action of $\Gamma$. Since the Sen operators of $\s{F}_{G_0}$ are trivial, by Corollary \ref{CoroProjectionFormula}   there is an open subgroup $\Gamma_{G_0}\subset \Gamma$ (depending on $G_0$) such that 
\[
\s{F}_{G_0}= \s{F}(X_{G_0,\infty})^{\Gamma_{G_0}} \widehat{\otimes}_{\s{O}(X_{G_0,\infty})^{\Gamma_{G_0}}} \widehat{\s{O}}_{X_{G_0}}
\]
with $\s{F}(X_{G_0,\infty})^{\Gamma_{G_0}}$ an ON $\s{O}(X_{G_0,\infty})^{\Gamma_{G_0}}$-Banach module.

We have an exact sequence of pro-Kummer-\'etale $\widehat{\s{O}}_{X}$-modules on $X_{\proket}$
\[
\widehat{\s{O}}_X\to \bigoplus_{a\in I} \widehat{\s{O}}_{D_a}\to \bigoplus_{\substack{J\subset I \\ |J|=2}}  \widehat{\s{O}}_{D_J}\to\cdots \to \widehat{\s{O}}_{D_I}\to 0.
\]
tensoring with $\s{F}_{G_0}$, evaluating at $X_{G_0,\infty}$ and taking $\Gamma_{G_0}$-invariant vectors, we get an exact sequence 
\[
\begin{gathered}
\s{F}(X_{G_0,\infty})^{\Gamma_{G_0}}\to \bigoplus_{a\in I } \s{F}(X_{G_0,\infty})^{\Gamma_{G_0}}\widehat{\otimes}_{\s{O}(X_{G_0,\infty})^{\Gamma_{G_0}}} \s{O}_{D_a}(X_{G_0,\infty})^{\Gamma_{G_0}} \to \\ 
\to \cdots   \s{F}(X_{G_0,\infty})^{\Gamma_{G_0}} \widehat{\otimes}_{\s{O}(X_{G_0,\infty})^{\Gamma_{G_0}}} \s{O}_{D_I}(X_{G_0,\infty})^{\Gamma_{G_0}} \to 0
\end{gathered}
\]
(this is the same as tensoring $\s{F}(X_{G_0,\infty})^{\Gamma_{G_0}}$ with the exact sequence induced by  divisors of the rigid space $X_{G_0,\Gamma_0}:= X_{G_0,\infty}/\Gamma_{G_0}$; it remains exact since $\s{F}(X_{G_0,\infty})^{\Gamma_{G_0}}$ admits an ON basis).  Taking group cohomology for the finite group action $\Gamma/\Gamma_{G_0}$, we obtain an exact sequence 
\[
\s{F}_{G_0}(X_{G_0})\to \bigoplus_{a\in I } (\s{F}_{G_0}\widehat{\otimes}_{\widehat{\s{O}}_X} \widehat{\s{O}}_{D_a})(X_{G_0}) \to \cdots \to (\s{F}_{G_0}\widehat{\otimes}_{\widehat{\s{O}}_X} \widehat{\s{O}}_{D_I})(X_{G_0})\to 0.
\]
Taking colimits as $G_0\to 1$ one obtains the exact sequence \eqref{eqpafpasmfa} as wanted. 
\end{proof}

\subsection{Relation with the $p$-adic Simpson correspondence}
\label{ss:HTSrigidpAdicSimpson}

We finish this section with the relationship between the  geometric Sen operator and the $p$-adic Simpson correspondence of \cite{LiuZhuRiemannHilbert,  DiaoLogarithmicHilbert2018, wang2021padic}. We shall write  $(K,K^+)$  for a complete discretely valued non-archimedean extension of $\bb{Q}_p$ with perfect residue field, and let $(C,C^+)$ be the completion of an algebraic closure of $K$. We let $t\in \bb{B}_{\dR}^+(C,C^+)$ be a generator of the kernel of the map of Fontaine $\theta: \bb{B}_{\dR}^+(C,C^+)\to C$, eg. $t=\log ([\epsilon])$ where $\epsilon=(\zeta_{p^n})_{n}$ is a sequence of compatible  $p$-th power roots of unity.

    Let us recall the following result

\begin{theo}[{\cite[Theorem  2.1]{LiuZhuRiemannHilbert} and  \cite[Theorem  3.2.4]{DiaoLogarithmicHilbert2018}} ]
\label{TheoLiuZhuRH}
Let $X$ be an fs log smooth adic space over $\Spa(K,K^+)$ and let $\bb{L}^0$ be a pro-Kummer-\'etale $\bb{Z}_p$-local system, write $\bb{L}=\bb{L}^0[\frac{1}{p}]$.   Let $\OC =\gr^0 \OBdr$ be the Hodge-Tate period sheaf,  and $\nu: X_{C,\proket}\to X_{C,\ket}  $ be the projection of sites.   Then 
\[
\n{H}(\bb{L}):= R\nu_*(\bb{L}\otimes_{\bb{Q}_p} \OC )
\]
is a $\Gal_K$-equivariant  $\log$ Higgs bundle concentrated in degree $0$.   Let $\theta$ denote the Higgs field of $\n{H}(\bb{L})$.  Then one has 
\begin{equation}\label{qwekpafma}
R \nu_* \bb{L}\otimes_{\bb{Q}_p}\widehat{\s{O}}_X = R\Gamma(\theta,  \n{H}(\bb{L})).  
\end{equation}
\end{theo}

To reprove this theorem with our theory we first need to compute the geometric Sen operator of $\OC$. 

\begin{prop}
\label{PropSenOperatorOC}
The geometric Sen operator of   \[\OC = \gr^0(\OBdr) =\varinjlim_n  \Sym^n(\gr^1 \OBdr^+)\cdot  t^{-n} \] is  given by 
\[
\OC \xrightarrow{-\overline{\nabla}} \OC(-1) \otimes_{\s{O}_X} \Omega^1(\log),
\]
where $\overline{\nabla}$ is the residual connection of $\nabla: \OBdr\to \OBdr \otimes_{\s{O}_X} \Omega^1(\log)$.  
\end{prop}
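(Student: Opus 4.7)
The plan is to reduce the equality $\theta_{\OC} = -\overline{\nabla}$ to a local computation in toric coordinates, where both maps can be written down on explicit generators of $\OC$ as a polynomial algebra over $\widehat{\s{O}}_X$. By Theorem \ref{TheoGluingCaseGamma}(1) and (5), the Sen operator of any relative locally analytic sheaf is functorial and is determined locally on the Kummer-\'etale topology; since $\OC = \varinjlim_n \Sym^n(\frac{1}{t}\gr^1 \OBdr^+)$ is a filtered colimit of the relative locally analytic sheaves $\Sym^n(\frac{1}{t}\gr^1 \OBdr^+)$ (which are extensions of finite rank $\widehat{\s{O}}_X$-bundles), the formation of $\theta_{\OC}$ is compatible with this colimit and with products, so it suffices to identify $\theta$ on $\frac{1}{t}\gr^1 \OBdr^+$ and the conclusion for all of $\OC$ follows from the Leibniz rule.

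Assuming $X = \bb{S}^{(e,d-e)}_C$ is a product of tori and discs with perfectoid toric cover $X_\infty$ and $\Gamma \cong \bb{Z}_p(1)^d$, I would use the explicit description $\OBdr^+|_{X_\infty}= \widehat{\s{O}}_X[[V_1,\ldots, V_d]]$ where $V_i := \log([T_i^\flat]/T_i)$ for $1 \leq i \leq e$ and $V_i := \log([S_i^\flat]/S_i)$ for $e<i\leq d$, as used already in the proof of Proposition \ref{PropProjectionO}. Writing $t = \log[\epsilon]$, the classes $u_i := V_i/t \in \gr^0$ generate $\OC$ as a polynomial algebra over $\widehat{\s{O}}_X$. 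The Galois action on Teichm\"uller elements gives $\gamma\cdot V_i = V_i + \chi_i(\gamma)\, t$, where $\chi_i$ is the $i$-th component of $\chi:\Gamma\xrightarrow{\sim}\bb{Z}_p(1)^d$, and hence $\gamma\cdot u_i = u_i + \chi_i(\gamma)$. Differentiating yields $\log(\gamma_j)\cdot u_i = \delta_{ij}$, so by the very definition of the Sen operator (Definition \ref{Definition:SenOperator}) one has $\theta_{\OC}(u_i) = 1 \otimes \log(\gamma_i)^{\vee}$. Under the identification $(\Lie \Gamma)^\vee \otimes \widehat{\s{O}}_X \cong \Omega^1_X(\log)\otimes \widehat{\s{O}}_X(-1)$ furnished by Proposition \ref{PropProjectionO} and the explicit cocycles of Remark \ref{RemarkExplicitProjectionTori}, $\log(\gamma_i)^\vee$ matches $\dlog T_i$ (resp.\ $\dlog S_i$), and thus $\theta_{\OC}(u_i) = 1\otimes \dlog T_i$.

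For the other side, horizontality of Teichm\"uller lifts gives $\nabla [T_i^\flat] = 0$, and combined with $\nabla T_i = T_i\, \dlog T_i$ and $\nabla t = 0$ one obtains $\nabla V_i = -\dlog T_i$, whence $\nabla u_i = -t^{-1}\otimes \dlog T_i$ in $\gr^{-1}\OBdr \otimes \Omega^1_X(\log) = \OC(-1)\otimes \Omega^1_X(\log)$. Using the canonical generator $t^{-1}$ of $\widehat{\s{O}}_X(-1)$ to identify $\OC(-1)\otimes \Omega^1_X(\log) \cong \OC \otimes \Omega^1_X(\log)(-1)$, this reads $\overline{\nabla}(u_i) = -1\otimes \dlog T_i$, matching $\theta_{\OC}(u_i) = -\overline{\nabla}(u_i)$ on generators. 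The Leibniz rule for both $\theta_{\OC}$ (viewing $\OC$ as an algebra object and $\theta$ as a derivation coming from the Lie algebra action) and for $\overline{\nabla}$ then propagates the equality from the generators to all of $\OC$. The only real subtlety is bookkeeping the Tate twists and signs consistently, and checking independence of the chosen toric chart, which is ensured by the functoriality of $\theta$ in Theorem \ref{TheoGluingCaseGamma}(5) together with the compatibility of the Hodge–Tate isomorphism with pullback in Corollary \ref{CoroFunctHTiso}.
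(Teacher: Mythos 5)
Your proposal is correct and follows essentially the same route as the paper: reduce to toric coordinates, present $\OC$ over the perfectoid cover as a polynomial algebra in the generators $\tfrac{1}{t}\log(\underline{T}^{-1}[\underline{T}^{\flat}])$, $\tfrac{1}{t}\log(\underline{S}^{-1}[\underline{S}^{\flat}])$, and compare $\theta_{\OC}$ and $\overline{\nabla}$ on these generators. The only (inessential) difference is that you compute $\theta_{\OC}$ by differentiating the explicit $\Gamma$-cocycle directly, while the paper packages the same computation as the identification $\OC \cong C^{pol}(\Gamma,\bb{Q}_p)_{\star_2,\ket}\otimes_{\bb{Q}_p}\widehat{\s{O}}_X$ and then appeals to the torsor formalism of Proposition \ref{PropSenOperatorsTorsorLocal}.
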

\begin{proof}
It suffices to identify $\theta_{\OC}$ with $-\nabla$ locally on $X$, we can then assume that $X$ has toric coordinates $\psi:X\to \bb{S}^{(e,d-e)}_K$. Let $X_{C,\infty}= X\times_{ \bb{S}_{C}^{(e,d-e)}} \bb{S}_{C,\infty}^{(e,d-e)}$ the $\Gamma$-torsor over $X$. Then, by  \cite[Proposition 2.3.15]{DiaoLogarithmicHilbert2018} we have a presentation 
\[
\OC|_{X_{C,\infty}} = \widehat{\s{O}}_{X_{C,\infty}}\big[ \frac{\log (\underline{T}^{-1}[\underline{T}]^{b})}{t}, \frac{\log(\underline{S}^{-1}[\underline{S}^{\flat}]) }{t}  \big],
\]
where $T^{\flat}=(T^{\frac{1}{p^{n}}})_{n\in \bb{N}}$,  $S^{\flat}=(S^{1/p^{n}})_{n\in \bb{N}}$, and $t=\log([\epsilon
])$.  We see from this presentation that 
\[
\overline{\nabla}(\log( \underline{T}^{-1} [T^{\flat}] ))= -\frac{d\underline{T}}{\underline{T}} \mbox{ and } \overline{\nabla}(\log (\underline{S}^{-1}[\underline{S}^{\flat}])) = - \frac{d\underline{S}}{\underline{S}}.
\]

Note that the products of the variables $\frac{\log (T^{-1}_i [T_i]^{\flat})}{t}$ and $\frac{\log(S_j^{-1}[S_j]^{\flat})}{t}$ form a locally analytic  basis of the $\widehat{\s{O}}_X$-module $\OC$ over $X_{C,\infty}$. Hence, to compute the Sen operator of $\OC$ it suffices to know the derivative of the action of $\Gamma$ on them. Let $\gamma_i$ be the standard basis of $\Gamma$, since 
\[
\gamma_{i}^a(\log(T^{-1}_k [T_k]^{\flat}))= \log(T^{-1}_k [T_k]^{\flat}) + \delta_{i,k} a t
\]
(resp. for the $S_j$), we deduce  that 
\[
\theta_{\OC}(\log \underline{T}^{-1} [T^{\flat}])= \frac{d\underline{T}}{\underline{T}}   \mbox{ and }  \theta_{\OC}(\log \underline{S}^{-1}[\underline{S}^{\flat}])=\frac{d\underline{S}}{\underline{S}},
\]
proving that $\theta_{\OC}=-\overline{\nabla}$ as wanted.
\end{proof}

\begin{proof}[Proof of  Theorem \ref{TheoLiuZhuRH}]
We first need to make a construction.  Let us suppose without loss of generality  that $\bb{L}^0$ is of rank $n$.  Define the $\GL_n(\bb{Z}_p)$-torsor 
\[
\widetilde{X}:= \mathrm{Isom}(\bb{Z}_p^{n},  \bb{L}^0).  
\]
Thus,  $\bb{L}$ is constructed from the standard representation of $\GL_n$ via the torsor $\widetilde{X}$.  In particular,  by Theorem \ref{TheoSenBundle},  $\bb{L}\otimes_{\bb{Q}_p} \widehat{\s{O}}_X$ has a Sen operator $\theta_{\bb{L}}$ arising from a map of pro-Kummer-\'etale sheaves
\[
\theta_{\widetilde{X}}: (\f{gl}_n)_{\ket}^{\vee}\otimes_{\bb{Q}_p} \widehat{\s{O}}_X \to \Omega^1_X(\log)\otimes_{\s{O}_X} \widehat{\s{O}}_X(-1),
\] 
or equivalently a map
\[
\Sen_{\widetilde{X}}: \Omega^{1}_X(\log)^{\vee}\otimes_{\s{O}_X} \widehat{\s{O}}_X(1)\to \f{gl}_{n,\ket}\otimes_{\bb{Q}_p} \widehat{\s{O}}_X
\] On the other hand,   by Proposition \ref{PropSenOperatorOC},  the Sen operator of $\OC$ is $-\overline{\nabla}$.    Thus,  by Theorem \ref{TheoGluingCaseGamma} (2) one gets that 
\[
R^i \nu_* (\bb{L}\otimes_{\bb{Q}_p} \OC)=  \nu_* H^i(-\theta_{\bb{L}\otimes \OC}, \bb{L}\otimes_{\bb{Q}_p} \OC), 
\]
where $\theta_{\bb{L}\otimes \OC}= \theta_{\bb{L}}\otimes \id_{\OC}- \id_{\bb{L}}\otimes_{\bb{Q}_p} \overline{\nabla}$. We want to show that $R^i\nu_* (\bb{L}\otimes_{\bb{Q}_p} \OC)=0$ for $i>0$ and that it is a vector bundle of rank $n$ for $i=0$.    We need the following lemma,  which is the Lie algebra incarnation of  \cite[Lemma 2.15]{LiuZhuRiemannHilbert}.  

\begin{lem}
\label{LemmaNilpotenceSenOperators}
The image of $\Sen_{\widetilde{X}}:   \Omega^1_X(\log)^{\vee} \otimes_{\s{O}_X} \widehat{\s{O}}_X(1) \to  \f{gl}_{n,\ket} \otimes_{\bb{Q}_p} \widehat{\s{O}}_X$ is contained in a nilpotent  subalgebra.  
\proof
Since $\bb{L}$ is obtained by the standard representation of $\f{gl}_n$,  it is enough to prove that the action of $\Sen_{\widetilde{X}}$ on  $\bb{L}\otimes_{\bb{Q}_p} \widehat{\s{O}}_X$ is nilpotent.   The coefficients of the characteristic polynomial of $\Sen_{\widetilde{X}}$  are given by Galois equivariant maps
\[
\sigma_i:   \Omega^1_X(\log)^{\vee} \otimes_{\s{O}_X} \widehat{\s{O}}_X(1) \to  \End_{\widehat{\s{O}}_X}(\bigwedge^i \bb{L}\otimes_{\bb{Q}_p} \widehat{\s{O}}_X) \xrightarrow{\Tr} \widehat{\s{O}}_X.
\]
 Since we are working over a discrete valuation field, Tate's theorem on the vanishing of Galois cohomology \cite[Proposition  8]{Tatepdivisible}  forces $\sigma_i=0$ for all $i=1,\ldots,n$,  proving that $\Sen_{\widetilde{X}}$ is nilpotent.  
\endproof
\end{lem}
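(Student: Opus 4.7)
The plan is to prove the lemma by a characteristic polynomial argument in the spirit of the classical Sen-Tate theorem. Because $\bb{L}$ is built from the standard representation of $\GL_n$ via the torsor $\widetilde{X}$, it suffices to check that for every local section $v$ of $\Omega^{1}_X(\log)^{\vee}\otimes_{\s{O}_X}\widehat{\s{O}}_X(1)$ the endomorphism $\Sen_{\widetilde{X}}(v)\in\End_{\widehat{\s{O}}_X}(\bb{L}\otimes_{\bb{Q}_p}\widehat{\s{O}}_X)$ is nilpotent. Equivalently, one wants the coefficients $\sigma_i(v)$ of the characteristic polynomial of $\Sen_{\widetilde{X}}(v)$ to vanish for $i=1,\ldots,n$. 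Since each $\sigma_i(v)$ is a degree-$i$ polynomial in $v$, it assembles into an $\widehat{\s{O}}_X$-linear, Galois-equivariant map of pro-Kummer-\'etale sheaves
\[
\sigma_i:\Sym^i\bigl(\Omega^{1}_X(\log)^{\vee}\otimes_{\s{O}_X}\widehat{\s{O}}_X(1)\bigr)\to\widehat{\s{O}}_X,
\]
or equivalently a Galois-equivariant global section of $\Sym^i\Omega^1_X(\log)\otimes_{\s{O}_X}\widehat{\s{O}}_X(-i)$.

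The second step is to kill these sections using Hodge-Tate theory. By Theorem \ref{TheoSenBundleofTorsorIntro} the map $\theta_{\widetilde{X}}$, and hence $\Sen_{\widetilde{X}}$, is $\Gal_K$-equivariant once $X$ has a form over $(K,K^+)$, so each $\sigma_i$ lives in the $\Gal_K$-invariants of $\nu_*\bigl(\Sym^i\Omega^1_X(\log)\otimes_{\s{O}_X}\widehat{\s{O}}_X(-i)\bigr)$. The Hodge-Tate decomposition of Theorem \ref{TheoSenOperatorIntro}(4) applied to the trivial (log-)local system pushed up by the Tate twist gives $\nu_*\widehat{\s{O}}_X(-i)^{\Gal_K}=0$ for every $i\geq 1$ (this is the geometric incarnation of the Ax-Sen-Tate vanishing $H^0(\Gal_K,\bb{C}_p(-i))=0$). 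Therefore $\sigma_i=0$ for $i=1,\ldots,n$, and each $\Sen_{\widetilde{X}}(v)$ is nilpotent.

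To upgrade from ``every element is nilpotent'' to ``the image lies in a nilpotent subalgebra'', one invokes the Higgs condition $\theta_{\widetilde{X}}\wedge\theta_{\widetilde{X}}=0$ also guaranteed by Theorem \ref{TheoSenBundleofTorsorIntro}. Dualizing, this translates into the vanishing of the Lie brackets $[\Sen_{\widetilde{X}}(v),\Sen_{\widetilde{X}}(w)]$ for local sections $v,w$, so that the image generates an abelian $\widehat{\s{O}}_X$-submodule of $\f{gl}_{n,\ket}\otimes_{\bb{Q}_p}\widehat{\s{O}}_X$; an abelian Lie subalgebra consisting of nilpotent operators is nilpotent (which can also be seen as Engel's theorem applied locally). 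The main obstacle will be the second step: one must carefully verify that the Hodge-Tate vanishing works in this relative setting and on the nose for $\Gal_K$-equivariant sections of sheaves with non-trivial Tate twist, without smuggling in extra hypotheses such as Galois equivariance after localization of $X$.
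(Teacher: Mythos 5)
Your proposal is correct and follows essentially the same route as the paper: reduce to nilpotence of $\Sen_{\widetilde{X}}$ acting on $\bb{L}\otimes_{\bb{Q}_p}\widehat{\s{O}}_X$, observe that the characteristic-polynomial coefficients are Galois-equivariant maps whose target carries the wrong Tate twist, and kill them by the Ax--Sen--Tate vanishing $H^0(\Gal_K,\bb{C}_p(-i))=0$. Your two refinements --- packaging the degree-$i$ homogeneity via $\Sym^i$ so that $\sigma_i$ becomes a genuine Galois-equivariant section of $\Sym^i\Omega^1_X(\log)\otimes_{\s{O}_X}\widehat{\s{O}}_X(-i)$, and invoking $\theta_{\widetilde{X}}\wedge\theta_{\widetilde{X}}=0$ to pass from elementwise nilpotence to containment in an abelian, hence nilpotent, subalgebra --- only make explicit what the paper's terse proof leaves implicit.
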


We have proven that the Sen operator $\theta_{\bb{L}}$ of $\bb{L}\otimes_{\bb{Q}_p}\widehat{\s{O}}_X$ is nilpotent. We claim that $\n{H}(\bb{L}):= \nu_* (\bb{L}\otimes_{\bb{Q}_p} \OC)$ is a vector bundle of rank $n$ and that the natural map 
\[
\n{H}(\bb{L})\otimes_{\s{O}_X} \OC \xrightarrow{\sim}  \bb{L}\otimes_{\bb{Q}_p} \OC
\] 
is an isomorphism.  If this holds, then 
\[
R\nu_*(\bb{L}\otimes_{\bb{Q}_p} \OC)= R\nu_* (\n{H}(\bb{L})\otimes_{\s{O}_X} \OC ) = \n{H}(\bb{L})\otimes_{\s{O}_X} R\nu_* \OC = \n{H}(\bb{L}), 
\]
where the vanishing of $R\nu_* \OC$ follows from the computation of its Sen operator in Proposition \ref{PropSenOperatorOC} and Theorem \ref{TheoGluingCaseGamma} (2). The quasi-isomorphism \eqref{qwekpafma} then follows by taking cohomology for the Higgs field of $\OC$ and $\n{H}(\bb{L})$.

We are left to show that $\n{H}(\bb{L})$ is an $\s{O}_X$-vector bundle in the Kummer-\'etale site with the aforementioned properties.   This property is local in the Kummer-\'etale site, so we can assume that $X$ is affinoid and that has toric coordinates $\psi\colon X\to \bb{S}^{(e,d-e)}_K$ with coordinates $T_i,S_j$. Let $X_{\infty,C}$ be the $\Gamma$-torsor over $X_C$ obtained by taking $p$-th power roots of $T_i$ and $S_j$, let $A_{\infty}=\s{O}(X_{\infty,C})$ be the perfectoid algebra of functions of $X_{\infty,C}$. Write $\s{F}=\bb{L}\otimes_{\bb{Q}_p}\widehat{\s{O}}_X$. 

By shrinking $X$ if necessary, we can assume that $\s{F}(X_{\infty,C})$ is a free module  over $A_{\infty}$ of rank $n$. Let $v\in \s{F}(X_{\infty,C})$. Let us write $\Gamma\cong \bb{Z}_p^d$ with coordinates $\gamma_i$, and let $\theta_i$ be the Sen operator  arising from the derivation along the direction of $\gamma_i$.  By Lemma \ref{LemmaNilpotenceSenOperators} the Sen operators $\theta_i$ are nilpotent on  $\s{F}$. On the other hand, we have an explicit description as a polynomial algebra
\[
\OC(X_{\infty,C})=A_{\infty}[Z_1,\ldots, Z_d]
\]
where the $Z_i$ satisfy $\theta_i (Z_j)=\delta_{i,j}$, see the proof of Proposition \ref{PropSenOperatorOC}. Then, in the tensor product $(\s{F}\otimes_{\widehat{\s{O}}_X } \OC )(X_{\infty,C})\cong  \s{F}(X_{\infty,C})[\underline{Z}]$ we can consider the element 
\[
v^{(1)}= \sum_{k=0}^{n-1} (-1)^k \theta_1^k(v) \frac{Z_1^{k}}{k!}.  
\]
One has that $\theta_1(v^{(1)})=0$. By repeating this construction for all the operators $\theta_i$,  we find an element $\widetilde{v}\in  \s{F}(X_{\infty,C})[\underline{Z}]$ with constant term given by $v$ such that $\theta_i(\widetilde{v})=0$ for all $i$. By picking a locally analytic basis  $\{v_1,\ldots, v_n\}$ of $\s{F}(X_{\infty,C})$, the previous algorithm constructs locally analytic elements   $\widetilde{v}_1,\ldots, \widetilde{v}_n\in \s{F}(X_{\infty,C})[\underline{Z}]$ with $\theta_j(\widetilde{v}_i)=0$ for all $i,j$. Thus, the elements $\widetilde{v}_1$ are fixed by the action of an open compact subgroup $\Gamma_0\subset \Gamma$ and, after localizing in the Kummer-\'etale topology,  they give rise to a vector bundle $\n{H}(\bb{L})$ over $X_{C}$  such that 
\[
\n{H}(\bb{L})\otimes_{\s{O}_X} \OC = \s{F}\otimes_{\widehat{\s{O}}_X} \OC.
\]
Thus, we must have 
\[
\n{H}(\bb{L})=\nu_*(\s{F}\otimes_{\widehat{\s{O}}_X} \OC)
\]
proving what we wanted. 
\end{proof}

\begin{remark}
The previous argument can be extended to any $\widehat{\s{O}}_X$-vector bundle $\s{F}$.  Indeed,  we can always find locally Kummer-\'etale on $X$ a lattice $\s{F}^+ \subset \s{F}$ such that $\s{F}^+/p =^{ae} \bigoplus_{i=1}^n \s{O}^+_X/p$ as pro-Kummer-\'etale sheaves.  By  Theorem \ref{TheoGluingCaseGamma},  the sheaf $\s{F}$ admits a geometric Sen operator $\theta_{\s{F}}:  \s{F}\to \s{F}(-1)\otimes \Omega^1_X(\log)$ computing its cohomology.   Then,  since $X$ is defined over a discretely valued field,  the proof of Lemma \ref{LemmaNilpotenceSenOperators} shows that $\theta_{\s{F}}$ is actually a nilpotent operator.   This would prove in particular that Theorem \ref{TheoLiuZhuRH} holds not just for local systems but for any $\widehat{\s{O}}_X$-vector bundle in the case when $X$ is defined over a discretely valued field.  We thank Ben Heuer for pointing this out.  
\end{remark}

Finally,  let us mention the relation with the work of  Wang  \cite{wang2021padic}.   Let $X$ be a rigid analytic space over $\bb{C}_p$ admitting a liftable good reduction $\f{X}$ over $\n{O}_{\bb{C}_p}$ (this means that $\f{X}$ admits a lifting over $A_{\inf}/ \xi^2$ where $\xi=([\epsilon]-1)/([\epsilon^{\frac{1}{p}}]-1)$).   Recall the following theorem
\begin{theo}[{\cite[Theorem  5.3]{wang2021padic}}]
Let $\OC^{\dagger}$ denote the overconvergent Hodge-Tate period  sheaf  of Wang.  Let $a\geq 1/(p-1)$ and $\nu:  X_{\proket} \to  X_{\ket}$ be the projection of sites.   Then the functor 
\[
\n{H}(\n{L}):= \nu_*( \n{L }\otimes_{\widehat{\s{O}}_X} \OC^\dagger)
\]
induces an equivalence from the category of $a$-small generalized representations to the category of $a$-small  Higgs bundles.  
\end{theo}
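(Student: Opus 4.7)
The plan is to reprove Wang's theorem by following the strategy used to reprove Theorem \ref{TheoLiuZhuRH}, replacing the Hodge-Tate period sheaf $\OC$ by its overconvergent variant $\OC^{\dagger}$. First, I would observe that an $a$-small generalized representation $\n{L}$ is, by definition, a pro-Kummer-\'etale $\widehat{\s{O}}_X$-module admitting, locally on the Kummer-\'etale topology of $X$, a lattice $\n{L}^0$ in which the descent cocycle is trivial modulo $p^a$. This exhibits $\n{L}$ as a relative locally analytic ON Banach sheaf in the sense of Definition \ref{DefRelLocAnIntro}, so by Theorem \ref{TheoGluingCaseGamma} it carries a geometric Sen operator
\[
\theta_{\n{L}}: \n{L} \to \n{L} \otimes_{\s{O}_X} \Omega^{1}_X(\log)(-1).
\]

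Second, I would compute the geometric Sen operator of $\OC^{\dagger}$ by arguing exactly as in Proposition \ref{PropSenOperatorOC}: the liftability assumption on $\f{X}$ yields, in local perfectoid toric coordinates, a presentation of $\OC^{\dagger}$ as the overconvergent subalgebra of $\OC$ generated over $\widehat{\s{O}}_X$ by the logarithms $\log(\underline{T}^{-1}[\underline{T}^{\flat}])$ and $\log(\underline{S}^{-1}[\underline{S}^{\flat}])$ in a controlled radius of convergence, and in this presentation the standard basis of $\Lie \Gamma$ acts on $\OC^{\dagger}$ via the dual of the toric log-differentials. This identifies $\theta_{\OC^{\dagger}}$ with $-\overline{\nabla}$, the overconvergent residual Higgs field. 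Consequently, the total Sen operator on $\n{L} \otimes_{\widehat{\s{O}}_X} \OC^{\dagger}$ is $\theta_{\n{L}} \otimes \id - \id \otimes \overline{\nabla}$.

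Third, using that $a \geq 1/(p-1)$, the Sen operator $\theta_{\n{L}}$ is ``small enough'' that its exponential converges when evaluated against the overconvergent logarithm variables generating $\OC^{\dagger}$. This gives, by a Poincar\'e-type argument applied fiberwise in local toric coordinates, that the $\theta$-cohomology of $\n{L} \otimes_{\widehat{\s{O}}_X} \OC^{\dagger}$ is concentrated in degree zero and computes a locally free $\s{O}_X$-module; by Theorem \ref{TheoGluingCaseGamma}\,(2) this yields
\[
\n{H}(\n{L}) = \nu_{*}(\n{L} \otimes_{\widehat{\s{O}}_X} \OC^{\dagger})^{\theta_{\n{L}} \otimes \id - \id \otimes \overline{\nabla} = 0},
\]
which becomes a Higgs bundle via the projection of $\id_{\n{L}} \otimes \overline{\nabla}$ under $\nu_{*}$, with the smallness inherited from that of $\theta_{\n{L}}$. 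The quasi-inverse is defined by $(\s{E},\theta_{\s{E}}) \mapsto (\s{E} \otimes_{\s{O}_X} \OC^{\dagger})^{\theta_{\s{E}} \otimes \id - \id \otimes \overline{\nabla} = 0}$, and fully faithfulness and essential surjectivity follow by comparing Sen operators in local coordinates through the explicit formulas above.

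The main obstacle, and where most of the work will go, is verifying that $\OC^{\dagger}$ really fits the abstract Colmez-Sen-Tate framework of \S \ref{s:SenTheoryAxioms}, and in particular that the numerical smallness threshold $a \geq 1/(p-1)$ coming from Wang's construction matches the constants $c_2, c_3$ governing the decompletion of the Sen module and the convergence of the exponential that integrates $\theta_{\n{L}}$ on the overconvergent logarithm generators. Concretely, one must ensure that the overconvergent Sen module $S_{H'}(\n{L} \otimes \OC^{\dagger})$ is not only defined but stable under the residual connection $\overline{\nabla}$, and that the resulting Higgs complex degenerates in degree zero within the overconvergent range rather than only after passing to $\OC$.
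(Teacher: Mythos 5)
The paper does not prove this statement at all: it is quoted verbatim from Wang (\cite[Theorem 5.3]{wang2021padic}), and immediately after stating it the paper says explicitly that it does not pretend to give a new proof, offering only a dictionary — $a$-small generalized representations are relative locally analytic sheaves in the sense of Definition \ref{DefRelativeLocAnSheaf}, $\OC^{\dagger}$ is locally a completion of a polynomial algebra into an overconvergent polydisc of radius $|\rho_k|$, and Wang's decompletion in \cite[\S 3.1]{wang2021padic} is an \emph{integral} refinement of the Berger--Colmez decompletion. So there is no argument in the paper to compare yours against, and your proposal has to be judged as a self-standing proof attempt.

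As such it has a genuine gap, and you have in fact located it yourself in your last paragraph: everything quantitative is deferred, and that quantitative content \emph{is} the theorem. The strategy you are transplanting from Theorem \ref{TheoLiuZhuRH} integrates the Sen operator using Lemma \ref{LemmaNilpotenceSenOperators}, whose nilpotence argument uses Galois equivariance over a discretely valued base; for an arbitrary $a$-small generalized representation over $\bb{C}_p$ there is no arithmetic action and $\theta_{\n{L}}$ is only \emph{small}, not nilpotent, so the Poincar\'e-type integration must be replaced by convergence of an exponential at a precise radius governed by $a\geq 1/(p-1)$ and $\rho_K$. The Colmez--Sen--Tate formalism of \S\ref{s:SenTheoryAxioms} is rational and only controls the constants $c_2,c_3$ asymptotically as $n\to\infty$; it does not produce the integral lattice-level decompletion with fixed radius that Wang needs, which is exactly why the paper calls his decompletion the ``integral version'' of \cite{BC1} rather than a consequence of it. Finally, the theorem asserts an \emph{equivalence of categories}: your treatment of the quasi-inverse, full faithfulness and essential surjectivity (including the fact that every $a$-small Higgs bundle is hit, with the correct smallness, and the dependence of the whole construction on the chosen lift of $\f{X}$ to $A_{\inf}/\xi^2$ entering through $\OC^{\dagger}$) is a one-sentence assertion, whereas this is where the bulk of Wang's work lies. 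In short, the skeleton is reasonable and consistent with the paper's dictionary, but the steps you label as ``where most of the work will go'' are not routine verifications inside the paper's framework — they require the integral estimates that the framework deliberately does not provide.
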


We do not pretend to give a new proof of this statement, instead let us translate some of the main players in terms of the language used in this paper. An $a$-small generalized representation of rank $l$  is a locally free $\widehat{\s{O}}_X$-module $\n{L}$ admitting a lattice $\n{L}^{0}$ such that there is $b> a + \mathrm{val}(\rho_k)$  with $\n{L}^0/p^{b} =^{ae} (\s{O}^+_X/p^{b})^{l} $ ($\rho_K$ being an element in $\f{m}_{\bb{C}_p}$ depending on the ramification of a discretely valued subfield).   In particular, this is a relative locally analytic $\widehat{\s{O}}_X$-sheaf as in Definition \ref{DefRelativeLocAnSheaf}. The way how Wang constructs the sheaf $\OC^{\dagger}$ is by considering a particular lattice of the Faltings extension provided by the lifting of $\f{X}$ to $A_2$,  cf.  \cite[Corollary  2.19]{wang2021padic}.   Locally on coordinates,  the ring $\OC^{\dagger}$ is nothing but a suitable completion of a polynomial algebra into an overconvergent polydisc of radius $|\rho_k|$ (cf.  \cite[Theorem  2.27]{wang2021padic}).  The $a$-smallness condition is a finite rank version of the relative locally analytic condition,  where one imposes a fixed radius of analyticity.  Finally,  the decompletion  used by Wang in  \cite[\S 3.1]{wang2021padic} is the integral version of the decompletion provided by Berger-Colmez axiomatic Sen theory  \cite{BC1}.


\bibliographystyle{alpha}
\bibliography{GeoSen}


\begin{center}
•
\end{center}

\end{document}